\documentclass[11pt,american,preprint]{elsarticle}
\makeatletter
\def\ps@pprintTitle{%
 \let\@oddhead\@empty
 \let\@evenhead\@empty
 \def\@oddfoot{\centerline{\thepage}}%
 \let\@evenfoot\@oddfoot}
\makeatother

\usepackage[margin=1in]{geometry}

\usepackage{xcolor}
\usepackage{lipsum}

\usepackage{changepage}
\usepackage{centernot}

\usepackage[T1]{fontenc}
\usepackage[latin9]{luainputenc}
\usepackage{amsmath}
\usepackage{amsthm}
\usepackage{amssymb}
\usepackage{stmaryrd}
\usepackage{esint}
\usepackage{nameref}
\usepackage{hyperref} 
\usepackage{cleveref} %\autoref \nameref 

\usepackage[shortlabels]{enumitem}
\usepackage{chngcntr}

\usepackage{mathrsfs}

\newsavebox{\foobox}

\usepackage{graphicx,amssymb}

\usepackage{array}
\newcolumntype{M}[1]{>{\centering\arraybackslash}m{#1}}
\usepackage{bbm} %indicatorFunction
\makeatletter
\numberwithin{equation}{section}
\theoremstyle{plain}
\newtheorem{thm}{\protect\theoremname}[section]

\theoremstyle{plain*}
\newtheorem*{thm*}{\protect\theoremname}

\theoremstyle{plain}
\newtheorem{lem}[thm]{\protect\lemmaname}
  
\theoremstyle{plain*}
\newtheorem*{lem*}{\protect\lemmaname}  
  
  \theoremstyle{plain}
  \newtheorem{prop}[thm]{\protect\propositionname}
  
    \theoremstyle{plain*}
  \newtheorem*{prop*}{\protect\propositionname}

\theoremstyle{remark}
\newtheorem{question}[thm]{Question}
\theoremstyle{remark*}
\newtheorem*{question*}{Question} 
\theoremstyle{remark}
\newtheorem{rem}[thm]{\protect\remarkname}
\theoremstyle{remark*}
\newtheorem*{rem*}{\protect\remarkname}
\theoremstyle{remark}
\newtheorem{example}[thm]{Example}
\theoremstyle{remark*}
\newtheorem*{example*}{\protect\examplename}

\theoremstyle{plain}
\newtheorem{cor}[thm]{\protect\corollaryname}
\providecommand{\corollaryname}{Corollary}
  
\theoremstyle{definition}
\newtheorem{defn}[thm]{Definition}  
\makeatother

\theoremstyle{plain}

\providecommand{\principlename}{Principle}

\theoremstyle{plain} % just in case the style had changed
\newcommand{\thistheoremname}{}
\newtheorem{genericthm}[thm]{\thistheoremname}

\newtheorem*{genericthm*}{\thistheoremname}
\newenvironment{namedthm*}[1]
  {\renewcommand{\thistheoremname}{#1}%
   \begin{genericthm*}}
  {\end{genericthm*}}

\makeatother

\usepackage{babel}
 \providecommand{\lemmaname}{Lemma}
  \providecommand{\propositionname}{Proposition}
  \providecommand{\remarkname}{Remark}
\providecommand{\theoremname}{Theorem}

\newcommand{\R}{\mathbb{R}}
\newcommand{\N}{\mathbb{N}}
\newcommand{\Q}{\mathbb{Q}}
\newcommand{\Z}{\mathbb{Z}}

\newcommand{\C}{\mathbb C}

\newcommand\precdot{\mathrel{\ooalign{$\prec$\cr
  \hidewidth\raise0ex\hbox{$\cdot\mkern0.5mu$}\cr}}}
\newcommand\preceqdot{\mathrel{\ooalign{$\preceq$\cr
  \hidewidth\raise0.225ex\hbox{$\cdot\mkern0.5mu$}\cr}}}

\usepackage{fancyhdr}
\usepackage{float}
\title{Sets of large values of polynomial multi-correlation functions}
\begin{document}
\author{V. Bergelson and R. Zelada}
\begin{abstract}
Let $p_1,...,p_\ell\in\Z[x_1,...,x_d]$ be non-constant polynomials with zero constant term. The ergodic theoretical proofs of the 
polynomial \cite{bergelsonLeibmaPolyVderWardenAndSz} and the IP-polynomial \cite{berMcCuIPPolySzemeredi} Szemer{\'e}di  theorems  as well as some of the ergodic-theoretical and combinatorial consequences of the Density Polynomial Hales-Jewett conjecture \cite{ERTaU} naturally lead to the study of \textit{sets of large returns} which are defined as 
$$
R_\epsilon^{p_1,...,p_\ell}(A):=\{\vec n\in\Z^d\,|\,\mu(A\cap T_1^{-p_1(\vec n)}A\cap\cdots\cap T_\ell^{-p_\ell(\vec n)}A)>\mu^{\ell+1}(A)-\epsilon\},
$$
where $T_1,...,T_\ell$ are commuting and invertible measure preserving transformations  on a probability space $(X,\mathcal A,\mu)$, $A\in \mathcal A$,  and $\epsilon>0$.\\
We obtain new results dealing with the fine properties of the sets of the form $R_\epsilon^{p_1,...,p_\ell}(A)$ which in turn provide new combinatorial applications.
Among other things, we show that 
%we prove that 
every set of the form $R_\epsilon^{p_1,...,p_\ell}(A)$ is syndetic if and only if the polynomials $p_1,...,p_\ell$ are linearly independent, answering a question asked by  N. Frantzikinakis and B. Kuca \cite{FraKuJoinErgForIndependentPolys}. 
Moreover, we prove that the linear independence of $p_1,...,p_\ell$ implies that every set of the form $R_\epsilon^{p_1,...,p_\ell}(A)$ has the A-IP$^*$ property (="almost" IP$^*$ property), which is  quite a bit stronger than syndeticity.\\
The following is one of the  new combinatorial results  obtained in this paper. Suppose that $p_1,...,p_\ell$ are linearly independent. For  any set $E\subseteq\Z^D$ with upper Banach density  $d^*(E)>0$, any non-zero ${\bf v_1,..., v_\ell}\in\Z^D$, and any $\epsilon>0$, the set 
\begin{equation*}\label{Abstract:DefnCombinatorialSet}
S_\epsilon^{p_1,...,p_\ell}(E):=\{\vec n\in\Z^d\,|\,d^*(E\cap (E-p_1(\vec n){\bf v_1})\cap \cdots\cap (E-p_\ell(\vec n){\bf v_\ell}))>(d^*(E))^{\ell+1}-\epsilon\}
\end{equation*}
is A-IP$^*$.  Furthermore, we prove that when $D>\ell>1$, this result is sharp in the sense that  the A-IP$^*$ property  cannot be upgraded to  IP$^*$. \\
The techniques developed in this paper lead to some additional applications. For example, we show that an amplified form of the IP-polynomial Szemer{\'e}di theorem conjectured in \cite{berMcCuIPPolySzemeredi} follows from the Density Polynomial Hales-Jewett conjecture.
%%%%%%%%%%%%%%%%%%%%%%%%%%%%%%%%%%%%
\end{abstract}

\maketitle

%\tableofcontents
\tableofcontents

%\tableofcontents
%%%%%%%%%%%%%%%%%%%%%%%%%%%%
\section{Introduction}
\subsection{New and old results dealing with single and multiple recurrence for measure preserving $\Z$-actions} 
We start our discussion with the formulation of the classical  Khintchine recurrence theorem.
\begin{thm}[Khintchine's recurrence theorem \cite{KhintchineRec1935}]\label{0.KhintchineRecThm}
For any invertible probability measure preserving system $(X,\mathcal A,\mu,T)$, any $A\in\mathcal A$, and any $\epsilon>0$, the set 
\begin{equation}
    R_\epsilon(A,T):=\{n\in\Z\,|\,\mu(A\cap T^{-n}A)\geq \mu^2(A)-\epsilon\}
\end{equation}
is syndetic, meaning that it has bounded gaps.     
\end{thm}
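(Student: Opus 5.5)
The plan is to prove Khintchine's theorem through the von Neumann mean ergodic theorem, applied in its uniform form (averages over arbitrary long intervals, not only over initial segments). Let $f=\mathbbm{1}_A$. Let $P$ be the orthogonal projection of $L^2(\mu)$ onto the subspace of $T$-invariant functions. Since $T$ is invertible, the unitary operator $U_Tg=g\circ T$ gives
\[
\frac{1}{N}\sum_{n=M}^{M+N-1}U_T^n f\longrightarrow Pf \quad\text{in } L^2(\mu) \text{ as } N\to\infty, \text{ uniformly in } M\in\Z .
\]
This holds by the usual splitting $L^2=\mathcal{H}_{\mathrm{inv}}\oplus\overline{\{g-U_Tg\}}$. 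On coboundaries $g-U_Tg$ the sum telescopes, so the average has norm at most $2\|g\|/N$, independently of $M$. A density argument then extends the uniform convergence to the closure of the coboundaries.

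Next, take inner products with $f$ to get
\[
\frac{1}{N}\sum_{n=M}^{M+N-1}\mu(A\cap T^{-n}A)=\Big\langle f,\frac1N\sum_{n=M}^{M+N-1}U_T^nf\Big\rangle \longrightarrow \langle f,Pf\rangle=\|Pf\|^2 ,
\]
again uniformly in $M$. The key lower bound comes from Cauchy--Schwarz together with the fact that $P$ preserves integrals (the constant $1$ is invariant, so $\int Pf\,d\mu=\langle Pf,1\rangle=\langle f,1\rangle$). This gives
\[
\|Pf\|^2\ge \Big(\int Pf\,d\mu\Big)^2=\mu(A)^2 .
\]

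Finally, argue by contradiction. Suppose $R_\epsilon(A,T)$ is not syndetic. Then for every $N$ there is an interval $\{M,\dots,M+N-1\}$ disjoint from it. On such an interval every term satisfies $\mu(A\cap T^{-n}A)<\mu^2(A)-\epsilon$, so the average over the interval is less than $\mu^2(A)-\epsilon$. But by uniform convergence, for all large $N$ every such average exceeds $\|Pf\|^2-\epsilon/2\ge\mu^2(A)-\epsilon/2$. This is a contradiction. The bounded-gap conclusion can also be stated with an explicit bound: fix $N_0$ with uniform error below $\epsilon/2$; then every interval of length $N_0$ meets $R_\epsilon(A,T)$.

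The only point needing care is the uniformity in $M$ of the mean ergodic theorem. Ordinary convergence of Cesàro averages $\frac1N\sum_{n=1}^N$ would give only positive density of $R_\epsilon(A,T)$, not syndeticity. Uniformity follows because the coboundary estimate $2\|g\|/N$ does not depend on $M$, and the density approximation is uniform since each average operator has norm at most $1$.
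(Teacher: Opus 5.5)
Your proof is correct and follows the route the paper points to. The paper gives no separate proof, but notes that Khintchine derived the result from the uniform lower bound $\lim_{N-M\to\infty}\frac{1}{N-M}\sum_{n=M+1}^{N}\mu(A\cap T^{-n}A)\ge\mu^2(A)$ (Theorem~\ref{0.thm:khintchineLimitingFormula}), and your uniform mean ergodic theorem together with $\|Pf\|^2\ge\mu(A)^2$ establishes exactly that bound, from which your contradiction step gives bounded gaps.
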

In  Khintchine's original paper \cite{KhintchineRec1935}, \cref{0.KhintchineRecThm}  was derived from the following variant of the ergodic theorem:
\begin{thm}\label{0.thm:khintchineLimitingFormula}
Let $(X,\mathcal A,\mu,T)$ be an invertible probability measure preserving system and let $A\in\mathcal A$. Then, 
\begin{equation}
    \lim_{N-M\rightarrow\infty}\frac{1}{N-M}\sum_{n=M+1}^N\mu(A\cap T^{-n}A)\geq \mu^2(A).
\end{equation}
\end{thm}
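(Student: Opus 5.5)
We will derive the statement from the uniform (Bergelson-style) version of von Neumann's mean ergodic theorem, applied to the indicator function $f=1_A\in L^2(\mu)$, followed by the Cauchy--Schwarz inequality. The limit exists and equals an explicit quantity that we bound from below by $\mu^2(A)$.

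Let $U_T$ denote the unitary Koopman operator on $L^2(\mu)$, $U_Tf=f\circ T$, and let $P$ be the orthogonal projection onto the closed subspace $\mathcal H_{inv}=\{g\in L^2(\mu)\,|\,U_Tg=g\}$ of $T$-invariant functions. The first step is the uniform mean ergodic theorem: for every $f\in L^2(\mu)$,
\begin{equation*}
\lim_{N-M\rightarrow\infty}\Big\|\frac{1}{N-M}\sum_{n=M+1}^N U_T^nf-Pf\Big\|_{L^2}=0.
\end{equation*}
We prove it in the standard way, using the orthogonal decomposition $L^2(\mu)=\mathcal H_{inv}\oplus\overline{\{U_Th-h\,|\,h\in L^2(\mu)\}}$. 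This decomposition holds because the orthogonal complement of the coboundaries consists of the $g$ with $U_T^*g=g$, which for unitary $U_T$ is the same as $U_Tg=g$.

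On $\mathcal H_{inv}$ the averages are identically $Pf$. For a coboundary $f=U_Th-h$ the sum telescopes, so the average over $(M,N]$ equals $\frac{1}{N-M}(U_T^{N+1}h-U_T^{M+1}h)$. This has norm at most $2\|h\|/(N-M)$, which tends to $0$ uniformly in $M$. A $3\epsilon$ approximation argument then passes to the closure, since the averaging operators all have norm at most $1$. The only point requiring care is that the convergence is uniform in the starting point $M$, and the telescoping bound gives exactly this. This step is the heart of the matter.

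Next, write $\mu(A\cap T^{-n}A)=\langle U_T^n1_A,1_A\rangle$. By continuity of the inner product,
\begin{equation*}
\lim_{N-M\rightarrow\infty}\frac{1}{N-M}\sum_{n=M+1}^N\mu(A\cap T^{-n}A)=\langle P1_A,1_A\rangle=\|P1_A\|^2 .
\end{equation*}
Finally, the constant function $1$ lies in $\mathcal H_{inv}$, so $\langle P1_A,1\rangle=\langle 1_A,P1\rangle=\langle1_A,1\rangle=\mu(A)$. By Cauchy--Schwarz, using $\|1\|=1$ in the probability space, we get $\mu(A)=\langle P1_A,1\rangle\le\|P1_A\|$. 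Hence $\|P1_A\|^2\geq\mu^2(A)$, which proves the inequality, and in fact shows the limit exists.
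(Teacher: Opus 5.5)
The paper does not prove this statement. It quotes it as Khintchine's classical limiting formula, the route by which \cref{0.KhintchineRecThm} was originally obtained, so there is no argument of the authors' to compare yours against. Your proof is correct and complete, and it is the standard one. The splitting $L^2(\mu)=\mathcal H_{inv}\oplus\overline{\{U_Th-h\}}$ together with the telescoping bound gives the mean ergodic theorem uniformly in $M$. That yields existence of the limit and identifies it as $\|P\mathbbm 1_A\|^2$. Cauchy--Schwarz against the invariant function $1$ then gives the lower bound $\mu^2(A)$.

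Your argument has exactly the two-step shape the paper later uses for multiple recurrence. Since $P\mathbbm 1_A=\mathbb E(\mathbbm 1_A\,|\,\mathcal I)$, where $\mathcal I$ is the $\sigma$-algebra of $T$-invariant sets, your limit equals $\int_X\mathbbm 1_A\,\mathbb E(\mathbbm 1_A\,|\,\mathcal I)\,\text{d}\mu$. Your Cauchy--Schwarz step is then precisely the case $\ell=1$, $\mathcal A_1=\mathcal I$ of \cref{2.lem:ProdExpectationsLowerBound}.

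In the proof of \cref{4.GenBerLeib}, the invariant $\sigma$-algebra is replaced by the Kronecker factors $\mathcal K_{T_j}$, via \cref{2.Cor:DensityFraKu}. This is needed because for polynomial iterates the invariant factor is no longer characteristic. The remaining eigenfunction contribution is handled there by the Diophantine result \cref{3.thm:DiophantineRec}, rather than by averaging as in your linear single-transformation setting.
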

There is, however, an alternative approach to the proof of Khintchine's recurrence theorem which provides additional information about the sets of the form $R_\epsilon(A,T)$.
Let's call $E\subseteq \Z$  a $\Delta^*$ set if for every strictly monotone  sequence $(n_k)_{k\in\N}$ in $\Z$, the set 
$$\{n_j-n_i\,|\,j>i\}$$
has a non-trivial intersection with $E$. It is not hard to see that any $\Delta^*$ set is syndetic. On the other hand, not every syndetic set is $\Delta^*$ (consider for example the set $E=2\Z+1$). So, the following proposition forms a non-trivial amplification of \cref{0.KhintchineRecThm}. (This result is an easy corollary from the Enhanced  Principle P (EPP) formulated and proved on page 34 of \cite{MultifariousPoincare}.) 
\begin{thm}\label{0.Delta*Statement}
    Let $(X,\mathcal A,\mu,T)$ be an invertible probability measure preserving system, let $A\in\mathcal A$, and let $\epsilon>0$. The set 
    $$R_\epsilon(A,T)=\{n\in\Z\,|\,\mu(A\cap T^{-n}A)\geq \mu^2(A)-\epsilon\}$$
    is $\Delta^*$. 
    Moreover, there exists an $r:=
r(\mu(A),\epsilon)\in\N$ with the property that for any distinct $n_1,...,n_r\in\Z$, 
\begin{equation}\label{0.eq:Delta_r*property}
\{n_j-n_i\,|\,i<j,\,i,j\in\{1,...,r\}\}\cap R_\epsilon(A,T)\neq \emptyset.
\end{equation}
\end{thm}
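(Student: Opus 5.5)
We will prove the quantitative statement \eqref{0.eq:Delta_r*property} first; the $\Delta^*$ property follows at once. Indeed, given a strictly monotone sequence $(n_k)_{k\in\N}$, the terms $n_1,\dots,n_r$ are distinct. By \eqref{0.eq:Delta_r*property} some difference $n_j-n_i$ with $i<j$ lies in $R_\epsilon(A,T)$.

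The approach is a Hilbert space averaging argument, in the spirit of the Enhanced Principle P. Put $f=\mathbbm{1}_A\in L^2(\mu)$. Since $T$ preserves $\mu$, we have $\mu(T^{-n_i}A\cap T^{-n_j}A)=\mu(A\cap T^{-(n_j-n_i)}A)$. Fix distinct $n_1,\dots,n_r$ and set $f_i=f\circ T^{n_i}$, the indicator of $T^{-n_i}A$. The plan is to bound $\|\sum_{i=1}^r f_i\|_2^2$ from below in two ways.

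First, by Cauchy--Schwarz against the constant function $1$,
\[
\Big\|\sum_{i=1}^r f_i\Big\|_2^2\geq\Big(\int\sum_{i=1}^r f_i\,d\mu\Big)^2=r^2\mu^2(A).
\]
Second, expanding the square,
\[
\Big\|\sum_{i=1}^r f_i\Big\|_2^2=r\mu(A)+2\sum_{i<j}\mu\big(A\cap T^{-(n_j-n_i)}A\big).
\]
Now suppose, for contradiction, that no difference $n_j-n_i$ with $i<j$ lies in $R_\epsilon(A,T)$. Then every term in the last sum is less than $\mu^2(A)-\epsilon$, so
\[
r^2\mu^2(A)< r\mu(A)+r(r-1)\big(\mu^2(A)-\epsilon\big).
\]
This rearranges to $r(r-1)\epsilon< r\mu(A)-r\mu^2(A)$, that is, $(r-1)\epsilon<\mu(A)(1-\mu(A))\le 1/4$. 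Choosing $r$ with $r-1\geq \mu(A)(1-\mu(A))/\epsilon$ gives a contradiction, so $r=\lceil 1/(4\epsilon)\rceil+1$ works. This $r$ depends only on $\epsilon$, and in particular only on $(\mu(A),\epsilon)$.

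The argument is short, and no step is a real obstacle. The only place needing care is the expansion of the norm. One must keep the diagonal term $r\mu(A)$ separate, since it is exactly what the Cauchy--Schwarz lower bound must beat. One must also use invariance of $\mu$ to rewrite each cross term as a correlation along the difference $n_j-n_i$. Invertibility of $T$ guarantees that $T^{n}$ makes sense for negative $n$. This matters because the $n_i$ are arbitrary distinct integers; listing them in the given order, the differences $n_j-n_i$ with $i<j$ may be negative, and the identity $\mu(T^{-n_i}A\cap T^{-n_j}A)=\mu(A\cap T^{-(n_j-n_i)}A)$ holds for all signs.
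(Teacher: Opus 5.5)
Your proof is correct and follows essentially the route the paper has in mind: the paper derives this statement from Bergelson's Enhanced Principle P, whose proof is exactly this Cauchy--Schwarz comparison of $\|\sum_{i=1}^r \mathbbm 1_{T^{-n_i}A}\|_2^2$ with $r^2\mu^2(A)$, together with measure-preservation to rewrite $\mu(T^{-n_i}A\cap T^{-n_j}A)$ as $\mu(A\cap T^{-(n_j-n_i)}A)$. Your explicit choice $r=\lceil 1/(4\epsilon)\rceil+1$, which does not even depend on $\mu(A)$, is valid.
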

It is worth mentioning that, by invoking Ramsey's theorem, one can show that $\Delta^*$ sets have the \textit{finite intersection property} \cite[p. 179]{Fbook}, meaning that if $E,F\subseteq \Z$ are $\Delta^*$ sets, then $E\cap F$ also is. It follows that if $(X,\mathcal A,\mu,T)$ and $(Y,\mathcal B,\nu,S)$ are invertible probability preserving systems, one has that for any $A\in\mathcal A$, $B\in\mathcal B$ with $\mu(A),\nu(B)>0$, and $\epsilon',\epsilon''>0$, the set 
\begin{equation}\label{0.eq:IntersectionOfSingleRec}
R_{\epsilon'}(A,T)\cap R_{\epsilon''}(B,S)
\end{equation}
is $\Delta^*$ (and, in particular, syndetic).\\

Next we formulate the Furstenberg-S{\'a}rk{\"o}zy theorem  \cite[Theorem 3.16]{Fbook}, \cite{sarkozy1978difference}, which can be viewed as a polynomial extension of the classical Poincar{\'e} recurrence theorem.
\begin{thm}[Furstenberg-S{\'a}rk{\"o}zy Theorem] \label{0.FSTheorem}
    Let $p\in\Z[x]$ be a non-constant polynomial with zero constant term and let $(X,\mathcal A,\mu,T)$ be an invertible probability preserving system. For any $A\in\mathcal A$ with $\mu(A)>0$, there are infinitely many $n\in\Z$ with $\mu(A\cap T^{-p(n)}A)>0$.
\end{thm}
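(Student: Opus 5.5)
We prove the Furstenberg--S{\'a}rk{\"o}zy theorem (\cref{0.FSTheorem}) through the spectral theorem and the splitting of $L^2$ into rational and irrational spectral parts. Put $f=\mathbbm{1}_A$ and let $U_T g=g\circ T$ be the Koopman operator. The spectral theorem gives a finite positive Borel measure $\sigma_f$ on $\mathbb T=\R/\Z$ such that
\begin{equation*}
\mu(A\cap T^{-p(n)}A)=\langle U_T^{p(n)}f,f\rangle=\int_{\mathbb T}e^{2\pi i p(n)x}\,d\sigma_f(x).
\end{equation*}
Decompose $\sigma_f=\sigma_{\mathrm{rat}}+\sigma_{\mathrm{irr}}$, where $\sigma_{\mathrm{rat}}$ is the restriction of $\sigma_f$ to the countable set $\Q/\Z$. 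Correspondingly, $f=f_{\mathrm{rat}}+f_{\mathrm{irr}}$, where $f_{\mathrm{rat}}$ is the orthogonal projection of $f$ onto $\mathcal H_{\mathrm{rat}}$, the closed span of eigenfunctions with rational eigenvalues. Equivalently, $\mathcal H_{\mathrm{rat}}$ is the closure of $\bigcup_{k}\{g : U_T^{k}g=g\}$.

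We average along a sparse subsequence $n=k!\,m$ with $k$ fixed, $m=1,\dots,M$ and $M\to\infty$. The irrational part is handled by Weyl's equidistribution theorem. For each irrational $x$ the polynomial $m\mapsto p(k!m)x$ has an irrational non-constant coefficient, so $\frac1M\sum_{m=1}^M e^{2\pi i p(k!m)x}\to0$. Dominated convergence then makes the contribution of $\sigma_{\mathrm{irr}}$ tend to $0$.

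The rational part needs an approximation argument.
\begin{itemize}
\item Choose $k$ so large that $\|f_{\mathrm{rat}}-g\|_2<\delta$ for some $g$ with $U_T^{k!}g=g$. This is possible because the invariant subspaces for $U_T^{j!}$ increase with $j$ and their union is dense in $\mathcal H_{\mathrm{rat}}$.
\item Since $p(0)=0$, we have $k!\mid p(k!m)$, so $U_T^{p(k!m)}g=g$ for every $m$.
\item Hence $\langle U_T^{p(k!m)}f_{\mathrm{rat}},f_{\mathrm{rat}}\rangle\ge \|f_{\mathrm{rat}}\|_2^2-2\delta\|f\|_2$ for every $m$.
\end{itemize}
Cross terms vanish because $\mathcal H_{\mathrm{rat}}$ is $U_T$-invariant and orthogonal to $f_{\mathrm{irr}}$. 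Combining the two parts,
\begin{equation*}
\liminf_{M\to\infty}\frac1M\sum_{m=1}^M\mu(A\cap T^{-p(k!m)}A)\ \ge\ \|f_{\mathrm{rat}}\|_2^2-2\delta.
\end{equation*}
Finally, $\|f_{\mathrm{rat}}\|_2\ge|\langle f,\mathbbm 1\rangle|=\mu(A)$, since constants lie in $\mathcal H_{\mathrm{rat}}$. Taking $\delta<\mu(A)^2/4$ makes the averages eventually exceed $\mu(A)^2/2>0$.

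A positive lower bound on the averages, which holds for all large $M$, forces $\mu(A\cap T^{-p(k!m)}A)>0$ for infinitely many $m$. Because $p$ is non-constant, $p(k!m)$ takes any given value for only finitely many $m$. We therefore obtain infinitely many distinct $n=k!m$, and in fact a set of positive upper density of them, as claimed.

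The main obstacle is the rational-spectrum step. Weyl's equidistribution theorem fails at rational points, so they must be absorbed through the divisibility $k!\mid p(k!m)$, which is exactly where the hypothesis $p(0)=0$ is used. Without $p(0)=0$ the statement fails, for example for a rotation on two points with $p(n)=2n+1$. The approximation of $f_{\mathrm{rat}}$ by $U_T^{k!}$-invariant vectors must be done with care; this is the part of the argument I would write out in full detail.
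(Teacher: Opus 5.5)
Your proof is correct and is essentially Furstenberg's spectral proof: rational/irrational splitting of $\sigma_f$, with Weyl's theorem killing the irrational part and averaging along multiples of $k!$ handling the rational part. The paper does not reprove this theorem but cites it from \cite[Theorem 3.16]{Fbook}, and your argument, with $b=k!$ and the bound $\mu^2(A)-2\delta$, is precisely the ``careful analysis of Furstenberg's proof'' that the paper invokes to get the stronger \cref{0.FSThm.2}.
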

A careful analysis of Furstenberg's proof of  \cref{0.FSTheorem} reveals that the following stronger result also holds.
\begin{thm}\label{0.FSThm.2}
     Let $p\in\Z[x]$ be a non-constant polynomial with zero constant term and let $(X,\mathcal A,\mu,T)$ be an invertible probability preserving system. For any $A\in\mathcal A$ with $\mu(A)>0$ and any $\epsilon>0$,  there is a natural number $b$ such that 
     \begin{equation}
         \lim_{N-M\rightarrow\infty}\frac{1}{N-M}\sum_{n=M+1}^N\mu(A\cap T^{-p(bn)}A)\geq \mu^2(A)-\epsilon.
     \end{equation}
\end{thm}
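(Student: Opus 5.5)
We will derive \cref{0.FSThm.2} from the splitting $L^2(X,\mu)=\mathcal H_{rat}\oplus\mathcal H_{rat}^{\perp}$, where $\mathcal H_{rat}$ is the closed linear span of the eigenfunctions of $T$ whose eigenvalues are roots of unity. Put $f=\mathbbm 1_A=f_r+f_w$ with $f_r\in\mathcal H_{rat}$ and $f_w\in\mathcal H_{rat}^\perp$. Both summands are $T$-invariant subspaces, so the cross terms vanish and
$$\mu(A\cap T^{-p(bn)}A)=\langle T^{p(bn)}f_r,f_r\rangle+\langle T^{p(bn)}f_w,f_w\rangle .$$
We treat the two terms separately.

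\textbf{The rational part.} First choose a finite linear combination $g=\sum_{j}c_j\phi_j$ of rational eigenfunctions with $\|f_r-g\|_2<\epsilon/4$. Here $T\phi_j=e^{2\pi i a_j/q_j}\phi_j$. Take $b$ to be a common multiple of the $q_j$. Since $p$ has integer coefficients and zero constant term, $q_j\mid p(bn)$ for every $n$, so $T^{p(bn)}g=g$ for all $n\in\Z$. It follows that $\langle T^{p(bn)}f_r,f_r\rangle\ge\|f_r\|_2^2-\epsilon/2$ for every $n$.

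Moreover $\mathbbm 1\in\mathcal H_{rat}$, so $\int f_r\,d\mu=\int f\,d\mu=\mu(A)$. By Cauchy--Schwarz, $\|f_r\|_2^2\ge\mu(A)^2$.

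\textbf{The orthogonal part.} We need the uniform Ces\`aro limit of $\langle T^{p(bn)}f_w,f_w\rangle$ to equal $0$. By the spectral theorem this is
$$\int_{\mathbb T}\frac{1}{N-M}\sum_{n=M+1}^N e^{2\pi i p(bn)x}\,d\sigma_{f_w}(x).$$
The spectral measure $\sigma_{f_w}$ gives no mass to rational points, because $f_w\perp\mathcal H_{rat}$. For irrational $x$ the polynomial $n\mapsto p(bn)x$ has an irrational non-constant-term coefficient. Weyl's equidistribution theorem, in its uniform-in-$M$ form obtained from van der Corput's difference trick, then makes the inner averages tend to $0$ pointwise. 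They are bounded by $1$, so dominated convergence gives the limit $0$.

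Combining the two parts, the $\liminf$ of the averages is at least $\mu(A)^2-\epsilon$. The statement asserts that the limit actually exists. Existence also follows from the decomposition: the rational term is a periodic function of $n$, whose uniform Ces\`aro limit exists, up to an error of $\epsilon/2$ that we can take arbitrarily small by refining $g$. The weak-mixing-type term converges to $0$.

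\textbf{Main obstacle.} The delicate step is the Weyl claim for $x$ irrational in the uniform (``$N-M\to\infty$'') sense. It needs the fact that the leading coefficient of $p(bx)$ times $x$ is irrational, and that the van der Corput induction on degree is uniform over shifts $M$. Both are standard, so I expect no real difficulty. One alternative is to replace the spectral argument by a direct van der Corput argument on $\mathcal H_{rat}^\perp$ using the characterization of totally ergodic components. The spectral route is cleaner for a single $T$, and I would use it.
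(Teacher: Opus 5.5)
Your proposal is correct and is essentially the argument the paper has in mind. The paper gives no proof of its own; it says only that the statement follows from a careful analysis of Furstenberg's proof of the Furstenberg--S{\'a}rk{\"o}zy theorem. That proof is exactly your splitting $L^2(\mu)=\mathcal H_{rat}\oplus\mathcal H_{rat}^{\perp}$: Weyl's theorem (here in its well-distribution form) kills the averages on $\mathcal H_{rat}^{\perp}$, a multiplier $b$ trivializes finitely many rational eigenvalues, and $\|f_r\|_2^2\geq\mu(A)^2$ finishes the estimate.

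One step should be stated more carefully: for the existence of the limit, $b$ must stay fixed while you refine $g$. What you actually use is that $n\mapsto\langle T^{p(bn)}g',g'\rangle$ is periodic for every finite combination $g'$ of rational eigenfunctions, not that $g'$ is fixed by $T^{p(bn)}$. Alternatively, note that $\frac{1}{N-M}\sum_{n=M+1}^N e^{2\pi i p(bn)x}$ converges as $N-M\rightarrow\infty$ for every $x\in\mathbb T$, and apply dominated convergence to the spectral measure of $\mathbbm 1_A$.
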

\begin{cor}\label{0.FSCorSyndetic}
    Let $p\in\Z[x]$ be a non-constant polynomial with zero constant term and let $(X,\mathcal A,\mu,T)$ be an invertible probability preserving system. For any $A\in\mathcal A$ and any $\epsilon>0$, the set
\begin{equation}\label{0.eq:PolySingleRecSET}
R_\epsilon^{p}(A,T):=\{n\in\Z\,|\,\mu(A\cap T^{-p(n)}A)>\mu^2(A)-\epsilon\}
\end{equation}
is syndetic.
\end{cor}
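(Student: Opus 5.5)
The plan is to derive \cref{0.FSCorSyndetic} from \cref{0.FSThm.2} by a uniform-density argument. First, if $\mu(A)=0$ then $\mu^2(A)-\epsilon<0$, so $R_\epsilon^p(A,T)=\Z$ and there is nothing to prove. Assume therefore that $\mu(A)>0$ and fix $\epsilon>0$.

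Apply \cref{0.FSThm.2} with $\epsilon/2$ in place of $\epsilon$. This gives $b\in\N$ such that
\begin{equation*}
L:=\lim_{N-M\rightarrow\infty}\frac{1}{N-M}\sum_{n=M+1}^N\mu(A\cap T^{-p(bn)}A)\geq \mu^2(A)-\frac{\epsilon}{2}.
\end{equation*}
Set $a_n:=\mu(A\cap T^{-p(bn)}A)\in[0,1]$ and $F:=\{n\in\Z\,|\,a_n>\mu^2(A)-\epsilon\}$. By definition $bF\subseteq R_\epsilon^p(A,T)$.

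The key step is to show that $F$ is syndetic. Suppose it is not. Then for every $K$ there is an interval $\{M+1,\dots,M+K\}$ disjoint from $F$, and on such an interval $a_n\leq \mu^2(A)-\epsilon$. Taking $K\to\infty$ along these intervals (so $N-M=K\to\infty$), the uniform Ces\`aro averages over them are at most $\mu^2(A)-\epsilon$. This contradicts the existence of the uniform limit $L\geq\mu^2(A)-\epsilon/2$. The point to emphasize is that \cref{0.FSThm.2} controls limits as $N-M\to\infty$ with no restriction on $M$. This uniformity is exactly what rules out arbitrarily long gaps; ordinary Ces\`aro averages would only give positive upper density.

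Finally, if $F$ has gaps bounded by $g$, then $bF$ has gaps bounded by $bg$. Since $R_\epsilon^p(A,T)\supseteq bF$, the set $R_\epsilon^p(A,T)$ is syndetic. The main (mild) obstacle is the existence of the uniform limit, but that is supplied by \cref{0.FSThm.2}. With it taken as given, no step beyond the gap argument is delicate.
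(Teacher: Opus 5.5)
Your proposal is correct and is essentially the derivation the paper intends: the paper states this result as an immediate corollary of \cref{0.FSThm.2}, which the uniform Ces\`aro bound makes syndetic via your gap argument, and whose dilate by $b$ lies in $R_\epsilon^p(A,T)$. Your separate handling of $\mu(A)=0$ is needed, since \cref{0.FSThm.2} assumes $\mu(A)>0$, and the argument only requires a uniform $\liminf$ bound rather than the existence of the limit.
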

In light of \cref{0.FSThm.2} it is natural to inquire if, in analogy with \cref{0.Delta*Statement}, 
the sets of the form \eqref{0.eq:PolySingleRecSET}
are $\Delta^*$.
It turns out  that $R_\epsilon^p(A,T)$ is not necessarily a $\Delta^*$ set. For example, one can show that for any irrational number $\alpha\in\R$, the Lebesgue preserving transformation $T_\alpha:[0,1)\rightarrow [0,1)$ defined by $T_\alpha x=x+\alpha\mod 1$ has the property that if $A$ is an  interval of  length strictly smaller than $1/2$ and $p\in\Z[x]$ satisfies $\deg(p)>1$, then the set $R^p_\epsilon(A,T_\alpha)$ is not $\Delta^*$ (for the case $p(x)=x^2$, see \cite[p. 177]{Fbook}; see also \cite[Remark 5.4]{BerZel_JCTA_iteratedDifferences2021}). On the other hand, one can show  that sets of the form   $R_\epsilon^p(A,T)$ possess the IP$^*$ property which is  strictly stronger   than that of  syndetic (but strictly weaker than that of $\Delta^*$). The notion of IP$^*$ (in the generality of $\Z^d$) plays a prominent role in our paper. To introduce it,  we start by  recalling the definition  of IP set.\\
For any $d\in\N$, a set $E\subseteq \Z^d$ is called an IP set, if  there exists a  sequence $(\vec n_k)_{k\in\N}$ in $\Z^d$ with $\lim_{k\rightarrow\infty}|\vec n_k|=\infty$ such that  
$$E:=\{\vec n_{k_1}+\cdots+\vec n_{k_t}\,|\,k_1<\cdots<k_t,\,t\in\N\}.$$
The following variant of a classical theorem of Hindman \cite{HIPPartitionRegular}  is needed for the sequel.
\begin{thm}[Cf. Lemma 2.1 in \cite{BerHi1993AdditiveAndMultiplicative}]\label{0.thm:Hindman}
    Let $d\in\N$ and let $E,F\subseteq \Z^d$ be such that $E\cup F$ contains an IP set. Then, at least one of $E$ and $F$  contains an IP set as well. 
\end{thm}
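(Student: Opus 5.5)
The plan is to use the algebraic (ultrafilter) proof of Hindman's theorem via the Galvin--Glazer idempotent argument in $\beta\Z^d$, and to keep track of the requirement $|\vec m_j|\to\infty$ in the definition of IP set by working only with non-principal ultrafilters. Let $(\vec n_k)_{k\in\N}$ be a sequence with $|\vec n_k|\to\infty$ whose IP set $\mathrm{FS}((\vec n_k)_{k\in\N})$ is contained in $E\cup F$. For $m\in\N$, write $\mathrm{FS}_m:=\mathrm{FS}((\vec n_k)_{k>m})$. Endow $\beta\Z^d$ with the usual extension of addition, so that it becomes a compact right topological semigroup.

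First, let $T:=\bigcap_{m\in\N}\overline{\mathrm{FS}_m}$ and $T^*:=T\cap(\beta\Z^d\setminus\Z^d)$. I will check that $T^*$ is a nonempty compact subsemigroup.
\begin{enumerate}
\item \emph{Nonempty and closed.} Since $|\vec n_k|\to\infty$, each $\mathrm{FS}_m$ is infinite, so each $\overline{\mathrm{FS}_m}$ meets $\Z^{d*}$. The sets $\overline{\mathrm{FS}_m}$ are decreasing, so compactness gives $T^*\neq\emptyset$. It is closed as an intersection of closed sets.
\item \emph{Closed under addition.} This is the standard argument. For $\vec x\in\mathrm{FS}_m$, say with indices $\le M$, we have $\vec x+\mathrm{FS}_M\subseteq \mathrm{FS}_m$. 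Hence for $p,q\in T$, $\mathrm{FS}_m\in p+q$, that is, $\{\vec x\,|\,\mathrm{FS}_m-\vec x\in q\}\supseteq \mathrm{FS}_m\in p$.
\item \emph{Non-principality is preserved.} In $\Z^d$, if $q$ is non-principal then so is $p+q$. Indeed, if $\{\vec a\}\in p+q$, then $\{\vec a-\vec x\}\in q$ for $p$-many $\vec x$, which is impossible for non-principal $q$.
\end{enumerate}
By the Ellis--Numakura lemma, $T^*$ then contains an idempotent $p=p+p$, which is non-principal and has $\mathrm{FS}_m\in p$ for all $m$.

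Since $\mathrm{FS}_0\subseteq E\cup F$ and $p$ is an ultrafilter, one of $E$, $F$ belongs to $p$; say $E\in p$. Now run the standard Galvin--Glazer extraction, with one modification. For $A\in p$ let $A^\star:=\{\vec x\in A\,|\,A-\vec x\in p\}$; idempotence gives $A^\star\in p$ and $(A^\star)^\star=A^\star$. Choose inductively $\vec m_j\in A_j^\star$ with $|\vec m_j|>j$, where $A_1=E$ and $A_{j+1}=A_j^\star\cap(A_j^\star-\vec m_j)$. The extra condition $|\vec m_j|>j$ can always be met: $A_j^\star\in p$ and $p$ is non-principal, so $A_j^\star$ minus a finite ball still belongs to $p$. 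The usual induction then shows $\mathrm{FS}((\vec m_j)_{j\in\N})\subseteq E$. Since $|\vec m_j|\to\infty$, this is an IP set in the sense of the paper.

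The main obstacle is exactly this growth condition $|\vec m_j|\to\infty$. A purely combinatorial finite-unions version of Hindman's theorem yields block sums $\vec m_j=\sum_{k\in\alpha_j}\vec n_k$, and these could stay bounded because of cancellation in $\Z^d$. This is why I insist on non-principal idempotents in $T^*$. The key facts to verify carefully are that the $\mathrm{FS}_m$ are infinite and that $\Z^{d*}$ absorbs addition, as in the third step above; everything else is the textbook argument.
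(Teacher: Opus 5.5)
Your proof is correct. The paper gives no argument for this statement; it only quotes it as a variant of Hindman's theorem, citing Lemma 2.1 of Bergelson--Hindman. So your proposal supplies a self-contained proof rather than reproducing one.

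All the steps check out:
\begin{enumerate}
\item $T^*$ is a non-empty closed subsemigroup. Each $\mathrm{FS}_m$ is infinite, the block argument gives $\mathrm{FS}_m\in p+q$, and $p+q$ is non-principal whenever $q$ is.
\item Ellis--Numakura then yields a non-principal idempotent.
\item Non-principality lets you impose $|\vec m_j|>j$ at every stage of the Galvin--Glazer extraction.
\end{enumerate}
Your insistence on non-principality is genuinely needed. Take $\vec n_{2k-1}=(k,0,\dots,0)$ and $\vec n_{2k}=(-k,0,\dots,0)$. Then $\vec 0\in\mathrm{FS}_m$ for every $m$, so $T$ contains the principal idempotent at $\vec 0$. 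For the partition $E=\{\vec 0\}$, $F=\mathrm{FS}_0\setminus\{\vec 0\}$, that idempotent picks $E$, which contains no IP set.

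As for the combinatorial route you set aside, the cancellation obstacle you identified can be removed by a preliminary step. Pass to a subsequence with $|\vec n_k|>\sum_{i<k}|\vec n_i|+k$ for all $k$; its FS set is still contained in $E\cup F$. Then $\bigl|\sum_{i\in\alpha}\vec n_i\bigr|>\max\alpha$ for every non-empty finite $\alpha\subseteq\N$. Hence every block sequence $\vec m_j=\sum_{i\in\alpha_j}\vec n_i$ with $\max\alpha_j<\min\alpha_{j+1}$ automatically satisfies $|\vec m_j|\to\infty$. The finite-unions form of Hindman's theorem, applied to the two-colouring of finite sets $\alpha$ according to whether $\sum_{i\in\alpha}\vec n_i$ lies in $E$, then finishes the proof.

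The two routes trade off as follows. The combinatorial one stays elementary (no $\beta\Z^d$) and produces an IP set generated by a sum subsystem of the original sequence. Yours needs no preprocessing and handles the growth condition through non-principality of the idempotent, at the cost of the ultrafilter machinery.
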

We say that a set $E\subseteq \Z^d$ is IP$^*$ if it  has a non-trivial intersection with every IP set in $\Z^d$. The family   of IP$^*$ sets in $\Z^d$ has the following two important properties: (i) each of its elements is syndetic (a set $E\subseteq \Z^d$ is syndetic if there is a finite  $F\subseteq\Z^d$ with $\bigcup_{\vec a\in F}(E+\vec a)=\Z^d$) and (ii) as a consequence of \cref{0.thm:Hindman}, it has the finite intersection property. We remark in passing that  not every syndetic set is IP$^*$ (consider, for example, the set $2\Z+1$).
\begin{thm}[Cf. Corollary 2.1 in \cite{BFM}]\label{0.thm:BFM}
    Let $d\in\N$ and let $p\in\Z[x_1,...,x_d]$ be a non-constant polynomial with zero constant term. For any invertible probability preserving system $(X,\mathcal A,\mu,T)$, any $A\in\mathcal A$, and any $\epsilon>0$, the set 
    \begin{equation}
    R_\epsilon^p(A,T):=\{\vec n\in\Z^d\,|\,\mu(A\cap T^{-p(\vec n)}A)>\mu^2(
    A)-\epsilon\}
    \end{equation}
    is \rm{IP$^*$}. 
\end{thm}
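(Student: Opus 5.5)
Fix an IP set $\Gamma=\{\vec n_{k_1}+\cdots+\vec n_{k_t}\}$ generated by $(\vec n_k)_{k\in\N}$. We must find $\vec n\in\Gamma$ with $\mu(A\cap T^{-p(\vec n)}A)>\mu^2(A)-\epsilon$. The plan works in $L^2(X,\mu)$ with the unitary operator $U f=f\circ T$, so that
$$\mu(A\cap T^{-p(\vec n)}A)=\langle U^{p(\vec n)}1_A,1_A\rangle,$$
and uses IP-limits along sub-IP-systems. We index the IP set by finite nonempty sets $\alpha\subseteq\N$, writing $\vec n_\alpha=\sum_{k\in\alpha}\vec n_k$. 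Then $p(\vec n_\alpha)$ is an IP-polynomial in $\alpha$ with zero constant term, and it suffices to find a sub-IP-system along which
$$\text{IP-}\lim_\alpha \langle U^{p(\vec n_\alpha)}1_A,1_A\rangle\ \ge\ \mu^2(A).$$
Any such limit, being a limit of values in the set, forces some $\vec n_\alpha\in\Gamma$ into $R^p_\epsilon(A,T)$.

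The first step is a polynomial IP-version of the spectral splitting $L^2=\mathcal H_{rat}\oplus\mathcal H_{rat}^\perp$, where $\mathcal H_{rat}$ is spanned by eigenfunctions with rational eigenvalues, i.e. functions invariant under some power $T^b$. The splitting comes from the IP polynomial weak-mixing / Hilbert space IP-convergence theorem of Bergelson--Furstenberg--McCutcheon. Passing to a sub-IP-system (Hindman/Milliken–Taylor compactness, in the spirit of \cref{0.thm:Hindman}), we may assume that $\text{IP-}\lim U^{p(\vec n_\alpha)}=P$ exists in the weak operator topology. Here $P$ is the orthogonal projection onto the subspace of vectors $f$ with $U^{p(\vec n_\alpha)}f\to f$ along the IP-system. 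That $P$ is a projection, rather than a general contraction, is the key IP phenomenon. For linear $p$ it is Furstenberg's IP-limit argument: $\text{IP-}\lim U^{n_\alpha}$ is an idempotent contraction, hence an orthogonal projection. For general $p$ one argues by induction on degree using van der Corput's lemma for IP-systems. Granting $P$ is an orthogonal projection fixing constants, we get
$$\langle P1_A,1_A\rangle=\|P1_A\|^2\ge\langle 1_A,1\rangle^2=\mu(A)^2.$$
The inequality holds because $1\in\operatorname{Ran}P$ (as $U1=1$), so $\|P1_A\|\ge|\langle P1_A,1\rangle|=\mu(A)$.

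So the proof reduces to: after refining, the weak IP-limit of $U^{p(\vec n_\alpha)}$ is an orthogonal projection. To organize this I will argue via the spectral theorem. Let $\sigma$ be the spectral measure of $1_A$ on $\mathbb T$. Then
$$\langle U^{p(\vec n_\alpha)}1_A,1_A\rangle=\int e^{2\pi i p(\vec n_\alpha)x}\,d\sigma(x).$$
By compactness, refine so that for every $x$ in a countable dense set the IP-limit of $e^{2\pi i p(\vec n_\alpha)x}$ exists. Then handle two cases.

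\emph{Rational $x$.} The polynomial $p(\vec n_\alpha)$ modulo $q$ is eventually $0$ along a sub-IP-system, since an IP-polynomial with zero constant term vanishes mod $q$ on a sub-IP-system by Hindman's theorem applied to finitely many colors. Hence the limit is $1$.

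\emph{Irrational $x$.} Pointwise limits of $e^{2\pi i p(\vec n_\alpha)x}$ need not vanish. What matters is the operator identity, and the main obstacle is the idempotence $P^2=P$ for nonlinear $p$. For this I would use the polynomial IP-limit machinery from \cite{BFM}: $\text{IP-}\lim_\alpha\text{IP-}\lim_\beta U^{p(\vec n_{\alpha\cup\beta})-p(\vec n_\alpha)-p(\vec n_\beta)}$ has lower degree in each variable, which gives a PET-type induction. Alternatively, one can prove directly $\langle P f,f\rangle\ge 0$ and $P$ self-adjoint, and then positivity plus the constant-eigenvector argument already gives $\langle P1_A,1_A\rangle\ge\mu(A)^2$ by decomposing $1_A=\mu(A)+g$ with $g\perp 1$. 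In that decomposition the cross terms vanish and $\langle Pg,g\rangle\ge0$. Establishing $\langle Pg,g\rangle\ge 0$ is the step I expect to be hardest; I would attack it by induction on the degree of $p$ via the IP van der Corput lemma.
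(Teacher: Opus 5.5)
This is essentially the Bergelson--Furstenberg--McCutcheon argument behind the result the paper cites. You pass to a sub-IP-system along which $P=\text{weak-IP-}\lim_\alpha U^{p(\vec n_\alpha)}$ exists, use that $P$ is an orthogonal projection with $P1=1$, and conclude $\langle P1_A,1_A\rangle=\|P1_A\|^2\ge\mu^2(A)$. Note, though, that the projection property is the entire content of BFM's Hilbert-space theorem (established there by an IP van der Corput argument and an induction), so your proposal invokes the hard step rather than proving it.

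One correction: $\operatorname{Ran}P$ is the space of vectors rigid along the refined IP-system, and it automatically contains every eigenfunction, irrational eigenvalues included. Indeed, if $Uf=\lambda f$ then $Pf=cf$ with $|c|=1$, and $c^2=c$ forces $c=1$. So the $\mathcal H_{rat}\oplus\mathcal H_{rat}^{\perp}$ framing and the rational/irrational spectral case split play no role in the argument.
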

The discussed above results demonstrate that statements pertaining to \textit{ single recurrence} (such as the Poincar{\'e} recurrence theorem or \cref{0.FSTheorem})  admit amplified forms dealing with  \textit{large intersections} (see Theorems \ref{0.KhintchineRecThm}, \ref{0.Delta*Statement}, \ref{0.FSThm.2}, and \ref{0.thm:BFM}).\\
It is of interest to find out when \textit{multiple polynomial recurrence} results have an amplified form which deals with large intersections. For example, given  any family of polynomials $p_1,...,p_\ell\in\Z[x_1,...,x_d]$ with zero constant term, the IP-polynomial Szemer{\'e}di theorem \cite{berMcCuIPPolySzemeredi} implies  that every set of the form
\begin{equation}\label{0.PositiveIntersection}
R_{>0}^{p_1,...,p_\ell}(A,T):=\{\vec n\in\Z^d\,|\,\mu(A\cap T^{-p_1(\vec n)}A\cap \cdots\cap T^{-p_\ell(\vec n)}A)>0\},
\end{equation}
is IP$^*$ (and, in particular, syndetic).\\
This makes it natural to inquire under which conditions are the sets of the form 
\begin{equation}\label{0.eq:LargeVectorIntersections}
R_\epsilon^{p_1,...,p_\ell}(A,T)=\{\vec n\in\Z^d\,|\,\mu(A\cap T^{-p_1(\vec n)}A\cap \cdots\cap T^{-p_\ell(\vec n)}A)>\mu^{\ell+1}(A)-\epsilon\}
\end{equation}
"large" (e.g. syndetic or, perhaps, IP$^*$).\\
Actually, the situation with the sets $R_\epsilon^{p_1,p_2,...,p_\ell}(A,T)$ is quite intricate.
For example,  it was shown in \cite{BHKNilSystems2005} that there are non-ergodic systems  $(X,\mathcal A,\mu,T)$ such that for some  $\epsilon>0$, and some $A\in\mathcal A$, the set $R_\epsilon^{n,2n}(A,T)$ equals $\{0\}$. On the other hand, it was also shown in \cite{BHKNilSystems2005}, that when  $(X,\mathcal A,\mu,T)$ is ergodic,  the sets of the  form $R_\epsilon^{n,2n}(A,T)$, $R_\epsilon^{n,2n,3n}(A,T)$  are syndetic but this is no longer the case for $R_\epsilon^{n,2n,3n,4n}(A,T)$. As a matter of fact, and in contrast with \eqref{0.eq:IntersectionOfSingleRec}, the proof of  \cite[Proposition 1.9] {DonosoLeSunMoreiraLowerBounds} reveals that there are ergodic $(X,\mathcal A,\mu,T)$, $(Y,\mathcal B,\nu,S)$, $A\in\mathcal A$, $B\in\mathcal B$, and $\epsilon>0$ for which  the sets $R_\epsilon^{n,2n}(A,T)$, $R_\epsilon^{n^2}(B,S)$ are syndetic but  
\begin{equation}\label{0.eq:NonSyndetic}
R_\epsilon^{n,2n}(A,T)\cap R_\epsilon^{n^2}(B,S)=\{0\}.
\end{equation}
(Formula \eqref{0.eq:NonSyndetic} should be juxtaposed with the fact  that, in light of the IP-polynomial Szemer{\'e}di theorem, $R_{>0}^{n,2n}(A,T)\cap R_{>0}^{n^2}(B,S)$ is not only syndetic but is actually  IP$^*$.) \\
We remark that,  as a consequence of \cite[Corollary 11.6]{ABBLargeIntersection2021}, for any $\ell>1$, any distinct and non-zero integers $a_1,...,a_\ell$, and any non-constant $p\in\Z[x]$ with zero constant term, there is an invertible probability preserving system $(X,\mathcal A,\mu,T)$, a set $A\in\mathcal A$ with $\mu(A)>0$, and an $\epsilon>0$ for which  $R_\epsilon^{a_1p,...,a_\ell p}(A,T)=\{0\}$ (see \cite{fra2008ThreePolynomials}, \cite{DonosoLeSunMoreiraLowerBounds} for related results).
On the other hand, as shown in  \cite{WMPet}, every set of the form \eqref{0.eq:LargeVectorIntersections} is syndetic under the additional assumption that $T$ is weak mixing. 
(Recall that  a measure preserving system $(X,\mathcal A,\mu,T)$ is called  weak mixing if 
$\lim_{N-M\rightarrow\infty}\frac{1}{N-M}\sum_{n=M+1}^N|\mu(A\cap T^{-n}B)-\mu(A)\mu(B)|=0$
for any $A,B\in\mathcal A$). \\
The above discussion indicates that, unless the system $(X,\mathcal A,\mu,T)$ has special properties, say,  is weak mixing (or, slightly more generally, a weakly mixing extension of a Kronecker system, see \cite[Theorem 6.1]{HKAComplexityOfNilsystems},
 \cite[Theorem 4.3]{AlmostIPBerLeib}), one should consider special families of polynomials in order for  sets of the form $R_\epsilon^{p_1,...,p_\ell}(A,T)$ to be large.
 For example, in light of the classical results of uniform distribution of polynomial tuples in multi-dimensional tori \cite{weyl1916Mod1} (see, for example, \cite[Theorem 1.6.4]{kuipers2012uniform}), it is natural to assume that the polynomials that we deal with are linearly independent over $\Q$. Under this independence assumption, various results about the largeness of the sets of the form $R_\epsilon^{p_1,...,p_\ell}(A,T)$ were established  
in \cite{AlmostIPBerLeib}, \cite{fra2008ThreePolynomials},  \cite{FraKra2006IndependentPolys}, \cite{FraKuJoinErgForIndependentPolys}.
The following theorem due to Frantzikinakis and Kra was the first positive result in this direction of inquiry. 
\begin{thm}[Theorem 1.3 in \cite{FraKra2006IndependentPolys}]\label{0.thm:FraKraTheorem1.3} Let $(X,\mathcal A,\mu,T)$ be an invertible probability measure preserving system and let $p_1,...,p_\ell\in\Z[x]$ be non-constant polynomials with zero constant term. Suppose that $p_1,...,p_\ell$ are $\Q$-linearly independent. Then, for any $A\in\mathcal A$ and every $\epsilon>0$, the set 
\begin{equation}\label{0.eq:DefnR_eSingleT}
R_\epsilon^{p_1,...,p_\ell}(A,T):=\{ n\in\Z\,|\,\mu(A\cap T^{-p_1(n)}A\cap \cdots \cap T^{-p_\ell(n)}A)>\mu^{\ell+1}(A)-\epsilon \}
\end{equation}
is syndetic. 
\end{thm}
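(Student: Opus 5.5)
The plan is to follow the template of \cref{0.FSThm.2}: find a natural number $b$ such that the uniform Ces\`aro averages along the subsequence $bn$ satisfy
\begin{equation*}
\lim_{N-M\rightarrow\infty}\frac{1}{N-M}\sum_{n=M+1}^N\mu\big(A\cap T^{-p_1(bn)}A\cap\cdots\cap T^{-p_\ell(bn)}A\big)\geq \mu^{\ell+1}(A)-\epsilon .
\end{equation*}
Syndeticity of $R_\epsilon^{p_1,...,p_\ell}(A,T)$ then follows at once: if the set had arbitrarily long gaps, the set of $n$ with $bn\in R_\epsilon^{p_1,...,p_\ell}(A,T)$ would also have arbitrarily long gaps. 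Along those gaps the averages would be at most $\mu^{\ell+1}(A)-\epsilon$, contradicting the uniform limit once we run the argument with $\epsilon/2$ in place of $\epsilon$.

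\textbf{Step 1: reduction to a characteristic factor.} By Host--Kra and its polynomial refinement (Leibman), the $L^2$-limit of the uniform averages of $T^{p_1(n)}f_1\cdots T^{p_\ell(n)}f_\ell$ is unchanged if each $f_i$ is replaced by its conditional expectation on a nilfactor $Z_k$ of some finite order $k$. The same holds along $bn$, since $p_i(bx)$ is again a polynomial family. Moreover $Z_k$ is an inverse limit of nilsystems, so by approximating we may assume $X$ is an ergodic nilsystem $G/\Gamma$. Non-ergodic $T$ is handled by ergodic decomposition together with the convexity of $t\mapsto t^{\ell+1}$ (Jensen). We pass to the factor with $f=\mathbb E(1_A\mid Z_k)$; only an $L^2$-small error enters, and this is where $\epsilon$ is spent.

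\textbf{Step 2: equidistribution on the nilmanifold.} For $x\in G/\Gamma$, consider the polynomial orbit $n\mapsto (T^{p_1(bn)}x,\dots,T^{p_\ell(bn)}x)$ in $(G/\Gamma)^\ell$. We choose $b$ divisible enough that, for almost every $x$, the orbit stays within a single connected component of the relevant subnilmanifold and no rational obstruction from the finite torsion of $G/G^\circ$ survives. The key claim is then the following. Linear independence of $p_1,\dots,p_\ell$, combined with Leibman's criterion (equidistribution of a polynomial sequence on a nilmanifold reduces to equidistribution of its projection to the horizontal torus), forces the orbit closure to be as large as possible modulo the connected component. Concretely, the projection to the horizontal torus $(Z_1^\circ)^\ell$ is $(p_1(bn)\alpha,\dots,p_\ell(bn)\alpha)$, and Weyl's theorem together with independence of the $p_i$ gives equidistribution on the product of the relevant subtori. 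One then has to lift this to show that the limit of the averages equals
\begin{equation*}
\int f\cdot \prod_{i=1}^\ell \mathbb E(f\mid \mathcal K_b)\,d\mu
\end{equation*}
or a similar expression in which the $\ell$ later coordinates become conditionally independent over an appropriate (pro-)finite or rational Kronecker factor $\mathcal K_b$. As $b$ becomes more divisible, $\mathcal K_b$ shrinks to the trivial factor; this is exactly the mechanism used in \cref{0.FSThm.2}. I expect this lifting from the horizontal torus to the full nilmanifold, that is, identifying the orbit closure of the polynomial sequence and showing the limit factorizes, to be the main obstacle. The first approach I would try is an induction on the nilpotency step using Leibman's orbit-closure description.

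\textbf{Step 3: the lower bound.} Once the limit has the form $\int f\,\prod_i \mathbb E(f\mid\mathcal K)\,d\mu$ with all factors equal to the same conditional expectation $g=\mathbb E(f\mid\mathcal K)$, we get
\begin{equation*}
\int f\, g^\ell\,d\mu=\int g^{\ell+1}\,d\mu\geq \Big(\int g\,d\mu\Big)^{\ell+1}=\mu^{\ell+1}(A)
\end{equation*}
by Hölder/Jensen, since $\int g\,d\mu=\mu(A)$. Adding back the approximation errors from Steps 1–2 yields the displayed inequality with $\mu^{\ell+1}(A)-\epsilon$, and syndeticity follows as explained at the start.
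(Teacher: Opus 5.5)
\textbf{Gap in Step 2.} Step 2 is the heart of the theorem, and it is not proved. You give the target only as ``$\int f\prod_i\mathbb E(f\mid\mathcal K_b)\,d\mu$ or a similar expression'', and you defer the passage from the horizontal torus to the nilmanifold to an unspecified induction on the step.

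Step 3 needs something precise: along $bn$ the limit must be, up to $\epsilon$, $\int f\,g^\ell\,d\mu$ with $g=\mathbb E(f\mid\mathcal C)$ for some factor $\mathcal C$, and with \emph{no residual shifts or phases}. ``A similar expression'' will not do. For $b$ not divisible enough, Gauss-sum weights appear on the rational spectrum and the limit can drop below $\mu^{\ell+1}(A)$. For example, with $\ell=1$, $p_1(n)=n^2$, $b=1$ and $A=\{0,2\}\subseteq\Z/5\Z$, the average is $\frac{2}{25}<\mu^2(A)=\frac{4}{25}$. Proving the required form is essentially the Frantzikinakis--Kra result that for $\Q$-independent polynomials the (rational) Kronecker factor is characteristic; the paper imports it, in the Frantzikinakis--Kuca form, as \cref{FraKuTheorem}.

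You also locate the difficulty in the wrong place, and no orbit-closure induction is needed. On an ergodic nilsystem $Y$ with $k$ connected components, take $k\mid b$. Then:
\begin{itemize}
\item $T^{p_j(bn)}=(T^b)^{\tilde p_j(n)}$ with $\tilde p_j(n)=p_j(bn)/b$, and the $\tilde p_j$ are still $\Q$-independent;
\item each $T^{p_j(bn)}y$ stays in the component $Y_0$ of $y$;
\item $T^b$ is totally ergodic on the connected nilmanifold $Y_0$, so Weyl gives equidistribution of the horizontal projection in $Z_0^\ell$, where $Z_0$ is the horizontal torus of $Y_0$;
\item Leibman's criterion on the \emph{connected} nilmanifold $Y_0^\ell$ upgrades this to equidistribution in $Y_0^\ell$.
\end{itemize}
The limit is therefore $\int f\,(\mathbb E(f\mid\mathcal C))^\ell\,d\mu$, where $\mathcal C$ is the finite factor of components, and Step 3 applies. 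Your proposal is missing this argument, together with the approximation of the inverse limit $Z_k$ by a single nilsystem.

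\textbf{Smaller points.} First, you describe the mechanism backwards. As $b$ becomes more divisible, the relevant factor grows towards the rational Kronecker factor, exactly as in Furstenberg's proof of \cref{0.FSThm.2}; it does not shrink to the trivial factor. In the example above, averaging along $5n$ gives $\mu(A)=\frac25$, not $\mu^2(A)$. Step 3 survives, since Jensen works for any factor, but your limit formula must keep that finite part. Second, in the ergodic decomposition the admissible $b$ depends on the component. You must fix a single $b$, e.g.\ $b=m!$ with $m$ large enough to serve components of total measure $>1-\epsilon$, before integrating and applying Jensen.

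\textbf{Comparison with the paper.} The paper obtains the statement as the case $d=1$, $T_1=\cdots=T_\ell=T$ of \cref{4.GenBerLeib}, using neither nilmanifolds nor averages along $bn$. Its steps are:
\begin{itemize}
\item By \cref{2.Cor:DensityFraKu}, off a set of zero Banach density the correlation is within $\epsilon/2$ of $\int\mathbbm 1_A\prod_jT^{p_j(n)}\mathbb E(\mathbbm 1_A\mid\mathcal K_T)\,d\mu$.
\item It approximates $\mathbb E(\mathbbm 1_A\mid\mathcal K_T)$ by finitely many eigenfunctions. The shifts then disappear for $n$ in the Diophantine set $\{n:\|p_j(n)\alpha_s\|<\delta\}$, which is VIP$_0^*$ (in particular IP$^*$) by \cref{3.thm:DiophantineRec}.
\item The lower bound is your Step 3, or \cref{2.lem:ProdExpectationsLowerBound} for several transformations.
\end{itemize}
This pointwise route yields the stronger A-IP$_0^*$ property, whereas yours, once completed, gives only syndeticity.
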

In light of \cref{0.thm:BFM}, which is an IP$^*$-amplification of \cref{0.FSCorSyndetic}, it is reasonable to ask if \cref{0.thm:FraKraTheorem1.3} allows for a generalization 
which replaces syndeticity with the IP$^*$ property. 
\\
 The following theorem, which we prove in Section \ref{Sec8} and which  generalizes natural variants of Corollaries 1.14 and 1.15 in \cite{zelada2025coexistence}, provides a negative answer to this question hereby indicating that  any possible strengthening of \cref{0.thm:FraKraTheorem1.3} must deal with a notion of largeness which is either weaker than or different  from IP$^*$. The polynomials $p_1,...,p_\ell\in \Z[x_1,...,x_d]$ are called essentially distinct if for any different $i,j\in\{1,...,\ell\}$, $p_j-p_i$ is non-constant. 
\begin{thm}\label{0.thm:IPFailure}
Let $d,\ell\in\N$ be such that $\ell>1$ and let $p_1,...,p_\ell\in\Z[x_1,...,x_d]$ be essentially distinct, non-constant polynomials with zero constant term. There exists an  invertible probability preserving system $(X,\mathcal A,\mu,T)$, a set $A\in\mathcal A$ with $\mu(A)>0$, and an $\epsilon>0$ for which the set 
    \begin{equation}
        \{\vec n\in\Z^d\,|\,\mu(A\cap T^{-p_1(\vec n)}A\cap \cdots\cap T^{-p_\ell(\vec n)}A)\leq\mu^{\ell+1}(A)-\epsilon\}
    \end{equation}
    contains an IP set. So, for such an $A$, $T$, and $\epsilon$, the set 
    $$
R_{\epsilon}^{p_1,...,p_\ell}(A,T)=\{\vec n\in\Z^d\,|\,\mu(A\cap T^{-p_1(\vec n)}A\cap \cdots\cap T^{-p_\ell(\vec n)}A)>\mu^{\ell+1}(A)-\epsilon\},
    $$
 cannot be {\rm IP$^*$}. 
\end{thm}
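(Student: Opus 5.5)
We will construct the system directly. Along an IP set the polynomial values $p_j(\vec n)$ can be forced to behave rigidly, and we pick a system in which this rigidity makes the multiple intersection collapse to a single intersection.

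\emph{Step 1: reduce to one variable.} Since $p_1$ is non-constant with zero constant term, we choose $\vec v\in\Z^d$ such that the one-variable polynomials $q_j(m):=p_j(m\vec v)$ are non-constant for every $j$ and the differences $q_j-q_i$ are non-constant for $i\neq j$. A generic $\vec v$ works, because only finitely many polynomial conditions have to be avoided. An IP set in $\Z$ generated by $(m_k)$ pushes forward to the IP set in $\Z^d$ generated by $(m_k\vec v)$. So it is enough to find an IP set $\{m\}\subseteq\Z$ on which $\mu(A\cap T^{-q_1(m)}A\cap\cdots\cap T^{-q_\ell(m)}A)\le \mu^{\ell+1}(A)-\epsilon$.

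\emph{Step 2: choose a rigid system and an IP set along which the $q_j$ become rigid.} We take $T$ to be a weakly mixing, rigid transformation, for instance a Gaussian system whose spectral measure is a continuous measure supported on a Kronecker-type set. The goal is an IP system $(m_\alpha)$ such that $T^{q_j(m_\alpha)}\to \mathrm{Id}$ strongly as $\alpha\to\infty$ in the IP sense, for every $j$. Concretely, we want $q_j(m_\alpha)\theta\to 0 \bmod 1$ uniformly for $\theta$ in the support of the spectral measure. We build $(m_k)$ inductively. Sums $m_\alpha=\sum_{k\in\alpha}m_k$ give $q_j(m_\alpha)$ as a polynomial expression in the $m_k$. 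If each $m_k$ is chosen with $m_k\theta$, $m_k^2\theta,\dots$ extremely close to $0$ on the support, simultaneously for all products with previously chosen generators, then all mixed terms are small. This is the standard IP-rigidity construction, and it is essentially what underlies the examples in \cite{zelada2025coexistence}. An alternative is to start from an IP-rigidity sequence of $T$ (such sequences exist in generic classes) and apply an IP polynomial limit theorem in the style of \cite{BFM}, passing to a sub-IP-system on which $T^{q_j(n_\alpha)}\to\mathrm{Id}$ weakly, and then use rigidity to upgrade to strong convergence. With IP-limits we get
\[
\text{IP-}\lim_\alpha \mu(A\cap T^{-q_1(m_\alpha)}A\cap\cdots\cap T^{-q_\ell(m_\alpha)}A)=\mu(A).
\]
This is too large, so the construction has to be modified.

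\emph{Step 3: force a small limit.} To obtain a deficiency we need the limit to be smaller than $\mu^{\ell+1}(A)$. We therefore use the essential distinctness of the $q_j$. Fix $i\ne j$, and choose the IP system so that $T^{q_1(m_\alpha)}\to\mathrm{Id}$, while $T^{q_2(m_\alpha)-q_1(m_\alpha)}$ converges along the IP system to $T^{c}$, where $c\neq0$ is a fixed shift and $q_2-q_1$ is non-constant. One way to arrange this is to take $T$ to be a product of a rotation and a suitable system. In the simplest case we take $T$ to be an irrational rotation (or an odometer), where the IP-limits of $q_j(m_\alpha)\alpha$ are computed explicitly, and we arrange that $(q_1(m_\alpha)\alpha,\,q_2(m_\alpha)\alpha)\to(0,1/2)$ mod 1. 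With $A=[0,1/2)$ the limit of the triple intersection is at most $\mu(A\cap(A+1/2))=0<\mu^{\ell+1}(A)$. Then, for $\epsilon$ small, the IP-limit $0$ forces all $\alpha$ beyond some point into the complement of $R_\epsilon$, and that tail is itself an IP set, which proves the statement.

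The main obstacle is Step 3: producing an IP set in $\Z$ along which the distinct polynomials $q_1,q_2$ take prescribed, different limits mod 1 under a rotation, e.g.\ $q_1(m)\alpha\to0$ and $q_2(m)\alpha\to1/2$. This is delicate, because IP-limits of polynomial sequences are themselves constrained, for example by their additivity along disjoint unions. My first attempt will use a finite cyclic rotation or an odometer on $\Z_2$. In that setting one chooses $m_k$ divisible by high powers of 2 except in a controlled residue, so that $q_1(m_\alpha)$ and $q_2(m_\alpha)$ stabilize modulo $2^N$ at distinct values, exploiting the essential distinctness of $q_2-q_1$. If a fixed shift cannot be arranged, a fallback is to replace the pointwise limit by an IP-limit operator $P$, a projection with respect to an IP-limit, that is non-trivial and makes the joint correlation strictly smaller than $\mu^{\ell+1}(A)$.
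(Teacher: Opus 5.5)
\emph{Step 3 has a genuine gap, and it is a true obstruction rather than a technical difficulty.}

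For a rotation by $\theta$, \cref{3.lem:DiophantineRec} and \cref{4.rem:IntersectionProperty} show that $\{m\in\Z : \|q_1(m)\theta\|<\eta,\ \|q_2(m)\theta\|<\eta\}$ is IP$^*$ for every $\eta>0$, because the $q_j$ have zero constant term. So every tail $\{m_\alpha : \min\alpha>k\}$ of your IP system contains an $m$ with $q_2(m)\theta$ near $0$. The limit $(0,1/2)$ can never occur.

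The same holds for every Kronecker system, including odometers and finite cyclic rotations. The argument for condition (C.2) in the proof of \cref{4.GenBerLeib} shows that $\{m : \mu(A\cap T^{-q_1(m)}A\cap\cdots\cap T^{-q_\ell(m)}A)>\mu(A)-\eta\}$ is IP$^*$, and $\mu(A)\geq\mu^{\ell+1}(A)$. For your $2$-adic idea this is just Hindman's theorem. Your IP set contains a sub-IP set on which $m_\alpha$ is constant modulo $2^N$. That constant $c$ satisfies $c\equiv 2c$, so $c\equiv 0$ and $q_j(m_\alpha)\equiv q_j(0)=0$.

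The obstruction is not specific to rotations. By \cite{BFM}, every weak IP-limit of the operators $f\mapsto f\circ T^{q(m_\alpha)}$ with $q(0)=0$ is an orthogonal projection. Hence the only unitary limit $T^{(q_2-q_1)(m_\alpha)}$ can have is the identity. The scheme ``$T^{q_1(m_\alpha)}\to\mathrm{Id}$, $T^{(q_2-q_1)(m_\alpha)}\to T^c\neq\mathrm{Id}$'' is therefore unavailable in every system. Your fallback, a non-trivial limiting projection, points in the right direction. As stated, though, it gives neither a construction nor a reason why the correlation should fall below $\mu^{\ell+1}(A)$.

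The paper's proof supplies exactly the two missing ingredients. First, it produces a non-degenerate limit that ties $q_1$ and $q_2$ together. It works on $\mathbb T^2$ with $T(x,y)=(x,y+x)$ and the measure $\sigma\otimes\mu$. \Cref{6.thm:DistributionResult}, a consequence of Corollary E in \cite{BKLUltrafilterPoly}, yields $\sigma$ and an IP system along which the law of $(q_1(n_\alpha)x,q_2(n_\alpha)x)$, $x\sim\sigma$, converges to that of $(ay,by)$ with $y\sim\lambda_M$. Here $a=1,b=2$ if $q_1,q_2$ are linearly independent, and $bq_1=aq_2$ otherwise. For $A=\mathbb T\times B$ the correlation then tends to $\int\int\mathbbm 1_B(y)\mathbbm 1_B(y+ax)\mathbbm 1_B(y+bx)\,d\lambda_M(x)\,d\mu(y)$. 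For large $M$ this is close to the same integral with Lebesgue measure in $x$.

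Second, that integral must be smaller than $\mu^{\ell+1}(B)$. For a short interval it is of order $\mu(B)^2$, so you need a set with very few such configurations. The paper takes the Behrend-type set of \cref{10'.lem:Rusza(d-1)}, thickens it into short intervals, and gets the bound $\mu^r(B)/8$.

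Your Step 1 reduction to one variable and the final passage to a tail IP set are correct and match the paper.
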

\begin{question}
    We do not know if one can take the system $(X,\mathcal A,\mu,T)$ in \Cref{0.thm:IPFailure} to be ergodic. 
\end{question}
It turns out that a slight relaxation of the notion of IP$^*$, which was introduced in \cite{AlmostIPBerLeib}, is well-fitted  to obtain the sought after amplification of \cref{0.thm:FraKraTheorem1.3}. Given a set $F\subseteq \Z^d$, we define $d^*(F)$, the \textit{upper Banach density of $F$}, by 
$$d^*(F):=\limsup_{\substack{N_j-M_j\rightarrow\infty\\ j\in\{1,...,d\}}}\frac{|F\cap\prod_{j=1}^d\{M_j+1,...,N_j\}|}{\prod_{j=1}^d(N_j-M_j)}.$$
A set $E\subseteq \Z^d$ is called an \textit{almost} IP$^*$ set (denoted A-IP$^*$) if there exists a   set $F\subseteq \Z^d$ with $d^*(F)=0$ for which $E\cup F$ is  IP$^*$. In other words, up to a potential modification on a set with upper Banach density zero, $E$ is an  IP$^*$ set. One can show that the notion of A-IP$^*$ lies strictly between IP$^*$ and syndetic \cite[p. 505]{AlmostIPBerLeib} (see, for example, item  (3)  in the explanations to Figure \ref{Fig:VenDiagram} below). Also, A-IP$^*$ sets have the finite intersection property \cite[p. 502]{AlmostIPBerLeib}. Indeed,  let $C,D\subseteq \Z^d$ be A-IP$^*$.  Then we can find $E,F\subseteq \Z^d$ with $d^*(E)=0=d^*(F)$  such that $C\cup E$ and $D\cup F$ are IP$^*$. It follows that $(C\cup E)\cap (D\cup F)$ is also an  IP$^*$. So, by noting that 
$$(C\cup E)\cap (D\cup F)\subseteq (C\cap D)\cup (E\cup F)$$
and utilizing the fact that supersets of IP$^*$ sets are IP$^*$, we conclude that $(C\cap D)\cup(E\cup F)$ is IP$^*$. Thus, since $d^*(E\cup F)=0$, $C\cap D$ is A-IP$^*$.\\
As a matter of fact, a  stronger notion than that of A-IP$^*$ that can be viewed as a  "finitary" version thereof  also has the finite intersection property and is good for our purposes.\\
 Following Furstenberg and Katznelson \cite{FKIPSzemerediLong}, given any $r\in\N$ and any  distinct $\vec n_1,...,\vec n_r\in\Z^d$, we call the set 
 $$\{\vec n_{j_1}+\cdots+\vec n_{j_t}\,|\,1\leq j_1<\cdots<j_t\leq r\}$$
 an IP$_r$ set.  We say that  a set $E\subseteq \Z^d$ is an IP$_0^*$ set if it is an IP$_r^*$ set for some $r\in\N$ (i.e. for every IP$_r$ set $D\subseteq \Z^d$, $E\cap D\neq\emptyset$). A set $E\subseteq \Z^d$ is called an A-IP$_0^*$ set (read "almost" IP$_0^*$ set) if there exists  a set $F\subseteq \Z^d$ with $d^*(F)=0$
such that $E\cup F$ is IP$_0^*$. It is worth pointing out that IP$_r$ sets are finitary versions of IP sets and that the classes of IP$_0^*$ sets  and A-IP$_0^*$ sets   possess the finite intersection property (in the case of IP$_0^*$ sets, such property can be obtained with the help of \cite[Proposition 2.5]{BDonaldRobertsonIP_r}. The finite intersection property of A-IP$_0^*$ sets, which was first observed in \cite[p. 502] {AlmostIPBerLeib}, can be deduced from that of IP$_0^*$ sets in a way similar to that in which the corresponding property of A-IP$^*$ sets is deduced from that of IP$^*$). We remark that the notion of A-IP$_0^*$ is strictly stronger than that of A-IP$^*$ (see \cite[p. 505]{AlmostIPBerLeib} or item (4) in the explanations to Figure \ref{Fig:VenDiagram} below).\\ 
The following result which is a special case of a more general result to be proved in Section \ref{Sec4} is a strengthening of \cref{0.thm:FraKraTheorem1.3} along the lines discussed above. 
\begin{thm}[Cf. Theorem 4.2 in \cite{AlmostIPBerLeib}]\label{0.thm:BerLeibTheorem4.2} Let $(X,\mathcal A,\mu,T)$ be an invertible  probability measure preserving system and let $p_1,...,p_\ell\in\Z[x_1,...,x_d]$ be non-constant polynomials with zero constant term. Suppose that $p_1,...,p_\ell$ are $\Q$-linearly independent. Then, for any $A\in\mathcal A$ and every $\epsilon>0$, the set 
\begin{equation}\label{0.eq:LArgeBerLeibmanIntersection}
R_\epsilon^{p_1,...,p_\ell}(A,T)=\{\vec n\in\Z^d\,|\,\mu(A\cap T^{-p_1(\vec n)}A\cap \cdots \cap T^{-p_\ell(\vec n)}A)>\mu^{\ell+1}(A)-\epsilon \}
\end{equation}
is \rm{A-IP$_0^*$}.
\end{thm}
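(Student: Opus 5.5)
A natural route follows Bergelson--Leibman. First reduce to an ergodic-decomposition-free setting: the multicorrelation $\mu(A\cap T^{-p_1(\vec n)}A\cap\cdots\cap T^{-p_\ell(\vec n)}A)$ is controlled, up to an error that vanishes in uniform density, by its projection onto a suitable characteristic factor. For $\Q$-linearly independent polynomials, the results of Frantzikinakis--Kra show that the relevant factor is the Kronecker factor, or more precisely a finite-step nilfactor on which independence forces equidistribution behaving like the Kronecker case. Concretely, write $f=\mathbb 1_A$. The plan is to show that
\[
\vec n\mapsto \int f\cdot T^{p_1(\vec n)}f\cdots T^{p_\ell(\vec n)}f\,d\mu-\int \tilde f\cdot T^{p_1(\vec n)}\tilde f\cdots T^{p_\ell(\vec n)}\tilde f\,d\mu
\]
tends to $0$ in uniform Cesàro density over F{\o}lner boxes in $\Z^d$, where $\tilde f=\mathbb E(f\mid \mathcal Z)$ and $\mathcal Z$ is the Kronecker factor. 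Applied to $\vec n\mapsto|\cdot|$, this shows that the set $F$ where the difference exceeds $\epsilon/2$ has $d^*(F)=0$. It is exactly this density-zero exceptional set that accounts for the ``almost'' in A-IP$_0^*$.

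Second, on the Kronecker factor $\mathcal Z$ (a compact abelian group rotation, after ergodic decomposition or by working with the non-ergodic Kronecker factor fiberwise), approximate $\tilde f$ in $L^2$ by a trigonometric polynomial in finitely many eigenfunctions with eigenvalues $e(\alpha_1),\dots,e(\alpha_k)$. The correlation then becomes a continuous function $\Phi$ of the point $\bigl(p_i(\vec n)\alpha_j\bigr)_{i,j}\in\mathbb T^{k\ell}$. By linear independence, the polynomial map $\vec n\mapsto(p_i(\vec n)\alpha_j)$ is, modulo its rational part, equidistributed on a subtorus. Together with H\"older's inequality, this gives $\Phi(0)\ge\mu^{\ell+1}(A)$, since at $\vec n=0$ the correlation is $\int\tilde f^{\ell+1}\ge(\int\tilde f)^{\ell+1}$. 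Hence $\Phi>\mu^{\ell+1}(A)-\epsilon/2$ on a neighbourhood $U$ of $0$ in $\mathbb T^{k\ell}$.

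The key step is to show that $\{\vec n: (p_i(\vec n)\alpha_j)\in U\}$ is IP$_r^*$ for some $r$. This is the polynomial IP$_0^*$ recurrence for translations on tori, which follows from the finitary IP polynomial recurrence of Bergelson--Furstenberg--McCutcheon (cf. \cref{0.thm:BFM}, in its IP$_r$ form via \cite{BDonaldRobertsonIP_r}) applied to the polynomial sequence $g(\vec n)=(p_i(\vec n)\alpha_j)$ in a compact group: every IP$_r$ set with $r$ large enough contains an $\vec n$ with $g(\vec n)\in U$. Combining the three steps, $R_\epsilon^{p_1,\dots,p_\ell}(A,T)\cup F$ contains an IP$_r^*$ set, which proves the A-IP$_0^*$ property.

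The main obstacle is the first step. It requires the characteristic factor reduction for several independent polynomials in $d$ variables with uniform (Banach) density control, rather than mere Ces\`aro averages along $[1,N]$, and it must handle non-ergodic $T$. I would try to obtain it via PET induction and van der Corput along F{\o}lner sequences in $\Z^d$, reducing to nilsystems (Host--Kra/Leibman), and then use Leibman's equidistribution theorem for polynomial sequences in nilmanifolds to show that independence kills all non-abelian contributions. The non-ergodic case is handled by applying this fiberwise and integrating, noting that H\"older's inequality still gives the needed lower bound globally.
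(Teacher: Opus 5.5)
Your outline matches the paper's proof. The statement is the case $T_1=\cdots=T_\ell=T$ of \cref{4.GenBerLeib}: its condition (C.1) is your Step 1, its condition (C.2) is your Steps 2--3, and \cref{2.lem:ProdExpectationsLowerBound} plays the role of your H\"older/Jensen bound. The gap is in the justification of Step 3, which is exactly where IP$^*$ gets upgraded to IP$_0^*$. There is no IP$_r$ form of \cref{0.thm:BFM} for general systems. Indeed, \cref{0.thm:IPoFailure} shows that for non-linear $p$ the set $R^p_\epsilon(A,T)$ can fail to be IP$_0^*$ even for weakly mixing $T$. So the IP$_r^*$ property of $\{\vec n : (p_i(\vec n)\alpha_j)_{i,j}\in U\}$ is special to compact group rotations and must be proved as such; it cannot be obtained by specializing a general finitary IP recurrence theorem. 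The paper uses \cref{3.lem:DiophantineRec}, a consequence of Theorem 0.5 in \cite{BerLeibNilsystemsIPr}: if $p(\vec 0)=0$, then $\{\vec n:\|p(\vec n)\alpha\|<\epsilon\}$ is VIP$_0^*$, in particular IP$_0^*$. The finitely many conditions indexed by $(i,j)$ are then combined via \cref{4.rem:IntersectionProperty}, which rests on the finitary Hindman theorem \cite[Theorem 3.13]{graham1991RamseyTheory}. Alternatively, you could argue by compactness. If for every $r$ some IP$_r$ set avoided $U$, the coefficient data of the VIP$_r$-polynomials $\beta\mapsto g(\vec n_\beta)$ (cf. \cref{7.prop:LevelDecomposition}) would have a pointwise limit. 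That limit is an infinite VIP system in the torus avoiding $U$, contradicting IP recurrence for VIP systems in compact abelian groups. Note that this limit need not be of the form $g(\vec n_\alpha)$, which is another reason \cref{0.thm:BFM} as stated is not the right input.

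Two further points. First, do not handle non-ergodicity fiberwise. A single IP$_r^*$ set requires a fixed finite set of frequencies, so $\tilde f$ must be the projection onto the global Kronecker factor $\mathcal K_T$ generated by eigenfunctions with constant eigenvalues, as in the paper. Fiberwise Kronecker factors have component-dependent eigenvalues, so the good $\vec n$ in a given IP$_r$ set would depend on the component. With $\mathcal K_T$, no ergodic decomposition is needed and Jensen gives $\int\tilde f^{\ell+1}\,\text{d}\mu\ge\mu^{\ell+1}(A)$ directly. Second, what you call the main obstacle is available off the shelf. \cref{FraKuTheorem} gives the $L^2$ characteristic-factor statement for every F{\o}lner sequence in $\Z^d$, with no ergodicity assumption. \cref{2.Cor:DensityFraKu} turns it into your density-zero statement by combining two facts: the telescoping identity \eqref{eq:IntegralZeroIdentity}, and the observation that $|\int f_0\prod_j T^{p_j(\vec n)}f_j\,\text{d}\mu|^2$ is a correlation for $T\times T$, with $\mathcal K_{T\times T}=\mathcal K_T\otimes\mathcal K_T$. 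No PET or nilsystem analysis is needed. This is also the only place linear independence enters, so your equidistribution remark in Step 2 is superfluous. Finally, in Step 2 it is cleaner to use the bound $|\tilde c(\vec n)-\tilde c(\vec 0)|\le\sum_{j=1}^\ell\|T^{p_j(\vec n)}\tilde f-\tilde f\|_{L^2}$, which needs only $0\le\tilde f\le 1$, and only then approximate $\tilde f$ in $L^2$ by finitely many eigenfunctions. Routing the argument through the continuous function $\Phi$ requires uniform sup-norm control of the eigenfunction approximants, which you have not addressed.
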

A few words regarding the case  $\ell=1$ of \cref{0.thm:BerLeibTheorem4.2} are now in place. First, let us note that when $p_1\in\Z[x_1,...,x_d]$ is linear, the conclusions of both \cref{0.thm:BFM} and \cref{0.thm:BerLeibTheorem4.2} follow from a natural analogue of \cref{0.Delta*Statement} which states that for any $\vec a=(a_1,...,a_d)\in \Z^d$, any invertible probability preserving system $(X,\mathcal A,\mu,T)$, any $A\in\mathcal A$, and any $\epsilon>0$, the set 
\begin{equation}\label{0.eq:LinearCaseT}
R^{\vec a\cdot \vec n}_\epsilon(A,T):=\{(n_1,...,n_d)\in \Z^d\,|\,\mu(A\cap T^{-(a_1n_1+\cdots+a_dn_d)}A)>\mu^2(A)-\epsilon\}
\end{equation}
is IP$_0^*$.\\
It is somewhat surprising that for $\ell=1$,
a similar common refinement of  Theorems \ref{0.thm:BFM} and \cref{0.thm:BerLeibTheorem4.2}  does not hold when $p_1\in\Z[x_1,...,x_d]$ is a non-linear polynomial. Indeed, we have the following generalization  of \cite[Theorem A]{ZelIP0Khintchine2023} to polynomials $p\in \Z[x_1,...,x_d]$ (see Section \ref{Sec7}).
\begin{thm}[Cf. Theorem A in \cite{ZelIP0Khintchine2023}]\label{0.thm:IPoFailure}
    Let $d\in\N$ and let $p\in\Z[x_1,...,x_d]$ be a non-constant, non-linear polynomial with zero constant term.  
    Then, there exist an invertible, weakly mixing probability preserving system $(X,\mathcal A,\mu,T)$, a set $A\in\mathcal A$ with $\mu(A)>0$, and an $\epsilon>0$, for which the set $R_\epsilon^{p}(A,T)$ is not {\rm IP$_0^*$}.
\end{thm}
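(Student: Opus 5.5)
The plan is to reduce the statement to the one-variable case, i.e.\ to \cite[Theorem A]{ZelIP0Khintchine2023}. That result (for $p\in\Z[x]$ non-linear) produces a weakly mixing system $(X,\mathcal A,\mu,T)$, a set $A$, and an $\epsilon>0$ such that for every $r\in\N$ there are distinct $n_1,\dots,n_r\in\Z$ with $\mu(A\cap T^{-p(\sum_{j\in\alpha}n_j)}A)\le\mu^2(A)-\epsilon$ for all non-empty $\alpha\subseteq\{1,\dots,r\}$. More robustly, the mechanism is a Gaussian (or Gaussian-like) weakly mixing system whose spectral measure $\sigma$ is supported on a Kronecker-type set. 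For such a system, correlations $\mu(A\cap T^{-m}A)$ are controlled by $\hat\sigma(m)$, and one can force $\hat\sigma(q(\sum_{j\in\alpha}n_j))\approx -c$ uniformly in $\alpha$, for a suitable non-linear $q$.

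\textbf{Step 1 (restriction to a line).} Given non-linear $p\in\Z[x_1,\dots,x_d]$ with zero constant term, choose a vector $\vec v\in\Z^d$ such that $q(t):=p(t\vec v)\in\Z[t]$ is still non-linear. Such a $\vec v$ exists: write $p=\sum_{k\ge1}P_k$ with $P_k$ homogeneous of degree $k$, and let $k_0\ge2$ be the top degree. The polynomial $P_{k_0}$ is non-zero, so it does not vanish at some integer point $\vec v$. Then $q(t)=\sum_k P_k(\vec v)t^k$ has degree $k_0\ge2$ and zero constant term.

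\textbf{Step 2 (transfer).} Apply the one-variable theorem to $q$ to obtain $(X,\mathcal A,\mu,T)$, $A$, and $\epsilon$ such that $R_\epsilon^q(A,T)$ is not IP$_0^*$. Thus for each $r$ there are distinct $n_1,\dots,n_r$ whose IP$_r$ set misses $R^q_\epsilon(A,T)$. Set $\vec n_j:=n_j\vec v$. These are distinct, since $\vec v\neq 0$ (otherwise $q=0$). Their finite sums are $(\sum_{j\in\alpha}n_j)\vec v$, and
\[
p\Bigl(\sum_{j\in\alpha}\vec n_j\Bigr)=q\Bigl(\sum_{j\in\alpha}n_j\Bigr)\notin\{m: \mu(A\cap T^{-m}A)>\mu^2(A)-\epsilon\}.
\]
Hence this IP$_r$ set in $\Z^d$ misses $R^p_\epsilon(A,T)$ for every $r$, so $R^p_\epsilon(A,T)$ is not IP$_0^*$. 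Weak mixing is inherited, because the system is unchanged.

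\textbf{Main obstacle.} Everything hinges on \cite[Theorem A]{ZelIP0Khintchine2023} holding for an arbitrary non-linear $q\in\Z[t]$ with zero constant term, not only for monomials such as $t^2$. If that paper covers only special polynomials, I would reprove it for general $q$. The approach is to build a weakly mixing Gaussian system with a continuous spectral measure $\sigma$ concentrated near frequencies $\theta$ for which $q(\sum_{j\in\alpha}n_j)\theta$ is close to $1/2 \bmod 1$ for all $\alpha$. This gives $\hat\sigma(q(\cdot))\approx-1$ on the IP$_r$ set. The construction would choose $n_j$ lacunary, so that $q(\sum_{j\in\alpha} n_j)$ is dominated by its leading cross terms. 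Then, by the non-linearity of $q$, a Kronecker/independence argument lets these values be prescribed mod $1$ simultaneously, while the linear case fails precisely because of additivity. Finally, I would take $A$ to be a set of the form $\{\xi>0\}$ for the Gaussian variable $\xi$, so that negative correlation of $\xi$ with $\xi\circ T^m$ yields $\mu(A\cap T^{-m}A)<\mu(A)^2-\epsilon$.
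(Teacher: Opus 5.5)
Your proposal is correct and follows the paper's argument. The paper likewise chooses a non-zero $\vec a\in\Z^d$ at which the top-degree homogeneous part of $p$ does not vanish (Lemma \ref{6.lem:DegreeOfLinearRestriction}), applies \cite[Theorem A]{ZelIP0Khintchine2023} to $q(x)=p(a_1x,\dots,a_dx)$, and pushes the resulting IP$_0$ set forward along $n\mapsto n\vec a$. The obstacle you flag does not arise: Theorem A already covers every $q\in\Z[x]$ with zero constant term and $\deg q>1$, so your fallback Gaussian construction is unnecessary.
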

\subsection{ Relations between the various notions of largeness}
 The following  diagram provides a visual representation of relations between various notions of largeness which were introduced above.
The purpose of various symbols, such  as  $R_\epsilon^{\vec a\cdot \vec n}(A,T)$ and  $R_\epsilon^{p_1,...,p_{\ell+1}}(A,T)$,  which appear in the diagram below,  is to indicate the non-emptiness of the corresponding regions (see remarks below  Figure \ref{Fig:VenDiagram}).
The sets $R_U(x_0)$ and $R_\epsilon^{(\vec a\cdot\vec n),2(\vec a\cdot\vec n)}(A,T)$ are defined as follows:
\begin{enumerate}
\item [-] Given a topological dynamical system $(X,T)$, a point $x_0\in X$, and an open neighborhood $U$ of $x_0$, we let $R_U(x_0):=\{\vec n\in \Z^d\,|\, T^{\vec n}x_0\in U\}$.
\item [-] For any probability preserving system $(X,\mathcal A,\mu,T)$, any $\epsilon>0$, any $A\in\mathcal A$, and any $\vec a=(a_1,...,a_d)\in\Z^d$, we let 
$$
R_\epsilon^{(\vec a\cdot\vec n),2(\vec a\cdot\vec n)}(A,T)=\{(n_1,...,n_d)\in\Z^d\,|\,\mu(A\cap T^{-\sum_{j=1}^d a_jn_j}A\cap T^{-2\sum_{j=1}^da_jn_j}A)>\mu^3(A)-\epsilon\}.$$
\end{enumerate}
\begin{figure}[H]
\centering
\resizebox{!}{6.5cm} {\includegraphics{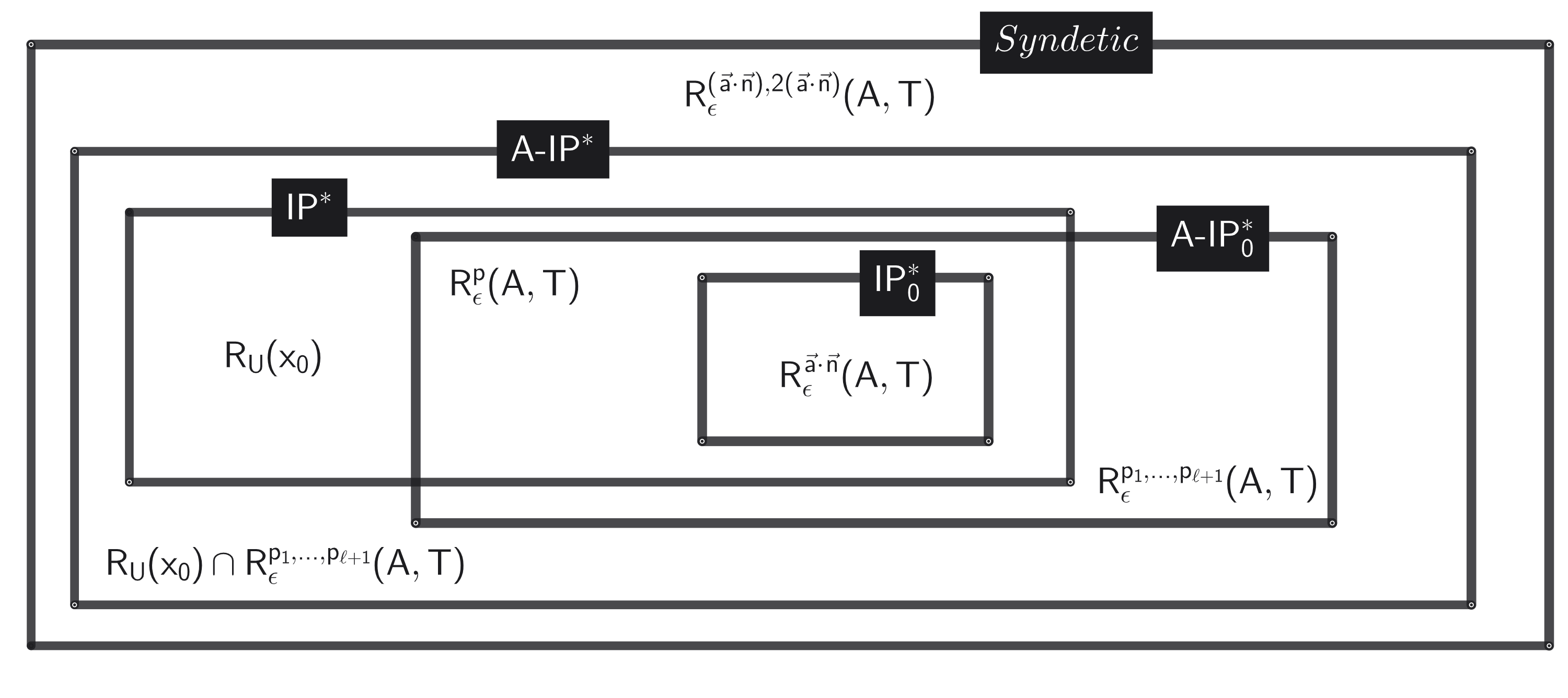}}
\caption{\small 
}
\label{Fig:VenDiagram}
\end{figure}
Here are the explanatory remarks concerning the families of sets appearing  in Figure \ref{Fig:VenDiagram}:  
\begin{enumerate}[(1)]
\item The fact that  every  set of the form $R_\epsilon^{\vec a\cdot \vec n}(A,T)$ is IP$_0^*$ (see the rectangle at the center of the diagram above) follows from \eqref{0.eq:LinearCaseT}. 
\item Let $p\in\Z[x_1,...,x_d]$ be a non-constant, non-linear polynomial with zero constant term. The fact that there are sets of the form $R_\epsilon^p(A,T)$  in the region specified in Figure 1, follows from Theorems  \ref{0.thm:BFM},  \ref{0.thm:BerLeibTheorem4.2}, and \ref{0.thm:IPoFailure}.
%%%%%%%%%%%%%%%%%%%%%%%%%
\item The fact that there are sets of the form $R_\epsilon^{p_1,...,p_{\ell+1}}(A,T)$, $\ell\in\N$,  in the region specified in Figure 1, follows from Theorems  \ref{0.thm:IPFailure} and \ref{0.thm:BerLeibTheorem4.2}.
\item The fact that there is a set of the form $R_U(x_0)$  in the region specified in Figure 1, follows from \cite[p. 505]{AlmostIPBerLeib}.  There, the authors show that there exists a distal dynamical system $(X,T)$, an $x_0\in X$, and an open neighborhood $U$ of $x_0$ for which the set 
$$R_U(x_0):=\{\vec n\in \Z^d\,|\,T^{\vec n}x_0\in U\}$$
is IP$^*$ but not A-IP$_0^*$. 
%%%%%%%%%%%%%%%%%%%%%%%%%%%%%%%%%%
\item Let $R_\epsilon^{p_1,...,p_{\ell+1}}(A,T)$ and $R_U(x_0)$ be as in (3) and (4). Then $R:=R_\epsilon^{p_1,...,p_{\ell+1}}(A,T)\cap R_U(x_0)$ is A-IP$^*$. However, since $R_\epsilon^{p_1,...,p_{\ell+1}}(A,T)$ is not IP$^*$, $R$ is not IP$^*$ and since  $R_U(x_0)$ is not A-IP$_0^*$, $R$ neither is A-IP$_0^*$.
%$%%%%%%%%%%%%%%%%%%%%%%%%%%%%%%%%%%%%%%%
\item Let $d\in\N$. We claim that for every non-zero $\vec a=(a_1,...,a_d)\in \Z^d$ there is a set of the form 
$R^{(\vec a\cdot\vec n),2(\vec a\cdot\vec n)}_\epsilon(A,T)$  in the region specified in Figure 1. Let $\alpha$ be irrational and consider the classical skew product transformation on the 2-torus $\mathbb T^2$ given by $T(x,y)=(x+\alpha,y+x)$. Denote the Lebesgue measure on $\mathbb T^2$ by $\mu$ and let  $\mathcal A$ denote the $\sigma$-algebra of Borel sets in $\mathbb T^2$. 
As noted in \cite[Subsection 8.1]{BHFebruary2003}, there is  an $A\in \mathcal A$ and an $\epsilon>0$ for which the set 
$\Z\setminus R_\epsilon ^{n,2n}(A,T)$ is IP$^*$  (that such an $A\in\mathcal A$ and $\epsilon>0$ exist can also be verified by  a slight modification of the  proof of Theorem  B.1  in  \cite{Shalomkra2025}). Let $\varphi:\Z^d\rightarrow \Z$ be given by $\varphi(\vec n)=\vec a\cdot \vec n$  and note that $\varphi^{-1}(R_\epsilon^{n,2n}(A,T))$ is a set of the form $R_\epsilon^{(\vec a\cdot\vec n),2(\vec a\cdot\vec n)}(A,T)$. It is now routine to check that since $R_\epsilon^{n,2n}(A,T)$ is syndetic, $\varphi^{-1}(R_\epsilon^{n,2n}(A,T))$ also is.\\
To prove that $\varphi^{-1}(R_\epsilon^{n,2n}(A,T))$  is not A-IP$^*$, we will show that 
$$E:=\Big(\Z^d\setminus\varphi^{-1}(R_\epsilon^{n,2n}(A,T))\Big)\cup \varphi^{-1}(\{0\})$$ 
is IP$^*$ (the claim then follows by recalling that A-IP$^*$ sets have the finite intersection property and that, since $\varphi^{-1}(\{0\})$ is contained in a proper subspace of $\R^d$,  $d^*(\varphi^{-1}(\{0\}))=0$). Suppose, for the sake of contradiction, that there is an IP set $F$ with $F\subseteq (\Z^d\setminus E)$. Since $\Z^d\setminus E=\varphi^{-1}(R_\epsilon^{n,2n}(A,T)\setminus\{0\})$, our assumption implies that $\varphi(F)$ contains an IP set (in $\Z$) and $\varphi(F)\subseteq R_\epsilon^{n,2n}(A,T)$ contrary to the fact that $\Z\setminus R_\epsilon^{n,2n}(A,T)$ is IP$^*$. 
\end{enumerate}
\subsection{Sets of large returns for commuting polynomial actions ---the  main result}
It is desirable to obtain generalizations of the above results to measure preserving systems involving commuting measure preserving transformations. For starters, one would like to know if \cref{0.thm:FraKraTheorem1.3} extends to commuting operators. 
The following  recent result due to N. Frantzikinakis and B. Kuca provides an affirmative answer to this question. It  follows from \cite[Corollary 2.11]{FraKuJoinErgForIndependentPolys}, the Remark on page 633 of \cite{FraKuJoinErgForIndependentPolys}, and the comments in Subsection 2.7 of \cite{FraKuJoinErgForIndependentPolys}.
The polynomials $p_1,...,p_\ell\in\Z[x_1,...,x_d]$ are called \textit{jointly intersective} if for every $m\in\N$ there is an $\vec n\in \Z^d$ such that $p_j(\vec n)\equiv 0\mod m$ for each $j\in\{1,...,\ell\}$.
 %%%%%%%
\begin{thm}\label{0.thm:FraKu} Let $(X,\mathcal A,\mu)$ be a probability space, let $T_1,...,T_\ell$ be invertible and commuting $\mu$-preserving transformations, and   let $p_1,...,p_\ell\in\Z[x_1,...,x_d]$ be non-constant polynomials. Suppose that $p_1,...,p_\ell$ are $\Q$-linearly independent and jointly intersective. Then, for any $A\in\mathcal A$ and every $\epsilon>0$, the set 
\begin{equation}%\label{0.eq:DefnR_eSingleT'}
R_\epsilon^{p_1,...,p_\ell}(A):=\{\vec n\in\Z^d\,|\,\mu(A\cap T_1^{-p_1(\vec n)}A\cap \cdots \cap T
_\ell^{-p_\ell(\vec n)}A)>\mu^{\ell+1}(A)-\epsilon \}
\end{equation}
is syndetic.
\end{thm}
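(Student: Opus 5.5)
The plan is to deduce syndeticity from a lower bound on uniform Cesàro averages. Concretely, I aim to show that for every $\epsilon>0$ there is a positive integer $b$ with
\begin{equation*}
\liminf_{\substack{N_j-M_j\rightarrow\infty\\ j\in\{1,...,d\}}}\frac{1}{\prod_{j=1}^d(N_j-M_j)}\sum_{\vec n\in\prod_{j=1}^d\{M_j+1,...,N_j\}}\mu\big(A\cap T_1^{-p_1(b\vec n+\vec r)}A\cap\cdots\cap T_\ell^{-p_\ell(b\vec n+\vec r)}A\big)\geq \mu^{\ell+1}(A)-\epsilon/2 ,
\end{equation*}
where $\vec r\in\Z^d$ is a suitable residue with $p_j(\vec r)\equiv 0 \bmod b$ for every $j$. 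Such an $\vec r$ exists by joint intersectivity. Once this bound holds, the set of $\vec n$ whose summand exceeds $\mu^{\ell+1}(A)-\epsilon$ must meet every sufficiently large box. Hence $\{\vec n : b\vec n+\vec r\in R_\epsilon^{p_1,...,p_\ell}(A)\}$ is syndetic, and so $R_\epsilon^{p_1,...,p_\ell}(A)$ is syndetic.

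To obtain the averaged bound, I would first reduce to a factor. Following Frantzikinakis--Kuca, I would use a seminorm-control statement: for $\Q$-linearly independent polynomials, the uniform averages of $\int f_0\cdot T_1^{p_1(\vec n)}f_1\cdots T_\ell^{p_\ell(\vec n)}f_\ell\,d\mu$ vanish whenever some $f_j$ ($j\geq 1$) is orthogonal to the rational Kronecker factor $\mathcal K_{rat}(T_j)$ of $T_j$. This factor is the span of the eigenfunctions of $T_j$ with eigenvalues that are roots of unity. Granting this, I may replace each $1_A$ in position $j$ by $f_j=\mathbb E(1_A\mid\mathcal K_{rat}(T_j))$. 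I expect this to be the main obstacle. In the commuting setting it is the joint ergodicity and seminorm-smoothing machinery of Frantzikinakis--Kuca, and I would quote it rather than reprove it.

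Next, approximate each $f_j$ in $L^2$ by a finite combination of $T_j$-eigenfunctions with eigenvalues that are $q$-th roots of unity, for a single common $q$. Take $b$ to be a multiple of $q$. Since $p_j(b\vec n+\vec r)\equiv p_j(\vec r)\equiv 0\bmod q$, we get $T_j^{p_j(b\vec n+\vec r)}f_j\approx f_j$ uniformly in $\vec n$. The average is therefore at least $\int 1_A f_1\cdots f_\ell\,d\mu-\epsilon/4$.

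Finally, I would need $\int 1_A\prod_{j}\mathbb E(1_A\mid\mathcal K_{rat}(T_j))\,d\mu\geq\mu(A)^{\ell+1}$. This is the Chu-type inequality for conditional expectations onto $\sigma$-algebras of invariant type. Here I would use the fact, established in the Frantzikinakis--Kuca framework, that the factors $\mathcal K_{rat}(T_j)$ can be handled so that this lower bound holds. One route is to pass to a finite-index subgroup action, where these factors become invariant $\sigma$-algebras of $T_j^{q}$, and then apply the standard inequality $\int 1_A\prod_j\mathbb E(1_A\mid\mathcal I_j)\,d\mu\geq\mu(A)^{\ell+1}$ for commuting invariant $\sigma$-algebras. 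Combining this with the previous step gives the averaged bound with $\epsilon/2$ to spare, which completes the argument.
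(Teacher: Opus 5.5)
Your argument is correct in outline and takes a different route from the paper's. One step, though, relies on a hypothesis you have not verified.

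\emph{Comparison.} The paper does not prove this statement directly. It obtains it from Theorem \ref{4.GenBerLeib}, which says that a translate of $R_\epsilon^{p_1,\dots,p_\ell}(A)$ is A-VIP$_0^*$, and hence syndetic. The steps there are as follows.
\begin{itemize}
\item The input is the characteristic-factor result with the \emph{full} Kronecker factors $\mathcal K_{T_j}$ (Theorem \ref{FraKuTheorem}).
\item This is upgraded via $T_j\times T_j$ to a statement holding outside a set of zero Banach density (Corollary \ref{2.Cor:DensityFraKu}).
\item Irrational eigenvalues are not averaged away. They are controlled pointwise on the set where all $\|p_j(\vec n)\alpha_s\|$ are small, which is a translate of a VIP$_0^*$ set by Theorem \ref{3.thm:DiophantineRec}.
\item Chu's inequality finishes the argument.
\end{itemize}
You instead work with plain averages and use the rational Kronecker factor as characteristic factor. (That statement also follows from Theorem \ref{FraKuTheorem} plus Weyl's theorem applied to the eigenvalue phases, so it is not a stronger input.) You then pass to a progression $b\vec n+\vec r$ on which the relevant eigenfunctions are frozen. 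This is essentially the Frantzikinakis--Kra/Frantzikinakis--Kuca argument. It is shorter and needs no IP/VIP or Diophantine machinery, but it yields only syndeticity. The paper works with the full Kronecker factor because the product trick needs $\mathcal K_{T\times T}=\mathcal K_T\otimes\mathcal K_T$, which fails for rational Kronecker factors. In exchange it gets the much stronger A-VIP$_0^*$ property.

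\emph{The point to repair.} The characteristic-factor statement as you quote it (``for $\Q$-linearly independent polynomials'') is false once constant terms are allowed. Take $p_1(n)=n$, $p_2(n)=n+1$, $T_1=T_2=T$ weakly mixing, $f_0=1$, $f_2\in L^\infty$ with $\int f_2\,d\mu=0$, and $f_1=\overline{Tf_2}$. Every average equals $\int f_1\,Tf_2\,d\mu=\|f_2\|_{L^2}^2\neq0$, although $\mathcal K_{\mathrm{rat}}(T)$ is trivial.

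The available results (compare Theorem \ref{FraKuTheorem}) concern polynomials with zero constant term. You apply the tool to $\vec n\mapsto p_j(b\vec n+\vec r)$, whose constant terms $p_j(\vec r)$ are in general non-zero, and the $p_j$ themselves may also have non-zero constant terms. The fix has two parts:
\begin{itemize}
\item Write $T_j^{p_j(b\vec n+\vec r)}f_j=T_j^{q_j(\vec n)}(T_j^{p_j(\vec r)}f_j)$ with $q_j(\vec n)=p_j(b\vec n+\vec r)-p_j(\vec r)$. Orthogonality to $\mathcal K_{\mathrm{rat}}(T_j)$ is preserved because this factor is $T_j$-invariant.
\item Check that $q_1,\dots,q_\ell$ are linearly independent. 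A relation $\sum_j a_jq_j=0$ gives, after the affine substitution, $\sum_ja_jp_j=\sum_ja_jp_j(\vec r)$ identically. Joint intersectivity forces this constant to be divisible by every $M$, hence zero. Independence of the $p_j$ then gives all $a_j=0$. This is Lemma \ref{3.lem:LinearIndForJointlyIntersective}.
\end{itemize}
So joint intersectivity is needed twice, not only to produce $\vec r$. The paper likewise uses it twice: in Lemma \ref{3.lem:LinearIndForJointlyIntersective} and in Theorem \ref{3.thm:DiophantineRec}.

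\emph{A simplification.} Your last step is easier than you make it. Chu's inequality (Lemma \ref{2.lem:ProdExpectationsLowerBound}) holds for \emph{arbitrary} sub-$\sigma$-algebras, so it applies directly with $\mathcal A_j=\mathcal K_{\mathrm{rat}}(T_j)$. No invariance and no passage to a finite-index subgroup is needed.

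Note also that $\mathcal K_{\mathrm{rat}}(T_j)$ is the join of the $\sigma$-algebras of $T_j^{m!}$-invariant sets, not the invariant $\sigma$-algebra of a single power. Taking as approximants the conditional expectations $\tilde f_j$ of $\mathbbm 1_A$ onto the $\sigma$-algebra of $T_j^{q}$-invariant sets makes your approximation step clean. These $\tilde f_j$ are bounded by $1$, are fixed exactly by $T_j^{p_j(b\vec n+\vec r)}$, and still satisfy Chu's inequality.
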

The following theorem strengthens \cref{0.thm:FraKu} in a manner analogous to how \cref{0.thm:BerLeibTheorem4.2} refines \cref{0.thm:FraKraTheorem1.3}. Furthermore, it answers the following question posed by N. Frantzikinakis and B. Kuca (see second bullet of the Remark on page 633 of \cite{FraKuJoinErgForIndependentPolys}). 
\begin{question}\label{0.Q:FraKu}
    Fix $d,\ell\in\N$. What conditions on a family of polynomials $p_1,...,p_\ell\in \Z[x_1,...,x_d]$ (if any) is equivalent to the fact that every set of the form 
    \begin{equation}
    R_\epsilon^{p_1,...,p_\ell}(A)=\{\vec n\in \Z^d\,|\,\mu(A\cap T_1^{-p_1(\vec n)}A\cap\cdots\cap T_\ell^{-p_\ell(\vec n)}A)>\mu^{\ell+1}(A)-\epsilon\}
    \end{equation}
    is syndetic? Notice that here we allow $p_j(\vec 0)\neq 0$ for $j\in\{1,...,\ell\}$.  
\end{question}
We remark that the special case
of \Cref{0.Q:FraKu} dealing with $\ell=1$ can be obtained with the help of \cref{0.FSThm.2} and routine considerations about the connection between intersective polynomials and sets of measurable recurrence. The case $\ell=2$ was answered in \cite{FraKuJoinErgForIndependentPolys}. 
\begin{thm}\label{0.MainResultLebesgueSpaces} 
    Let $d,\ell\in\N$ and let $p_1,...,p_\ell\in\Z[x_1,...,x_d]$ be non-constant polynomials. The following statements are equivalent. 
    \begin{enumerate}[(i)]
    \item The polynomials $p_1,...,p_\ell$ are $\Q$-linearly independent and jointly intersective (i.e. for every $m\in\N$ there is an $\vec n\in \Z^d$ such that $p_j(\vec n)\equiv0\mod m$ for each $j\in\{1,...,\ell\}$). 
    \item For any probability space $(X,\mathcal A,\mu)$, any invertible and commuting $\mu$-preserving transformations $T_1,...,T_\ell$, any $A\in\mathcal A$, and any $\epsilon>0$, there is a $\Z^d$-translate of the set
    \begin{equation}
        R_\epsilon^{p_1,...,p_\ell}(A)=\{\vec n\in\Z^d\,|\,\mu(A\cap T_1^{-p_1(\vec n)}A\cap \cdots\cap T_\ell^{-p_\ell(\vec n)}A)>\mu^{\ell+1}(A)-\epsilon\}
    \end{equation}
    which is {\rm A-IP$_0^*$} (i.e. $\exists {\vec n_0}\in\Z^d$ with $R_\epsilon^{p_1,...,p_\ell}(A)-{\vec n_0}$ an {\rm A-IP$_0^*$} set). Furthermore, if $p_j(\vec  0)=0$ for each $j\in\{1,...,\ell\}$, the set $R_\epsilon^{p_1,...,p_\ell}(A)$ is itself an {\rm A-IP$_0^*$} set.
        \item For any probability space $(X,\mathcal A,\mu)$,
    any invertible and commuting $\mu$-preserving transformations $T_1,...,T_\ell$, any $A\in\mathcal A$, any $\epsilon>0$, and any $j\in\{1,...,\ell\}$, there exists an $\vec n\in R_\epsilon^{p_1,...,p_\ell}(A)$ for which $p_j(\vec n)\neq 0$.  
\end{enumerate}
Furthermore, in the special case that $d=1$, items (i)-(iii) are also equivalent to the following statement:
\begin{enumerate}[(iv)]
    \item  For any invertible and commuting $\mu$-preserving transformations $T_1,...,T_\ell$, any $A\in\mathcal A$, and any $\epsilon>0$, the set
    \begin{equation}
        R_\epsilon^{p_1,...,p_\ell}(A)=\{n\in\Z\,|\,\mu(A\cap T_1^{-p_1(n)}A\cap \cdots\cap T_\ell^{-p_\ell(n)}A)>\mu^{\ell+1}(A)-\epsilon\}
    \end{equation}
    contains more than $\max_{1\leq j\leq \ell}\deg(p_j)$ elements.
    \end{enumerate}
\end{thm}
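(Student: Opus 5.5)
We will prove the cycle (i)$\Rightarrow$(ii)$\Rightarrow$(iii)$\Rightarrow$(i), and for $d=1$ add (ii)$\Rightarrow$(iv)$\Rightarrow$(i).

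\textbf{(i)$\Rightarrow$(ii).} First treat the case $p_j(\vec 0)=0$ for all $j$. The plan is to establish a limiting formula along IP-type averages, via the IP-polynomial machinery of \cite{berMcCuIPPolySzemeredi}: for any IP$_r$-structure (or IP-ring) the polynomial expressions $T_j^{p_j(\vec n_\alpha)}$ become IP-polynomial maps. Using Furstenberg--Katznelson style IP-limits of operators and a PET/van der Corput induction, linear independence of $p_1,\dots,p_\ell$ should force the relevant characteristic factor to be a product of Kronecker-type (rational/nil) factors on which the joint behaviour of $(p_1(\vec n),\dots,p_\ell(\vec n))$ equidistributes. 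This would give a lower bound $\mu^{\ell+1}(A)-\epsilon$ for the multicorrelation along a positive-density subset of any IP$_r$ set with $r$ large. To pass to A-IP$_0^*$, argue by contradiction. If the complement $E$ of $R_\epsilon$ is such that $E\setminus F$ contains IP$_r$ sets for every $r$ and every density-zero $F$, then by a density/Ramsey argument (as in \cite{AlmostIPBerLeib}, Theorem 4.2) there exist positive-density families of IP$_r$ configurations inside $E$. Averaging over them, with the joint ergodicity results of \cite{FraKuJoinErgForIndependentPolys} (the mechanism behind \cref{0.thm:FraKu}), yields an average $\ge \mu^{\ell+1}(A)-\epsilon/2$, a contradiction. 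The key step, and the main obstacle, is proving a uniform ``Khintchine along IP$_r$-sets of positive density'' estimate for commuting $T_j$. My first attempt would be to reduce, via the seminorm control of \cite{FraKuJoinErgForIndependentPolys}, to the rational Kronecker factor. There, linear independence plus zero constant term gives $p_j(\vec n)\equiv 0 \bmod m$ on IP$_r$-sums of multiples of $m!$, which handles the arithmetic part as in \cref{0.thm:BFM}. For general jointly intersective polynomials, joint intersectivity lets us pick, for each $m$, some $\vec n_0$ with all $p_j(\vec n_0)\equiv 0\bmod m$. Shifting to $q_j(\vec n)=p_j(\vec n+\vec n_0)$ gives polynomials that are still independent modulo constants, and a compactness (diagonal) choice of $\vec n_0$ handles all $m$ simultaneously up to $\epsilon$. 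The constant terms of the $q_j$ are then absorbed by invertibility, after approximating on the rational factor.

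\textbf{(ii)$\Rightarrow$(iii).} An A-IP$_0^*$ set is syndetic, hence infinite in every direction. The zero set of a non-constant $p_j$ has upper Banach density zero, so $R_\epsilon$ cannot be contained in it.

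\textbf{(iii)$\Rightarrow$(i).} Argue by contrapositive with explicit counterexamples. If the $p_j$ are linearly dependent, say $\sum c_jp_j=0$ with $c_k\neq0$, take $T_j=S^{c_j}$-type rotations on a torus or a Bernoulli-type construction as in \cite{FraKuJoinErgForIndependentPolys}, so that large intersections force $p_k(\vec n)=0$. If joint intersectivity fails for some $m$, use the cyclic rotation on $\Z/m$ (with $T_j$ all equal, $A$ a point) so that no $\vec n$ gives positive measure at all.

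\textbf{The case $d=1$.} Here (ii)$\Rightarrow$(iv) is trivial since syndetic sets are infinite. For (iv)$\Rightarrow$(i), observe that in the counterexamples above $R_\epsilon$ is contained in the zero set of a nonzero one-variable polynomial of degree at most $\max_j\deg p_j$, which has at most that many elements.
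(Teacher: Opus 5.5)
Your argument for (i)$\Rightarrow$(ii) leaves the decisive step open, and the route you suggest for it would fail. Reducing to the \emph{rational} Kronecker factor is legitimate for F{\o}lner averages, but an A-IP$_0^*$ conclusion needs control of the correlation at individual $\vec n$ outside a set of zero upper Banach density. For that you must keep the full Kronecker factors $\mathcal K_{T_j}$, irrational eigenvalues included.

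Consider $\ell=1$, $p_1(n)=n$, $T$ an irrational rotation by $\alpha$, and $A=[0,\frac12)$. Then $\mu(A\cap T^{-n}A)=\frac12-\|n\alpha\|$ and the rational Kronecker factor is trivial. The complement $E$ of $R_\epsilon$ has positive density. Yet $E$ contains no IP$_r$ set for $r\ge4$, because $\{n:\|n\alpha\|<\frac14+\epsilon\}$ is IP$_r^*$.

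The same example shows why your contradiction scheme is circular. Every $\vec n\in E$ has correlation at most $\mu^{\ell+1}(A)-\epsilon$. A contradiction would therefore require exactly an IP-type lower bound along configurations inside $E$, which is the statement being proved, and averaging information cannot detect whether $E$ contains IP$_r$ sets.

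The paper argues directly, in three steps.
\begin{enumerate}[(1)]
\item By \cref{2.Cor:DensityFraKu}, the correlation is within $\epsilon/2$ of $\int\mathbbm 1_A\prod_jT_j^{p_j(\vec n)}\mathbb E(\mathbbm 1_A|\mathcal K_{T_j})\,d\mu$ off a set $E_\epsilon$ with $d^*(E_\epsilon)=0$. This corollary comes from Frantzikinakis--Kuca $L^2$-control by $\mathcal K_{T_j}$ plus the tensor-square trick, applied to $p_j-p_j(\vec 0)$, which stay independent by joint intersectivity.
\item After truncating to finitely many eigenfunctions, this Kronecker expression is within $\epsilon/2$ of $\int\mathbbm 1_A\prod_j\mathbb E(\mathbbm 1_A|\mathcal K_{T_j})\,d\mu$ on a set $\mathcal D\supseteq\{\vec n:\|p_j(\vec n)\alpha_s\|<\delta\ \forall j,s\}$, for finitely many, possibly irrational, $\alpha_s$. \cref{3.thm:DiophantineRec} makes this a translate of a VIP$_0^*$ set. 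It combines the Bergelson--Leibman VIP$_0^*$ property of $\{\vec n:\|p(\vec n)\alpha\|<\delta\}$ with a shift $\vec u$, found from joint intersectivity for the rational $\alpha_s$ and from Weyl equidistribution for the irrational ones.
\item Chu's inequality, \cref{2.lem:ProdExpectationsLowerBound}, bounds the time-zero value below by $\mu^{\ell+1}(A)$ for non-ergodic commuting $T_j$. Your sketch never says where $\mu^{\ell+1}(A)$ comes from.
\end{enumerate}
Then $R_\epsilon\supseteq\mathcal D\setminus E_\epsilon$, so $R_\epsilon-\vec u$ is A-VIP$_0^*$.

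Your counterexample for linearly dependent families in (iii)$\Rightarrow$(i) is also not viable. Your cyclic system correctly covers the non-jointly-intersective case, as in the paper. What remains is a dependent but jointly intersective family, and there both systems you name satisfy (iii).
\begin{itemize}
\item For a group rotation $S$ and $T_j=S^{c_j}$, the correlation exceeds $\mu(A)-\epsilon\ge\mu^{\ell+1}(A)-\epsilon$ whenever every $c_jp_j(\vec n)$ acts by an almost trivial rotation. This happens on a syndetic set, again by \cref{3.thm:DiophantineRec}.
\item For weakly mixing (e.g.\ Bernoulli) $S$, joint intersectivity forces each $c_ip_i-c_jp_j$ to be $0$ or non-constant. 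So the correlation tends in density to $\mu(A)^{k+1}\ge\mu(A)^{\ell+1}$, where $k\le\ell$ is the number of distinct non-zero $c_jp_j$.
\end{itemize}
The missing idea is \cref{5.thm:SmallIntersections}. Take the skew product $T(x,y)=(x,y+x)$ on $\mathbb T^2$ and $T_j=T^{a_j}$, so that the relation $\sum_{j\le t}a_jp_j=0$ becomes the fibre identity $\sum_{j\le t}(y+a_jp_j(\vec n)x)=ty$. Let $A=[0,1)\times B$, with $B$ a union of short intervals placed at a Behrend-type set $\Lambda_N\subseteq\{1,\dots,N\}$. This set has only trivial solutions of $x_1+\cdots+x_t=tx_{t+1}$ and satisfies $|\Lambda_N|>Ne^{-\beta\sqrt{\log N}}$. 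If $p_i(\vec n)\neq0$ for some $i\le t$, the intersection is $O(|\Lambda_N|/N^2)$. The Behrend density bound is exactly what makes this smaller than $\mu(A)^r/2$ for any prescribed $r$, in particular below $\mu^{\ell+1}(A)-\epsilon$.

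Your $d=1$ argument for (iv)$\Rightarrow$(i) depends on this construction. The steps (ii)$\Rightarrow$(iii) and (ii)$\Rightarrow$(iv) are fine.
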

\begin{rem}
    We remark that items (i)-(iii) in \cref{0.MainResultLebesgueSpaces} are also equivalent to the following ostensibly  stronger variant of (ii) dealing with the A-VIP$_0^*$ property which was introduced in \cite{AlmostIPBerLeib} and which implies the A-IP$_0^*$ property (see Subsection \ref{Sec3.1} for the definition and basic properties):
       \begin{enumerate}
        \item [(ii$^{\prime}$)] For any probability space $(X,\mathcal A,\mu)$, any invertible and commuting $\mu$-preserving transformations $T_1,...,T_\ell$, any $A\in\mathcal A$, and any $\epsilon>0$, there is a $\Z^d$-translate of the set
    \begin{equation}
        R_\epsilon^{p_1,...,p_\ell}(A)=\{\vec n\in\Z^d\,|\,\mu(A\cap T_1^{-p_1(\vec n)}A\cap \cdots\cap T_\ell^{-p_\ell(\vec n)}A)>\mu^{\ell+1}(A)-\epsilon\}
    \end{equation}
    which is {\rm A-VIP$_0^*$}. Furthermore, if $p_j(\vec  0)=0$ for each $j\in\{1,...,\ell\}$, the set $R_\epsilon^{p_1,...,p_\ell}(A)$ is itself an {\rm A-VIP$_0^*$} set.
    \end{enumerate} 
As we explain in Section \ref{Sec4}, where we prove (i)$\implies$(ii$^\prime$),  item (ii$^\prime$) above will allow us to obtain   a non-trivial amplification of  \cite[Theorem 4.2]{AlmostIPBerLeib} (See \cref{RemarkAboutTheorem4.2} for further discussion). 
\end{rem}
The following result, which is a corollary of the proof of \cref{0.MainResultLebesgueSpaces}, demonstrates the sharpness of the implication  (iii)$\implies$(i) in \cref{0.MainResultLebesgueSpaces}. 
\begin{cor}\label{0.thm:Sharp(iii)->(i)}
Let $d,\ell\in\N$ with $\ell>1$ and let $p_1,...,p_\ell\in\Z[x_1,...,x_d]$ be non-constant, linearly dependent polynomials. Consider a non-empty $\alpha\subseteq \{1,...,\ell\}$ with the property that for some non-zero integers $a_j$, $j\in\alpha$,
$$\sum_{j\in\alpha}a_jp_j=0.$$ 
Then, there exist a probability space $(X,\mathcal A,\mu)$ and invertible and commuting $\mu$-preserving transformations $T_1,...,T_\ell$ with the property that for every $r\in\N$ there is an  $A_r\in\mathcal A$ with $\mu(A_r)>0$ and  the property that 
\begin{equation}\label{0.eq:EqualityWithLongerExpression}
    \{\vec n\in\Z^d\,|\,\mu(A_r\cap T_1^{-p_1(\vec n)}A_r\cap \cdots\cap T_\ell^{-p_\ell (\vec n)}A_r)>\frac{\mu^r(A_r)}{2}\}
    =\bigcap_{j\in \alpha} \{\vec n\in\Z^d\,|\,p_j(\vec n)=0\}.
\end{equation}
In other words, the $\vec n\in\Z^d$ for which 
\begin{equation}
\mu(A_r\cap T_1^{-p_1(\vec n)}A_r\cap \cdots\cap T_\ell^{-p_\ell (\vec n)}A_r)>\frac{\mu^r(A_r)}{2}
\end{equation}
are exactly those $\vec n$ for which $p_j(\vec n)=0$ for each $j\in\alpha$.
\end{cor}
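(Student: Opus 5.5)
The plan is to build a single system in which the commuting transformations $T_j$, $j\in\alpha$, are powers of one transformation $S$, and the remaining $T_j$ are the identity. The linear relation $\sum_{j\in\alpha}a_jp_j=0$ then becomes a relation among the return times of $S$, and the sets $A_r$ are chosen so that this relation is incompatible with large multiple intersections. The construction of $A_r$ (Step 3) is the main obstacle.

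\emph{Step 1: reductions.} Note first that $|\alpha|\geq 2$: if $\alpha=\{j\}$, then $a_jp_j=0$ with $a_j\neq0$ would force $p_j=0$, contradicting non-constancy. Set $T_j=\mathrm{id}$ for $j\notin\alpha$. Those factors then contribute nothing to the intersection, and
$$\mu\Big(A_r\cap\bigcap_{j=1}^{\ell}T_j^{-p_j(\vec n)}A_r\Big)=\mu\Big(A_r\cap\bigcap_{j\in\alpha}T_j^{-p_j(\vec n)}A_r\Big).$$
The inclusion ``$\supseteq$'' in \eqref{0.eq:EqualityWithLongerExpression} is then automatic. Indeed, if $p_j(\vec n)=0$ for all $j\in\alpha$, the intersection equals $\mu(A_r)$, which exceeds $\mu^r(A_r)/2$ whenever $\mu(A_r)>0$.

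\emph{Step 2: encoding the relation.} For $j\in\alpha$ I would take $T_j=S$ for a single invertible $\mu$-preserving $S$, independent of $r$. Writing $m_j:=p_j(\vec n)$, the integers $m_j$, $j\in\alpha$, satisfy $\sum_{j\in\alpha}a_jm_j=0$. So the inclusion ``$\subseteq$'' reduces to the following claim. For every $r$ there is a set $A_r$ with $\mu(A_r)>0$ such that
$$\mu\Big(A_r\cap\bigcap_{j\in\alpha}S^{-m_j}A_r\Big)\leq\frac{\mu^r(A_r)}{2}$$
for every integer vector $(m_j)_{j\in\alpha}\neq\vec 0$ with $\sum_{j\in\alpha}a_jm_j=0$. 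The claim must hold uniformly over all such solutions, not just those of polynomial form, since the polynomials enter only through the relation. The system $S$ must be fixed while $A_r$ varies.

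\emph{Step 3: constructing $S$ and $A_r$.} This is the core of the argument. I would take $S$ to be a Bernoulli shift, or a product of a Bernoulli shift with a rotation on a compact abelian group. I would then aim for a Behrend/Ruzsa-type set in the spirit of the constructions behind \cite[Corollary 11.6]{ABBLargeIntersection2021} and \cite{DonosoLeSunMoreiraLowerBounds}. These produce sets of positive measure whose intersection over the configuration $\{0\}\cup\{m_j\}$ is smaller than any prescribed power of the measure whenever the $m_j$ solve a fixed linear equation non-trivially.

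Concretely, I would split into two cases according to whether $\sum_{j\in\alpha}a_j$ vanishes.
\begin{enumerate}[(a)]
\item If $\sum_{j\in\alpha}a_j\neq0$, there is room for a modular or rotation-based obstruction.
\item If $\sum_{j\in\alpha}a_j=0$, the equation is translation-invariant. Here I would use a torus rotation $x\mapsto x+\theta$ on $\mathbb T^D$ with $D$ large depending on $r$, and take $A_r$ to be a thin neighbourhood of a ``Behrend sphere''. Strict convexity of the sphere should force the intersection to be concentrated near $\vec m=\vec 0$.
\end{enumerate}
To make the construction uniform in $r$, I would take $S$ to be the product over $r$ of the rotations on $\mathbb T^{D_r}$. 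The set $A_r$ is then pulled back from the $r$-th coordinate and intersected with a cylinder in a Bernoulli factor. That cylinder shrinks $\mu(A_r)$ only by a controlled factor, and it makes every non-zero shift lose an additional independent factor, which gives the required estimate $\leq\mu^r(A_r)/2$.

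The step I expect to be hardest is obtaining the bound $\mu^r(A_r)/2$ uniformly over all non-trivial solutions $\vec m$, including very large ones. My first attempt would be to exploit mixing of the Bernoulli component for large $\vec m$, which gives roughly $\mu(A_r)^{|\alpha|+1}$ times the rotation part, and to rely on the Behrend-type geometry for bounded $\vec m$.
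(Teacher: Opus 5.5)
Your Step 1 matches the paper: $|\alpha|\ge 2$, $T_j=\mathrm{Id}$ for $j\notin\alpha$, and the inclusion $\supseteq$ is trivial. The choice $T_j=S$ for every $j\in\alpha$ in Step 2, however, is fatal.

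\textbf{The choice $T_j=S$.} Take $p_1=p_2=x$, $\alpha=\{1,2\}$, $a_1=1$, $a_2=-1$. Then $\mu(A_r\cap S^{-n}A_r\cap S^{-n}A_r)=\mu(A_r\cap S^{-n}A_r)$. By Khintchine's theorem (\cref{0.KhintchineRecThm}) this exceeds $\mu^2(A_r)/2\ge\mu^r(A_r)/2$ for a syndetic set of $n$, which contains non-zero $n$, whatever $S$ and $A_r$ are. So already for $r=2$ the corollary fails with your choice. More generally, your uniform claim is false whenever $\sum_{j\in\beta}a_j=0$ for some non-empty $\beta\subseteq\alpha$: put $m_j=m$ on $\beta$ and $m_j=0$ off $\beta$. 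In particular it always fails in your case (b).

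The cure is to absorb the coefficients into the maps, $T_j=S^{a_j}$, as the paper does. The exponents $k_j=a_jp_j(\vec n)$ then satisfy $\sum_{j\in\alpha}k_j=0$. Any non-zero solution has entries of both signs, so $\{0\}\cup\{k_j\}$ has at least three points with $0$ as barycentre. With this change, your remark that only the relation matters, uniformly over all integer solutions, is correct, and no case split is needed.

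\textbf{The proposed systems.} Step 3 fails as proposed too: none of your systems can give the bound $\mu^r(A_r)/2$.
\begin{enumerate}[(1)]
\item Any rotation on a compact abelian group, in particular any product of torus rotations, is rigid. Fix a non-zero solution $(m_j^0)$. Simultaneous recurrence gives $q\neq0$ with every $qm_j^0\theta$ arbitrarily close to $0$, and then the rotation part of the intersection is close to its full measure.
\item A one-coordinate Bernoulli cylinder of measure $c$ multiplies the intersection by $c^{k}$, with $k\le|\alpha|+1$ independent of $r$. It multiplies $\mu^r(A_r)$ by $c^r$, so for large $r$ it makes the inequality harder, not easier. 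Mixing of the Bernoulli part likewise only produces bounded powers $\mu^{k}(A_r)$.
\item Ergodicity is itself an obstruction to your scheme. For $p_1=x$, $p_2=2x$ and $T_1=T_2=S$ ergodic, Bergelson--Host--Kra give syndetically many $n$ with $\mu(A\cap S^{-n}A\cap S^{-2n}A)>\mu^3(A)-\epsilon$.
\end{enumerate}

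\textbf{The missing idea.} You need a non-ergodic system whose fibres rotate at different speeds. The paper takes $T(x,y)=(x,y+x)$ on $[0,1)^2$, $T_j=T^{a_j}$ for $j\in\alpha$, and $A_r=[0,1)\times\mathcal I_M$. Here $\mathcal I_M$ is a union of intervals of length $\delta_M=1/(8M(\ell+1)^2)$ placed at a Behrend-type set $\Lambda_M$ from \cref{10'.lem:Rusza(d-1)}, applied with all coefficients equal to $1$.
\begin{enumerate}[(1)]
\item The $y$-coordinates $y+a_jp_j(\vec n)x$, $j\in\alpha$, sum to $|\alpha|y$ modulo $1$. This forces all the points into a single interval $\mathcal I_M(j)$.
\item If $p_i(\vec n)\neq0$, integrate first in $x$; the map $x\mapsto a_ip_i(\vec n)x$ preserves Lebesgue measure. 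This bounds the intersection by $|\Lambda_M|\,\delta_M^2$.
\item Since $\mu(A_r)=|\Lambda_M|\,\delta_M$ and $|\Lambda_M|>Me^{-\beta\sqrt{\log M}}$, this bound is at most $\frac12\mu^r(A_r)$ once $M$ is large.
\end{enumerate}
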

We will now formulate  a combinatorial corollary of \cref{0.MainResultLebesgueSpaces} which can be  obtained by applying Furstenberg's correspondence principle  as presented  in \cite[Theorem 6.4.17]{bergelson2000DiophantineSurvey}. This combinatorial  result can be interpreted as a strong form of a special case of the polynomial Szemer{\'e}di theorem \cite{bergelsonLeibmaPolyVderWardenAndSz} dealing with families of jointly intersective, linearly independent polynomials.  
A sequence $(\Phi_N)_{N\in\N}$ of non-empty, finite subsets of $\Z^D$ is called a F{\o}lner sequence if for every $\vec n\in\Z^D$ one has 
$$
\limsup_{N\rightarrow\infty} \frac{|\Phi_N\triangle(\Phi_N-\vec n)|}{|\Phi_N|}=0.
$$
For any $E\subseteq \Z^D$, we write
$$
\overline d_{(\Phi_N)}(E)=\limsup_{N\rightarrow\infty} \frac{|E\cap \Phi_N|}{|\Phi_N|}.
$$
\begin{cor}\label{0.CombinatorialApplication}
    Let $d,D,\ell\in\N$ and let $p_1,...,p_\ell\in\Z[x_1,...,x_d]$ be non-constant polynomials. Suppose that $p_1,...,p_\ell$ are jointly intersective and $\Q$-linearly independent. Then, for any $E\subseteq \Z^D$, any F{\o}lner sequence $(\Phi_N)_{N\in\N}$ in $\Z^D$, any non-zero ${\bf v_1},...,{\bf v_\ell}\in\Z^D$, and any $\epsilon>0$, the set 
    $$
S_\epsilon^{p_1,...,p_\ell}(E):=\{\vec n\in\Z^d\,|\,\overline d_{(\Phi_N)}(E\cap (E-p_1(\vec n){\bf v_1})\cap \cdots\cap (E-p_\ell(\vec n){\bf v_\ell}))>(\overline d_{(\Phi_N)}(E))^{\ell+1}-\epsilon\}
    $$
is a translation of an A-\rm{VIP$_0^*$} set. Furthermore, if $p_j(\vec 0)=0$ for each $j\in\{1,...,\ell\}$, then $S_\epsilon^{p_1,...,p_\ell}(E)$ is an A-\rm{VIP$_0^*$} set. 
\end{cor}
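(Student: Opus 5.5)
We will deduce \cref{0.CombinatorialApplication} from the implication (i)$\implies$(ii$^\prime$) of \cref{0.MainResultLebesgueSpaces} (as stated in the Remark following it), using Furstenberg's correspondence principle in the form of \cite[Theorem 6.4.17]{bergelson2000DiophantineSurvey}. If $\overline d_{(\Phi_N)}(E)=0$, the defining inequality of $S_\epsilon^{p_1,...,p_\ell}(E)$ reads $\overline d_{(\Phi_N)}(\cdot)>-\epsilon$, so $S_\epsilon^{p_1,...,p_\ell}(E)=\Z^d$ and there is nothing to prove. So assume $\overline d_{(\Phi_N)}(E)>0$.

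The correspondence principle then gives a probability space $(X,\mathcal A,\mu)$, a measure preserving $\Z^D$-action $(S^{\bf m})_{{\bf m}\in\Z^D}$ by invertible commuting transformations, and a set $A\in\mathcal A$ with $\mu(A)=\overline d_{(\Phi_N)}(E)$. It also gives, for every finite collection ${\bf m_1},...,{\bf m_k}\in\Z^D$,
\begin{equation*}
\overline d_{(\Phi_N)}\big(E\cap(E-{\bf m_1})\cap\cdots\cap(E-{\bf m_k})\big)\geq \mu\big(A\cap S^{-{\bf m_1}}A\cap\cdots\cap S^{-{\bf m_k}}A\big).
\end{equation*}
The key point to check is that this version applies to an arbitrary fixed Følner sequence with the upper density $\overline d_{(\Phi_N)}$. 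We need the inequality $\geq$, not equality, and we need $\mu(A)=\overline d_{(\Phi_N)}(E)$ exactly, which is what one gets by passing to a subsequence of $(\Phi_N)$ realizing the limsup. Both points are standard in that reference.

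Now put $T_j:=S^{\bf v_j}$ for $j\in\{1,...,\ell\}$. These are invertible, commuting and $\mu$-preserving. Taking ${\bf m_j}=p_j(\vec n){\bf v_j}$ gives $S^{-{\bf m_j}}=T_j^{-p_j(\vec n)}$, and hence
\begin{equation*}
R_\epsilon^{p_1,...,p_\ell}(A)=\{\vec n\,|\,\mu(A\cap T_1^{-p_1(\vec n)}A\cap\cdots\cap T_\ell^{-p_\ell(\vec n)}A)>\mu^{\ell+1}(A)-\epsilon\}\subseteq S_\epsilon^{p_1,...,p_\ell}(E).
\end{equation*}
This inclusion holds because $\mu^{\ell+1}(A)=(\overline d_{(\Phi_N)}(E))^{\ell+1}$.

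Since $p_1,...,p_\ell$ are jointly intersective and $\Q$-linearly independent, item (ii$^\prime$) supplies an $\vec n_0$ such that $R_\epsilon^{p_1,...,p_\ell}(A)-\vec n_0$ is A-VIP$_0^*$, and we may take $\vec n_0=\vec 0$ when all $p_j(\vec 0)=0$. The class of A-VIP$_0^*$ sets is closed under passing to supersets: if $R\cup F$ is VIP$_0^*$ with $d^*(F)=0$ and $R\subseteq S$, then $S\cup F$ is VIP$_0^*$ as well. Therefore $S_\epsilon^{p_1,...,p_\ell}(E)-\vec n_0$ is A-VIP$_0^*$, which is the claim.

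The main obstacle is the implication (i)$\implies$(ii$^\prime$) itself, but it is assumed here. The only delicate point in this deduction is the correspondence principle step: matching the density $\overline d_{(\Phi_N)}$ along the arbitrary given Følner sequence with $\mu(A)$ exactly, so that the threshold $\mu^{\ell+1}(A)-\epsilon$ translates into $(\overline d_{(\Phi_N)}(E))^{\ell+1}-\epsilon$ without loss.
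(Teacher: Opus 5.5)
Your proposal is correct and is essentially the paper's route. The paper obtains this corollary by transferring to a measure preserving $\Z^D$-action via Furstenberg's correspondence principle \cite[Theorem 6.4.17]{bergelson2000DiophantineSurvey}, setting $T_j=S^{\bf v_j}$, and invoking the implication (i)$\implies$(ii$^\prime$), i.e.\ \cref{4.GenBerLeib}. Your two supporting details are exactly what the paper leaves implicit: passing to a subsequence of $(\Phi_N)$ that realizes the $\limsup$, so that $\mu(A)=\overline d_{(\Phi_N)}(E)$ while the inequality for intersections is kept, and using that A-VIP$_0^*$ sets are closed under supersets.
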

We remark that 
\cref{0.CombinatorialApplication} provides an amplification of \cite[Corollary 2.12]{FraKuJoinErgForIndependentPolys} which established the syndeticity of the sets of the form $S_\epsilon^{p_1,...,p_\ell}(E)$.\\
We conclude this introduction with the following result  complementing \cref{0.CombinatorialApplication}. We say that a  set $E\subseteq \Z^D$ has uniform density $\delta\geq 0$  and write $d_{{\rm U}}(E)=\delta$ if for every F{\o}lner sequence $(\Phi_N)_{N\in\N}$ in $\Z^D$ one has
 $$\delta=d_{(\Phi_N)}(E):=\lim_{N\rightarrow\infty}\frac{|E\cap \Phi_N|}{|\Phi_N|}.$$
\begin{cor}\label{0.Cor:CombinatorialFailureOfIP}
    Let $d,D,\ell\in\N$  be such that $D>\ell>1$. Let $p_1,...,p_\ell\in\Z[x_1,...,x_d]$ be  non-constant polynomials with zero constant term and 
    let ${\bf v_1},...,{\bf v_\ell}\in\Z^D$ be non-zero vectors for which  the  maps $\vec n\mapsto p_j(\vec n){\bf v_j}$, $j\in\{1,...,\ell\}$, are pairwise distinct.
    There exist $\epsilon>0$ and a set $E\subseteq \Z^D$ with $d_{{\rm U}}(E)>0$ such that the set 
    \begin{equation}\label{0.eq:InverseFurstenberg}
\{\vec n\in\Z^d\,|\,d_{{\rm U}}(E\cap (E-p_1(\vec n){\bf v_1})\cap\cdots\cap (E-p_\ell(\vec n){\bf v_\ell}))>(d_{{\rm U}}(E))^{\ell+1}-\epsilon\}
    \end{equation}
    is not {\rm IP$^*$}.  
\end{cor}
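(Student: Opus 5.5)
Since $D>\ell$, I would pass to a $\Z^\ell$-action coming from a measure-theoretic counterexample and transfer it to $\Z^D$ through an inverse correspondence principle. Uniform density is the right tool, because sets of the form $\{\vec m\in\Z^D: S^{\vec m}x_0\in A\}$, for a uniquely ergodic $\Z^D$-system with boundary-null $A$, have uniform density equal to the measure of $A$.

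\emph{Step 1: a measure-theoretic counterexample.} I would use commuting transformations $T_1,\dots,T_\ell$ with $T_j$ acting through ${\bf v_j}$, and construct a $\Z^D$-action $S$ on a compact metric space with unique invariant measure $\mu$. Taking $T_j:=S^{{\bf v_j}}$, I want a set $A$ with $\mu(\partial A)=0$ and an IP set $\Gamma\subseteq\Z^d$ such that
\[
\mu\bigl(A\cap S^{-p_1(\vec n){\bf v_1}}A\cap\cdots\cap S^{-p_\ell(\vec n){\bf v_\ell}}A\bigr)\le\mu^{\ell+1}(A)-2\epsilon \quad\text{for } \vec n\in\Gamma.
\]
To get this I would follow the strategy of \cref{0.thm:IPFailure}, which is proved in Section \ref{Sec8}, but on a torus rotation $S^{\vec m}x=x+\vec m\cdot\vec\beta$ on $\mathbb T^k$ with suitably chosen $\vec\beta$. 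Such a rotation is uniquely ergodic when $\vec\beta$ has rationally independent coordinates. I expect the proof of \cref{0.thm:IPFailure} to use a rotation anyway, giving a Kronecker-type example in which, along an IP set, the quantities $p_j(\vec n)\alpha$ converge to prescribed limits by IP-polynomial convergence (idempotent ultrafilters).

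I would choose $\vec\beta$ so that along the IP set $\Gamma$ the vectors $p_j(\vec n){\bf v_j}\cdot\vec\beta$ converge to distinct, chosen points $c_1,\dots,c_\ell$. Then, with $A$ a small ball, the intersection measure tends to $\mu(A\cap(A-c_1)\cap\cdots)$, and choosing some $c_j$ far from $0$ makes this $0$, which is less than $\mu^{\ell+1}(A)$. The hypothesis $D>\ell$ together with pairwise distinctness of the maps $\vec n\mapsto p_j(\vec n){\bf v_j}$ is what should let me pick $\vec\beta$ with enough freedom that the limits are not forced to be $0$ or to coincide. In particular, it is what rules out a nontrivial limit being forced to be trivial along every IP set, as happens in the linear $\Delta^*$ situation.

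\emph{Step 2: transfer to $\Z^D$.} Set $E=\{\vec m\in\Z^D: S^{\vec m}0\in A\}$. By unique ergodicity and $\mu(\partial A)=0$ (including boundaries of finite intersections of translates of $A$), every Følner average converges. Hence $d_{\rm U}(E)=\mu(A)$, and $d_{\rm U}$ of each intersection $E\cap(E-p_1(\vec n){\bf v_1})\cap\cdots$ equals the corresponding measure. Therefore the set \eqref{0.eq:InverseFurstenberg} is disjoint from $\Gamma$, so it is not IP$^*$.

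\emph{Main obstacle.} The hard step is Step 1: the IP-limit computation. Using IP polynomial limits of $p(\vec n)\gamma$ mod $1$ along a sub-IP set, the limit of $p_j(\vec n)\theta$ can be $0$ for every $\theta$ when the IP set is poorly chosen. I would construct the IP set and $\vec\beta$ simultaneously: pick generators $\vec n_k$ growing fast, and choose $\vec\beta$ as a lacunary-type series so that a fixed monomial part of $p_j(\vec n)$ evaluated on finite sums stays near $c_j$. If the torus approach fails for higher-degree terms, I would instead use a skew-product nilsystem, which is still uniquely ergodic, and adapt the construction behind \cref{0.thm:IPFailure} directly.
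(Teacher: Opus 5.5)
Your Step~1 cannot work as designed, and Step~1 is where all the content of the statement lies.

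\textbf{Rotations cannot give the counterexample.} For $S^{\vec m}x=x+\sum_i m_i\beta_i$ on $\mathbb T^k$ with Haar measure $\mu$, write $S^{p_j(\vec n){\bf v_j}}x=x+p_j(\vec n)\gamma_j$ with $\gamma_j\in\mathbb T^k$. The map $(t_1,\dots,t_\ell)\mapsto\mu(A\cap(A-t_1)\cap\cdots\cap(A-t_\ell))$ is continuous and equals $\mu(A)$ at the origin. By Lemma~\ref{3.lem:DiophantineRec}, applied to every coordinate of every $\gamma_j$, and Proposition~\ref{4.rem:IntersectionProperty}, the set of $\vec n$ for which all $p_j(\vec n)\gamma_j$ lie within $\delta$ of $0$ is VIP$_0^*$, hence IP$^*$. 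So for every rotation, every $A$ and every $\epsilon>0$, the set of $\vec n$ with correlation $>\mu(A)-\epsilon\geq\mu^{\ell+1}(A)-\epsilon$ is IP$^*$. No choice of $\vec\beta$, lacunary series, or $\Gamma$ can help.

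Equivalently, an IP-limit of $p(\vec n_\alpha)\theta$ with $p(\vec 0)=0$, if it exists, is $0$. If all values with $\min\alpha>k$ are within $\eta$ of $c$, the inclusion--exclusion identity for a VIP-system of degree $s=\deg p$, applied to $s+1$ disjoint sets, gives $\|c\|\le 2^{s+1}\eta$. So limits $c_j$ with some $c_j$ ``far from $0$'' do not exist.

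\textbf{The paper's counterexample is not a rotation, and your fallback does not replace it.} Your expectation that the proof of Theorem~\ref{0.thm:IPFailure} uses a rotation is wrong. It uses $(x,y)\mapsto(x,y+x)$ on $\mathbb T^2$ with the measure $\sigma_r\otimes\mu$ and $A_r=[0,1)\times B$, where:
\begin{itemize}
\item $\sigma_r$ is a singular measure from Corollary~E of \cite{BKLUltrafilterPoly}, chosen so that along the IP set $(p_1(n_\alpha)x,p_2(n_\alpha)x)$ is distributed like $(ay,by)$ with $y\sim\lambda_M$;
\item $B$ is a Behrend-type set.
\end{itemize}
That system is not ergodic, and the paper leaves open whether it can be made so.

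Your fallback to a uniquely ergodic skew-product nilsystem is therefore not a proof; it moves the whole difficulty into an unspecified construction. The obvious candidate fails. Take $(x,y)\mapsto(x+\alpha,y+x)$ with Haar measure and linearly independent $p_j\in\Z[x]$. On an IP$^*$ set the maps $T^{p_j(n)}$ are close to $(x,y)\mapsto(x,y+p_j(n)x)$. Haar measure makes $(x,p_1(n)x,\dots,p_\ell(n)x)$ equidistribute as $|n|\to\infty$. The correlations then approach $\int\mu(A_x)^{\ell+1}\,dx\ge\mu^{\ell+1}(A)$, where $A_x$ is the fibre of $A$ over $x$.

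\textbf{The missing idea.} The counterexample need not be ergodic in the directions ${\bf v_j}$. The inverse correspondence only needs the $\Z^D$-action to be ergodic, and that can be supplied by a direction orthogonal to all ${\bf v_j}$. This, not extra freedom in $\vec\beta$, is what $D>\ell$ is for. The paper proceeds as follows.
\begin{itemize}
\item Choose ${\bf c}\in\Z^D$ so that the $q_j=\langle{\bf c},{\bf v_j}\rangle p_j$ are non-constant and essentially distinct. This is where pairwise distinctness of $\vec n\mapsto p_j(\vec n){\bf v_j}$ is used.
\item From the proof of Theorem~\ref{6.thm:FailureOfIP*Part2}, take a non-ergodic transformation $S$ on $(Y,\nu)$ and a set $B$ for which the large-returns set of $q_1,\dots,q_\ell$ is not IP$^*$.
\item Pick ${\bf w}\neq 0$ orthogonal to ${\bf v_1},\dots,{\bf v_\ell}$ and arrange ${\bf c}\perp{\bf w}$.
\item On $(Y^{\Z},\nu^{\otimes\Z})$ set $T_{\vec a}=\tilde S^{\langle{\bf c},\vec a\rangle}R^{\langle{\bf w},\vec a\rangle}$, where $\tilde S$ acts coordinatewise and $R$ is the shift.
\end{itemize}
Then $T_{\bf w}=R^{\langle{\bf w},{\bf w}\rangle}$ is ergodic, so the $\Z^D$-action is ergodic. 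At the same time $T_{p_j(\vec n){\bf v_j}}=\tilde S^{q_j(\vec n)}$, so $A=\{\omega:\omega(0)\in B\}$ has exactly the correlations of $B$ under $S$. Theorem~\ref{3.Lem:epsilonInverse} (Jewett--Krieger for abelian groups, plus approximating $A$ by a finite union of balls with null boundary) then gives $E$ up to an error $k\epsilon'$, which suffices.

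Your Step~2 is correct as far as it goes: it is the exact version of that theorem for a uniquely ergodic model in which $A$ already has null boundary. It just has nothing to act on until Step~1 is replaced by this construction.
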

We now state a special case of Question 2 on page 78 in \cite{berMcCuIPPolySzemeredi}, which served as one of the primary motivations for our paper.
\begin{question}\label{0.question:IP*2}
    Let $p_1,...,p_\ell\in \Z[x]$ be non-constant polynomials with zero constant term, let $(X,\mathcal A,\mu)$ be an atomless probability space, let  $T_1,...,T_\ell$ be invertible and commuting $\mu$-preserving transformations,  and let $A\in\mathcal A$ be such that $\mu(A)>0$. Is there a constant $c>0$ for which the set 
    \begin{equation}\label{0.eq:StrongRec}  
        \{ n\in\Z\,|\,\mu(A\cap T_1^{-p_1(n)}A\cap \cdots\cap T_\ell^{-p_\ell(n)}A) > c\}
    \end{equation}
    is IP$^*$?
\end{question}
While we cannot resolve \Cref{0.question:IP*2}, it will be demonstrated in Appendix \ref{Sec:A} that an affirmative answer to \Cref{0.question:IP*2}
follows from the Density polynomial Hales-Jewett conjecture (DPHJ). (See Appendix \ref{A.sec.DPHJConjecture} for the statement of DPHJ and \cref{8.thm:ConditionalSzemeredi} for its consequence providing a positive answer to \Cref{0.question:IP*2}).\\

The structure of this paper is as follows: 
In Section \ref{Sec:2}, we review the necessary background in ergodic theory. 
In Section \ref{Sec:3}, we define VIP$_0^*$ and A-VIP$_0^*$ sets and prove \cref{3.thm:DiophantineRec}, a Diophantine result involving jointly intersective polynomials, which is needed for the proof of \cref{0.MainResultLebesgueSpaces} and is of independent interest.
In Section \ref{Sec4}, we prove the implication (i)$\implies$(ii$^\prime$) in \cref{0.MainResultLebesgueSpaces}.   In Section \ref{Sec:5}, we prove a special case of the implication (iii)$\implies$(i) in \cref{0.MainResultLebesgueSpaces} and derive \cref{0.thm:Sharp(iii)->(i)}.
In Section \ref{Sec:6}, we complete the proof of \cref{0.MainResultLebesgueSpaces}. 
In Section \ref{Sec7}, we prove \cref{0.thm:IPoFailure}. 
In Section \ref{Sec8}, we prove \cref{0.thm:IPFailure}. 
In Section \ref{Sec:9}, we establish an approximate "uniform" version of the inverse Furstenberg correspondence principle and use it to prove \cref{0.Cor:CombinatorialFailureOfIP}.
In Appendix \ref{Sec:A}, we derive some non-trivial ergodic-theoretical consequences of the 
 Density Polynomial Hales-Jewett conjecture.\\

\textit{Acknowledgments.} Rigoberto Zelada is supported by EPSRC through Joel Moreira's Frontier Research Guarantee grant, ref. EP/Y014030/1. 
%%%%%%%%%%%%%%%%%%%%%%%%%%%%%%%%%%%%%%%%%%%%%%%%%%%%%%%%%%%%%%%%%%%%%%%%%
%%%%%%%%%%%%%%%%%%%%%%%%%%%%%%%%%%%%%%%%%%%%%%%%%%%%%%%%%%
%%%%%%%%%%%%%%%%%%%%%%%%%%%%%%%%%%%%%%%%%%%%%%%%%%%%%%%%%%%%%%%%
\section{Background on ergodic theory}\label{Sec:2}
In this section we formulate the basic definitions and results which will be needed in the subsequent sections.\\
Let $(X,\mathcal A,\mu)$ be a probability space and let $T:X\rightarrow X$ be an invertible $\mu$-preserving transformation. 
As is customary, we assume that the $\sigma$-algebra $\mathcal A$ is separable. 
An invertible probability measure preserving system ${\bf Y}=(Y,\mathcal B,\nu,S)$ is a factor of $
{\bf X}=(X,\mathcal A,\mu,T)$ if there is a surjective measure preserving map $\pi:X\rightarrow Y$ with the property that $S\circ \pi=\pi\circ T$. An alternative way of introducing a factor of $(X,\mathcal A,\mu,T)$
is to indicate the corresponding $T$-invariant sub-$\sigma$-algebra of $\mathcal A$. The Kronecker factor of $(X,\mathcal A,\mu,T)$ is the factor whose associated $L^2$-space is generated by the bounded, $\mathcal A$-measurable eigenfunctions of $T$. It is convenient to identify the Kronecker factor with the quadruple $(X,\mathcal K_T,\mu,T)$, where $\mathcal K_T$ is the smallest $\sigma$-algebra with respect to which every  bounded, $\mathcal A$-measurable eigenfunction of $T$ is measurable.\\
The following result  is an immediate consequence of the comments at the end of \cite[Subsection 2.7]{FraKuJoinErgForIndependentPolys}.
\begin{thm}[Cf. Theorem 2.10 and equation (8) in
\cite{FraKuJoinErgForIndependentPolys}]
\label{FraKuTheorem}
    Let $(X,\mathcal A,\mu)$ be a probability space. For any $d,\ell\in\N$, any commuting invertible $\mu$-preserving transformations  $T_1,...,T_\ell$, any  $\Q$-linearly independent polynomials $p_1,...,p_\ell\in \Z[x_1,...,x_d]$ with zero constant term, and any $f_1,...,f_\ell\in L^\infty(\mu)$, one has 
\begin{multline}
    \lim_{N\rightarrow\infty}\left\|\frac{1}{|\Phi_N|}\sum_{\vec n\in\Phi_N}T_1^{p_1(\vec n)}f_1\cdots T_\ell^{p_\ell(\vec n)}f_\ell\right.\\
    \left.-\frac{1}{|\Phi_N|}\sum_{\vec n\in\Phi_N}T_1^{p_1(\vec n)}\mathbb E(f_1\,|\,\mathcal K_{T_1})\cdots T_\ell^{p_\ell(\vec n)}\mathbb E(f_\ell\,|\,\mathcal K_{T_\ell})\right\|_{L^2}=0
\end{multline}
for any F{\o}lner sequence $(\Phi_N)_{N\in\N}$ in $\Z^d$.
\end{thm}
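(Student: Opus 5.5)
The statement is a characteristic-factor result: in $L^2$, and along any F{\o}lner sequence in $\Z^d$, the multiple polynomial averages are unchanged if each $f_j$ is replaced by its projection onto the Kronecker factor of $T_j$. The plan is to reduce it, by multilinearity and a telescoping argument, to the claim that the averages vanish when some $f_j$ is orthogonal to the Kronecker factor of $T_j$.

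First, write $f_j=\mathbb E(f_j\,|\,\mathcal K_{T_j})+g_j$ with $\mathbb E(g_j\,|\,\mathcal K_{T_j})=0$. Telescoping the product $T_1^{p_1(\vec n)}f_1\cdots T_\ell^{p_\ell(\vec n)}f_\ell$ shows it suffices to prove the following: if some $g_{j_0}$ satisfies $\mathbb E(g_{j_0}\,|\,\mathcal K_{T_{j_0}})=0$ and the other functions are arbitrary in $L^\infty$, then
$$\Big\|\frac{1}{|\Phi_N|}\sum_{\vec n\in\Phi_N}\prod_j T_j^{p_j(\vec n)}g_j\Big\|_{L^2}\rightarrow 0.$$

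This is exactly the content of the joint-ergodicity machinery of Frantzikinakis and Kuca. The steps, in order, are as follows.
\begin{enumerate}[(a)]
\item Apply PET induction together with repeated van der Corput, as in their seminorm-control results, to bound the $\limsup$ of the averages by a Host--Kra-type seminorm of $g_{j_0}$ of some degree $s$ with respect to $T_{j_0}$. This is the ``seminorm control'' step.
\item Run their seminorm smoothing argument. Using the $\Q$-linear independence of the $p_j$, successively lower the degree $s$ down to $1$. At degree $1$, the control is by $\|g_{j_0}\|$ relative to the Kronecker factor of $T_{j_0}$, i.e.\ by spectral data of $T_{j_0}$.
\item Once only degree-$1$ control remains, use the spectral theorem. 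Passing to eigenfunctions, one needs equidistribution of $(p_1(\vec n)\alpha_1,\dots,p_\ell(\vec n)\alpha_\ell)$, which follows from Weyl's theorem because linear independence ensures no nontrivial combination $\sum_j k_jp_j\alpha_j$ is constant. This shows the averages with Kronecker projections behave as claimed. The contribution of $g_{j_0}$ orthogonal to all eigenfunctions vanishes because its spectral measure has no atoms.
\end{enumerate}

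Independence of the F{\o}lner sequence follows because van der Corput and Weyl equidistribution both hold uniformly over F{\o}lner sequences in $\Z^d$.

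The main obstacle is step (b), the degree lowering for commuting transformations. The transformations $T_j$ differ, so the seminorms of the different functions are taken with respect to different transformations. Linear independence must be used to rule out the ``degenerate'' polynomial combinations that would otherwise force control by mixed seminorms. Since the text says this result follows immediately from the comments in \cite[Subsection 2.7]{FraKuJoinErgForIndependentPolys}, in practice I would not reprove it. I would cite Theorem 2.10 of that paper, which gives the Kronecker-factor characteristic property for linearly independent polynomials along arbitrary F{\o}lner sequences. I would then observe that equation (8) there, i.e.\ the $L^2$ identity, is exactly the displayed limit once the zero-constant-term hypothesis removes any need for congruence restrictions.
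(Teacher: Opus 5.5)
Your proposal takes essentially the same route as the paper: the paper gives no argument of its own and simply cites Theorem 2.10 and the remarks at the end of Subsection 2.7 of \cite{FraKuJoinErgForIndependentPolys}, just as you ultimately do. One caution about your sketch: step (c) is unnecessary, because once the average is controlled by the Kronecker-level seminorm of $g_{j_0}$, that seminorm is zero when $\mathbb E(g_{j_0}\,|\,\mathcal K_{T_{j_0}})=0$; moreover, its equidistribution claim is false as stated, since for a rational (e.g.\ zero) eigenvalue $\alpha_j$ the sequence $p_j(\vec n)\alpha_j$ is not equidistributed mod $1$.
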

The following corollary to  \cref{FraKuTheorem} plays an essential role in the proof of (i)$\implies$(ii$^\prime$) of \cref{0.MainResultLebesgueSpaces} (one of the main results of this paper).
\begin{cor}[Cf. Lemma 4.3 in \cite{FraKra2006IndependentPolys}]\label{2.Cor:DensityFraKu}
    Let $(X,\mathcal A,\mu)$ be a probability space. For any $d,\ell\in\N$, any commuting invertible $\mu$-preserving transformations  $T_1,...,T_\ell$, any  $\Q$-linearly independent polynomials $p_1,...,p_\ell\in \Z[x_1,...,x_d]$ with zero constant term, and any $f_0,f_1,...,f_\ell\in L^\infty(\mu)$, one has 
\begin{multline}\label{eq:FraKuCor}
    \lim_{N\rightarrow\infty}\frac{1}{|\Phi_N|}\sum_{\vec n\in\Phi_N}\left|\int_Xf_0T_1^{p_1(\vec n)}f_1\cdots T_\ell^{p_\ell(\vec n)}f_\ell\text{d}\mu\right.\\
    \left.-\int_X f_0T_1^{p_1(\vec n)}\mathbb E(f_1\,|\,\mathcal K_{T_1})\cdots T_\ell^{p_\ell(\vec n)}\mathbb E(f_\ell\,|\,\mathcal K_{T_\ell})\text{d}\mu\right|=0
\end{multline}
for any F{\o}lner sequence $(\Phi_N)_{N\in\N}$ in $\Z^d$.
\end{cor}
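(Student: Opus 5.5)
The plan is to derive \cref{2.Cor:DensityFraKu} from the $L^2$ statement of \cref{FraKuTheorem} via a van der Corput/Cauchy--Schwarz trick, in the spirit of \cite[Lemma 4.3]{FraKra2006IndependentPolys}. Set
\[
a_{\vec n}:=\int_X f_0\,T_1^{p_1(\vec n)}f_1\cdots T_\ell^{p_\ell(\vec n)}f_\ell\,d\mu-\int_X f_0\,T_1^{p_1(\vec n)}\tilde f_1\cdots T_\ell^{p_\ell(\vec n)}\tilde f_\ell\,d\mu,
\]
where $\tilde f_j=\mathbb E(f_j\,|\,\mathcal K_{T_j})$. Since $a_{\vec n}$ is bounded, it suffices to show that $\frac{1}{|\Phi_N|}\sum_{\vec n\in\Phi_N}|a_{\vec n}|^2\to0$. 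We may assume the $f_j$ are real valued (or conjugate appropriately) and write $|a_{\vec n}|^2=a_{\vec n}\overline{a_{\vec n}}$.

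The standard doubling trick expresses $|a_{\vec n}|^2$ as an integral on the product space. Work on $(X\times X,\mu\times\mu)$ with the commuting transformations $S_j:=T_j\times T_j$. Expanding the product shows that $|a_{\vec n}|^2$ is a signed sum of four terms of the form
\[
\int_{X\times X} (f_0\otimes\bar f_0)\,\prod_{j}S_j^{p_j(\vec n)}(g_j\otimes \bar h_j)\,d(\mu\times\mu),
\]
where each $g_j,h_j\in\{f_j,\tilde f_j\}$. The key input is then \cref{FraKuTheorem}, applied to the system $(X\times X,S_1,\dots,S_\ell)$ with the same independent polynomials. It gives that the Følner averages of $\prod_j S_j^{p_j(\vec n)}(g_j\otimes\bar h_j)$ are asymptotically equal in $L^2$ to those of $\prod_j S_j^{p_j(\vec n)}\mathbb E(g_j\otimes\bar h_j\,|\,\mathcal K_{S_j})$.

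The main obstacle, and the fact I would verify first, is the identity
\[
\mathbb E(g\otimes \bar h\,|\,\mathcal K_{T\times T})=\mathbb E(g\,|\,\mathcal K_T)\otimes\overline{\mathbb E(h\,|\,\mathcal K_T)}.
\]
This is the standard fact that the Kronecker factor of $T\times T$ is spanned by eigenfunctions, which are generated by tensors of eigenfunctions of $T$. Equivalently, for $g\perp\mathcal K_T$, the function $g\otimes\bar h$ is orthogonal to every eigenfunction of $T\times T$, which follows from the spectral characterization: $g\otimes \bar h$ then has continuous spectral measure under $T\times T$. This holds since that measure is the convolution of the spectral measures of $g$ and $h$, which need not be positive measures, so I would instead use the criterion
\[
\frac{1}{|\Phi_N|}\sum_{\vec m}\bigl|\langle (T\times T)^{m}F,F\rangle\bigr|^2\to0,
\]
which factors.

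With this identity, each of the four limits equals
\[
\lim_N\frac{1}{|\Phi_N|}\sum_{\vec n\in\Phi_N}\int (f_0\otimes\bar f_0)\prod_j S_j^{p_j(\vec n)}(\tilde f_j\otimes\overline{\tilde f_j})\,d(\mu\times\mu),
\]
since $\tilde g_j=\tilde h_j=\tilde f_j$ in every case. The alternating signs then cancel, giving $\frac{1}{|\Phi_N|}\sum_{\vec n\in\Phi_N}|a_{\vec n}|^2\to0$, and Cauchy--Schwarz yields \eqref{eq:FraKuCor}.
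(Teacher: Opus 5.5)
Your argument is correct and uses the same ingredients as the paper's proof: doubling to $(X\times X,\mu\otimes\mu)$ with $T_j\times T_j$, the identity $\mathcal K_{T\times T}=\mathcal K_T\otimes\mathcal K_T$, and \cref{FraKuTheorem} applied to the product system. The only difference is that the paper first uses the telescoping identity \eqref{eq:IntegralZeroIdentity} to reduce to the case where some $\mathbb E(f_j\,|\,\mathcal K_{T_j})=0$, whereas you expand $|a_{\vec n}|^2$ into four cross terms that cancel. One small correction: your convolution argument was already valid, because $\sigma_g$ and $\sigma_h$ are positive measures and a continuous finite measure convolved with any finite measure is continuous, so the detour through Wiener's criterion is unnecessary (though also valid).
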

\begin{proof}
The  proof is similar to that of  Lemma 4.3 in \cite{FraKra2006IndependentPolys}.
    Fix a F{\o}lner sequence $(\Phi_N)_{N\in\N}$ in $\Z^d$. For any complex numbers $a_0,...,a_\ell$ and $b_0,...,b_\ell$ one has that
\begin{equation}\label{eq:IntegralZeroIdentity}
        \prod_{j=0}^\ell a_j-\prod_{j=0}^\ell b_j
    =(a_0-b_0)\prod_{j=1}^\ell a_j+b_0(a_1-b_1)\prod_{j=2}^\ell a_j+
    \cdots+\prod_{j=0}^{\ell-1}b_j(a_\ell-b_\ell).
    \end{equation}
    Thus, in order to prove \eqref{eq:FraKuCor}, it is enough to show that 
\begin{equation}\label{eq:StrongConvergence1}
    \lim_{N\rightarrow\infty}\frac{1}{|\Phi_N|}\sum_{\vec n\in\Phi_N}|\int_Xf_0T_1^{p_1(\vec n)}f_1\cdots T_\ell^{p_\ell(\vec n)}f_\ell\text{d}\mu|^2=0,
    \end{equation}
    provided that for some $j\in\{1,...,\ell\}$, $\mathbb E(f_j\,|\,\mathcal K_{T_j})=0$. Without loss of generality, we assume $\mathbb E(f_\ell\,|\,\mathcal K_{T_\ell})=0$. Note that \eqref{eq:StrongConvergence1} holds whenever
    \begin{equation}\label{eq:StrongConvergence2}
    \lim_{N\rightarrow\infty}\frac{1}{|\Phi_N|}\sum_{\vec n\in\Phi_N}\int_{X\times X}f_0\otimes \overline {f_0}(T_1\times T_1)^{p_1(\vec n)}f_1\otimes \overline {f_1}\cdots(T_\ell\times T_\ell)^{p_\ell(\vec n)}f_\ell\otimes \overline {f_\ell}\text{d}(\mu\otimes\mu)=0.
    \end{equation}
    (Here $\overline f_j$ denotes the complex conjugate of $f_j$ for $j\in \{0,...,\ell\}$). By a classical fact (see for example  \cite[Lemma 4.18]{Fbook}), one has that $\mathcal K_{T_\ell\times T_\ell}=\mathcal K_{T_\ell}\otimes \mathcal K_{T_\ell}$. Thus, because $\mathbb E(f_\ell\,|\,\mathcal K_{T_\ell})=0$, we also have that $\mathbb E(f_\ell \otimes \overline{f_\ell}\,|\,\mathcal K_{T_\ell\times T_\ell})=0$. Invoking \cref{FraKuTheorem}, we obtain that \eqref{eq:StrongConvergence2} holds and, so, \eqref{eq:FraKuCor} also holds. 
\end{proof}
In the sequel we will also need the following result.
\begin{lem}[Lemma 1.6 in \cite{ChuLowerBoundForCommutingTs}]\label{2.lem:ProdExpectationsLowerBound}
    Let $(X,\mathcal A,\mu)$ be a probability space, let $\ell\geq 1$, and let $\mathcal A_1,...,\mathcal A_\ell$ be sub-$\sigma$-algebras of $\mathcal A$. For any non-negative $f\in L^\infty(\mu)$ one has
    $$\int_X f\mathbb E(f\,|\,\mathcal A_1)\cdots\mathbb E(f\,|\,\mathcal A_\ell)\text{d}\mu\geq \left( \int_X f\text{d}\mu\right)^{\ell+1}.$$
\end{lem}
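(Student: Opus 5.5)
The statement to prove is \cref{2.lem:ProdExpectationsLowerBound}: for non-negative $f\in L^\infty(\mu)$ and sub-$\sigma$-algebras $\mathcal A_1,\dots,\mathcal A_\ell$,
$$\int_X f\,\mathbb E(f\,|\,\mathcal A_1)\cdots\mathbb E(f\,|\,\mathcal A_\ell)\,\text{d}\mu\geq\Big(\int_X f\,\text{d}\mu\Big)^{\ell+1}.$$
The plan is to reduce to the case $f>0$ bounded away from zero, and then combine the conditional Jensen inequality with Hölder's inequality. The model case is $\ell=1$: there $\int f\,\mathbb E(f|\mathcal A_1)=\int \mathbb E(f|\mathcal A_1)^2\geq(\int f)^2$ by Cauchy--Schwarz. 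For general $\ell$, the difficulty is that the conditional expectations are taken with respect to different $\sigma$-algebras, so $f$ cannot be moved inside any single one of them. The idea is to use a logarithmic/geometric-mean argument instead.

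\textbf{Step 1 (reduction).} Replace $f$ by $f+\delta$ with $\delta>0$ and let $\delta\to0$ at the end. Both sides are continuous in $\delta$, since everything is bounded. So we may assume $c\le f\le C$ for constants $0<c\le C$.

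\textbf{Step 2 (normalise the measure).} Write $a=\int f\,\text{d}\mu$ and define the probability measure $\text{d}\nu=(f/a)\,\text{d}\mu$. Then
$$\int f\prod_{i=1}^{\ell}\mathbb E(f|\mathcal A_i)\,\text{d}\mu=a\int\prod_i \mathbb E(f|\mathcal A_i)\,\text{d}\nu .$$
Apply Jensen's inequality for $\exp$ (equivalently, AM--GM in integral form) with respect to $\nu$:
$$\int\prod_i g_i\,\text{d}\nu\geq\exp\Big(\sum_i\int\log g_i\,\text{d}\nu\Big),\qquad g_i=\mathbb E(f|\mathcal A_i).$$
It therefore suffices to show, for each $i$, that $\int\log\mathbb E(f|\mathcal A_i)\,\text{d}\nu\geq\log a$. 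That is, we need
$$\int f\log\mathbb E(f|\mathcal A_i)\,\text{d}\mu\geq a\log a.$$

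\textbf{Step 3 (the key single-$\sigma$-algebra inequality).} Write $\mathcal B=\mathcal A_i$ and $g=\mathbb E(f|\mathcal B)$. Since $\log g$ is $\mathcal B$-measurable,
$$\int f\log g\,\text{d}\mu=\int g\log g\,\text{d}\mu.$$
The function $t\mapsto t\log t$ is convex, so Jensen's inequality gives $\int g\log g\,\text{d}\mu\geq a\log a$, using $\int g\,\text{d}\mu=a$. The boundedness from Step 1 ensures that all logarithms are integrable.

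Combining Steps 2 and 3 gives a lower bound of $a\cdot\exp(\ell\log a)=a^{\ell+1}$, which is the claim.

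The main obstacle is recognising the change of measure in Step 2, which decouples the different $\sigma$-algebras. Once that is in place, each factor only requires the classical entropy-type Jensen estimate in Step 3. The only remaining care is the integrability and positivity technicalities, which are handled by the $\delta$-perturbation in Step 1.
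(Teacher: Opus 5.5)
Your proof is correct. The paper gives no proof of this lemma (it is quoted from Chu), so there is no in-paper argument to compare against, and I checked your steps directly.

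After the $\delta$-shift, each $g_i=\mathbb E(f\,|\,\mathcal A_i)$ lies in $[c,C]$ a.e., so all logarithms are bounded. Jensen for $\exp$ under the tilted measure $\text{d}\nu=(f/a)\,\text{d}\mu$ turns the product into a sum over the $\sigma$-algebras. Finally, $\int f\log g_i\,\text{d}\mu=\int g_i\log g_i\,\text{d}\mu\geq a\log a$ by the defining property of $\mathbb E(\cdot\,|\,\mathcal A_i)$ and convexity of $t\log t$. Your opening sentence announces conditional Jensen and H\"older, but you actually use ordinary Jensen twice; that is only a presentational slip.

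For comparison, the standard argument is a single application of H\"older's inequality with $\ell+1$ exponents all equal to $\ell+1$. Since $\int_{\{g_i=0\}}f\,\text{d}\mu=\int_{\{g_i=0\}}g_i\,\text{d}\mu=0$, $f$ vanishes a.e. off $S=\bigcap_i\{g_i>0\}$. On $S$ one writes $f=\bigl(f\prod_i g_i\bigr)^{1/(\ell+1)}\prod_i(f/g_i)^{1/(\ell+1)}$. Together with $\int_{\{g_i>0\}}f/g_i\,\text{d}\mu=\mu(\{g_i>0\})\leq 1$ (by monotone convergence), this gives the claim directly, with the null-set observation replacing your perturbation.

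Both routes work for the same reason: each factor is tested against a single $\sigma$-algebra, where $f$ may be replaced by its conditional expectation. They yield different by-products, though. H\"older gives the gain factor $\prod_i\mu(\{g_i>0\})^{-1}$. Your argument shows (for $a>0$, with $0\log 0=0$)
\[
\int_X f\prod_{i=1}^{\ell}g_i\,\text{d}\mu\geq a^{\ell+1}\exp\Bigl(\frac{1}{a}\sum_{i=1}^{\ell}\Bigl(\int_X g_i\log g_i\,\text{d}\mu-a\log a\Bigr)\Bigr),
\]
an entropy-type improvement that is strict unless every $g_i$ is a.e. constant.
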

%%%%%%%%%%%%%%%%%%%%%%%%%%%%%%%%%%%%%%%%%%%%%%%%%%
%%%%%%%%%%%%%%%%%%%%%%%%%%%%%%%%%%%%%%%%%%%%%%%%%%%%%%%%%%%%%%%%%%%%%%%%%
%%%%%%%%%%%%%%%%%%%%%%%%%%%%%%%%%%%%%%%%%%%%%%%%%%%%%%%%%%%%%
%%%%%%%%%%%%%%%%%%%%%%%%%%%%%%%%%%%%%%%%%%%%%
\section{VIP$_0^*$ sets, A-VIP$_0^*$ sets, and a Diophantine result}\label{Sec:3}
Our goal in this section is to introduce the notions of VIP$_0^*$ and A-VIP$_0^*$ sets mentioned in the introduction and formulate and prove a useful Diophantine result which will be used in the sequel.
%%%%%%%%%%%%%%%%%%%%%%%%%%%%%%%%%%%%%%%%%%%%%%%%%%%%%%%%%%%%%%%%%%%%%%%%%%%%%%%%%%%%%%%%%%%
\subsection{{\rm VIP$_0^*$} and {\rm A-VIP$_0^*$} sets}\label{Sec3.1}
The presentation of the material in this subsection closely  follows  \cite[pp. 645-646]{BerLeibNilsystemsIPr}.\\
For any  set $A$, let $\mathcal F(A)$ denote the class of all non-empty finite subsets of $A$ and put $\mathcal F_\emptyset(A):=\mathcal F(A)\cup\{\emptyset\}$. Let $H$ be an additive abelian group and let   $\varphi:\mathcal F_\emptyset (A)\rightarrow H$ be a map. For any $\beta\in\mathcal F_\emptyset (A)$, we define the $\beta$-derivative of $\varphi$ to be the map $D_\beta\varphi:\mathcal F_\emptyset(A\setminus \beta)\rightarrow H$ given by 
$$D_\beta\varphi(\alpha)=\varphi(\alpha\cup \beta)-\varphi(\alpha).$$
Given $r\in\N$, we say that $\varphi:\mathcal F_\emptyset(\{1,...,r\})\rightarrow H$ is a \textit{VIP$_r$-polynomial} of degree at most $k$, if for any pairwise disjoint $\beta_0,...,\beta_k\in\mathcal F_\emptyset (\{1,...,r\})$, $D_{\beta_0}D_{\beta_1}\cdots D_{\beta_k}\varphi=0$.
\begin{example}
    Let $r,d\in\N$ and let $\{\vec n_1,...,\vec n_r\}$ be an $r$-element subset of $\Z^d$. For any $\alpha\in \mathcal F_\emptyset(\{1,...,r\})$, let 
    $$\vec n_\alpha=\sum_{k\in\alpha}\vec n_k,$$
    where, by convention, we take $\sum_{k\in\emptyset}\vec n_k=0$. For any polynomial $p\in \Z[x_1,...,x_d]$, the map $\varphi:\mathcal F_\emptyset(\{1,...,r\})\rightarrow \Z$ defined by $\varphi(\alpha)=p(\vec n_\alpha)$ is a VIP$_r$-polynomial of degree at most $k$, where $k=\deg_{x_1,...,x_d} p$ (i.e. $k$ is the least non-negative integer $K$ for which 
 $$
 p(x_1,...,x_d)=\sum_{s=0}^K\sum_{\substack{t_1+\cdots+t_d=s\\t_1,...,t_d\in\N\cup\{0\}}}c_s^{(t_1,...,t_d)}x_1^{t_1}\cdots  x_d^{t_d}.)$$
\end{example}
\begin{defn}\label{3.defnOFVIP*}
For any given $k,d,r\in\N$, a set $E\subseteq \Z^d$ is called a VIP$_{k,r}$ set, if there is a VIP$_r$-polynomial mapping $\varphi:\mathcal F_\emptyset(\{1,...,r\})\rightarrow \Z^d$ of degree at most $k$ satisfying $\varphi(\emptyset)=0$ and such that 
$$\{\varphi(\alpha)\,|\,\alpha\subseteq \{1,...,r\},\,\alpha\neq\emptyset\}\subseteq E.$$
A set $E\subseteq \Z^d$ is called a VIP$_{k,r}^*$ set if it has a non-trivial intersection with every VIP$_{k,r}$ set in $\Z^d$ and say that it is a VIP$_0^
*$ set if for each $k\in\N$ there is an $r\in\N$ for which $E$ is VIP$_{k,r}^*$.
We say that a set $E\subseteq \Z^d$ is A-VIP$_0^*$ if and only if there exists a set $F\subseteq \Z^d$ with $d^*(F)=0$ such that $E\cup F$ is a VIP$_0^*$ set.\\
Notice that, by definition, every VIP$_0^*$ set is an IP$_0^*$ set and every A-VIP$_0^*$ set is an A-IP$_0^*$ set. 
\end{defn}
The following proposition establishes the finite intersection property of   VIP$_0^*$  and A-VIP$_0^*$ sets.
\begin{prop}\label{4.rem:IntersectionProperty}
Let $d\in\N$ and let $E,F\subseteq \Z^d$. If both $E$ and $F$ are {\rm VIP$_0^*$} sets, then $E\cap F$ also is. If both $E$ and $F$ are {\rm A-VIP$_0^*$} sets, then $E\cap F$ also is.
\end{prop}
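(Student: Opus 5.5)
The plan is to treat the VIP$_0^*$ statement as a finitary Ramsey-type fact, reducing it to a partition-regularity statement for VIP$_{k,r}$ sets. Everything hinges on the following claim. \emph{For every $k,s\in\N$ there is $R=R(k,s)$ such that whenever $\varphi:\mathcal F_\emptyset(\{1,\dots,R\})\to\Z^d$ is a VIP$_R$-polynomial of degree at most $k$ with $\varphi(\emptyset)=0$, and the non-empty $\alpha$ are $2$-coloured, there are pairwise disjoint non-empty $\beta_1,\dots,\beta_s\subseteq\{1,\dots,R\}$ with $\{\varphi(\bigcup_{i\in\gamma}\beta_i)\,|\,\emptyset\neq\gamma\subseteq\{1,\dots,s\}\}$ monochromatic.}

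Granting this, fix $k$. Choose $r_E,r_F$ with $E$ a VIP$_{k,r_E}^*$ set and $F$ a VIP$_{k,r_F}^*$ set, and let $s=\max(r_E,r_F)$ and $R=R(k,s)$. Take any VIP$_{k,R}$ set, given by some $\varphi$. Suppose, for contradiction, that no $\varphi(\alpha)$ with $\alpha\neq\emptyset$ lies in $E\cap F$. Colour $\alpha$ red if $\varphi(\alpha)\notin E$ and blue otherwise; in the blue case $\varphi(\alpha)\notin F$.

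The claim yields $\beta_1,\dots,\beta_s$, and we set $\psi(\gamma)=\varphi(\bigcup_{i\in\gamma}\beta_i)$. Then $\psi$ is a VIP$_s$-polynomial of degree at most $k$ with $\psi(\emptyset)=0$. To see this, note that for disjoint $\gamma_0,\dots,\gamma_k$ the derivatives satisfy $D_{\gamma_0}\cdots D_{\gamma_k}\psi(\gamma)=D_{\beta_{\gamma_0}}\cdots D_{\beta_{\gamma_k}}\varphi(\beta_\gamma)$, where $\beta_\gamma=\bigcup_{i\in\gamma}\beta_i$ and the $\beta_{\gamma_j}$ are disjoint. Restricting $\psi$ to $\mathcal F_\emptyset(\{1,\dots,r_E\})$, or to $\{1,\dots,r_F\}$, gives a VIP$_{k,r_E}$ set disjoint from $E$ or a VIP$_{k,r_F}$ set disjoint from $F$. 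Either way this is a contradiction, so $E\cap F$ is VIP$_{k,R}^*$ for every $k$, i.e. VIP$_0^*$.

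The claim itself is a finitary Hindman/Folkman theorem, specifically the Graham–Rothschild (or Taylor) theorem for finite unions. It states that for each $s$ there is $R$ such that every $2$-colouring of $\mathcal F(\{1,\dots,R\})$ admits disjoint $\beta_1,\dots,\beta_s$ all of whose non-empty unions are monochromatic. Apply it to the colouring of sets pulled back through $\varphi$. This is where I expect the main care to be needed. Once one sees that the colouring may be placed on $\mathcal F(\{1,\dots,R\})$ rather than on $\Z^d$, the statement becomes independent of $k$ and of the polynomial structure. Alternatively one could cite \cite[Proposition 2.5]{BDonaldRobertsonIP_r} or the treatment in \cite{BerLeibNilsystemsIPr}. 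The only point requiring verification is the closure of VIP-polynomiality under the union substitution, which is the derivative identity above.

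For the A-VIP$_0^*$ statement, repeat the argument given in the introduction for A-IP$^*$. Choose $E',F'$ with $d^*(E')=d^*(F')=0$ such that $E\cup E'$ and $F\cup F'$ are VIP$_0^*$. By the first part $(E\cup E')\cap(F\cup F')$ is VIP$_0^*$, and this set is contained in $(E\cap F)\cup(E'\cup F')$. Supersets of VIP$_0^*$ sets are VIP$_0^*$, and $d^*(E'\cup F')\le d^*(E')+d^*(F')=0$ by subadditivity of upper Banach density. Hence $E\cap F$ is A-VIP$_0^*$.
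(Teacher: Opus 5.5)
Your proof is correct and takes essentially the same route as the paper. Both apply the finitary finite-unions (Folkman/Graham--Rothschild) theorem to the $2$-colouring that $E$ induces on a VIP$_{k,R}$ configuration, and both derive the A-VIP$_0^*$ case from the VIP$_0^*$ case exactly as in the A-IP$^*$ argument of the introduction. Your explicit check that $\gamma\mapsto\varphi(\bigcup_{i\in\gamma}\beta_i)$ is again a VIP-polynomial of degree at most $k$ fills in a step the paper leaves implicit when it asserts that one of $D\cap E$ and $D\setminus E$ is a VIP$_{k,r_0}$ set.
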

\begin{proof}
That A-VIP$_0^*$ sets have the finite intersection property can be derived from the fact that VIP$_0^*$ sets also possess this property in a way similar to that in which the corresponding property of A-IP$^*$ sets is deduced from that of IP$^*$ (see the Introduction). Thus, we only need to show that VIP$_0^*$ sets have the finite intersection property. \\
 Let $E,F\subseteq \Z^d$ be VIP$_0^*$ sets and fix $k\in\N$. It suffices to show that $E\cap F$ is a VIP$_{k,R}^*$ set for some $R\in\N$.
 Note that by the definition of VIP$_0^*$ sets,  there exists an $r_0\in\N$  with the property that for any $r\geq r_0$, $E$ and $F$ are  both VIP$_{k,r}^*$ sets.  By \cite[Theorem 3.13]{graham1991RamseyTheory} (which is a finitary version of Hindman's theorem),  there is an $R\in\N$ with the property that for any VIP$_{k,R}$ set $D\subseteq \Z^d$, at least one of  $D\cap E$ and $D\cap (\Z^d\setminus E)$ is a VIP$_{k,r_0}$ set. Since $E$ is a VIP$_{k,r_0}^*$ set,  $D\cap (\Z^d\setminus E)$ cannot be a VIP$_{k,r_0}$ set. It follows that $D\cap E$ is a VIP$_{k,r_0}$ set and, so, $(D\cap E)\cap F\neq\emptyset$. Thus, $E\cap F$ is a VIP$_{k,R}^*$ set. We are done. 
\end{proof}
%%%%%%%%%%%%%%%%%%%%%%%%%%%%%%%%%%%%%%%%%%%%%
\subsection{A Diophantine result involving polynomials and VIP$_0^*$ sets}
In this section we prove a Diophantine result, \cref{3.thm:DiophantineRec}, which is needed for the proof of the implication (i)$\implies$(ii$^\prime$) in \cref{0.MainResultLebesgueSpaces} in Section \ref{Sec4}. We remark that  some of the consequences of \cref{3.thm:DiophantineRec} were implicitly used in the proofs of \cite[Theorem 1.4]{bergelson2008intersective} and \cite[Theorem 4.10]{AlmostIPBerLeib}. 
\begin{thm}\label{3.thm:DiophantineRec}
    Let $d,\ell,t\in\N$, let $p_1,...,p_\ell\in\Z[x_1,...,x_d]$ be a family of non-constant, jointly intersective polynomials, and let $\alpha_1,...,\alpha_t\in\R$. For every $\epsilon>0$ there is a ${\vec u}\in \Z^d$ for which the set  
    \begin{equation}\label{3.eq:DiophantineReturns}
\left(\bigcap_{j=1}^\ell\bigcap_{s=1}^t\{\vec n\in\Z^d\,|\,\|p_j(\vec n)\alpha_s\|<\epsilon\}\right)-{\vec u}
    \end{equation}
    is a {\rm VIP$_0^*$} set. Furthermore, if $p_j(\vec 0)=0$ for each $j\in\{1,...,\ell\}$, one can take ${\vec u}=0$. 
\end{thm}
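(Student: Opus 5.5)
Throughout, write $P_j(\vec n)=(p_j(\vec n)\alpha_1,\dots,p_j(\vec n)\alpha_t)$ and $P=(P_1,\dots,P_\ell)$, a polynomial map from $\Z^d$ to $\mathbb T^{\ell t}$. The target set is then $\{\vec n\,|\,P(\vec n)\in B_\epsilon\}$, where $B_\epsilon$ is the $\epsilon$-neighbourhood of $0$ in the torus. The plan is to first reduce to the case of zero constant term. Then we prove the zero-constant-term case by a finitary IP-polynomial recurrence argument on the torus, driven by compactness and induction on degree.

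\textbf{Step 1: reduction via joint intersectivity.} We split the $\alpha_s$ into rational and irrational parts. More precisely, choose a $\Q$-basis $1,\beta_1,\dots,\beta_m$ of $\Q+\sum_s\Q\alpha_s$ and write $\alpha_s=c_s+\sum_i q_{s,i}\beta_i$ with $c_s,q_{s,i}\in\Q$. Let $M$ be a common denominator of all $c_s,q_{s,i}$. By joint intersectivity there is $\vec u\in\Z^d$ with $p_j(\vec u)\equiv 0\bmod M'$ for a suitably large multiple $M'$ of $M$. Set $\tilde p_j(\vec n)=p_j(\vec n+\vec u)-p_j(\vec u)$; these polynomials have zero constant term. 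Then
\[
\|p_j(\vec n+\vec u)\alpha_s\|\le \|\tilde p_j(\vec n)\alpha_s\|+\|p_j(\vec u)\alpha_s\|.
\]
The second term is the obstacle: $p_j(\vec u)\alpha_s$ is small only if we control the irrational part as well. So we instead pick $\vec u$ with $p_j(\vec u)\equiv0\bmod M$ and \emph{simultaneously} $\|p_j(\vec u)\beta_i/M\|$ small for all $j,i$. Such $\vec u$ exist by applying the zero-constant-term case (Step 2) to the polynomials $\vec m\mapsto p_j(\vec u_0+M'\vec m)-p_j(\vec u_0)$. Here $\vec u_0$ is a jointly intersective point modulo a large power $M'$ of $M$; it is chosen so that every value of these shifted polynomials lies in $M\Z$, and so that $p_j(\vec u_0)\beta$ can be absorbed. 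If this bookkeeping gets messy, the cleaner route is to work directly in the compact group $\mathbb T^{\ell t}$: the orbit closure of $0$ under $\vec n\mapsto P(\vec n)$ contains $0$ by intersectivity plus Weyl equidistribution along cosets $\vec u_0+M'\Z^d$. Once we have $\vec u$ with $\|P(\vec u)\|<\epsilon/2$, the set in \eqref{3.eq:DiophantineReturns} contains $\{\vec n\,|\,\|\tilde P(\vec n)\|<\epsilon/2\}$, where $\tilde P$ has zero constant term. So it suffices to prove: for zero-constant-term polynomials, $\{\vec n\,|\,\|\tilde p_j(\vec n)\alpha_s\|<\delta\ \forall j,s\}$ is VIP$_0^*$.

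\textbf{Step 2: zero constant term.} Fix the degree $k$ and let $\varphi:\mathcal F_\emptyset(\{1,\dots,r\})\to\Z^d$ be a VIP$_r$-polynomial of degree $\le k$ with $\varphi(\emptyset)=0$. Then $g(\alpha)=\tilde P(\varphi(\alpha))\in\mathbb T^{\ell t}$ is a VIP$_r$-polynomial of degree $\le kK$, where $K=\max\deg p_j$, with $g(\emptyset)=0$. We need: for each $D$ and $\delta$ there is $r=r(D,\delta,\ell t)$ such that every VIP$_r$-polynomial $g:\mathcal F_\emptyset(\{1,\dots,r\})\to\mathbb T^N$ of degree $\le D$ with $g(\emptyset)=0$ has some nonempty $\alpha$ with $\|g(\alpha)\|<\delta$. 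This is a finitary polynomial Hindman/recurrence statement on the torus, and we prove it by induction on $D$.

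\emph{Degree 1.} Here $g(\alpha)=\sum_{i\in\alpha}x_i$. Cover $\mathbb T^N$ by $L\approx\delta^{-N}$ cells of diameter $<\delta$. By pigeonhole among the $L+1$ partial sums $x_1+\dots+x_i$, two fall in the same cell, and their difference is $g$ of an interval.

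\emph{Inductive step.} We follow the standard PET/van der Waerden-type scheme of Bergelson--Leibman. Write $g(\alpha\cup\beta)=g(\alpha)+g(\beta)+h_\beta(\alpha)$, where $h_\beta=D_\beta g-g(\beta)$ has degree $<D$ in $\alpha$. Then choose blocks $\beta_1<\beta_2<\dots$ by a pigeonhole (coloring) argument: colour each $\beta$ by the cell containing $g(\beta)$. Among sufficiently many unions of blocks, find a configuration where $g(\beta_i\cup\dots)$ lies near $g$ of a smaller union. The difference is then a lower-degree VIP-polynomial, to which we apply the induction hypothesis. The finite Ramsey statement \cite[Theorem 3.13]{graham1991RamseyTheory}, already used in the proof of \cref{4.rem:IntersectionProperty}, handles the colouring with finitely many colours.

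\textbf{Main obstacle.} I expect the inductive step in Step 2 to be the hardest part: obtaining $r$ uniformly in the VIP-polynomial with only a finite Ramsey input. A way around it is to argue by compactness and contradiction. If no such $r$ exists, take counterexamples $g_r$ for each $r$. Pass to a limit via a diagonal argument, using that the coefficients of VIP-polynomials of bounded degree on $\mathbb T^N$ range in a compact space. This yields an infinite VIP-system $g:\mathcal F(\N)\to\mathbb T^N$ avoiding $B_\delta$. That contradicts the IP-polynomial recurrence for compact group rotations, which is the IP$^*$ version \`a la \cref{0.thm:BFM}, obtainable from Hindman's theorem together with IP-limits in $\mathbb T^N$.
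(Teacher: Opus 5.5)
The gap is in Step 1. In the paper this is the substantive part of the proof; the zero-constant-term case is simply imported as \cref{3.lem:DiophantineRec} from Bergelson--Leibman.

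Your first route does not work. Applying the zero-constant-term result to $\vec m\mapsto p_j(\vec u_0+M'\vec m)-p_j(\vec u_0)$ only makes $p_j(\vec u_0+M'\vec m)\beta_i/M$ close to $p_j(\vec u_0)\beta_i/M$. No congruence condition on $\vec u_0$ makes that constant small, so nothing is ``absorbed''; making it small is the original problem.

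Your second route, Weyl equidistribution along $\vec u_0+M\Z^d$, is the right one and is what the paper does. However, you invoke Weyl without checking its hypothesis, and that hypothesis is exactly where joint intersectivity is used a second time. After splitting each $\alpha_s$ over a $\Q$-basis $1,\beta_1,\dots,\beta_m$ and discarding $\Q$-dependent $p_j$'s (rescaling the $\beta_i$), you choose $M$ clearing the rational parts and $\vec u_0$ with $M\mid p_j(\vec u_0)$. This handles the rational components. The irrational components $p_j(\vec u_0+M\vec m)\beta_i$ can be pushed to $0$ by Weyl only if the non-constant parts $p_j(\vec u_0+M\vec x)-p_j(\vec u_0)$ are $\Q$-linearly independent.

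Linear independence of the $p_j$ alone does not give this. For $p_2=p_1+c$ with $c\neq 0$, the shifted polynomials coincide. Then $p_2(\vec n)\beta-p_1(\vec n)\beta=c\beta$ is a fixed nonzero point of $\mathbb T$, and the closure of $\{P(\vec n)\}$ misses $0$ whichever cosets you use. What excludes this is \cref{3.lem:LinearIndForJointlyIntersective}. A relation $\sum_ja_j(p_j-p_j(\vec u_0))=0$ makes $\sum_ja_jp_j$ a constant. By joint intersectivity that constant is divisible by every $m$, hence $0$, so $a_1=\dots=a_\ell=0$. With this added, your Step 1 becomes the paper's argument.

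Step 2 is a genuinely different route, and it is valid in its compactness form. The Möbius coefficients $\eta(\gamma)=\sum_{\beta\subseteq\gamma}(-1)^{|\gamma\setminus\beta|}g(\beta)$, for $1\le|\gamma|\le D$, are arbitrary points of $\mathbb T^N$. They determine every VIP$_r$-polynomial $g$ of degree at most $D$ with $g(\emptyset)=0$. So a diagonal limit of counterexamples is a VIP-system on $\mathcal F(\N)$ staying $\delta$-away from $0$. This contradicts the standard fact (Bergelson--Furstenberg--McCutcheon) that every torus-valued VIP-system tends to $0$ along some IP-ring. Note that you need this for general VIP-systems in $\mathbb T^N$, not only the polynomials-along-IP-sets statement of \cref{0.thm:BFM}.

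Compared with the paper, this re-proves the finitary torus recurrence that the paper quotes. Working directly in $\mathbb T^{\ell t}$ also makes \cref{4.rem:IntersectionProperty} unnecessary. The PET/Ramsey induction sketch is not a proof as written and can be dropped in favour of the compactness argument.
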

The proof of \cref{3.thm:DiophantineRec} makes use of the following two lemmas. The first of these lemmas is a consequence of Theorem 0.5 in \cite{BerLeibNilsystemsIPr} and can be viewed as a simultaneous refinement of both Theorem 2.19 in \cite{Fbook} and Theorem 7.7 in \cite{BerUltraAcrossMath}. The second lemma is a simple observation about the linear independence of a family of jointly intersective polynomials.
\begin{lem}\label{3.lem:DiophantineRec}
    Let $d\in\N$ and let $p\in\Z[x_1,...,x_d]$ be such that $p(\vec 0)=0$. For any $\alpha\in\R$ and any $\epsilon>0$, the set 
    $$
\{(n_1,...,n_d)\in\Z^d\,|\,\|p(n_1,...,n_d)\alpha\|<\epsilon\}
    $$
    is a {\rm VIP$_0^*$} set. 
\end{lem}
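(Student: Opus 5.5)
The plan is to deduce the lemma from the nilsystem/IP$_r$ recurrence theory of \cite{BerLeibNilsystemsIPr}, specifically Theorem 0.5 there. The idea is to encode the Diophantine condition as a return-time statement for a unipotent affine transformation of a torus. Then, using the VIP-polynomial structure of the maps $\alpha\mapsto p(\vec n_\alpha)$, we show that every sufficiently long VIP$_{k,r}$ configuration contains a point of the set in question.

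\textbf{Step 1 (reduction to a VIP$_r$-polynomial statement).} Fix $k\in\N$, and let $\varphi:\mathcal F_\emptyset(\{1,\dots,r\})\to\Z^d$ be a VIP$_r$-polynomial of degree at most $k$ with $\varphi(\emptyset)=0$. The composition $\psi=p\circ\varphi$ is a VIP$_r$-polynomial $\mathcal F_\emptyset(\{1,\dots,r\})\to\Z$ of degree at most $k\deg p$. This holds because a polynomial of degree $m$ applied to a VIP-polynomial of degree $k$ is a VIP-polynomial of degree at most $km$; it is checked by expanding $p$ into monomials and using that products of VIP-polynomials have additive degrees. Moreover $\psi(\emptyset)=p(\vec 0)=0$. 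It therefore suffices to prove the following: for each $K$ there is an $r$ such that for every VIP$_r$-polynomial $\psi:\mathcal F_\emptyset(\{1,\dots,r\})\to\Z$ of degree at most $K$ with $\psi(\emptyset)=0$, some non-empty $\alpha$ satisfies $\|\psi(\alpha)\alpha_0\|<\epsilon$, where $\alpha_0$ denotes the real number from the statement.

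\textbf{Step 2 (the finitary recurrence).} Consider the circle rotation $x\mapsto x+\alpha_0$ on $\mathbb T$. For the integer-valued VIP-polynomial $\psi$, the points $\psi(\alpha)\alpha_0\in\mathbb T$ arise as $T^{\psi(\alpha)}0$. The torus rotation is a (degree-one) nilsystem, and Theorem 0.5 of \cite{BerLeibNilsystemsIPr} gives a uniform IP$_r$ recurrence. Namely, for any nilsystem, any point $x$, any neighborhood $U\ni x$, and any degree bound $K$, there is an $r=r(K,U)$, independent of the particular VIP$_r$-polynomial, such that $T^{\psi(\alpha)}x\in U$ for some non-empty $\alpha$. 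Applying this with $x=0$ and $U=(-\epsilon,\epsilon)$ yields the required $\alpha$.

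If one wants a self-contained route instead, a direct induction on $K$ also works. The method is Weyl-type differencing via derivatives $D_\beta$ combined with pigeonhole in $\mathbb T$ and the finitary Hindman theorem \cite[Theorem 3.13]{graham1991RamseyTheory}. The case $K=1$ is a pigeonhole over the $r+1$ partial sums $\psi(\{1,\dots,i\})$. For the inductive step, pigeonhole produces many disjoint blocks $\beta$ on which $\psi(\beta)\alpha_0$ are close to each other, and one then applies the induction hypothesis to the lower-degree derivatives.

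\textbf{Main obstacle.} The delicate point is the \emph{uniformity} of $r$ over all VIP$_r$-polynomials of degree at most $k$: VIP$_0^*$ demands a single $r$ working for every configuration. I would handle it by quoting the uniform form in \cite[Theorem 0.5]{BerLeibNilsystemsIPr}. For the direct induction, the required uniformity comes from the fact that pigeonhole bounds depend only on $\epsilon$ and $K$, so the iterated block constructions only grow $r$ by a computable function of $(K,\epsilon)$.
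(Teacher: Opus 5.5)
Your proposal is correct and takes the paper's route: the paper gives no separate argument and simply derives the lemma from Theorem 0.5 of \cite{BerLeibNilsystemsIPr}. Your Step 1 (that $p\circ\varphi$ is a VIP$_r$-polynomial of degree at most $k\deg p$ vanishing at $\emptyset$) is exactly the bookkeeping needed to apply that VIP$_r^*$-recurrence result to the rotation by $\alpha$ on $\mathbb T$; the ``direct induction'' you sketch as an alternative is too vague to stand as a proof by itself, but your argument does not need it.
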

\begin{lem}[Cf. Corollary 3.5 in \cite{bergelson2008intersective}] \label{3.lem:LinearIndForJointlyIntersective}
    Let $p_1,...,p_\ell\in\Z[x_1,...,x_d]$ be $\Q$-linearly independent and jointly intersective  polynomials.  Let ${\vec u}\in\Z^d$. For each $j\in\{1,...,\ell\}$, set $q_j=p_j-p_j({\vec u})$. Then, the polynomials $q_1,...,q_\ell$ are $\Q$-linearly independent. 
\end{lem}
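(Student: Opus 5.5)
The plan is to argue by contradiction, using joint intersectivity to rule out the one way linear independence could fail after the shift. The key observation is that a linear relation among the $q_j$ becomes, for the $p_j$, a relation that holds up to an additive constant. Joint intersectivity forbids such a constant from being non-zero.

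Suppose $q_1,...,q_\ell$ are $\Q$-linearly dependent. Then there are rationals $c_1,...,c_\ell$, not all zero, with $\sum_{j=1}^\ell c_jq_j=0$. Clearing denominators, we may assume that $c_1,...,c_\ell\in\Z$. Since $q_j=p_j-p_j({\vec u})$, this relation reads
$$\sum_{j=1}^\ell c_jp_j=c \quad\text{as polynomials, where } c:=\sum_{j=1}^\ell c_jp_j({\vec u})\in\Z.$$
If $c=0$, we have a non-trivial vanishing $\Q$-linear combination of $p_1,...,p_\ell$, which contradicts their linear independence. So we may assume $c\neq 0$; in other words, $\sum_j c_jp_j$ is the non-zero constant polynomial $c$.

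Now we use joint intersectivity with the modulus $m:=|c|+1$. It provides an $\vec n\in\Z^d$ with $p_j(\vec n)\equiv 0\mod m$ for every $j\in\{1,...,\ell\}$. Evaluating the polynomial identity at this $\vec n$ gives
$$c=\sum_{j=1}^\ell c_jp_j(\vec n)\equiv 0\mod m.$$
Hence $m$ divides $c$. But $0<|c|<m$, which is impossible. This contradiction shows that $q_1,...,q_\ell$ are $\Q$-linearly independent.

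There is no real obstacle in this argument. The only points needing care are the reduction to integer coefficients, so that the congruence makes sense, and the fact that the relation holds identically as polynomials, so that it may be evaluated at the point $\vec n$ supplied by joint intersectivity.
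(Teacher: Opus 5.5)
Your proof is correct and is essentially the paper's argument: a relation $\sum_j c_jq_j=0$ forces $\sum_j c_jp_j$ to equal the integer constant $\sum_j c_jp_j(\vec u)$, joint intersectivity shows this constant must vanish, and then the linear independence of the $p_j$ forces all coefficients to be zero. The only cosmetic difference is that you use the single modulus $|c|+1$ in a contradiction argument, whereas the paper notes directly that the constant is divisible by every $M\in\N$.
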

\begin{proof}[Proof of \cref{3.lem:LinearIndForJointlyIntersective}]
    Let $a_1,...,a_\ell\in\Z$ be such that $a_1q_1+\cdots+a_\ell q_\ell=0$. Then, $\sum_{j=1}^\ell a_j p_j=\sum_{j=1}^\ell a_jp_j(\vec  u)$. Furthermore, since $p_1,...,p_\ell$ are jointly intersective, one has that for every $M\in\N$, there is an $\vec n_M\in\Z^d$ with 
    $$\sum_{j=1}^\ell a_jp_j({\vec u})\equiv \sum_{j=1}^\ell a_jp_j(\vec n_M)\equiv 0\mod M.$$
    It follows that $\sum_{j=1}^\ell a_jp_j({\vec u})=0$. So, by the linear independence of $p_1,...,p_\ell$ and because $\sum_{j=1}^\ell a_jp_j=\sum_{j=1}^\ell a_jp_j({\vec u})$, we obtain that $a_1=\cdots=a_\ell=0$. We are done.
\end{proof}
\begin{proof}[Proof of \cref{3.thm:DiophantineRec}]
 Fix $\epsilon>0$. It suffices to show that there is a ${\vec u}\in\Z^d$ with 
\begin{equation}\label{3.eq:SingleGoodTime}
\|p_j({\vec u})\alpha_s\|<\frac{\epsilon}{2},\,(j,s)\in \{1,...,\ell\}\times\{1,...,t\}.
\end{equation}
Indeed, suppose that such a ${\vec u}$ exists and for each $j\in\{1,...,\ell\}$, set $q_j(\vec n)=p_j(\vec n+\vec u)-p_j(\vec u)$. For any polynomials $v_1,...,v_\ell\in \Z[x_1,...,x_d]$ and any $\delta>0$, set 
$$
R_\delta(v_1,...,v_\ell):=\bigcap_{j=1}^\ell\bigcap_{s=1}^t\{\vec n\in\Z^d\,|\,\|v_j(\vec n)\alpha_s\|<\delta\}.
$$
Observe that by our assumption on $\vec u$,
\begin{equation}\label{3.eq:NiceInclussion}
R_{\epsilon/2}(q_1,...,q_\ell)\subseteq R_\epsilon(p_1,...,p_\ell)-{\vec u}.
\end{equation}
Thus, since  $q_j(\vec 0)=0$ for each $j$, \cref{3.lem:DiophantineRec} and  \cref{4.rem:IntersectionProperty} imply that the set 
$R_{\epsilon/2}(q_1,...,q_\ell)$ is VIP$_0^*$. So, by \eqref{3.eq:NiceInclussion}, $R_\epsilon(p_1,...,p_\ell)-{\vec u}$ is a VIP$_0^*$ set, as claimed.\\

%\noindent $\diamond$ \textit{ Finding ${\vec u}$.}
  Pick $M\in\N$ with the property that for every $s\in\{1,...,t\}$ with $\alpha_s\in \Q$, one has that $M\alpha_s\in\Z$ and let $\vec u_0\in\Z^d$ be such that for every $j\in\{1,...,\ell\}$, $M|p_j(\vec u_0)$. In order to  find $\vec u\in\Z^d$ for which \eqref{3.eq:SingleGoodTime} holds, we only need to check that the polynomials
  $$
Q_j(\vec x)=p_j(M\vec x+\vec u_0),\,j\in\{1,...,\ell\},
  $$
satisfy the following two conditions:
 \begin{enumerate}
     \item [(C.1)] For every $j\in\{1,...,\ell\}$ and every $\vec n\in\Z^d$, $M|Q_j(\vec n)$
     \item [(C.2)] There is an $\vec n_0\in \Z^d$ with the property that for every $(j,s)\in\{1,...,\ell\}\times\{1,...,t\}$, if $\alpha_s\not\in\Q$, then $\|Q_j(\vec n_0)\alpha_s\|<\epsilon/2$.
 \end{enumerate}
 (If both (C.1) and (C.2) hold, $\vec u=M\vec n_0+\vec u_0$ satisfies \eqref{3.eq:SingleGoodTime}.)\\
 Notice that in order to prove that (C.1) and (C.2) hold, we can assume  without loss of generality,  that $p_1,...,p_\ell$ and $\alpha_1,....,\alpha_t$ are $\Q$-linearly independent. As a matter of fact, we can further assume that for each $s\in\{1,...,t\}$, $\alpha_s\in\Q$ if and only if $s=1$.\\ 

To check that condition (C.1) holds for each $j\in\{1,...,\ell\}$, let $q_j(\vec x)=p_j(\vec x+\vec u_0)-p_j(\vec u_0)$ and note that $q_j(\vec 0)=0$. So, for every $\vec n\in\Z^d$, $M|q_j(M\vec n)$. Thus,
$$
M|q_j(M\vec n)+p_j(\vec u_0)=Q_j(\vec n)
$$
for every $\vec n\in\Z^d$, as claimed.\\

To check that (C.2) holds, we use our additional assumptions that $p_1,...,p_\ell$ and $\alpha_1,...,\alpha_t$ are $\Q$-linearly independent and that for $s\neq 1$, $\alpha_s$ is irrational. By \cref{3.lem:LinearIndForJointlyIntersective}, we have that the polynomials $p_j(\vec x)-p_j({\vec u}_0)$, $j\in\{1,...,\ell\}$, are $\Q$-linearly independent and, so, the polynomials $Q_j(\vec x)-Q_j(\vec 0)$, $j\in\{1,...,\ell\}$, are also $\Q$-linearly independent. Thus, by Weyl's equidistribution theorem \cite{weyl1916Mod1}, the  sequence  
$$
(Q_1(\vec n)\alpha_s,....,Q_\ell(\vec n)\alpha_s)_{s=2}^t\mod 1,\,\vec n\in\Z^d,
$$
is equidistributed on $[0,1)^{\ell (t-1)}$. It follows that there exists an ${\vec n_0}\in\Z^d$ with the property that for every $j\in\{1,...,\ell\}$ and every $s\in\{2,...,t\}$,
$$
\| Q_j({\vec n_0})\alpha_s\|<\frac{\epsilon}{2}
$$
as claimed.\\
To complete the proof, notice that if $p_j(\vec 0)=0$ for each $j\in\{1,...,\ell\}$, we can take $\vec u_0=\vec n_0=\vec 0$ in the previous argument. We are done. 
\end{proof}
The following consequences of \cref{3.thm:DiophantineRec} are of interest on their own right. We omit the proofs.
\begin{cor}
    Let $p_1,...,p_\ell\in\Z[x_1,...,x_d]$ be jointly intersective polynomials and let $\vec \alpha:=(\alpha_1,...,\alpha_\ell)\in \mathbb T^\ell$. Then, for any open neighborhood $U$ of $\vec 0\in \mathbb T^\ell$, the set \
    $$
\{\vec n\in\Z^d\,|\,(p_1(\vec n)\alpha_1,...,p_\ell(\vec n)\alpha_\ell)\in U\}
    $$
    is a $\Z^d$-translate of a {\rm VIP$_0^*$} set.
\end{cor}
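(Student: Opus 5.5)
This is a direct application of \cref{3.thm:DiophantineRec} with $t=\ell$ and the real numbers $\alpha_1,\dots,\alpha_\ell$ (lifts of the coordinates of $\vec\alpha$ to $[0,1)$). The only work is translating membership in $U$ into a system of Diophantine inequalities.

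\emph{Step 1: shrink $U$ to a cube.} Since $U$ is an open neighborhood of $\vec 0\in\mathbb T^\ell$, there is $\epsilon>0$ such that the cube
$$
\{(y_1,\dots,y_\ell)\in\mathbb T^\ell\,|\,\|y_j\|<\epsilon \text{ for each } j\}
$$
is contained in $U$. Here $\|\cdot\|$ denotes the distance to the nearest integer, read off on each coordinate of $\mathbb T^\ell$. Consequently,
$$
\bigcap_{j=1}^\ell\{\vec n\in\Z^d\,|\,\|p_j(\vec n)\alpha_j\|<\epsilon\}\subseteq\{\vec n\in\Z^d\,|\,(p_1(\vec n)\alpha_1,\dots,p_\ell(\vec n)\alpha_\ell)\in U\}.
$$

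\emph{Step 2: enlarge to the set in the theorem.} The left-hand side contains the smaller set
$$
R:=\bigcap_{j=1}^\ell\bigcap_{s=1}^\ell\{\vec n\,|\,\|p_j(\vec n)\alpha_s\|<\epsilon\},
$$
since $R$ imposes the conditions for all pairs $(j,s)$ and we only need the diagonal ones. By \cref{3.thm:DiophantineRec}, applied to the jointly intersective polynomials $p_1,\dots,p_\ell$ and the numbers $\alpha_1,\dots,\alpha_\ell$, there is $\vec u\in\Z^d$ with $R-\vec u$ a VIP$_0^*$ set.

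\emph{Step 3: pass to the superset.} Write $S$ for the set in the corollary. Then $S-\vec u\supseteq R-\vec u$. A superset of a VIP$_0^*$ set is again VIP$_0^*$, because VIP$_{k,r}^*$ is defined by meeting every VIP$_{k,r}$ set, and that property is clearly preserved under enlargement. Hence $S-\vec u$ is VIP$_0^*$, so $S$ is a $\Z^d$-translate of a VIP$_0^*$ set.

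There is no real obstacle. The only point needing care is that the norm on $\mathbb T^\ell$ is read coordinatewise via $\|\cdot\|$ of the real lifts, which makes the inclusion in Step 1 legitimate.
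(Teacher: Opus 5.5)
Your argument is correct and matches the intended derivation. The paper omits this proof and simply lists the statement as a consequence of \cref{3.thm:DiophantineRec}; you apply that theorem with $t=\ell$, use the inclusion of the full $(j,s)$-intersection into the diagonal one and then into your set, and finish with upward closure of VIP$_0^*$ sets.

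One small point: \cref{3.thm:DiophantineRec} assumes the $p_j$ are non-constant, whereas the corollary does not. A constant jointly intersective polynomial must be identically $0$, so its coordinate condition is vacuous and you can discard it. The remaining non-constant $p_j$ are still jointly intersective, and if all $p_j$ vanish the set is $\Z^d$.
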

\begin{cor}[Cf. Theorem 0.5 in \cite{BerLeibNilsystemsIPr} ]
    For any $p\in\R[x_1,...,x_d]$ with $p(\vec 0)=0$, the set 
    $$
\{\vec n\in\Z^d\,|\,\|p(\vec n)\|<\epsilon \}
    $$
    is {\rm VIP$_0^*$}. 
\end{cor}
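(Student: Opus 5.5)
The plan is to reduce the real-coefficient case to the integer-coefficient statement of \cref{3.lem:DiophantineRec}, by splitting $p$ into monomials and using the finite intersection property of VIP$_0^*$ sets.

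First I would write $p$ in monomial form. Since $p(\vec 0)=0$,
$$
p(x_1,...,x_d)=\sum_{m\in\mathcal M} c_m\, x_1^{m_1}\cdots x_d^{m_d},
$$
where $\mathcal M$ is a finite set of non-zero multi-indices $m=(m_1,...,m_d)$ and $c_m\in\R$. Let $K=|\mathcal M|$; if $K=0$ then $p=0$ and there is nothing to prove. For each $m\in\mathcal M$, set $q_m(\vec x)=x_1^{m_1}\cdots x_d^{m_d}\in\Z[x_1,...,x_d]$. Each $q_m$ has zero constant term.

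Next I would invoke \cref{3.lem:DiophantineRec} with $q_m$ in place of $p$, $\alpha=c_m$, and $\epsilon/K$ in place of $\epsilon$. It gives that each set
$$
E_m:=\{\vec n\in\Z^d\,|\,\|q_m(\vec n)c_m\|<\epsilon/K\}
$$
is VIP$_0^*$. By \cref{4.rem:IntersectionProperty}, applied $K-1$ times, the finite intersection $\bigcap_{m\in\mathcal M}E_m$ is also VIP$_0^*$. (Equivalently, one may apply \cref{3.thm:DiophantineRec} with the monomials $q_m$ as the polynomials, the $c_m$ as the $\alpha_s$, and $\vec u=\vec 0$, then keep only the diagonal pairs $j=s$.)

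Then I would use the subadditivity $\|a+b\|\le\|a\|+\|b\|$ of the distance to the nearest integer. For $\vec n\in\bigcap_{m}E_m$ it gives
$$
\|p(\vec n)\|\le\sum_{m\in\mathcal M}\|c_m q_m(\vec n)\|<\epsilon .
$$
Hence $\bigcap_m E_m\subseteq\{\vec n\,|\,\|p(\vec n)\|<\epsilon\}$. Any superset of a VIP$_{k,r}^*$ set is again VIP$_{k,r}^*$, because meeting every VIP$_{k,r}$ set is preserved under enlargement. Therefore supersets of VIP$_0^*$ sets are VIP$_0^*$, and the claim follows.

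I see no real obstacle. The only point needing care is that the monomials individually have integer coefficients and zero constant term, so \cref{3.lem:DiophantineRec} applies to each one with a real multiplier $\alpha=c_m$.
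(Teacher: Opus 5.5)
Your proof is correct and follows the route the paper intends. The paper omits the proof and presents the statement as a direct consequence of \cref{3.thm:DiophantineRec}, applied to the monomials of $p$ (non-constant, and jointly intersective because they vanish at $\vec 0$) with the real coefficients as the $\alpha_s$ and $\vec u=\vec 0$. This is equivalent to combining \cref{3.lem:DiophantineRec} with \cref{4.rem:IntersectionProperty} as you do, and then concluding via subadditivity of $\|\cdot\|$ and the fact that supersets of VIP$_0^*$ sets are VIP$_0^*$.
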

%%%%%%%%%%%%%%%%%%%%%%%%%%%%%%%%%%%%%%%%%%%%%%%%
\section{The proof of the implication (i)$\implies$(ii$^\prime$) in \cref{0.MainResultLebesgueSpaces}}\label{Sec4}
In this section we prove  the implication (i)$\implies$(ii$^\prime$) in \cref{0.MainResultLebesgueSpaces}. We remark that this result generalizes Theorem 4.2 in \cite{AlmostIPBerLeib} (see \Cref{RemarkAboutTheorem4.2} below for the details).
\begin{thm}\label{4.GenBerLeib}
    Let $p_1,...,p_\ell\in\Z[x_1,...,x_d]$ be $\Q$-linearly independent and jointly intersective  polynomials. Let  $(X,\mathcal A,\mu)$ be a probability space and let $T_1,...,T_\ell$ be invertible and commuting $\mu$-preserving transformations. For every $\epsilon>0$ and every $A\in\mathcal A$,   the set
    \begin{equation}
        R_\epsilon^{p_1,...,p_\ell}(A)=\{\vec n\in\Z^d\,|\,\mu(A\cap T_1^{-p_1(\vec n)}A\cap \cdots\cap T_\ell^{-p_\ell(\vec n)}A)>\mu^{\ell+1}(A)-\epsilon\}
    \end{equation}
    has the property that for some ${\vec u}\in \Z^d$, $ R_\epsilon^{p_1,...,p_\ell}(A)-{\vec u}$ is {\rm A-VIP$_0^*$}. Furthermore, if $p_j(\vec 0)=0$ for each $j\in\{1,...,\ell\}$, then one can take ${\vec u}=0$. 
\end{thm}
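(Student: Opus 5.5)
We first reduce to the case of zero constant terms and afterwards prove that case. Let $f=1_A$. For the reduction, suppose the polynomials are jointly intersective and $\Q$-linearly independent but the constant terms need not vanish. We choose $\vec u$ by an argument like the one in the proof of \cref{3.thm:DiophantineRec}, and pass to $q_j(\vec n)=p_j(\vec n+\vec u)-p_j(\vec u)$; these are $\Q$-linearly independent by \cref{3.lem:LinearIndForJointlyIntersective}. The shift $T_j^{p_j(\vec u)}$ is what has to be controlled. Since only the Kronecker components will matter (see below), we want $\vec u$ such that $T_j^{p_j(\vec u)}$ acts almost trivially on the finitely many relevant eigenfunctions. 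Concretely, we want $\|p_j(\vec u)\alpha_s\|$ small for the finitely many eigenvalues $\alpha_s$ that suffice to approximate $\mathbb E(f\,|\,\mathcal K_{T_j})$. This is exactly the input supplied by \cref{3.thm:DiophantineRec}.

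The main case is $p_j(\vec 0)=0$. By \cref{2.Cor:DensityFraKu} applied with $f_0=\cdots=f_\ell=f$, the set
$$F:=\Big\{\vec n\,:\,\Big|\mu(A\cap T_1^{-p_1(\vec n)}A\cap\cdots\cap T_\ell^{-p_\ell(\vec n)}A)-\int f\,T_1^{p_1(\vec n)}f_1^*\cdots T_\ell^{p_\ell(\vec n)}f_\ell^*\,d\mu\Big|\geq \epsilon/3\Big\},$$
with $f_j^*=\mathbb E(f\,|\,\mathcal K_{T_j})$, has density zero along every F{\o}lner sequence. Hence $d^*(F)=0$.

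It therefore suffices to show that
$$G:=\Big\{\vec n: \int f\,T_1^{p_1(\vec n)}f_1^*\cdots T_\ell^{p_\ell(\vec n)}f_\ell^*\,d\mu>\mu^{\ell+1}(A)-2\epsilon/3\Big\}$$
contains a VIP$_0^*$ set. Once this is shown, $R_\epsilon^{p_1,\dots,p_\ell}(A)\cup F\supseteq G$, and therefore $R_\epsilon^{p_1,\dots,p_\ell}(A)$ is A-VIP$_0^*$.

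To prove the claim about $G$, approximate each $f_j^*$ in $L^2$ (within $\delta$) by a finite linear combination $g_j=\sum_s c_{j,s}\chi_{j,s}$ of bounded eigenfunctions of $T_j$ with eigenvalues $e(\alpha_{j,s})$. We truncate so that $\|g_j\|_\infty$ stays controlled; alternatively, we estimate the product telescopically using \eqref{eq:IntegralZeroIdentity}, with $f_j^*\in[0,1]$ and $L^2$ control on a single factor at a time, after bounding the other factors in $L^\infty$.

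If $\|p_j(\vec n)\alpha_{j,s}\|<\eta$ for all $j,s$, then $T_j^{p_j(\vec n)}g_j$ is within $C\eta$ of $g_j$ in $L^\infty$, and hence $T_j^{p_j(\vec n)}f_j^*$ is within $2\delta+C\eta$ of $f_j^*$ in $L^2$. For such $\vec n$, the integral defining $G$ is close to $\int f f_1^*\cdots f_\ell^*\,d\mu$, which is at least $\mu(A)^{\ell+1}$ by \cref{2.lem:ProdExpectationsLowerBound}. By \cref{3.lem:DiophantineRec} together with \cref{4.rem:IntersectionProperty}, the set of such $\vec n$ is VIP$_0^*$; this is also the case $\vec u=0$ of \cref{3.thm:DiophantineRec}.

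The main obstacle is the telescoping estimate that replaces each $T_j^{p_j(\vec n)}f_j^*$ by $f_j^*$. Each step requires $L^\infty$ bounds on the remaining factors, which hold since $0\le f_j^*\le1$ and the transforms are isometries, so each substitution costs at most $\|T_j^{p_j(\vec n)}f_j^*-f_j^*\|_{L^2}$.

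In the general case, the same argument runs with $q_j$ and with $f_j^*$ replaced by $T_j^{p_j(\vec u)}f_j^*$. We choose $\vec u$ via \cref{3.thm:DiophantineRec} so that every $\|p_j(\vec u)\alpha_{j,s}\|$ is small, which makes $T_j^{p_j(\vec u)}f_j^*\approx f_j^*$. The lower bound from \cref{2.lem:ProdExpectationsLowerBound} then survives, and $R_\epsilon^{p_1,\dots,p_\ell}(A)-\vec u$ is A-VIP$_0^*$.
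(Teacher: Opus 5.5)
Your proposal is correct and follows the paper's proof in all essentials: a density-zero exceptional set from \cref{2.Cor:DensityFraKu}, the lower bound from \cref{2.lem:ProdExpectationsLowerBound}, a truncated eigenfunction expansion of $\mathbb E(\mathbbm 1_A\,|\,\mathcal K_{T_j})$ that reduces the Kronecker part to a Diophantine condition, and the VIP$_0^*$ property of that condition via \cref{3.lem:DiophantineRec}. The only difference is how you handle nonzero constant terms: you fix $\vec u$ with $\|p_j(\vec u)\alpha_{j,s}\|$ small and rerun the zero-constant-term argument for $p_j(\vec n+\vec u)-p_j(\vec u)$, in effect inlining the proof of \cref{3.thm:DiophantineRec}; the paper instead applies \cref{2.Cor:DensityFraKu} to $p_j-p_j(\vec 0)$ with the sets $T_j^{-p_j(\vec 0)}A$, and then invokes \cref{3.thm:DiophantineRec} as a black box to translate the Kronecker-part set $\mathcal D$.
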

\begin{rem}\label{RemarkAboutTheorem4.2}
It is worth noting that Theorem 4.2 in \cite{AlmostIPBerLeib} states that when $p_1,...,p_\ell\in\Z[x_1,...,x_d]$ are linearly independent polynomials with zero constant term and $T$ is ergodic, every set of the form $R_\epsilon^{p_1,...,p_\ell}(A,T)$ is A-VIP$_0^*$. Thus, 
 \cref{4.GenBerLeib} generalizes \cite[Theorem 4.2]{AlmostIPBerLeib} in two ways. First, instead of dealing with a single, ergodic measure preserving transformation, \cref{4.GenBerLeib} deals with several commuting, not necessarily ergodic, measure preserving transformations. Second,  \cref{4.GenBerLeib} deals with  linearly independent polynomials  in $\Z[x_1,...,x_d]$ which, in principle, not necessarily have  zero constant term. 
\end{rem}
\begin{proof}
Let $\epsilon>0$. By applying 
\cref{2.lem:ProdExpectationsLowerBound} with $\mathcal A_j=\mathcal K_{T_j}$, $j\in\{1,...,\ell\}$, we see that if $\vec m\in\Z^d$ satisfies 
\begin{equation}\label{5.LowerBound}
|\mu(A\cap T_1^{-p_1(\vec m)}A\cap \cdots\cap T_\ell^{-p_\ell(\vec m)}A)-\int_X \mathbbm 1_A\mathbb E(\mathbbm 1_A|\mathcal K_{T_1})\cdots\mathbb E(\mathbbm 1_A|\mathcal K_{T_\ell})\text{d}\mu|<\epsilon
\end{equation}
then 
$$\mu(A\cap T_1^{-p_1(\vec m)}A\cap \cdots\cap T_\ell^{-p_\ell(\vec m)}A)>\int_X \mathbbm 1_A\mathbb E(\mathbbm 1_A|\mathcal K_{T_1})\cdots\mathbb E(\mathbbm 1_A|\mathcal K_{T_\ell})\text{d}\mu-\epsilon\geq \mu^{\ell+1}(A)-\epsilon$$
and, so, 
$$\vec m\in R^{(p_1,...,p_\ell)}_\epsilon(A)=\{\vec n\in\Z^d\,|\,\mu(A\cap T_1^{-p_1(\vec n)}A\cap \cdots\cap T_\ell^{-p_\ell(\vec n)}A)>\mu^{\ell+1}(A)-\epsilon\}.$$ 
Thus, in order to prove that $R^{(p_1,...,p_\ell)}_\epsilon(A)$ is a $\Z^d$-translate of an A-VIP$_0^*$ set, it suffices to show that the following two conditions hold:
\begin{enumerate}
    \item [(C.1)] There exists an $E_\epsilon\subseteq \Z^d$ with $d^*(E_\epsilon)=0$ and the property that for every $\vec n\in \Z^d\setminus E_\epsilon$,
    \begin{equation}\label{4.eq:CondtionC1}
    |\mu(A\cap T_1^{-p_1(\vec n)}A\cap \cdots\cap T_\ell^{-p_\ell(\vec n)}A)
    -\int_X \mathbbm 1_{A} T_1^{p_1(\vec n)}\mathbb E(\mathbbm 1_{A}|\mathcal K_{T_1})\cdots T_\ell^{p_
\ell(\vec n)}\mathbb E(\mathbbm 1_{A}|\mathcal K_{T_\ell})\text{d}\mu|<\frac{\epsilon}{2}.
\end{equation}
\item [(C.2)] The set $\mathcal D$ formed by all $\vec n\in\Z^d$ for which 
\begin{multline}\label{5.eq:CloseReturn}
|\int_X \mathbbm 1_A T_1^{p_1(\vec n)}\mathbb E(\mathbbm 1_A|\mathcal K_{T_1})\cdots T_\ell^{p_
\ell(\vec n)}\mathbb E(\mathbbm 1_A|\mathcal K_{T_\ell})\text{d}\mu\\
-\int_X \mathbbm 1_A\mathbb E(\mathbbm 1_A|\mathcal K_{T_1})\cdots\mathbb E(\mathbbm 1_A|\mathcal K_{T_\ell})\text{d}\mu|<\frac{\epsilon}{2}.
\end{multline}
is a $\Z^d$-translate of a  VIP$_0^*$ set.
\end{enumerate}
(The fact that one can take ${\vec u}=\vec 0$ in the statement of \cref{4.GenBerLeib} whenever $p_j(\vec 0)=0$ for each $j\in\{1,...,\ell\}$, is an immediate consequence of our proof of \cref{4.GenBerLeib}. See the parenthetical comment  at the end of the proof.)\\ 

To prove that condition (C.1) holds,
set $p_0\in \Z[x_1,...,x_d]$ to be the zero polynomial and for each $j\in\{0,1,...,\ell\}$, put $A_j=T^{-p_j(\vec 0)}A$ and $q_j=p_j-p_j(\vec 0)$. By \cref{3.lem:LinearIndForJointlyIntersective}, $q_1,...,q_\ell$ are $\Q$-linearly independent. Thus, by \cref{2.Cor:DensityFraKu}, there is an  $E_\epsilon\subseteq \Z^d$ with $d^*(E_\epsilon)=0$ and the property that 
for any $\vec n\in \Z^d\setminus E_\epsilon$,
\begin{equation}\label{4.eq:ConditionC1'}
|\mu(A_0\cap T_1^{-q_1(\vec n)}A_1\cap \cdots\cap T_\ell^{-q_\ell(\vec n)}A_\ell)-\int_X \mathbbm 1_{A_0} T_1^{q_1(\vec n)}\mathbb E(\mathbbm 1_{A_1}|\mathcal K_{T_1})\cdots T_\ell^{q_
\ell(\vec n)}\mathbb E(\mathbbm 1_{A_\ell}|\mathcal K_{T_\ell})\text{d}\mu|<\frac{\epsilon}{2}.
\end{equation}
Noting that \eqref{4.eq:ConditionC1'} is equivalent to \eqref{4.eq:CondtionC1}, we see that condition (C.1) holds.\\

We now check condition (C.2).
Utilizing formula  \eqref{eq:IntegralZeroIdentity} and the fact that the functions 
$$|\mathbbm 1_A|,|\mathbb E(\mathbbm 1_A|\mathcal K_{T_1})|,...,|\mathbb E(\mathbbm 1_A|\mathcal K_{T_\ell})|$$ are bounded by one  $\mu$-almost everywhere, we see that $\vec n\in\mathcal D$ whenever
\begin{multline}\label{5.eq:BoundIsSumOfBounds}
|\int_X \mathbbm 1_A T_1^{p_1(\vec n)}\mathbb E(\mathbbm 1_A|\mathcal K_{T_1})\cdots T_\ell^{p_
\ell(\vec n)}\mathbb E(\mathbbm 1_A|\mathcal K_{T_\ell})\text{d}\mu
-\int_X \mathbbm 1_A\mathbb E(\mathbbm 1_A|\mathcal K_{T_1})\cdots\mathbb E(\mathbbm 1_A|\mathcal K_{T_\ell})\text{d}\mu|\\
  \leq   \int_X|\mathbbm 1_A-\mathbbm 1_A|\text{d}\mu+\sum_{j=1}^\ell \int_X|T_j^{p_j(\vec n)}\mathbb E(\mathbbm 1_A|\mathcal K_{T_j})-\mathbb E(\mathbbm 1_A|\mathcal K_{T_j})|\text{d}\mu\\
    =\sum_{j=1}^\ell \int_X|T_j^{p_j(\vec n)}\mathbb E(\mathbbm 1_A|\mathcal K_{T_j})-\mathbb E(\mathbbm 1_A|\mathcal K_{T_j})|\text{d}\mu\\
    \leq \sum_{j=1}^\ell \|T_j^{p_j(\vec n)}\mathbb E(\mathbbm 1_A|\mathcal K_{T_j})-\mathbb E(\mathbbm 1_A|\mathcal K_{T_j})\|_{L^2(\mu)}<\frac{\epsilon}{2}.
\end{multline}
    For each $j\in\{1,...,\ell\}$, let $(f_k^{(j)})_{k\in\N}$ be a sequence in $L^2(\mu)$ consisting of  eigenfunctions of $T_j$ with $\|f_k\|_{L^2}=1$, $k\in\N$, and the property that for some sequence $(c_k^{(j)})_{k\in\N}$ in $\C$,
    $$\mathbb E(\mathbbm 1_A|\mathcal K_{T_j})=\sum_{k=1}^\infty c_k^{(j)}f_k^{(j)}.$$
    (Note that one may have infinitely many  of the coefficients $c_k^{(j)}$ equal to zero.  For example, when $\mathbb E(\mathbbm 1_A|\mathcal K_{T_1})=\mu(A)$, we can take $f^{(1)}_k=\mathbbm 1_X$ for each $k\in\N$, $c_1^{(1)}=\mu(A)$, and $c_k^{(1)}=0$ for $k>1$. Also note that, without loss of generality, we can assume that for each $j\in\{1,...,\ell\}$, the set $\{f_k^{(j)}\,|\,\exists k\in\N,\,c_k^{(j)}\neq 0\}$ is a set of orthonormal eigenfunctions.) \\
Let $N\in\N$ be large enough to ensure that for every $j\in\{1,...,\ell\}$,
\begin{equation}\label{5.eq:GoodApproxTruncation}
\|\mathbb E(\mathbbm 1_A|\mathcal K_{T_j})-\sum_{k=1}^N c_k^{(j)}f_k^{(j)}\|_{L^2(\mu)}<
\frac{\epsilon}{6\ell}
\end{equation}
For each $j\in\{1,...,\ell\}$ and each $k\in\{1,...,N\}$, let $\alpha_{j,k}\in \R$ be such that $T_jf_k^{(j)}=e^{2\pi i\alpha_{j,k}}f_k^{(j)}$. We have 
\begin{multline}\label{5.eq:UpperboundOfCompactApprox}
\|T_j^{p_j(\vec n)}\left(\sum_{k=1}^Nc_k^{(j)}f_k^{(j)}\right)-\sum_{k=1}^Nc_k^{(j)}f_k^{(j)}\|_{L^2}\\
=\|\sum_{k=1}^N(e^{2\pi ip_j(\vec n)\alpha_{j,k}}-1)c_k^{(j)}f_k^{(j)}\|_{L^2}\leq \sum_{k=1}^N|c_k^{(j)}||e^{2\pi ip_j(\vec n)\alpha_{j,k}}-1|,
\end{multline}
for every $j\in\{1,...,\ell\}$.\\
We now claim that if $\vec n\in\Z^d$ satisfies 
\begin{equation}\label{5.eq:DiophExpression}
\sum_{k=1}^N|c_k^{(j)}||e^{2\pi ip_j(\vec n)\alpha_{j,k}}-1|<\frac{\epsilon}{6\ell}
\end{equation}
for  each $j\in\{1,...,\ell\}$, then $\vec n\in\mathcal D$.\\ 
Indeed, \eqref{5.eq:UpperboundOfCompactApprox} implies that if $\vec n\in\Z^d$ satisfies \eqref{5.eq:DiophExpression}, then 
\begin{equation}\label{5.eq:GoodApproxAtDiophantineTime}
\|T_j^{p_j(\vec n)}\left(\sum_{k=1}^Nc_k^{(j)}f_k^{(j)}\right)-\sum_{k=1}^Nc_k^{(j)}f_k^{(j)}\|_{L^2}<\frac{\epsilon}{6\ell}.
\end{equation}
holds for every $j\in\{1,...,\ell\}$. Combining  \eqref{5.eq:GoodApproxTruncation} and \eqref{5.eq:GoodApproxAtDiophantineTime}, we obtain that for every $j\in\{1,...,\ell\}$,
\begin{multline*}
\|T_j^{p_j(\vec n)}\mathbb E(\mathbbm 1_A|\mathcal K_{T_j})-\mathbb E(\mathbbm 1_A|\mathcal K_{T_j})\|_{L^2(\mu)}\\
\leq \|T_j^{p_j(\vec n)}\mathbb E(\mathbbm 1_A|\mathcal K_{T_j})-T_j^{p_j(\vec n)}\left(\sum_{k=1}^N c_k^{(j)}f_k^{(j)}\right)\|_{L^2}+\|T_j^{p_j(\vec n)}\left(\sum_{k=1}^N c_k^{(j)}f_k^{(j)}\right)-\sum_{k=1}^N c_k^{(j)}f_k^{(j)}\|_{L^2}\\
+\|\sum_{k=1}^N c_k^{(j)}f_k^{(j)}-\mathbb E(\mathbbm 1_A|\mathcal K_{T_j})\|_{L^2}\\
=2\|\sum_{k=1}^N c_k^{(j)}f_k^{(j)}-\mathbb E(\mathbbm 1_A|\mathcal K_{T_j})\|_{L^2}+\|T_j^{p_j(\vec n)}\left(\sum_{k=1}^N c_k^{(j)}f_k^{(j)}\right)-\sum_{k=1}^N c_k^{(j)}f_k^{(j)}\|_{L^2}<\frac{\epsilon}{2\ell}.
\end{multline*}
So, by  \eqref{5.eq:BoundIsSumOfBounds}, we see that $\vec n\in \mathcal D$ as claimed.\\
It now follows from \eqref{5.eq:DiophExpression} that there are real numbers $\alpha_1,...,\alpha_t$ and a $\delta>0$ with the property that the set 
$$
\mathcal C:=\bigcap_{j=1}^\ell \left(\bigcap_{s=1}^t\{\vec n\in \Z^d\,|\,
        \|p_j(\vec n)\alpha_s\|<\delta\}\right)
$$
is a subset of $\mathcal D$. By \cref{3.thm:DiophantineRec} there is a ${\vec u}\in\Z^d$ for which  $\mathcal C-{\vec u}$ is a  VIP$_0^*$ set, and, so, $\mathcal D-\vec u$ is also a VIP$_0^*$ set, proving that condition (C.2) holds. (Notice that when $p_j(\vec 0)=0$ for each $j\in\{1,...,\ell\}$, \cref{3.thm:DiophantineRec} states that one can take $\vec u=\vec 0$).
\end{proof}
%%%%%%%%%%%%%%%%%%%%%%%%%%%%%%%%%%%%%%%%%%%%%%%%%%%
\section{The proof of  a special case of the implication (iii)$\implies$(i) in \cref{0.MainResultLebesgueSpaces} and derivation of \Cref{0.thm:Sharp(iii)->(i)}}\label{Sec:5}
In this section we prove the implication (iii)$\implies$(i) in \cref{0.MainResultLebesgueSpaces} under the additional assumption that the polynomials $p_1,...,p_\ell$ are jointly intersective and then use it to derive \Cref{0.thm:Sharp(iii)->(i)}. As a matter of fact, we will prove a more general result (see \cref{5.thm:SmallIntersections} below). The proof of  (iii)$\implies$(i) in its full generality will be presented in the next section. \\ 
Throughout this section we  set $X=[0,1)^2$, $\mathcal A=\text{Borel}([0,1)^2)$, and let $\mu$ denote the Lebesgue measure on $[0,1)$. Let $T:X\rightarrow X$ be defined by 
$$
T(x,y)=(x,y+x\mod 1).
$$
Note that $T$ is an invertible, $(\mu\otimes\mu)$-preserving transformation.
\begin{thm}[Cf. Theorem 2.1 in \cite{BHKNilSystems2005}]\label{5.thm:SmallIntersections}
    For any integer $\ell>1$ there exists a sequence $A_r^{(\ell)}$, $r\in\N$,  of  $\mathcal A$-measurable sets having positive $\mu\otimes\mu$-measure with the following property:\\
    \begin{adjustwidth}{0.5em}{0.5em}
    For any $d\in\N$ and any non-constant and linearly dependent polynomials $p_1,...,p_\ell\in\Z[x_1,...,x_d]$, there exist $a_1,...,a_\ell\in\Z\setminus\{0\}$ and a non-empty $\alpha\subseteq \{1,...,\ell\}$ for which the transformations $T_j=T^{a_j}$, $j=1,...,\ell$, satisfy 
    \begin{multline}\label{LongExpressionsWithFewElements}
\{\vec n\in\Z^d\,|\,\mu\otimes\mu(A_r^{(\ell)}\cap T_1^{-p_1(\vec n)}A_r^{(\ell)}\cap\cdots\cap T_\ell^{- p_\ell(\vec n)}A_r^{(\ell)})
>\frac{(\mu\otimes\mu)^r(A_r^{(\ell)})}{2}\}\\
\subseteq \bigcap_{j\in\alpha}\{\vec n\in\Z^d\,|\,p_j(\vec n)=0\}, 
    \end{multline}
for each $r\in\N$. 
Furthermore, for any non-zero $b_1,...,b_\ell\in\Z$  such that 
$$b_1p_1(x_1,...,x_d)+\cdots+b_\ell p_\ell(x_1,...,x_d)=0,$$
one has that the maps $S_j=T^{b_j}$, $j\in\{1,...,\ell\}$, satisfy
\begin{multline}\label{ShortExpressionWithFewElements}
\{\vec n\in\Z^d\,|\,\mu\otimes\mu(A_r^{(\ell)}\cap S_1^{-p_1(\vec n)}A_r^{(\ell)}\cap\cdots\cap S_\ell^{- p_\ell(\vec n)}A_r^{(\ell)})>\frac{(\mu\otimes\mu)^r(A_r^{(\ell)})}{2}\}\\
=\bigcap_{j=1}^\ell \{\vec n\in\Z^d\,|\,p_j(\vec n)=0\}.
\end{multline}
\end{adjustwidth}
\end{thm}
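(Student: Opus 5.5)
The plan is to work fibrewise on the skew product. We have $T^m(x,y)=(x,y+mx)$, so for $T_j=T^{c_j}$ and any set of the form $A=[0,1)\times B$ with $B\subseteq[0,1)$,
$$
\mu\otimes\mu\Big(A\cap\bigcap_{j=1}^\ell T_j^{-p_j(\vec n)}A\Big)=\int_0^1 \mu\Big(B\cap\bigcap_{j=1}^\ell (B-c_jp_j(\vec n)x)\Big)\,\text{d}x .
$$
The key feature is that when $c_j=b_j$ with $\sum_j b_jp_j=0$, the shifts $u_j(x):=b_jp_j(\vec n)x$ satisfy $\sum_j u_j(x)\equiv 0$ identically in $x$. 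Writing $z_j=y+u_j$, any point of the fibre intersection gives $y,z_1,\dots,z_\ell\in B$ with the rigid relation $z_1+\cdots+z_\ell\equiv \ell y \pmod 1$.

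I will first prove \eqref{ShortExpressionWithFewElements} and then deduce \eqref{LongExpressionsWithFewElements} from it.

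\textbf{Step 1 (the short expression).} Fix $\ell$ and $r$. I will build $B=B_r^{(\ell)}$ as a union of $K$ short arcs $I_1,\dots,I_K$ of length $\eta$, centred at points $\theta_1,\dots,\theta_K$ in Sidon/dissociated position with respect to the relation $\sum_{j=1}^\ell \theta_{i_j}\equiv \ell\theta_{i_0}$. The required property is that this relation holds within $(\ell+1)^2\eta$ only when $i_0=i_1=\cdots=i_\ell$. Choosing the $\theta_i$ rationally independent and then taking $\eta$ small does this.

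Given such $B$, any point of the fibre intersection has all of $y,z_1,\dots,z_\ell$ in a single arc $I_i$. Hence every $u_j(x)$ lies within $\eta$ of $0$ mod $1$.

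Now suppose $p_{j_0}(\vec n)\neq 0$ for some $j_0$. Then $b_{j_0}p_{j_0}(\vec n)\neq 0$, so $u_{j_0}(x)$ is within $\eta$ of an integer only for $x$ in a set of measure at most $2\eta$. For such $x$ the fibre measure is at most $K\eta$. The total is therefore at most $2K\eta^2$.

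On the other hand $\mu\otimes\mu(A)=K\eta$. I will choose $K$ and $\eta$ with $2K\eta^2<(K\eta)^r/2$, i.e. take $K\eta$ close to $1$ (e.g. $K\eta\ge 1/2$) and $\eta\to 0$. This must be arranged simultaneously with the Sidon-type separation, which is the delicate bookkeeping.

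Conversely, if all $p_j(\vec n)=0$, the intersection is $A$ itself, of measure $K\eta>(K\eta)^r/2$ for $r\ge 1$. This gives the equality in \eqref{ShortExpressionWithFewElements}.

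One point needs checking here. The construction of $B$ must not depend on the $b_j$, only on $\ell$ and $r$, since the theorem asks for sets $A_r^{(\ell)}$ that work for all families. I would first try to handle this by making the separation condition hold for all nonzero integer coefficient patterns at once. Since the $u_j$ only enter through $z_j-y$, the relation actually used is just $\sum_j(z_j-y)\equiv 0$, which is coefficient-free. So the construction should be uniform automatically.

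\textbf{Step 2 (the long expression).} Given linearly dependent $p_1,\dots,p_\ell$, choose a minimal nonempty $\alpha$ and nonzero integers $b_j$, $j\in\alpha$, with $\sum_{j\in\alpha}b_jp_j=0$. Set $a_j=b_j$ for $j\in\alpha$ and $a_j=1$ otherwise.

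Removing the sets indexed by $j\notin\alpha$ only enlarges the intersection. So any $\vec n$ in the left side of \eqref{LongExpressionsWithFewElements} also satisfies the inequality for the subfamily indexed by $\alpha$.

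Apply Step 1 with $|\alpha|$ in place of $\ell$. If $|\alpha|=1$, then $p_j=0$ for that $j$, which is excluded by non-constancy. So Step 1 applies, provided $B$ is built to work for every length $\le\ell$ simultaneously, which I would ensure by imposing the separation condition for all lengths $2,\dots,\ell$. This gives $p_j(\vec n)=0$ for all $j\in\alpha$.

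\textbf{Main obstacle.} The main obstacle is the design of $B$ in Step 1, namely reconciling three requirements: the measure ratio $2K\eta^2<(K\eta)^r/2$, the near-exact separation of the arc centres, and the case where some $u_j$ wraps around mod $1$. I expect to adapt the Fourier or arc construction behind \cite[Theorem 2.1]{BHKNilSystems2005}.
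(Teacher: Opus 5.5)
Your reduction is correct and matches the paper's. This covers the fibre formula, the observation that $\sum_j b_jp_j=0$ forces $z_1+\cdots+z_t\equiv ty \pmod 1$ for $z_j=y+b_jp_j(\vec n)x$, the bound $2K\eta^2$ once every solution lies in a single arc, the equality case, and the deduction of \eqref{LongExpressionsWithFewElements} by discarding the indices outside $\alpha$. The gap is the construction of $B$, which is the real content of the theorem, and the regime you propose for it does not exist. Take $i_1=\cdots=i_{\ell-2}=i_0$ in your separation condition: it then says the arc centres contain no approximate nontrivial $3$-term progressions. Your own argument then gives $\int_{\mathbb T}\int_{\mathbb T}\mathbbm 1_B(y)\mathbbm 1_B(y+u)\mathbbm 1_B(y+2u)\,\text{d}y\,\text{d}u\le 2K\eta^2=2\mu(B)\eta$. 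By Roth's theorem (in Varnavides' averaged form), the left side is bounded below by a positive constant depending only on $\mu(B)$. So $K\eta\ge 1/2$ together with $\eta\to 0$ is impossible, and the total measure of $B$ is forced to tend to $0$. Choosing the centres rationally independent does give separation for some $\eta$, but it gives no control of $\eta$ in terms of $K$, and that control is exactly what is needed.

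The inequality $2K\eta^2<(K\eta)^r/2$ means $4\eta<\mu(B)^{r-1}$: the arcs must be short compared with a power of the (necessarily small) total measure. Rescaling by $\eta$, you need, for each $r$, a subset of $\{1,\dots,N\}$ with $N\approx 1/\eta$ such that:
\begin{enumerate}
\item its size is at least $cN^{1-1/(r-1)}$;
\item it has no nontrivial solutions of $x_1+\cdots+x_t=tx_{t+1}$ for $t\le \ell$.
\end{enumerate}
This is the missing ingredient. The paper supplies it with the Behrend-type \cref{10'.lem:Rusza(d-1)}: integers whose digits in base $\ell d-1$ are at most $d-1$ and whose digit vectors lie on a fixed sphere, so that strict convexity of the Euclidean norm kills nontrivial solutions. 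This gives such sets $\Lambda_N$ with $|\Lambda_N|>Ne^{-\beta\sqrt{\log N}}$. The paper then takes $A_r^{(\ell)}=[0,1)\times\bigcup_{j\in\Lambda_M}\bigl(\frac{j}{2M(\ell+1)}+[0,\frac{1}{8M(\ell+1)^2})\bigr)$. All arcs lie in $[0,\frac{1}{2(\ell+1)}+\frac{1}{8M(\ell+1)^2})$, so any sum of at most $\ell$ of their points is below $5/8$. This removes the wrap-around issue you flagged and turns the relation mod $1$ into an exact relation among elements of $\Lambda_M$. Since $\mu(A_r^{(\ell)})$ is at least of order $e^{-\beta\sqrt{\log M}}$ while $\eta\approx 1/M$, the inequality $\eta\ll \mu(A_r^{(\ell)})^{r-1}$ holds once $M$ is large; this is the choice \eqref{eq:ChoiceOfM}. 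Your pointer to \cite{BHKNilSystems2005} is in the right direction, since that construction is Behrend-based. However, the proposal does not contain this step, and it could not succeed with the parameters it does specify.
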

Before proceeding to the proof of \cref{5.thm:SmallIntersections}, we review the necessary background from additive combinatorics. 
\subsection{A variant of Behrend's result}
The proof of \cref{5.thm:SmallIntersections} utilizes the following  generalization of a classical result due to  Behrend \cite{BehrendSetsWithNo3-APs}. We remark that it is a refinement of Theorem 2.3 in \cite{Ruzsa93LinearEquations} which was formulated without proof.
\begin{lem}\label{10'.lem:Rusza(d-1)}
Let $b\in\N$, $b>1$. There exists a $\beta>0$ with the property that for every  large enough $N\in\N$ there exists a set 
$\Lambda_N\subseteq \{1,...,N\}$ 
which satisfies 
\begin{equation}\label{eq:Lambda_NLowerBound}
    |\Lambda_N|>Ne^{-\beta \sqrt{\log(N)}},
\end{equation}
and such  that for any $t\in\N$, any $a_1,...,a_t\in\N$ with $\sum_{j=1}^ta_j\leq b$, and any $x_1,...,x_{t+1}\in \Lambda_N$, one has 
\begin{equation}\label{eq:equationsOfanyLength}
a_1x_1+\cdots+a_tx_t=(\sum_{j=1}^ta_j)x_{t+1},
\end{equation}
if and only if $x_1=x_2=\cdots=x_{t+1}$. 
\end{lem}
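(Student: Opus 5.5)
The plan is to follow Behrend's construction, using digits in a large base together with a strictly convex function, so that every solution of \eqref{eq:equationsOfanyLength} is trivial. Fix $b>1$. For a large $N$ choose integers $n$ (the number of digits) and $d$ (the digit bound), to be optimized at the end, with $(bd)^n\le N$. Consider the integers $x$ whose base-$(bd)$ expansion $x=\sum_{i=0}^{n-1}\epsilon_i (bd)^i$ has all digits $\epsilon_i\in\{0,\dots,d-1\}$. For each $k\le nd^2$ let $S_k$ be the set of such $x$ with $\sum_i\epsilon_i^2=k$. Since there are $d^n$ admissible digit vectors and at most $nd^2$ values of $k$, the pigeonhole principle gives some $k$ with $|S_k|\ge d^{n}/(nd^2)$. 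Take $\Lambda_N=S_k\setminus\{0\}$, shifted by $1$ if needed to land inside $\{1,\dots,N\}$. A shift by $1$ preserves the solution set of \eqref{eq:equationsOfanyLength}, because both sides carry the same total coefficient $\sum_j a_j$.

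The key step is to rule out nontrivial solutions. Suppose $a_1,\dots,a_t\in\N$ with $A:=\sum_j a_j\le b$, and $x_1,\dots,x_{t+1}\in S_k$ satisfy $\sum_j a_jx_j=Ax_{t+1}$. The digitwise combination $\sum_j a_j\epsilon_i(x_j)$ is at most $A(d-1)<bd$, and the same bound holds for $A\epsilon_i(x_{t+1})$. So no carries occur in base $bd$, and we may compare digits: $\sum_j a_j\epsilon_i(x_j)=A\epsilon_i(x_{t+1})$ for every $i$. In vector form this reads $v_{t+1}=\sum_j (a_j/A)v_j$, a convex combination of the digit vectors $v_1,\dots,v_t$, all of which lie on the sphere $\{\|v\|_2^2=k\}$. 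The map $v\mapsto\|v\|^2$ is strictly convex and all positive weights are involved, so $\|v_{t+1}\|^2\le k$, with equality only if all the $v_j$ with $a_j>0$ coincide. Every $a_j\ge1$, so each $v_j$ equals $v_{t+1}$, and hence $x_1=\dots=x_{t+1}$. The converse direction, that equal $x$'s give a solution, is trivial.

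The remaining step is the density estimate, which is the usual Behrend optimization and the only delicate part. We have $|\Lambda_N|\gtrsim d^{n-2}/n$ with $d^n\approx N/b^n$, so $|\Lambda_N|\gtrsim N b^{-n}d^{-2}n^{-1}$. Choose $n\approx\sqrt{\log N}$ and $d\approx N^{1/n}/b$, so that $d=e^{O(\sqrt{\log N})}$. Then $b^{-n}$, $d^{-2}$ and $n^{-1}$ are each $\ge e^{-C\sqrt{\log N}}$, with $C$ depending only on $b$, which yields \eqref{eq:Lambda_NLowerBound} with a suitable $\beta=\beta(b)$ for all large $N$. The integer rounding of $n$ and $d$ only changes constants, and so does taking $(bd)^n\le N$ rather than equality. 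The main obstacle, mild as it is, is checking the no-carry claim carefully, and in particular that the bound $A\le b$ is exactly what the choice of base $bd$ needs.
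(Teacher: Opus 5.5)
Your proposal is correct and is essentially the paper's argument. Both use a Behrend-type set of integers with digits in $\{0,\dots,d-1\}$ in a base of size about $bd$ (the paper uses $bd-1$, you use $bd$), restricted by pigeonhole to a sphere $\sum_i\gamma_i^2=k$; a no-carry digit comparison and the equality case of convexity (the paper phrases it via the triangle inequality for the $L^2$-norm) then force trivial solutions, and the standard optimization $n\asymp\sqrt{\log N}$ gives the density bound. The remaining differences (the choice of base, your optional shift by $1$, and the exact rounding of $n$ and $d$) are cosmetic and only affect the constant $\beta$.
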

\begin{proof}
    Our proof of \cref{10'.lem:Rusza(d-1)} is a modification of the argument presented in \cite{BehrendSetsWithNo3-APs}. For any natural numbers $d,k,n\in\N$ with $d,n\geq 2$, $k\leq n(d-1)^2$, consider the set 
    $$
\Lambda_{d,k,n}:=\{\sum_{i=0}^{n-1} \gamma_i( b d-1)^i\,|\,\forall i\in\{0,...,n-1\},\,\gamma_i\in\{0,...,d-1\}\text{ and }\sum_{i=0}^{n-1}\gamma_i^2=k\}.
    $$
Pick $t\in\{1,...,b\}$,  let $a_1,...,a_t\in\N$ be such that $a_{t+1}:=\sum_{j=1}^ta_j
\leq b$, and let   $x_1,...,x_{t+1}\in\Lambda_{d,k,n}$ satisfy 
$$a_1x_1+\cdots+a_tx_t=a_{t+1}x_{t+1}.$$
We claim that  
\begin{equation}\label{eq:TrivialSolution}
x_1=x_2=\cdots=x_{t+1}.
\end{equation}
To prove \eqref{eq:TrivialSolution}, for each $j\in \{1,...,t+1\}$, let $\gamma^{(j)}_0,...,\gamma^{(j)}_{n-1}$ be the unique elements of $\{0,...,d-1\}$ with the property that 
\begin{equation}\label{defnOfx_j}
x_j=\sum_{i=0}^{n-1}\gamma_i^{(j)}(bd-1)^i.
\end{equation}
Let $\gamma^{(j)}\in \R^n$ be the vector given by $\gamma^{(j)}=(\gamma^{(j)}_0,...,\gamma^{(j)}_{n-1})$. Note that by the definition of $\Lambda_{d,k,n}$, we have the following equality for the $L^2$-norm of $\gamma^{(j)}$,
$$\|\gamma^{(j)}\|=\sqrt{\sum_{i=0}^{n-1}(\gamma^{(j)}_i)^2}=\sqrt k.$$ 
We now show that $a_{t+1}\gamma^{(t+1)}=\sum_{j=1}^ta_j\gamma^{(j)}$.  To check that this is the case, we first note that by the definition of $\Lambda_{d,k,n}$, for every $i\in\{0,...,n-1\}$,
\begin{equation}\label{eq:NotWraparround}
\sum_{j=1}^ta_j \gamma^{(j)}_i\leq a_{t+1}(d-1)<bd-1.
\end{equation}
and, so, by \eqref{defnOfx_j} and \eqref{eq:NotWraparround}, 
$$
a_{t+1}\sum_{i=0}^{n-1}\gamma_i^{(t+1)}(bd-1)^i=a_{t+1}x_{t+1}=\sum_{j=1}^ta_jx_j= \sum_{i=0}^{n-1}\left(\sum_{j=1}^ta_j \gamma^{(j)}_i\right)(bd-1)^i,
$$
which proves that $a_{t+1}\gamma^{(t+1)}=\sum_{j=1}^ta_j\gamma^{(j)}$.\\
Thus, by the triangle inequality for the $L^2$-norm,
$$a_{t+1}\sqrt{k}=\|a_{t+1}\gamma^{(t+1)}\|= 
\|\sum_{j=1}^ta_j\gamma^{(j)}\|\leq \sum_{j=1}^ta_j\|\gamma^{(j)}\|=a_{t+1}\sqrt k.$$
So, $\|a_{t+1}\gamma^{(t+1)}\|= \sum_{j=1}^ta_j\|\gamma^{(j)}\|$ which together with the fact that 
 $\|\gamma^{(j)}\|=\sqrt k$ for all $j\in\{1,...,t+1\}$, imply $\gamma^{(1)}=\gamma^{(2)}=\cdots=\gamma^{(t+1)}$ (which in turn implies that \eqref{eq:TrivialSolution} holds).\\
Note now that 
$$
|\bigcup_{k=1}^{n(d-1)^2}\Lambda_{d,k,n}|=d^n-1.
$$
So, we  can find a $k_{d,n}\in \{1,...,n(d-1)^2\}$ with 
\begin{equation}\label{eq:LowerBoundForLambda}
|\Lambda_{d,k_{d,n},n}|\geq \frac{d^n-1}{n(d-1)^2}>\frac{d^{n-2}}{n}\geq \frac{d^{n-b}}{n}.
\end{equation}
Consider $N\in\N$ with $N>b^{4b}$. Let 
\begin{equation}\label{eq:DefnOfn}
    n:=\left\lfloor \sqrt{\frac{b\log N}{\log b}}\right\rfloor 
\end{equation}
and pick $d\in\N$ such that
\begin{equation}\label{eq:DefnOfd}
(db)^n\leq N<((d+1)b)^n. 
\end{equation}
(Note that a sufficient condition for the existence of $d$ with $d>b-1$ is that $N>b^{4b}$. Also note that since $N>b^{4b}$, $n\geq 2b$.)\\
Set $\Lambda_N=\Lambda_{d,k_{d,n},n}$. Note that by \eqref{eq:DefnOfd}, $\Lambda_N\subseteq \{1,...,N\}$. Also note  that by  \eqref{eq:LowerBoundForLambda},
\begin{equation*}
    |\Lambda_N|>\frac{d^{n-b}}{n}>\frac{(N^{1/n}-b)^{n-b}}{nb^{n-b}}
    =\frac{N^{1-b/n}}{nb^{n-b}}(1-bN^{-1/n})^{n-b}.
\end{equation*}
To prove that $\Lambda_N$ satisfies \eqref{eq:Lambda_NLowerBound}, it is enough to note that for any $\epsilon,c>0$ and  sufficiently large $N$, 
\begin{equation*}
    \frac{N^{1-b/n}}{nb^{n-b}}=N^{1-\frac{b}{n}-\frac{\log (n)}{\log(N)}-(n-b)\frac{\log(b)}{\log(N)}}>N^{1-2\sqrt{\frac{b\log(b)}{\log(N)}}-\frac{\epsilon}{\sqrt{\log(N)}}}=Ne^{-(2\sqrt{b\log(b)}+\epsilon)\sqrt{\log(N)}}
\end{equation*}
and 
\begin{equation*}
\frac{(n-b)\log(1-bN^{-1/n})}{\sqrt{\log(N)}}>-c
\end{equation*}
We are done.  
\end{proof}
%%%%%%%%%%%%%%%%%%%%%%%%%%%%%%
\subsection{The proof of \cref{5.thm:SmallIntersections}}
\begin{proof}[Proof of \cref{5.thm:SmallIntersections}]
We use a construction similar to the one used to prove Theorem 2.1 in \cite{BHKNilSystems2005}.
Fix $r\in\N$. We will first define the set $A_r^{(\ell)}$ and then show that both formulas  \eqref{LongExpressionsWithFewElements} and \eqref{ShortExpressionWithFewElements} hold.\\

 For  every sufficiently large  $N\in\N$, let $\Lambda_N$ denote the set guaranteed to exist by \cref{10'.lem:Rusza(d-1)} with $b=\ell$. For each $j\in\Lambda_N$, we set
$$
\mathcal I_N(j):= \frac{j}{2N(\ell+1)}+[0,\frac{1}{8N(\ell+1)^2})
$$
and set $\mathcal I_N=\bigcup_{j\in\Lambda_N}\mathcal I_N(j)$. Suppose that there exist a $t\in\{1,...,\ell\}$ and $x_1,...,x_{t+1}\in \mathcal I_N$ with 
\begin{equation}\label{eq:SumInSet}
x_1+\cdots+x_t=tx_{t+1}\mod 1.
\end{equation}
Then, we can find $j_1,...,j_{t+1}\in\Lambda_N$  such that 
for every $s\in\{1,...,t+1\}$, $x_s\in\mathcal I_N(j_s)$.
We claim that 
$$
j_1+\cdots+j_t=tj_{t+1}.
$$
Indeed, noting that  
$|\sum_{j=1}^tx_j|,|tx_{t+1}|<5/8$, we see  that $x_1+\cdots+x_t=tx_{t+1}$ and, so, 
\begin{multline}
    \frac{1}{2N(\ell+1)}|\sum_{s=1}^tj_s-tj_{t+1}|=|\sum_{s=1}^t \left(\frac{j_s}{2N(\ell+1)}-x_s\right)-t\left(\frac{j_{t+1}}{2N(\ell+1)}-x_{t+1}\right)|\\
    <\frac{2t}{8N(\ell+1)^2}\leq \frac{1}{4N(\ell+1)}.
\end{multline}
Thus,
$
\frac{1}{2}>|\sum_{s=1}^t j_s-tj_{t+1}|
$, which proves the claim. It now follows that whenever \eqref{eq:SumInSet} holds with $x_1,...,x_{t+1}\in \mathcal I_N$, one can find a $j\in\Lambda_N$ with $x_1,...,x_{t+1}\in \mathcal I_N(j)$.\\
We pick $M\in\N$ large enough to ensure that 
\begin{equation}\label{eq:ChoiceOfM}
2\leq \frac{Me^{-(r-1)\beta\sqrt{\log(M)}}}{(8(\ell+1)^2)^{r-2}}
\end{equation}
and set  $A_r^{(\ell)}=[0,1)\times \mathcal I_M$.\\

We now show that \eqref{LongExpressionsWithFewElements} holds. For this, consider  $d\in\N$ and  the  non-constant, linearly dependent polynomials $p_1,...,p_\ell\in\Z[x_1,...,x_d]$. By re-indexing $p_1,...,p_\ell$, if needed, we can assume that without loss of generality there exist a $t>1$ and non-zero $a_1,...,a_\ell\in\Z\setminus\{0\}$ with
$$
\sum_{j=1}^ta_jp_j(x_1,...,x_d)=0.
$$
Setting $T_j=T^{a_j}$ for each $j\in\{1,...,\ell\}$, we obtain that for every pair $(x,y)\in [0,1)^2$ and any $\vec n\in\Z^d$, one always has that
\begin{equation}
(tx,ty)=\sum_{j=1}^t(x,y+a_jp_j(\vec n)x)=\sum_{j=1}^t T_j^{p_j(\vec n)}(x,y).      
\end{equation}
So, if 
\begin{equation}
(x,y), T_1^{p_1(\vec n)}(x,y),...,T_t^{p_t(\vec n)}(x,y)\in [0,1)\times \mathcal I_M,       
\end{equation}
then one can find a $j\in\Lambda_M$ with 
\begin{equation}
(x,y), T_1^{p_1(\vec n)}(x,y),...,T_t^{p_t(\vec n)}(x,y)\in [0,1)\times \mathcal I_M(j).    
\end{equation}
It is now easy to see that when $p_i(\vec n)\neq 0$ for some $i\in\{1,...,t\}$,
\begin{multline}\label{eq:KeyComputationForLDpolys}
    \mu\otimes\mu(A_r^{(\ell)}\cap T_1^{-p_1(\vec n)}A_r^{(\ell)}\cap \cdots\cap T_\ell^{-p_\ell(\vec n)}A_r^{(\ell)}) \\
    \leq \mu\otimes\mu(A_r^{(\ell)}\cap T_1^{-p_1(\vec n)}A_r^{(\ell)}\cap \cdots\cap T_t^{-p_t(\vec n)}A_r^{(\ell)})\\
    =\int_{[0,1)}\int_{[0,1)}\mathbbm  1_{A_r^{(\ell)}}(x,y)\prod_{j=1}^t\mathbbm 1_{A_r^{(\ell)}}(x,y+a_jp_j(\vec n)x)\text{d}\mu(y)\text{d}\mu(x)\\
    =\sum_{j\in\Lambda_M}\int_{[0,1)}\int_{\mathcal I_M(j)}\prod_{j=1}^t\mathbbm 1_{\mathcal I_M(j)}(y+a_jp_j(\vec n)x)\text{d}\mu(y)\text{d}\mu(x)\\
    \leq \sum_{j\in\Lambda_M}\int_{\mathcal I_M(j)}\int_{[0,1)}\mathbbm 1_{\mathcal I_M(j)}(y+a_ip_i(\vec n)x)\text{d}\mu(x)\text{d}\mu(y)\\
    =\sum_{j\in\Lambda_M}\frac{1}{[8M(\ell+1)^2]^2}=\frac{|\Lambda_M|}{[8M(\ell+1)^2]^2}. 
\end{multline}
By \eqref{eq:ChoiceOfM}, we have 
$$
\frac{|\Lambda_M|}{[8M(\ell+1)^2]^2}\leq \frac{1}{2}\frac{(Me^{-\beta\sqrt{\log(M)}})^{r-1}}{[8M(\ell+1)^2]^{r-2}}\frac{|\Lambda_M|}{[8M(\ell+1)^2]^2}\leq \frac{1}{2}\left(\frac{|\Lambda_M|}{[8M(\ell+1)^2]}\right)^r=\frac{1}{2}(\mu\otimes\mu)^r(A_r^{(\ell)}).
$$
It follows that 
\begin{multline}\label{eq:BoundedByDeg}
\{\vec n\in\Z^d\,|\,\mu\otimes\mu(A_r^{(\ell)}\cap T_1^{-p_1(\vec n)}A_r^{(\ell)}\cap\cdots\cap T_\ell^{- p_\ell(\vec n)}A_r^{(\ell)})> \frac{(\mu\otimes\mu)^r(A_r^{(\ell)})}{2}\}\\
\subseteq \bigcap_{j=1}^t\{\vec n\in\Z^d\,|\,p_j(\vec n)=0\}.
\end{multline}
To prove that  \eqref{ShortExpressionWithFewElements} holds, suppose that there exist $b_1,...,b_\ell\in\Z\setminus\{0\}$ such that $\sum_{j=1}^\ell b_jp_j(\vec x)=0$ and for each $j\in\{1,...,\ell\}$, set $S_j=T^{b_j}$. By the previous argument applied with $t=\ell$, we have  
\begin{multline*}
\{\vec n\in\Z\,|\,\mu\otimes\mu(A_r^{(\ell)}\cap S_1^{-p_1(\vec n)}A_r^{(\ell)}\cap\cdots\cap S_\ell^{- p_\ell(\vec n)}A_r^{(\ell)})>\frac{(\mu\otimes\mu)^r(A_r^{(\ell)})}{2}\}\\
\subseteq \bigcap_{j=1}^\ell \{\vec n\in\Z^d\,|\,p_j(\vec n)=0\}.
\end{multline*}
On the other hand,  for any $\vec n\in\Z^d$ with $p_j(\vec n)=0$ for each $j\in\{1,...,\ell\}$, 
$$
\mu\otimes\mu(A_r^{(\ell)}\cap S_1^{-p_1(\vec n)}A_r^{(\ell)}\cap\cdots\cap S_\ell^{- p_\ell(\vec n)}A_r^{(\ell)})=\mu\otimes\mu(A_r^{(\ell)}).
$$
We are done.  
\end{proof}
%%%%%%%%%%%%%%%%%%%%%%%%%%%%%%%%%%%%%
\subsection{The proof of \cref{0.thm:Sharp(iii)->(i)}}
\begin{proof}[Proof of \cref{0.thm:Sharp(iii)->(i)}]
Let $d,\ell\in\N$, $p_1,...,p_\ell\in\Z[x_1,...,x_d]$, $\alpha\subseteq \{1,...,\ell\}$ and $a_j$, $j\in\alpha$,  be as in the statement of \cref{0.thm:Sharp(iii)->(i)}. Set $t=|\alpha|$ and suppose without loss of generality that $\alpha=\{1,...,t\}$.  Applying formula \eqref{ShortExpressionWithFewElements} in  \cref{5.thm:SmallIntersections}  to the polynomials $p_1,...,p_t$, we get that the $\mu\otimes\mu$-preserving and commuting transformations $T_j=T^{a_j}$, $j\in\{1,...,t\}$, have the property that for every $r\in\N$, the set  $A_r:=A_{r}^{(t)}$ satisfies 
$$
\{\vec n\in\Z^d\,|\,\mu\otimes\mu(A_{r}\cap T_1^{-p_1(\vec n)}A_{r}\cap\cdots\cap T_t^{- p_t(\vec n)}A_{r})>\frac{(\mu\otimes\mu)^r(A_{r})}{2}\}\\
=\bigcap_{j=1}^t \{\vec n\in\Z^d\,|\,p_j(\vec n)=0\}.
$$
Equation  \ref{0.eq:EqualityWithLongerExpression} now follows by letting $T_j=\text{Id}$ for each $j\in\{1,...,\ell\}\setminus\{1,...,t\}$. 
\end{proof}
%%%%%%%%%%%%%%%%%%%%%%%%%%
%%%%%%%%%%%%%%%%%%%%%%%%%%%%%%%%%%%%%%%%%%%%%%%%%%%%%%%%%%%%%%%
\section{Completion of the proof of \cref{0.MainResultLebesgueSpaces}}\label{Sec:6}
In this section we wrap-up the proof of \cref{0.MainResultLebesgueSpaces}
by employing the results obtained in Sections 3--5.
\begin{proof}[Proof of \cref{0.MainResultLebesgueSpaces}]
(i)$\implies$(\rm{ii$^\prime$}): This implication follows from \cref{4.GenBerLeib}.\\
(\rm{ii$^\prime$})$\implies$(ii): As was mentioned in the introduction and in Subsection 3.1, every A-VIP$_0^*$ set is an A-IP$_0^*$ set.\\
(ii)$\implies$(iii): For any non-constant polynomial $p\in\Z[x_1,...,x_d]$, the set $E:=\{\vec n\in\Z^d\,|\,p(\vec n)=0\}$ satisfies $d^*(E)=0$.\footnote{Here is one of the many ways of deriving this fact. Let $\alpha$ be an irrational number and let $$t_\epsilon=d^*(\{\vec n\in\Z^d\,|\,\|p(\vec n)\alpha\|<\epsilon\}),\,\epsilon>0.$$
By Weyl's equidistribution theorem,  $\lim_{\epsilon\rightarrow 0^+}t_\epsilon=0$. Noting that $E\subseteq \{\vec n\in\Z^d\,|\,\|p(\vec n)\alpha\|<\epsilon\}$ for every positive value of $\epsilon$, we deduce that $d^*(E)=0$.} Thus, because every A-IP$_0^*$ set $D\subseteq \Z^d$ is syndetic (and, so, satisfies $d^*(D)>0$), we obtain that (iii) holds. \\
(iii)$\implies$(i): We will prove the contrapositive. So assume that the non-constant polynomials $p_1,...,p_\ell\in\Z[x_1,...,x_d]$ fail to be either linearly independent or jointly intersective. If $p_1,...,p_\ell$ are not linearly independent, then the result follows from \cref{5.thm:SmallIntersections}. On the other hand, if $p_1,...,p_\ell$ fail to be  jointly intersective but are linearly independent, one can easily find a counterexample in systems of finite cardinality as follows. Because $p_1,...,p_\ell$ are not jointly intersective, there is   an $M\in\N$ with the property that for each $\vec n\in\Z^d$ there is a $j\in\{1,...,\ell\}$ with $p_j(\vec n)\not\equiv  0\mod M$. Let $T_M:\Z/M\Z\rightarrow \Z/M\Z$ be the map given by $T_Mx\equiv x+1\mod M$. Letting $A=\{0\}\subseteq\Z/M\Z$, we obtain
$$
\{\vec n\in\Z^d\,|\,\frac{|A\cap T_M^{-p_1(\vec n)}A\cdots \cap T_M^{-p_\ell(\vec n)}A|}{M}=0\}=\Z^d.
$$
Thus, taking $\epsilon=\frac{1}{2M^{\ell+1}}$, we obtain 
$$R_\epsilon^{p_1,...,p_\ell}(A)=\{\vec n\in\Z^d\,|\,\frac{|A\cap T_M^{-p_1(\vec n)}A\cdots \cap T_M^{-p_\ell(\vec n)}A|}{M}>\frac{1}{M^{\ell+1}}-\frac{1}{2M^{\ell+1}}\}=\emptyset$$
which implies that for every $\vec n\in R_\epsilon^{p_1,...,p_\ell}(A)$ and every  $j\in\{1,...,\ell\}$, $p_j(\vec n)= 0$, contrary to (iii).\\
(ii)$\implies$(iv) when $d=1$: The argument presented in the proof of (ii)$\implies$(iii) also shows that, when (ii) holds, every set of the form $R_\epsilon^{p_1,...,p_\ell}(A)$ is infinite.\\
(iv)$\implies$(iii) when $d=1$: This implication is a consequence of the pigeon hole principle. All that one has to note is that if $|R_\epsilon^{p_1,...,p_\ell}(A)|>\deg(p_j)$, then there is an $n\in R_\epsilon^{p_1,...,p_\ell}(A)$ with $p_j(n)\neq 0$.   
\end{proof}
\begin{rem}
     The counterexample used above to prove  the contrapositive of  (iii)$\implies$(i)  when $p_1,...,p_\ell$ are linearly independent but not jointly intersective can be modified to have its underlying  probability space be an atomless Lebesgue space (simply consider the set $[0,1)\times \Z/M\Z$ with its Borel $\sigma$-algebra and the product measure induced by the Lebesgue measure and the normalized counting measure). 
     Furthermore, under the additional assumption that the polynomials $p_j-p_j(\vec 0)$, $j\in\{1,...,\ell\}$, are $\Q$-linearly independent, one has that 
     any such counterexample must involve the existence of non-trivial eigenfunctions. Indeed, \cite{FraKuJoinErgForIndependentPolys} implies that for any probability space $(X,\mathcal A,\mu)$, any $A\in\mathcal A$,  and any invertible and commuting $\mu$-preserving transformations $T_1,...,T_\ell$, if $T_1,...,T_\ell$ possess no non-trivial rational eigenfunctions, then 
    $$
\lim_{N\rightarrow\infty}\frac{1}{|F_N|}\sum_{\vec n\in F_N}\mu(A\cap T_1^{-p_1(\vec n)}A\cap \cdots\cap T_\ell^{-p_\ell(\vec n)}A)=\mu^{\ell+1}(A).
    $$
    for any F{\o}lner sequence $(F_N)_{N\in\N}$ in $\Z^d$.
\end{rem}
%%%%%%%%%%%%%%%%%%%%%%%%%%%%%%%%%%%%%%%%%%%%%%%%%
%%%%%%%%%%%%%%
\section{The proof of \cref{0.thm:IPoFailure}}\label{Sec7}
Our goal in this section is to prove the following result which implies \cref{0.thm:IPoFailure}. For any $d\in\N$ and any non-zero polynomial $p\in\Z[x_1,...,x_d]$, the degree of $p$, denoted $\deg_{x_1,...,x_d}(p)$, is the maximum sum of the exponents of the variables in any single monomial term forming the polynomial.
\begin{thm}\label{6.thm:KhinthcinFailure}
Let $p\in\Z[x_1,...,x_d]$ be a polynomial with zero constant term and $\deg_{x_1,...,x_d}(p)>1$. 
    There exist an invertible weakly mixing system $(X,\mathcal A,\mu,T)$, a set $A\in\mathcal A$ with $\mu(A)=\frac{1}{2}$, and an $\epsilon>0$ for which the set 
    $$
\{\vec n\in\Z^d\,|\,\mu(A\cap T^{-p(\vec n)}A)> \mu^2(A)-\epsilon\} 
    $$
    is not \rm{IP$_0^*$}.
\end{thm}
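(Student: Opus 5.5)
The strategy is to reduce to the one-variable case treated in \cite[Theorem A]{ZelIP0Khintchine2023} via a Gaussian system whose spectral measure is tailored to defeat finite IP structures. We will use Gaussian (or Gaussian-type) systems, which are weakly mixing exactly when the spectral measure $\sigma$ on $\mathbb T$ is continuous. For a Gaussian system with spectral measure $\sigma$, take $A=\{\omega\,|\,X_0(\omega)>0\}$, so that $\mu(A)=\frac12$. Then
$$\mu(A\cap T^{-m}A)=\frac14+\frac{1}{2\pi}\arcsin\hat\sigma(m).$$
Hence $\mu(A\cap T^{-m}A)\le \frac14-\epsilon$ whenever $\hat\sigma(m)\le -c$ for a suitable fixed $c>0$. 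The theorem therefore reduces to the following claim: there is a continuous symmetric probability measure $\sigma$ and a $c>0$ such that for every $r\in\N$ there are distinct $\vec n_1,\dots,\vec n_r\in\Z^d$ with
$$\mathrm{Re}\,\hat\sigma(p(\vec n_\alpha))<-c\quad\text{for every non-empty }\alpha\subseteq\{1,\dots,r\}.$$

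The second step is to extract one-variable structure from $p$. Since $\deg p>1$, a generic line $\vec n=x\vec v$ gives $q(x)=p(x\vec v)$, a one-variable polynomial of degree $k=\deg p\ge2$ with zero constant term. Choosing the IP$_r$ set inside $\Z\vec v$ reduces us to the case $d=1$, which is precisely \cite[Theorem A]{ZelIP0Khintchine2023}. This is the step I would rely on, but I would reconstruct its proof as follows. Take $\sigma$ to be a weak-$*$ limit of measures concentrated near rationals $a_i/b_i$ with denominators $b_i$ growing very fast, modulated so that $\sigma$ is continuous; a Riesz product type construction is the natural choice. For each $r$, pick $n_1,\dots,n_r$ as multiples of a large $b$ with carefully chosen residues, so that $q(n_\alpha)\theta\approx\frac12 \bmod 1$ for all non-empty $\alpha$, where $\theta$ ranges over the bulk of the mass of $\sigma$.

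The key arithmetic point is that for degree $k\ge2$, the cross terms in $q(n_\alpha)$ mean that $q(n_\alpha)$ is not additive in $\alpha$. One can therefore choose $n_i=b\,m_i$ so that $q(n_\alpha)/b$ modulo $2$ is odd for all $\alpha$. For example, with $q(x)=x^2$, take $\theta\approx 1/(2b^2)$, so that $q(n_\alpha)\theta\approx(\sum_{i\in\alpha} m_i)^2/2$; choosing all $m_i$ odd fails for even-size $\alpha$, so a more careful choice is needed. This is the subtle part. Instead I would use frequencies $\theta=\frac{1}{2b}$, with $n_i$ chosen so that $q(n_\alpha)\equiv b \pmod{2b}$. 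This is a finite system of congruences, solvable by picking $n_i$ in suitable residue classes modulo a large power of $2$ times $b$, using that for $k\ge2$ the map $\alpha\mapsto q(n_\alpha)$ has higher-order VIP structure. This contrasts with the linear case, where this is impossible, consistent with \eqref{0.eq:LinearCaseT}.

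The main obstacle is this simultaneous congruence step, combined with continuity of $\sigma$: mass must sit near $\frac{1}{2b}$-type frequencies for infinitely many $b$ (one per $r$) while keeping $\sigma$ atomless and $\mathrm{Re}\,\hat\sigma<-c$ uniformly. I would handle this by putting mass $2^{-r}$-weighted small smooth bumps around $\frac{1}{2b_r}$. Then, for the $n_i$ attached to level $r$, the other levels contribute only a small error, because $b_r$ grows so fast that $q(n_\alpha)\theta$ for $\theta$ near $\frac{1}{2b_s}$ with $s>r$ is near $0$ by choice of size, and for $s<r$ it is near an integer by divisibility. The remaining technical point is to ensure that the bulk weight at level $r$ dominates; if it does not, I would replace the Gaussian system with a direct product, or with the construction from \cite{ZelIP0Khintchine2023} pulled back along $x\mapsto x\vec v$.
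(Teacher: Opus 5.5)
Your main route is the paper's proof. First pick $\vec v$ off the zero set of the top-degree homogeneous part of $p$ (this is \cref{6.lem:DegreeOfLinearRestriction}), so that $q(x)=p(x\vec v)$ has degree $k\geq 2$ and $q(0)=0$. Then apply \cite[Theorem A]{ZelIP0Khintchine2023} to $q$. Finally, push each IP$_r$ set $\{n_\alpha\}\subseteq\Z$ lying in the complement of the return set forward to $\{n_\alpha\vec v\}$. This is an IP$_r$ set in $\Z^d$, since $x\mapsto x\vec v$ is additive and injective, and on it $p(n_\alpha\vec v)=q(n_\alpha)$. That already proves the statement. The Gaussian reformulation in your first paragraph is not needed, because Theorem A supplies the system directly.

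The reconstruction of Theorem A that you sketch would fail, so it should not be offered as part of the argument.

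\emph{The congruence step is impossible once $r\geq k+1$.} The map $\varphi(\alpha)=q(n_\alpha)$ is a VIP polynomial of degree at most $k$ with $\varphi(\emptyset)=0$, so
\[
D_{\{1\}}\cdots D_{\{k+1\}}\varphi(\emptyset)=\sum_{\gamma\subseteq\{1,\dots,k+1\}}(-1)^{k+1-|\gamma|}\varphi(\gamma)=0.
\]
If $\varphi(\gamma)\equiv b\pmod{2b}$ for all non-empty $\gamma$, this alternating sum is $\equiv(-1)^k b\not\equiv 0\pmod{2b}$. The same computation in $\mathbb T$ shows that no single frequency $\theta$ can satisfy $\|q(n_\alpha)\theta-\frac12\|<2^{-(k+2)}$ for all non-empty $\alpha\subseteq\{1,\dots,k+1\}$. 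Moreover, by \cref{3.lem:DiophantineRec}, each fixed $\theta$ has some $\alpha$ with $\mathrm{Re}\,e^{2\pi i q(n_\alpha)\theta}>0$ once $r$ is large. So the higher-order VIP structure works against you, just as in the linear case. A negative $\hat\sigma(q(n_\alpha))$ can only come from averaging over $\theta$, with the ``returning'' $\alpha$ varying with $\theta$. Your plan of making $q(n_\alpha)\theta\approx\frac12$ for all $\alpha$ on the bulk of $\sigma$ cannot be realized.

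\emph{The bookkeeping across levels has the wrong sign.} If $q(n_\alpha)\theta$ is near an integer for $\theta$ near $\frac{1}{2b_s}$ with $s\neq r$, then $e^{2\pi i q(n_\alpha)\theta}\approx 1$. Those levels therefore contribute about their total mass $1-2^{-r}$, giving $\hat\sigma(q(n_\alpha))\approx 1-2^{1-r}>0$ rather than a small error. A uniform $c$ would require essentially all of $\sigma$ to contribute negatively at every level. Your layered construction cannot provide this, since only the level-$r$ bumps (of mass $2^{-r}$) are designed to be negative.
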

As we show in Subsection \ref{sec:7.1} below, \cref{6.thm:KhinthcinFailure} can be derived from \cite[Theorem A]{ZelIP0Khintchine2023}, which we now state.
\begin{thm}[Theorem A in \cite{ZelIP0Khintchine2023}]\label{6.thm:TheoremA}
Let $p\in\Z[x]$ be a polynomial with zero constant term and $\deg_{x}(p)>1$. 
    There exist an invertible weakly mixing system $(X,\mathcal A,\mu,T)$, a set $A\in\mathcal A$ with $\mu(A)=\frac{1}{2}$, and an $\epsilon>0$ for which the set 
    $$
\{n\in\Z\,|\,\mu(A\cap T^{-p(n)}A)> \mu^2(A)-\epsilon\} 
    $$
    is not \rm{IP$_0^*$}.
\end{thm}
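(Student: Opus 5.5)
Theorem A is quoted from \cite{ZelIP0Khintchine2023}, so the paper does not need to reprove it. If I had to reprove it, I would use a Gaussian construction and reduce everything to a spectral, Diophantine problem. The reduction goes as follows. Let $\sigma$ be a symmetric, continuous (atomless) probability measure on $\mathbb T$, and let $(X,\mathcal A,\mu,T)$ be the associated Gaussian system. It is weakly mixing precisely because $\sigma$ is continuous. With $X_0$ the underlying standard Gaussian variable, take $A=\{X_0>0\}$, so that $\mu(A)=\frac12$. The classical orthant formula then gives
$$\mu(A\cap T^{-n}A)=\frac14+\frac{1}{2\pi}\arcsin\hat\sigma(n).$$
Hence it suffices to find $c>0$ and, for each $r\in\N$, distinct $n_1,\dots,n_r$ such that $\hat\sigma(p(n_\alpha))\le -c$ for every non-empty $\alpha\subseteq\{1,\dots,r\}$. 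Taking $\epsilon$ with $\frac{1}{2\pi}\arcsin c>\epsilon$, each such IP$_r$ set then avoids the set in the statement, so that set is not IP$_r^*$ for any $r$, hence not IP$_0^*$.

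I would build $\sigma=\sum_r w_r\sigma_r$ inductively. Each $\sigma_r$ is a symmetric, absolutely continuous bump of tiny width around $\pm\theta_r$, so that $\hat\sigma_r(m)\approx\cos(2\pi m\theta_r)$ on any prescribed finite set of $m$. The target is $\|p(n_\alpha)\theta_s\|\approx\frac12$ for \emph{all} $s$, not only $s=r$, at all times of the $r$-th IP$_r$ set. This is what gives a uniform constant $c$ (close to $1$) rather than one degrading with $w_r$. I would enforce it in two ways. First, choose $\theta_r$ after $n^{(s)}_i$ for $s\le r$ are fixed, so that $\theta_r$ also satisfies the finitely many earlier constraints. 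Second, choose the generators $n^{(r)}_i$ of the new IP$_r$ set far out, so that the earlier absolutely continuous bumps $\sigma_s$, $s<r$, have Fourier coefficients controlled there. Here I would use rapid growth together with the approximation $\hat\sigma_s(m)\approx\cos(2\pi m\theta_s)$ on a long but finite window, rather than Riemann--Lebesgue, which would kill the sign. Continuity of $\sigma$ is automatic since every $\sigma_r$ is absolutely continuous and the weights are summable.

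The heart of the matter, and the step I expect to be the main obstacle, is the Diophantine step. For a given finite family of reals $\theta$ (or for a new $\theta$), one must produce $n_1,\dots,n_r$ with $\|p(n_\alpha)\theta\|$ close to $\frac12$ for every non-empty $\alpha$. This is exactly where $\deg p>1$ is needed. For linear $p$ it is impossible, since $p(n_{\{1,2\}})=p(n_1)+p(n_2)$ would force $\frac12+\frac12\equiv\frac12$. For $\deg p\ge2$ the expansion of $p(n_\alpha)$ contains cross terms such as $2n_in_j$ for $p(x)=x^2$, which are new free parameters. My first attempt would therefore be to choose the $n_i$ inductively with a multiplicative structure, $n_i=Q_i m_i$ with the $Q_i$ rapidly growing and highly divisible. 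The aim is that, modulo $1$, $p(n_\alpha)\theta$ is governed by a single top-order cross term in the largest index of $\alpha$, which can be tuned to $\frac12$ by choosing $m_i$ (for instance via Weyl equidistribution of a single polynomial in $m_i$), while all lower-order contributions are made negligible by choosing $\theta$ within a rational-approximation scheme $\theta\approx a/(2Q)$. Plain genericity of $\theta$ does not suffice: the $2^{r-1}$ shifts $p(x+n_\alpha)$ span only a $(\deg p+1)$-dimensional space, so this step genuinely requires exploiting the specific arithmetic structure of the IP$_r$ sums. This is the delicate part handled in \cite{ZelIP0Khintchine2023}.
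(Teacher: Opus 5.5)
Your Gaussian reduction is sound, and the paper itself only quotes Theorem A from \cite{ZelIP0Khintchine2023}. With $A=\{X_0>0\}$, Sheppard's formula turns the claim into a spectral statement: there should be a continuous symmetric probability measure $\sigma$ and a constant $c>0$ such that $\hat\sigma(p(n_\alpha))\le -c$ on some IP$_r$ set for every $r$. The gap is the Diophantine step you call the heart of the matter: it is not delicate but false. Let $k=\deg p$. Because $\alpha\mapsto p(n_\alpha)$ is a VIP-polynomial of degree at most $k$ (see the example in Subsection~\ref{Sec3.1}) and $p(0)=0$, one has the exact identity
\[
\sum_{\emptyset\neq\gamma\subseteq\beta}(-1)^{|\beta|-|\gamma|}\,p(n_\gamma)=0\qquad\text{for every }\beta\subseteq\{1,\dots,r\}\text{ with }|\beta|=k+1.
\]
For $p(x)=x^2$ and $\beta=\{1,2,3\}$ it reads $(n_1+n_2+n_3)^2-\sum_{i<j}(n_i+n_j)^2+\sum_i n_i^2=0$. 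Suppose $\|p(n_\gamma)\theta-\tfrac12\|<\eta$ for all non-empty $\gamma\subseteq\beta$. Multiply the identity by $\theta$ and use $\sum_{\emptyset\neq\gamma\subseteq\beta}(-1)^{|\beta|-|\gamma|}=\pm1$; this gives $\tfrac12\le(2^{k+1}-1)\eta$. So your target ``$\|p(n_\alpha)\theta\|\approx\frac12$ for all $\alpha$'' fails as soon as $r\ge k+1$, for every $\theta$ and every choice of the $n_i$. Your linear obstruction $\frac12+\frac12\not\equiv\frac12$ is just the case $k=1$ of this identity. The cross terms are not free parameters beyond order $k$; this is exactly what your remark about the $(\deg p+1)$-dimensional span detects. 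In particular, a uniform $c$ close to $1$ is out of reach.

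Weakening the target to $\cos(2\pi p(n_\alpha)\theta_s)\le -c$ with small $c$ does not rescue the scheme either. By \cref{3.lem:DiophantineRec} and \cref{4.rem:IntersectionProperty}, for any finitely many $\theta_1,\dots,\theta_S$ and any $\epsilon>0$, the set $\{n\in\Z : \|p(n)\theta_s\|<\epsilon \text{ for all } s\le S\}$ is VIP$_0^*$, hence IP$_{r_0}^*$ for some $r_0$. Fix $S$ with $\sum_{s>S}w_s$ small. Your second device requires the earlier bumps to still behave like atoms at $\pm\theta_s$. Then at any stage $r>\max(S,r_0)$ some $\alpha$ gives
\[
\hat\sigma(p(n_\alpha))\ge\sum_{s\le S}w_s\cos(2\pi\epsilon)-(\text{approximation errors})-\sum_{s>S}w_s>0.
\]
If instead the earlier bumps are allowed to decay, $|\hat\sigma(p(n_\alpha))|$ is at most about $w_r+\sum_{s>r}w_s\to0$, as you yourself observed. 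So no construction works in which the spectral measure, at the times of the $r$-th IP$_r$ set, looks like a measure with boundedly many atoms.

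The missing idea is that the negativity of $\hat\sigma(p(n_\alpha))$ must be an averaged effect. Every single frequency $x$ has some $\alpha$ with $\|p(n_\alpha)x\|$ small. One must therefore build one measure, spread over unboundedly many frequencies and working at all stages simultaneously, for which at each stage and each fixed $\alpha$ only a small portion of the mass is of this kind. That is the genuine content of Theorem A, and your proposal does not supply it.
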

%%%%%%%%%%%%%%%%%%%%%%%%%%%%%%%%%%%%%%%%%%%%%%%%%%%%%
\subsection{Deducing \cref{6.thm:KhinthcinFailure} from Theorem A}\label{sec:7.1}
In order to  derive \cref{6.thm:KhinthcinFailure}   from  \cite[Theorem A]{ZelIP0Khintchine2023} we will need the following algebraic observation. We remark that this result will also be employed to prove \cref{0.thm:IPFailure} in the next section. A set $E\subseteq \Z^d$ is said to have \textit{uniform density one} if  for every F{\o}lner sequence $(\Phi_N)_{N\in\N}$ in $\Z^d$, $\overline d_{(\Phi_N)}(E)=1$.  
\begin{lem}\label{6.lem:DegreeOfLinearRestriction}
    Let $d\in\N$ and let $p\in \Z[x_1,...,x_d]$ be a non-constant polynomial. There is a set $E\subseteq \Z^d$ with uniform density one such that for any $\vec a=(a_1,...,a_d)\in E$ the polynomial 
    $$q_{\vec a}(x)=p(a_1x,...,a_dx)$$
    satisfies $\deg_x(q_{\vec a})=\deg_{x_1,...,x_d}(p)$.
\end{lem}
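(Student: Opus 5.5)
Let $k=\deg_{x_1,\dots,x_d}(p)\geq 1$ and split $p=\sum_{s=0}^k p_s$ into homogeneous components, $p_s$ homogeneous of degree $s$. The top component $p_k$ is then a non-zero homogeneous polynomial. For $\vec a\in\Z^d$,
$$q_{\vec a}(x)=p(a_1x,\dots,a_dx)=\sum_{s=0}^k p_s(\vec a)\,x^s .$$
So $\deg_x(q_{\vec a})=k$ exactly when $p_k(\vec a)\neq 0$. We take
$$E=\{\vec a\in\Z^d\,|\,p_k(\vec a)\neq 0\},$$
and it remains to show that $Z=\{\vec a\in\Z^d\,|\,p_k(\vec a)=0\}$ has zero upper density along every F{\o}lner sequence, i.e.\ $d^*(Z)=0$. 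Since every F{\o}lner density is bounded by $d^*$, this gives $\overline d_{(\Phi_N)}(E)=1$ for all $(\Phi_N)$.

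This is the fact already used in the footnote to the proof of (ii)$\implies$(iii) of \cref{0.MainResultLebesgueSpaces}, applied to the non-constant polynomial $p_k$. Fix an irrational $\alpha$. For each $\epsilon>0$, the set $Z$ is contained in $\{\vec a\,|\,\|p_k(\vec a)\alpha\|<\epsilon\}$. Weyl's theorem says $(p_k(\vec a)\alpha)_{\vec a\in\Z^d}$ is well distributed mod $1$, meaning uniformly along every F{\o}lner sequence, equivalently along all boxes with side lengths tending to infinity. Hence that set has upper Banach density $2\epsilon$, and letting $\epsilon\to 0$ gives $d^*(Z)=0$.

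The main obstacle is justifying well-distribution for a multivariate polynomial $p_k$ in the Banach-density sense, not just along one F{\o}lner sequence. If one prefers to avoid this, a direct combinatorial route works instead. Induct on $d$, writing $p_k$ as a polynomial in $x_d$ with coefficients in $\Z[x_1,\dots,x_{d-1}]$, some coefficient $c(\vec x')$ non-zero. In a box $\prod_j\{M_j+1,\dots,N_j\}$:
\begin{itemize}
\item for each $\vec x'$ with $c(\vec x')\neq 0$, there are at most $k$ roots in $x_d$;
\item the $\vec x'$ with $c(\vec x')=0$ form a density-zero set in $\Z^{d-1}$ by induction.
\end{itemize}
The two contributions give $|Z\cap\text{box}|\leq k\prod_{j<d}(N_j-M_j)+o\big(\prod_{j<d}(N_j-M_j)\big)(N_d-M_d)$. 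Dividing by the box volume, this is $o(1)$ as all side lengths tend to infinity. This is the Schwartz–Zippel type bound, and it yields $d^*(Z)=0$ directly.
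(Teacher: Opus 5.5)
Your proof is correct and follows the paper's argument. The paper also takes $E$ to be the complement of the zero set of the top homogeneous part of $p$, and it simply asserts that this zero set has $d^*=0$, relying on the fact justified in the footnote to the proof of (ii)$\implies$(iii). Your Schwartz--Zippel induction is a valid self-contained proof of that step, and your observation that $\overline d_{(\Phi_N)}\le d^*$ for every F{\o}lner sequence correctly turns $d^*(Z)=0$ into uniform density one for $E$.
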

\begin{proof}[Proof of \cref{6.lem:DegreeOfLinearRestriction}] Let $D=\deg_{x_1,...,x_d}(p)$ and   write
$$
p(x_1,...,x_d)=\sum_{s=0}^D\sum_{\substack{j_1+\cdots+j_d=s\\j_1,...,j_d\in\N\cup\{0\}}}c_{j_1,...,j_d}x_1^{j_1}\cdots x_d^{j_d}.
$$
Consider the polynomial 
$$
u(x_1,...,x_d):= \sum_{\substack{j_1+\cdots+j_d=D\\j_1,...,j_d\in\N\cup\{0\}}}c_{j_1,...,j_d}x_1^{j_1}\cdots x_d^{j_d}
$$
and note that, like $p$, $u$ is a non-constant polynomial. Thus,  the set $F:=\{\vec a\in\Z^d\,|\,u(\vec a)=0 \}$ satisfies $d^*(F)=0$. Letting $E=\Z^d\setminus F$ it now follows that for every $\vec a=(a_1,...,a_d)\in E$, the polynomial  
$$
q_{\vec a}(x)=\sum_{s=0}^D\left(\sum_{\substack{j_1+\cdots+j_d=s\\j_1,...,j_d\in\N\cup\{0\}}}c_{j_1,...,j_d}a_1^{j_1}\cdots a_d^{j_d}\right)x^s.
$$
satisfies $\deg_x(q_{\vec a})=D$.
\end{proof}
\begin{proof}[Proof of \cref{6.thm:KhinthcinFailure}]
Let $p\in \Z[x_1,...,x_d]$ be a polynomial with zero constant term and  $\deg_{x_1,..,x_d}(p)>1$. By \cref{6.lem:DegreeOfLinearRestriction}, there is a non-zero $\vec a=(a_1,...,a_d)\in \Z^d$ for which the polynomial 
$$q(x)=p(a_1x,...,a_dx)$$
satisfies $\deg(q)>1$ and $q(0)=0$. By \cref{6.thm:TheoremA}, there exist an invertible weakly mixing system $(X,\mathcal A,\mu,T)$, a set $A\in\mathcal A$ with $\mu(A)=\frac{1}{2}$, an $\epsilon>0$, and an IP$_0$ set $\Gamma'\subseteq \Z$ (i.e. for each $r\in\N$, $\Gamma'$ contains an IP$_r$ set) with the property that for each $n\in\Gamma'$, 
\begin{equation}\label{6.eq:qConsequenceOfThmA}
\mu(A\cap T^{-q(n)}A)\leq\mu^2(A)-\epsilon.
\end{equation}
Let $\Gamma:=\{(a_1n,...,a_dn)\,|\,n\in \Gamma'\}$. The result now follows by noting that $\Gamma$ is an IP$_0$ set in $\Z^d$ and that for any $\vec n\in \Gamma$ there is an $m\in\Gamma'$ with 
$$
\mu(A\cap T^{-p(\vec n)}A)=\mu(A\cap T^{-q(m)}A)\leq\mu^2(A)-\epsilon.
$$
We are done. 
\end{proof}
%%%%%%%%%%%%%%%%%%%%%%%%%%%%%%%%%%
%%%%%%%%%%%%%%%%%%%%%%%%%%%%%%%%%%%%%%%%%%%%%%%%%%%%%%%%%%%%%%%%%%%%%%%%%%%%%%%%%%%%%
\section{The proof of \cref{0.thm:IPFailure}}\label{Sec8}
Our goal in this section is to prove the following result which implies   \cref{0.thm:IPFailure}. The  polynomials $p_1,...,p_\ell\in\Z[x_1,...,x_d]$ are called \textit{essentially distinct} if for any $i\neq j$, $p_j-p_i$ is a non-constant polynomial. 
\begin{thm}[Cf. Remark 1.16 in \cite{zelada2025coexistence}]\label{6.thm:FailureOfIP*Part2}
Let $d,\ell\in\N$, $\ell>1$, and let $p_1,...,p_\ell\in\Z[x_1,...,x_d]$ be essentially distinct, non-constant polynomials with zero constant term. For each $r\in\N$ there exists an invertible probability  measure preserving system $(X_r,\mathcal A_r,\mu_r,T_r)$  with the property that for some $A_r\in\mathcal A_r$ with $\mu_r(A_r)>0$, the set 
$$
\{\vec n\in\Z^d\,|\,\mu_r(A_r\cap T^{-p_1(\vec n)}A_r\cap \cdots\cap T^{-p_\ell(\vec n)}A_r)>\frac{\mu_r^r(A_r)}{2}\}
$$
is not \rm{IP$^*$}. 
\end{thm}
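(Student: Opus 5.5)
We will reduce to the one-variable case along a line, as in the derivation of \cref{6.thm:KhinthcinFailure}, and then build an explicit counterexample from a Behrend-type set and a rotation-like system in which the $\ell+1$ points $x, T^{p_1(n)}x,\dots,T^{p_\ell(n)}x$ are forced into a linear relation along a suitable IP set.

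\emph{Step 1 (restriction to a line).} Apply \cref{6.lem:DegreeOfLinearRestriction} to each of the finitely many non-constant polynomials $p_j$ and $p_j-p_i$ ($i\neq j$). Since a finite intersection of sets of uniform density one is non-empty, we get a non-zero $\vec a\in\Z^d$ such that the one-variable polynomials $q_j(x)=p_j(a_1x,\dots,a_dx)$ have zero constant term, are non-constant, and are essentially distinct. If $\Gamma'\subseteq\Z$ is an IP set on which the intersections are small, then $\{n\vec a\,|\,n\in\Gamma'\}$ is an IP set in $\Z^d$ with the same property. So it suffices to treat $d=1$.

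\emph{Step 2 (choice of IP set and system).} Choose an IP set $\Gamma=FS((n_k))$ with $n_k$ growing very fast, say divisible by rapidly increasing factorials. Along such a set, each $q_j(n)$, $n\in\Gamma$, equals a VIP-polynomial expression in the generators. Pass to a sub-IP set on which the leading terms separate, which is possible because the $q_j$ are essentially distinct. The goal is an IP-limit (in the sense of Furstenberg–Katznelson / Bergelson–McCutcheon IP-limits of unitary operators) in which $T^{q_j(n)}$ converges weakly along $\Gamma$ to projections arranged as in the nil-example of \cref{5.thm:SmallIntersections}.

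Concretely, I would take a product of a circle rotation by a suitable $\theta$ with a skew-product $T(x,y)=(x+\theta,\,y+x)$, or with the map $T(x,y)=(x,y+x)$ of Section 5. I would then pick $\Gamma$ along which $q_j(n)\theta\to c_j$ in a prescribed way, so that along $\Gamma$ the points $y+q_j(n)x$ satisfy, modulo a small error, a fixed linear relation $\sum_j b_j(\cdot)=0$ with $\sum_j b_j$ equal to the appropriate weight. With $A_r=[0,1)\times\mathcal I_M$ built from the set $\Lambda_M$ of \cref{10'.lem:Rusza(d-1)}, the Behrend property then forces all $\ell+1$ points into a single interval $\mathcal I_M(j)$. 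A computation as in \eqref{eq:KeyComputationForLDpolys} bounds the measure by $|\Lambda_M|/[8M(\ell+1)^2]^2\le \mu^r(A_r)/2$ via \eqref{eq:ChoiceOfM}. Since here there is only one $T$, the relation has to come from the IP-limit behaviour rather than from exact linear dependence. A simpler way to get it exactly: choose $x$-coordinates in a finite cyclic factor and use $n_k$ highly divisible, so that $q_j(n)\equiv 0$ modulo a fixed number while the relevant fractional parts converge.

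\emph{Main obstacle.} The hard part is producing, for arbitrary essentially distinct $q_1,\dots,q_\ell$ with a single transformation, an IP set along which the orbit points satisfy a rigid linear relation with non-zero "derivative" terms. Note that $\ell=2$ with $q_1=n$ and $q_2=2n$ is the $3$-AP situation, but in general the $q_j$ are independent.

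My first attempt would be to take $n_k$ such that $q_j(n)x$ approximates a distinct multiple $\lambda_j$ of a single "tiny" quantity along the IP set. For example, use a Weyl-type system on $\mathbb T^{D+1}$, with $D=\max\deg q_j$, in which $T^{m}$ acts through polynomial coordinates. Then choose $\Gamma$ so that $T^{q_j(n)}(x,y)\approx (x,\,y+\lambda_j h(n,x))$ with distinct integers $\lambda_j$, exploiting essential distinctness to make the $\lambda_j$ distinct. Finally, apply the Behrend lemma with $b$ large enough to cover the coefficients $\lambda_j$.
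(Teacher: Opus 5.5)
Your Step 1 is correct and is the paper's own reduction to $d=1$. The genuine gap is Step 2. The construction of a system and an IP set with small intersections, which is the whole content of the theorem, is never carried out; you flag it yourself as the main obstacle. Moreover, the mechanisms you sketch cannot work once two of the $q_j$ are linearly independent, e.g.\ $\ell=2$, $q_1(n)=n$, $q_2(n)=n^2$.

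\emph{Lebesgue base.} Suppose the base coordinate carries Lebesgue measure, as for $T(x,y)=(x,y+x)$ or the skew product $(x+\theta,y+x)$. For $A=[0,1)\times B$ the triple intersection equals $\int_{\mathbb T}F(q_1(n)x+c_1,q_2(n)x+c_2)\,\text{d}x$, where $c_j=c_j(n)$ are constants and $F(s,t)=\int_{\mathbb T}\mathbbm 1_B(y)\mathbbm 1_B(y+s)\mathbbm 1_B(y+t)\,\text{d}y$ is continuous. For each fixed $(h_1,h_2)\neq(0,0)$ the polynomial $h_1q_1+h_2q_2$ has finitely many zeros, so $\int_{\mathbb T}e^{2\pi i(h_1q_1(n)+h_2q_2(n))x}\,\text{d}x=0$ for all large $|n|$. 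Approximating $F$ uniformly by trigonometric polynomials, the intersection tends to $\mu(B)^3>\mu(B)^r/2$ for $r\geq 3$, so the return set is cofinite.

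\emph{Discrete-spectrum coordinates.} These do not help either. By \cref{3.lem:DiophantineRec} the sets $\{n\,|\,\|q_j(n)\theta\|<\epsilon\}$ and $\{n\,|\,M\mid q_j(n)\}$ are IP$^*$. Hence every tail of an IP set contains $n$ at which such coordinates are (almost) unmoved. This rules out $q_j(n)\theta\to c_j\neq 0$, or $q_j(n)\equiv\lambda_j\not\equiv 0 \pmod M$, along a whole IP set. Your ``simpler'' variant ($x\in\frac1M\Z/\Z$ with $M\mid q_j(n)$) gives $T^{q_j(n)}=\text{Id}$, hence intersection exactly $\mu(A)$ — the opposite of what is needed.

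The missing idea is to keep $T(x,y)=(x,y+x)$ but replace Lebesgue measure on the $x$-circle by a suitably chosen Borel probability measure $\sigma$; every product $\sigma\otimes\mu$ is $T$-invariant. This $\sigma$ comes from Corollary E of Bergelson--Knutson--Lesigne (\cref{6.thm:CorollaryE}, in the form \cref{6.thm:DistributionResult}). It provides an increasing $(n_k)$ such that the push-forward of $\sigma$ under $x\mapsto(p_1(n_\alpha)x,p_2(n_\alpha)x)$ IP-converges weakly to the push-forward of $\lambda_M$ under $y\mapsto(ay,by)$.

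This is a rigid linear relation holding only in distribution, so it does not conflict with the IP$^*$ property of Bohr sets. It implies that along the IP set the triple intersection approaches $\int\int\mathbbm 1_B(y)\mathbbm 1_B(y+ax)\mathbbm 1_B(y+bx)\,\text{d}\lambda_M(x)\,\text{d}\mu(y)$. For large $M$ this is close to the corresponding Lebesgue integral, which \cref{10'.lem:Rusza(d-1)}, applied to $(b-a)n_1+an_2=bn_3$, makes small.

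Only two of the polynomials are needed: bound the $(\ell+1)$-fold intersection by the triple one. Take $(a,b)=(1,2)$ if some pair $p_1,p_2$ is linearly independent. Otherwise $bp_1=ap_2$ with $a\neq b$ by essential distinctness. There is no need to force all $\ell+1$ points into a single relation.
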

Before proving   \cref{6.thm:FailureOfIP*Part2} we need to introduce some notation and technical results.
 %%%%%%%%%%%%%%%%%%%%%%%%%
 \subsection{Background in spectral theory}
The proof of \cref{6.thm:FailureOfIP*Part2} makes use  Corollary E  in \cite{BKLUltrafilterPoly}. Before stating Corollary E  we need to introduce some notation.\\

Denote the set of all non-empty, finite subsets of $\N$ by $\mathcal F$. For any sequence $(n_k)_{k\in\N}$ in $\Z$ and any $\alpha\in \mathcal F$, let
$$
n_\alpha=\sum_{j\in\alpha}n_j.
$$
For any compact metric space $(X,d)$ and any $\mathcal F$-indexed  sequence $(x_\alpha)_{\alpha\in\mathcal F}$ in $X$, we
write 
$$
\mathop{\text{IP-lim}}_{\alpha\in\mathcal F}\,x_\alpha=x
$$
if there is an  $x\in X$ such that for any $\epsilon>0$, there exists a $k_\epsilon\in\N$ for which every $\alpha\in\mathcal F$ with $\min \alpha>k_{\epsilon}$ satisfies $d(x,x_\alpha)<\epsilon$.
\begin{cor}[Corollary E in \cite{BKLUltrafilterPoly}.]\label{6.thm:CorollaryE}
    Let $N\in\mathbb N$ and denote by $\mathcal P_{\leq N}$ the set of all polynomials with integer coefficients, zero constant term,  and degree not exceeding  $N$. Let $G$ be a subgroup of the additive group $\mathcal P_{\leq N}$ such that $\max_{p\in G} \deg(p)=N$. Then, the group $G$ has finite index in $\mathcal P_{\leq N}$ if and only if there exists  a Borel probability measure $\sigma$ on $\mathbb T$ and an increasing sequence $(n_k)_{k\in\N}$ in $\N$ such that for any $p\in \mathcal P_{\leq N}$ 
    $$
\mathop{\text{IP-lim}}_{\alpha\in\mathcal F}\int_\mathbb T e^{2\pi i p(n_\alpha)x}\text{d}\sigma(x)=\begin{cases}
1\text{ if }p\in G,\\
0\text{ if }p\not\in G. 
    \end{cases}
    $$
\end{cor}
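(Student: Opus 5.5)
Corollary~\ref{6.thm:CorollaryE} is quoted from \cite{BKLUltrafilterPoly}. If I had to reprove it, I would pass to an idempotent ultrafilter $\mathfrak p$ on $\N$ refining the IP-ring generated by $(n_k)$ and work with $\mathfrak p$-limits instead of IP-limits. For each $p\in\mathcal P_{\leq N}$, consider the multiplication operator $M_p$ by $e^{2\pi i p(n_\alpha)x}$ on $L^2(\sigma)$.

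\emph{Key structural fact.} By the polynomial IP-limit theory of Bergelson--Furstenberg--McCutcheon, the weak $\mathfrak p$-limit of $M_p$ exists. Moreover, by induction on $\deg p$ with the usual van der Corput/derivative step $D_\beta p(n_\alpha)=p(n_{\alpha\cup\beta})-p(n_\alpha)$, this limit is an orthogonal projection. Consequently the set
$$G_\sigma:=\Big\{p\,:\,\mathop{\text{IP-lim}}_{\alpha\in\mathcal F}\hat\sigma(p(n_\alpha))=1\Big\}$$
is a subgroup: convergence to $1$ in $L^2(\sigma)$ is preserved under products and conjugates.

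\emph{($\Leftarrow$).} Assume the dichotomy holds, so $G=G_\sigma$, and suppose for contradiction that $G$ has infinite index. Pick $q\in G$ with $\deg q=N$ and leading coefficient $a$. I would apply the derivative identity to $q$. Because the cross terms $q(n_{\alpha\cup\beta})-q(n_\alpha)-q(n_\beta)$ are $\mathfrak p$-limits of elements of $G$, they also converge to $1$. Iterating $N$ times, this should force $m x^j\in G$ for every $j\le N$ and a fixed $m$ depending only on $a$ and $N!$. Then $m\mathcal P_{\leq N}\subseteq G$, so $G$ has finite index, a contradiction. The point where the $0$-alternative enters is ruling out degenerate behaviour of the lower-degree cross terms: such a term cannot converge to a value of modulus strictly between $0$ and $1$.

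\emph{($\Rightarrow$).} Assume $G$ has finite index, so $m\mathcal P_{\leq N}\subseteq G$ for some $m$, and $\mathcal P_{\leq N}/G$ is a finite abelian group $H$. I would build $\sigma$ in two layers. First, the condition $n_k\equiv 0 \pmod{m!}$ makes $e^{2\pi i p(n_\alpha)c}$, for $c\in\frac1m\Z$, depend only on the class of $p$ modulo $m\mathcal P_{\leq N}$. Second, to separate $G$ from its cosets, I would use a Riesz-product (generalized Gaussian) measure $\sigma=\lim_K\prod_{k\le K}\big(1+\mathrm{Re}\,\text{(trig.\ polynomial in }n_kx))\big)$. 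Here $(n_k)$ grows lacunarily fast and the factors are tuned to the characters of $H$. The aim is that the Fourier coefficients $\hat\sigma(p(n_\alpha))$ factor approximately over $k\in\alpha$ and converge to $\chi(p+G)$ averaged over a suitable subgroup. Averaging over $\widehat H$ then gives $1$ on $G$ and $0$ off $G$.

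The main obstacle is this construction in ($\Rightarrow$). One has to control the mixed terms in $p(n_\alpha)$, that is, the products $n_{k_1}^{i_1}\cdots n_{k_s}^{i_s}$ for $k_1,\dots,k_s\in\alpha$, so that $\hat\sigma$ genuinely factorizes in the IP-limit. I would first try the case $N=1$, which is classical rigidity and mixing along IP-sets, and then handle degree $N$ by choosing $n_{k+1}$ so large relative to all monomials in $n_1,\dots,n_k$ that the dissociativity of the Riesz product survives.
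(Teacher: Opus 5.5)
The paper does not prove this statement; it quotes it from \cite{BKLUltrafilterPoly}. Your proposal therefore has to stand on its own, and it has genuine gaps in both directions.

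\emph{($\Rightarrow$).}
\begin{itemize}
\item \textbf{The first layer separates nothing.} If $m!\mid n_k$ for all $k$, then $m\mid p(n_\alpha)$ for every $p\in\mathcal P_{\le N}$. Hence $e^{2\pi i p(n_\alpha)c}=1$ for all $c\in\frac1m\Z$.
\item \textbf{The final averaging step is not coherent as stated.} If it means averaging components whose coefficients tend to single nontrivial characters $\chi(p+G)$, this is impossible. Suppose $\hat\tau(p(n_\alpha))\to\chi(p)$ with $|\chi(p)|=1$ for all $p$. For linear $p$, the identity $n_{\alpha\cup\beta}=n_\alpha+n_\beta$ gives $\chi(p)=\chi(p)^2$. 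In general, induction on the degree via the frozen derivative described below gives the same, so $\chi\equiv 1$.
\item \textbf{What is needed, and is left open.} The requirement is precisely that the law of $(n_\alpha x,\dots,n_\alpha^N x)$ under $\sigma$ tends, uniformly as $\min\alpha\to\infty$, to Haar measure on the finite group
\[
K=G^\perp=\Big\{z\in\mathbb T^N:\textstyle\sum_j b_jz_j=0 \text{ for all } \sum_j b_jx^j\in G\Big\}\subseteq\big(\tfrac1m\Z/\Z\big)^N .
\]
All of the difficulty lies there, and you do not resolve it.
\end{itemize}

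One construction that works is as follows.
\begin{itemize}
\item Take $m\mid n_k$, $mn_k^N\mid n_{k+1}$ and $n_k\ge 2^k(kn_{k-1})^N$.
\item Let $\kappa^{(k)}=(w_{k,1}/m,\dots,w_{k,N}/m)$ be independent and Haar-distributed on $K$.
\item Let $\sigma$ be the law of $x=\sum_k\sum_{i=1}^N w_{k,i}/(mn_k^i)$.
\end{itemize}
Since $mn_k^N$ divides every $n_i$ with $i>k$, level $k$ sees only $n_\alpha$ modulo $mn_k^N$. That residue is $s'$ or $n_k+s'$, where $s'=\sum_{i\in\alpha,\,i<k}n_i\le kn_{k-1}$ and $m\mid s'$. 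Expand $(n_k+s')^j\cdot w_{k,i}/(mn_k^i)$.
\begin{itemize}
\item Divisibility by $m$ makes every term with $i\le l$ an integer, except the one with $i=l=j$. Here $l$ is the power of $n_k$.
\item The terms with $i>l$ are $O((kn_{k-1})^N/n_k)$.
\end{itemize}
Hence $n_\alpha^jx\equiv\sum_{k\in\alpha}\kappa^{(k)}_j+O\big(\sum_{k\ge\min\alpha}(kn_{k-1})^N/n_k\big)$. It follows that $\hat\sigma(p(n_\alpha))\to\int_K e^{2\pi i\sum_j b_jz_j}\,dz=\mathbbm 1_G(p)$ for $p=\sum_j b_jx^j$. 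So the divisibility you imposed is genuinely needed, but to neutralize the mixed monomials, not to separate cosets. Lacunarity alone cannot work, because the dichotomy forces $G$-dependent congruences on $(n_k)$, as shown below.

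\emph{($\Leftarrow$).} The hypothesis concerns only sequences $r(n_\gamma)$ with $r\in\mathcal P_{\le N}$ \emph{fixed}. The cross term $(\alpha,\beta)\mapsto q(n_{\alpha\cup\beta})-q(n_\alpha)-q(n_\beta)$ is not of this form, so knowing that it tends to $1$ puts nothing into $G$. The missing step is to freeze $\alpha$.
\begin{itemize}
\item Write $q=\sum_i a_ix^i$ and set $r_\alpha(y)=q(n_\alpha+y)-q(n_\alpha)-q(y)\in\mathcal P_{\le N-1}$.
\item For $\min\alpha$ large and $\min\beta>\max\alpha$, each of the three exponentials is $L^2(\sigma)$-close to $1$. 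Hence $|\hat\sigma(r_\alpha(n_\beta))-1|$ is small.
\item The IP-limit in $\beta$ is $0$ or $1$ by hypothesis, so it equals $1$, and $r_\alpha\in G$.
\item The coefficient of $y^j$ in $r_\alpha$ is $\sum_{i>j}a_i\binom ij n_\alpha^{i-j}$, a polynomial of exact degree $N-j$ in $n_\alpha$. Since $n_\alpha$ takes infinitely many values, the $r_\alpha$ span $\mathcal P_{\le N-1}\otimes\Q$.
\item Together with $q$ this gives $\operatorname{rank}G=N$, i.e.\ finite index.
\end{itemize}
One derivative suffices, and no projection theorem is needed.

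Your intended conclusion, a fixed $m$ depending only on $a$ and $N!$, is moreover false. Take $N=2$, $G=\Z x^2+M\Z x$ and $q=x^2$, so $a=1$. This $G$ satisfies the dichotomy by the construction above. Yet $mx\in G$ only if $M\mid m$, and $M$ is arbitrary. Correspondingly, $r_\alpha=2n_\alpha x\in G$ forces $M\mid 2n_\alpha$; this is the congruence constraint mentioned above.
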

We now utilize \cref{6.thm:CorollaryE} to deduce the following result which is needed for the proof of \cref{6.thm:FailureOfIP*Part2}.
\begin{thm}[Cf. Theorem 1.8 item (iv) in \cite{zelada2025coexistence}.]\label{6.thm:DistributionResult}
    Let $\ell\in\N$ and let $p_1,...,p_\ell\in\Z[x]$ be linearly independent polynomials with zero constant term. For every $M\in\N$ there exists a Borel probability measure $\sigma$ on $\mathbb T=[0,1)$ and an increasing sequence $(n_k)_{k\in\N}$ in $\N$ with the property that for every continuous function $f:\mathbb T^\ell\rightarrow \R$,
\begin{equation}\label{6.eq:IPDistribution}
\mathop{\text{IP-lim}}_{\alpha\in\mathcal F}\int_\mathbb T f(p_1(n_\alpha)x,...,p_\ell(n_\alpha)x)\text{d}\sigma(x)=\int_{\mathbb T} f(y,2y,...,\ell y)\text{d}\lambda_{M}(y),
\end{equation}
where $\lambda_{M}$ denotes the normalized counting measure on the set $\{\frac{0}{M},...,\frac{M-1}{M}\}$.
\end{thm}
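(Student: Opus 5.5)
We plan to deduce \cref{6.thm:DistributionResult} from \cref{6.thm:CorollaryE} by choosing a suitable subgroup $G$ and then passing from characters to continuous functions. Let $N=\max_j\deg(p_j)$ and consider the additive map $\Phi:\Z^\ell\to\mathcal P_{\leq N}$, $\Phi(\vec k)=\sum_{j=1}^\ell k_jp_j$, which is injective by linear independence. We want the IP-limit of $\int e^{2\pi i\sum_j k_jp_j(n_\alpha)x}\,d\sigma(x)$ to equal the Fourier coefficient of the right-hand side of \eqref{6.eq:IPDistribution} at $\vec k$. That coefficient is $\int e^{2\pi i(\sum_j jk_j)y}d\lambda_M(y)$, which is $1$ if $M\mid\sum_j jk_j$ and $0$ otherwise. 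So the target group is
$$H=\{\vec k\in\Z^\ell\,|\,\textstyle\sum_{j=1}^\ell jk_j\equiv 0\mod M\},$$
and we would like to take $G=\Phi(H)$. Then $\Phi(\vec k)\in G$ iff $\vec k\in H$, by injectivity.

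The difficulty is that \cref{6.thm:CorollaryE} requires $G$ to have finite index in $\mathcal P_{\leq N}$, while $\Phi(H)$ has infinite index whenever $\ell<N$ (the rank of $\mathcal P_{\leq N}$ is $N$). We therefore enlarge it to
$$G=\Phi(H)\oplus W,$$
where $W$ is a complementary subgroup. To build $W$, saturate $\Phi(\Z^\ell)$ to $V=(\Q\Phi(\Z^\ell))\cap\mathcal P_{\leq N}$; this is a direct summand of the free abelian group $\mathcal P_{\leq N}$, so we may pick a complement $W$ with $\mathcal P_{\leq N}=V\oplus W$.

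We then check the required properties of $G$:
\begin{enumerate}
\item $\Phi(\Z^\ell)$ has finite index in $V$, since both have the same rank.
\item $\Phi(H)$ has index at most $M$ in $\Phi(\Z^\ell)$.
\item Hence $G$ has finite index in $\mathcal P_{\leq N}$.
\item $G\cap\Phi(\Z^\ell)=\Phi(H)$, because $W\cap V=0$.
\item $\max_{p\in G}\deg p=N$, since $G$ has finite index and so contains a multiple of $x^N$.
\end{enumerate}
\cref{6.thm:CorollaryE} then provides $\sigma$ and $(n_k)$ with $\mathop{\text{IP-lim}}_\alpha\hat\sigma$-values equal to $\mathbbm 1_G(p)$. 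Restricting to $p=\Phi(\vec k)$ gives $\mathbbm 1_H(\vec k)$, which is the desired Fourier coefficient.

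Finally we pass from characters to continuous functions. For trigonometric polynomials $f$ on $\mathbb T^\ell$, \eqref{6.eq:IPDistribution} follows by linearity. Since IP-limits are additive for finitely many terms, this is immediate. For general continuous $f$, use uniform approximation by trigonometric polynomials (Stone–Weierstrass) and a standard $\epsilon/3$ argument: both sides are integrals against probability measures, so they are controlled by $\|f-P\|_\infty$. The IP-limit therefore exists and equals the right side.

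I expect the main obstacle to be the finite-index requirement. The steps needing care are choosing $W$ as a genuine complement of the saturation $V$, and verifying $G\cap\Phi(\Z^\ell)=\Phi(H)$, so that enlarging $G$ does not change the limits along the polynomials $\Phi(\vec k)$. The hypothesis that the $p_j$ have zero constant term is what places them in $\mathcal P_{\leq N}$.
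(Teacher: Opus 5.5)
Your proposal is correct and follows essentially the paper's route: you apply \cref{6.thm:CorollaryE} to a finite-index subgroup $G$ with $\Phi(\vec k)\in G$ if and only if $\sum_j jk_j\in M\Z$, and then pass from characters to continuous functions. When $\ell=\max_j\deg p_j$, your $G=\Phi(H)$ is exactly the paper's group $\{Ma_1p_1+\sum_{j\geq 2}a_j(jp_1-p_j)\}$. The only difference is the case $\ell<\max_j\deg p_j$: you adjoin a complement $W$ of the saturation of $\Phi(\Z^\ell)$, whereas the paper extends $p_1,\dots,p_\ell$ to a linearly independent family $p_1,\dots,p_N$, applies the full-rank case with target $(y,2y,\dots,Ny)$, and views $f$ as a function on $\mathbb T^N$. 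Both constructions give a finite-index $G$ meeting $\Phi(\Z^\ell)$ exactly in $\Phi(H)$, and you also spell out the Stone--Weierstrass approximation step that the paper leaves implicit.
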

\begin{rem}\label{6.rem:FourierCoefficients}
Let $\ell\in\N$. Recall that any Borel probability measure $\lambda$ on $\mathbb T^\ell$ is determined by its Fourier coefficients 
$$
\int_{\mathbb T^\ell} e^{2\pi i \sum_{j=1}^\ell a_jy_j}\text{d}\lambda(y_1,...,y_\ell),\,a_1,...,a_\ell\in\Z.
$$
When $\ell=1$ and $\lambda=\lambda_M$ is as defined in \cref{6.thm:DistributionResult}, one has that for any $a\in\Z$,
$$
\int_{\mathbb T} e^{2\pi i ay}\text{d}\lambda_M(y)=\begin{cases}
    1,\text{ if }a\in M\Z,\\
    0,\text{ otherwise.}.
\end{cases}
$$
\end{rem}
\begin{proof}
We  consider separately the cases (i) $\ell=\max_{1\leq j\leq \ell}\deg p_j$ and (ii)  $\ell\neq\max_{1\leq j\leq \ell}\deg p_j$. \\

 Suppose first that $\ell=\max_{1\leq j\leq \ell}\deg p_j$ and let 
$$G=\{Ma_1p_1+\sum_{j\in\{1,...,\ell\}\setminus\{1\}}^\ell a_j(jp_1-p_j)\,|\,a_1,...,a_\ell\in \Z\}.$$
Let $b_1,...,b_\ell\in\Z$ be such that $\sum_{j=1}^\ell b_jp_j\in G$. By writing 
$$
\sum_{j=1}^\ell b_jp_j =\left(Ma_1+\sum_{j=2}^\ell ja_j\right)p_1-\sum_{j=2}^\ell a_jp_j
$$
and comparing coefficients, we see that  
$b_1=Ma-\sum_{j=2}^\ell jb_j$
for some $a\in\Z$. Thus,  for any given $b_1,...,b_\ell\in\Z$, 
\begin{equation}\label{6.eq:CharG}
\sum_{j=1}^\ell b_jp_j\in G\text{ if and only if }\sum_{j=1}^\ell jb_j\in M\Z.
\end{equation}
Because $p_1,...,p_\ell$ are linearly independent, we have that for any $t\in\{1,...,\ell\}$, there is  a $K\in\N$ with $Kx^t=\sum_{j=1}^\ell b_jp_j$ for some $b_1,...,b_\ell\in\Z$. By \eqref{6.eq:CharG},  $MKx^t\in G$. So, $G$ has finite index in $\mathcal P_{\leq \ell}$ and   $\max_{p\in G}\deg(p)=\ell$. Invoking \cref{6.thm:CorollaryE} (and formula \eqref{6.eq:CharG}, we obtain that there exist an increasing sequence $(n_k)_{k\in\N}$ in $\N$ and a Borel probability measure $\sigma$ on $\mathbb T$ such that for any $a_1,...,a_\ell\in\Z$,
\begin{equation}\label{6.eq:ConvergenceIndistribution}
\mathop{\text{IP-lim}}_{\alpha\in\mathcal F}\int_\mathbb T e^{2\pi i \left(\sum_{j=1}^\ell a_jp_j(n_\alpha)\right)x}\text{d}\sigma(x)=\begin{cases}
1\text{ if }\sum_{j=1}^\ell ja_j\in M\Z,\\
0\text{ otherwise. } 
    \end{cases}
\end{equation}
In light of \Cref{6.rem:FourierCoefficients}, formula  \eqref{6.eq:ConvergenceIndistribution}  implies that  for every $a\in\Z$,
$$
\mathop{\text{IP-lim}}_{\alpha\in\mathcal F}\int_\mathbb T e^{2\pi iap_1(n_\alpha)x}\text{d}\sigma(x)=\int_\mathbb T e^{2\pi i ay}\text{d}\lambda_M(y).
$$
Also, by formula \eqref{6.eq:ConvergenceIndistribution} and because for each $j\in\{1,...,\ell\}$, $(p_j-jp_1)\in G$, we have that for any $a_1,...,a_\ell\in\Z$,
\begin{multline*}
\mathop{\text{IP-lim}}_{\alpha\in\mathcal F}\left|\int_\mathbb T e^{2\pi i \left(\sum_{j=1}^\ell a_jp_j(n_\alpha)\right)x}\text{d}\sigma(x)-\int_\mathbb T e^{2\pi i \left(\sum_{j=1}^\ell ja_j\right)p_1(n_\alpha)x}\text{d}\sigma(x)\right|\\
\leq \mathop{\text{IP-lim}}_{\alpha\in\mathcal F}\int_\mathbb T \left|e^{2\pi i \sum_{j=1}^\ell a_j\Big(p_j(n_\alpha)-jp_1(n_\alpha)\Big)x}-1\right|\text{d}\sigma(x)=0.
\end{multline*}
Thus, 
\begin{multline*}
    \mathop{\text{IP-lim}}_{\alpha\in\mathcal F}\int_\mathbb T e^{2\pi i \left(\sum_{j=1}^\ell a_jp_j(n_\alpha)\right)x}\text{d}\sigma(x)=\mathop{\text{IP-lim}}_{\alpha\in\mathcal F}\int_\mathbb T e^{2\pi i \left(\sum_{j=1}^\ell ja_j\right)p_1(n_\alpha)x}\text{d}\sigma(x)\\
= \int_\mathbb T e^{2\pi i (a_1y+2a_2y+\cdots+\ell a_\ell y)}\text{d}\lambda_M(y).
\end{multline*}

which implies that formula \eqref{6.eq:IPDistribution} holds.\\

 Suppose now that $N:= \max_{1\leq j\leq \ell}\deg(p_j)$ satisfies $N\neq \ell$.  By the linear independence of $p_1,...,p_\ell$, we have $\ell< N$. Thus, we can find polynomials $p_{\ell+1},...,p_N\in \mathcal P_{\leq N}$ such that $p_1,...,p_N$ are linearly independent. The result now follows by applying the previous case and noting that every continuous function $f:\mathbb T^\ell\rightarrow \R$ can be viewed as a continuous function $f:\mathbb T^N\rightarrow \R$. We are done.  
\end{proof}
%%%%%%%%%%%%%%%%%%%%%%%%%%%%%%%%%%%%%%%%%%%%%%%%%%%%%%%%%%%%%%%%%%%%%%%%%%%%%%%%
\subsection{The proof of \cref{6.thm:FailureOfIP*Part2}}
In order to prove that \cref{6.thm:FailureOfIP*Part2} holds for any $d\in\N$, it suffices to show that it holds in the special case that $d=1$. Indeed, let $d>1$ and let $p_1,...,p_\ell\in\Z[x_1,...,x_d]$ be  essentially distinct, non-constant  polynomials with zero constant term. By \cref{6.lem:DegreeOfLinearRestriction} we have that for every $j\in\{1,...,\ell\}$ there exists a set  $E_j\subseteq \Z^d$  with uniform density one and such that for every $(b_1,...,b_d)\in E_j$,
    $$\deg_x(p_j(b_1x,...,b_dx))=\deg_{x_1,...,x_d}(p_j(x_1,...,x_d)).$$
    Similarly, for any $i\neq j$, there exists a set $E_{i,j}\subseteq \Z^d$ with uniform density one and such that for every $(b_1,...,b_d)\in E_{i,j}$
    $$\deg_x((p_j-p_i)(b_1x,...,b_dx))=\deg_{x_1,...,x_d}((p_j-p_i)(x_1,...x_d)).$$
    Thus, we can find an $\vec a=(a_1,...,a_d)\in\Z^d$ for which the polynomials
    \begin{equation}
        q_j(x):=p_j(a_1x,...,a_dx),\,j\in\{1,...,\ell\},
    \end{equation}
    are essentially distinct, non-constant polynomials with zero constant term in $\Z[x]$. It now follows that in order to prove that a set of the form 
    $$
    \tilde R:=\{\vec n\in\Z^d\,|\,\mu_r(A_r\cap T^{-p_1(\vec n)}A_r\cap \cdots\cap T^{-p_\ell(\vec n)}A_r)>\frac{\mu_r^r(A_r)}{2}\}
    $$
    is not IP$^*$ in  $\Z^d$, it is enough to establish that 
    \begin{equation}\label{7.eq:WantToshow}
R:=\{ n\in\Z\,|\,\mu_r(A_r\cap T^{-q_1(n)}A_r\cap \cdots\cap T^{-q_\ell(n)}A_r)>\frac{\mu_r^r(A_r)}{2}\}
\end{equation}
is not an IP$^*$ set in $\Z$, as claimed. (If there is an IP set  $\Gamma\subseteq \Z$ with $\Gamma\cap R=\emptyset$, then the set 
$$\tilde\Gamma:=\{(a_1n,...,a_dn)\,|\,n\in\Gamma\}$$
is  an IP set in $\Z^d$ and, by the definition of $q_1,...,q_\ell$, it satisfies $\tilde\Gamma\cap \tilde R=\emptyset$.)
\\
Fix now any $\ell$ non-constant and essentially distinct polynomials $p_1,...,p_\ell\in\Z[x]$ having zero constant term. We start by noting that in order to prove \cref{6.thm:FailureOfIP*Part2} (in the case $d=1$) it suffices to show that  for each $r\in\N$, there exist an atomless, standard Lebesgue space $(X_r,\mathcal A_r,\mu_r)$, an invertible $\mu_r$-preserving transformation $T_r$, a set $A_r\in \mathcal A_r$ with $\mu_r(A_r)>0$, and an IP set $\Gamma_r\subseteq \Z$
    for which the set 
    \begin{equation}\label{6.eq:LargeIntersectionSet}
        \{n\in\Z\,|\, \mu_r(A_r\cap T_r^{-p_1(n)}A_r\cap \cdots\cap T_r^{-p_\ell(n)}A_r)> \frac{\mu_r^r(A_r)}{2}\}
    \end{equation}
    has an empty intersection with $\Gamma_r$ and, so, it is not IP$^*$. 

We now describe the general strategy that we will utilize to prove \eqref{6.eq:LargeIntersectionSet}. We will take $X_r=\mathbb T^2$ (which we identify with $[0,1)^2$), $\mathcal A_r=\text{Borel}(\mathbb T^2)$, and $\mu_r=\sigma_r\otimes\mu$, where $\sigma_r$ is an \textit{ad hoc} Borel probability measure on $\mathbb T$ which we will  pick only \textit{after} having defined $A_r$ and $\mu$ denotes the normalized Lebesgue measure on $\mathbb T$. The transformation $T_r:X_r\rightarrow X_r$ is given by $T_r(x,y)=(x,y+x)$. 
  We will define $A_r$ similarly to the definition of $A^{(\ell)}_r$  in the proof of \cref{5.thm:SmallIntersections}. The key feature of $A_r$ is that for some distinct natural numbers $a,b$ (which are picked according to the linear independence properties of $p_1,...,p_\ell$),  one has that 
  $$
(\mu\otimes\mu)(A_r\cap T_r^{-a}A_r\cap T_r^{-b}A_r)\leq \frac{(\mu\otimes\mu)^{r}(A_r)}{8}.
  $$
We will then pick the probability measure $\sigma_r$ together with an IP set $\Gamma_r$ to have the property that for a specific pair of  distinct $i,j\in\{1,...,\ell\}$, the quantity 
\begin{equation*}\label{8.ErrorInstrategy}
|(\sigma_r\otimes\mu)(A_r\cap T_r^{-p_i(n)}A_r\cap T_r^{-p_j(n)}A_r)-(\mu\otimes\mu)(A_r\cap T_r^{-a}A_r\cap T_r^{-b}A_r)|
\end{equation*}
is sufficiently small for every $n\in \Gamma_r$. The result then follows via some rather natural inequalities. \\ 
We will now implement this plan in a sequence of steps.\\
    
$\bullet$ \textit{ Step 1: Picking the set $A_r$ for which 
the set described in \eqref{6.eq:LargeIntersectionSet} is not {\rm IP$^*$}.}   
%We now explain how to pick a set $A_r\in\mathcal A$ with convenient statistical properties which will later allow us to define the measure $\mu_r$.\\  
We begin by picking  two  natural numbers $a$ and $b$ depending on the linear independence properties of the polynomials $p_1,...,p_\ell$. In the case that at least two of $p_1,...,p_\ell$ are linearly independent, we will let $a=1$, $b=2$, and assume that, without loss of generality, $p_1$ and $p_2$ are linearly independent. 
In the case that no two of $p_1,...,p_\ell$ are linearly independent, we will let $a,b\in\Z$ be such that $b p_1=a p_2$. Because $\deg(p_1-p_2)>0$, we have $a\neq b$. Because the conclusion of \cref{6.thm:FailureOfIP*Part2} holds for the family of polynomials $p_1,...,p_\ell$ if and only if it holds for the family of polynomials $-p_1,p_2-p_1,...,p_\ell-p_1$, we can re-index $p_1,...,p_\ell$ and replace $p_1,...,p_\ell$ with $-p_1,p_2-p_1,...,p_\ell-p_1$, if needed, to assume that  without loss of generality $a,b\in\N$ and  $a<b$. The described above choices of the pair $a,b$ (depending on $p_1,p_2$) will be fixed throughout the proof of \cref{6.thm:FailureOfIP*Part2}.\\

By \cref{10'.lem:Rusza(d-1)}, there is a $\beta>0$ such that  for every large enough $m\in\N$, there exists a $\Lambda_m\subseteq \{1,...,m\}$ with  
$$ 
|\Lambda_m|>me^{-\beta\sqrt{\log(m)}}
$$
and such that for every $n_1,n_2,n_3\in \Lambda_m$, the equality 
$$(b-a)n_1+an_2=bn_3$$
implies $n_1=n_2=n_3$.\\
Let 
$$B_{r,m}:=\bigcup_{j\in\Lambda_m}\left(\frac{j}{2m(b+1)}+[0,\frac{1}{8m(b+1)^2})\right).$$
Arguing as in the proof of \cref{5.thm:SmallIntersections}, one can show that whenever $x_1,x_2,x_3\in B_{r,m}$
satisfy $(b-a)x_1+ax_2\equiv bx_3\mod 1$, one has that for some $j\in \Lambda_m$, 
$$x_1,x_2,x_3\in \frac{j}{2m(b+1)}+[0,\frac{1}{8m(b+1)^2}).$$
Note that  for any $y,x\in [0,1)$  with 
$$y,y+bx,y+ax\in B_{r,m}$$
one always has  
$$(b-a)y+a(y+bx)\equiv b(y+ax)\mod 1.$$
It follows, by imitating the proof of   \cref{5.thm:SmallIntersections}, that there is an  $m_0=m_0(r)\in\N$ such that 
\begin{multline}\label{eq:DefnB_rm_0}
\int_{\mathbb T^2} \mathbbm 1_{B_{r,m_0}}(y)\mathbbm 1_{B_{r,m_0}}(y+bx)\mathbbm 1_{B_{r,m_0}}(y+ax)\text{d}\mu(x)\text{d}\mu(y)\leq\frac{|\Lambda_{m_0}|}{[8m_0(b+1)^2]^2}\\
\leq\frac{1}{8}\frac{(m_0e^{-\beta\sqrt{\log(m_0)}})^{r-1}}{[8m_0(b+1)^2]^{r-2}} \frac{|\Lambda_{m_0}|}{[8m_0(b+1)^2]^2}\leq \frac{1}{8}\frac{|\Lambda_{m_0}|^r}{[8m_0(b+1)^2]^r}=\frac{\mu^{r}(B_{r,m_0})}{8},
\end{multline}
 where, as before, $\mu$ denotes the Lebesgue measure. The choice of the value in the last expression of \eqref{eq:DefnB_rm_0} (in particular the concrete value of the denominator) is made in order to facilitate the computations at the end of Step 3.\\
Now let  $A_r=[0,1)\times B_{r,m_0}$. Note that $\mu\otimes\mu(A_r)=\mu(B_{r,m_0})\in (0,1)$ and that our choice of $B_{r,m_0}$ also depends on $p_1$, $p_2$, $a$, and $b$; these facts will be utilized when we define the measure $\mu_r$ in the next step.\\

$\bullet$ \textit{ Step 2: Defining $\mu_r$.}  We will define $\mu_r$ to be a product measure of the form $\sigma_r\times\mu$, where, as before, $\mu$ denotes the Lebesgue measure on $\mathbb T$ and  $\sigma_r$ is a Borel probability measure on $\mathbb T$. The choice of $\sigma_r$ will depend on the relations between  $p_1$ and $p_2$ (and the corresponding choice of $a,b$ described at the beginning of Step 1).\\
For each $N\in\N$, let $\lambda_N$ denote the normalized counting measure on the set $\{\frac{j}{N}\,|\,j=0,...,N-1\}$. 
Note that for any function $f:\mathbb T\rightarrow \{0,1\}$ with finitely many discontinuities and any $y\in\mathbb T$,
\begin{equation}\label{8.eq:VarFormula}
|\int_\mathbb T f(y+ax)f(y+bx)\text{d}\mu(x)-\int_\mathbb T f(y+ax)f(y+bx)\lambda_N(x)|\leq \frac{(a+b)\text{Var}(f)}{2N},
\end{equation}
where $\text{Var}(f)$ denotes the total variation of $f$. Recall that at the beginning of Step 1 we picked $a$ and $b$ depending on  the linear  relation between $p_1$ and $p_2$ and then picked in Step 2 the set $B_{r,m_0}$ accordingly. Applying  \eqref{8.eq:VarFormula} to the function 
$$f(x)=\mathbbm 1_{B_{r,m_0}}(x),$$
we obtain that   for any large enough  $M\in\N$ the following inequality holds for  every $y\in \mathbb T$:
\begin{multline}\label{7.eq:RationalApproximation}
    \left|\int_{\mathbb T}\mathbbm 1_{B_{r,m_0}}(y)\mathbbm 1_{B_{r,m_0}}(y+ax)\mathbbm 1_{B_{r,m_0}}(y+bx)\text{d}\mu(x)\right.\\
    -\left.\int_{\mathbb T}\mathbbm 1_{B_{r,m_0}}(y)\mathbbm 1_{B_{r,m_0}}(y+ax)\mathbbm 1_{B_{r,m_0}}(y+bx)\text{d}\lambda_M(x)\right|<\frac{\mu^r(B_{r,m_0})}{8}\left(1-\mu^2(B_{r,m_0})\right).
\end{multline}
Analogously to our specific choice of the rightmost expression  in \eqref{eq:DefnB_rm_0}, the right-hand side of \eqref{7.eq:RationalApproximation} is selected in order to smooth out the estimates at the end of  Step 3.\\

We will now define the measure $\sigma_r$ with the help of the measure  $\lambda_M$ which appears in formula \eqref{7.eq:RationalApproximation}. We consider two cases: (1)  $p_1$ and $p_2$ are linearly independent and (2)  $bp_1=ap_2$.\\
\textit{ Case 1: } If $p_1,p_2$ are linearly independent,  \cref{6.thm:DistributionResult} guarantees that there exists a Borel probability measure  $\sigma_{r,1}$ on $\mathbb T$ and an increasing sequence  $(n_k)_{k\in\N}$ in $\N$ with the property that for every continuous function $f:\mathbb T^2\rightarrow \R$, 
\begin{equation*}
\mathop{\text{IP-lim}}_{\alpha\in\mathcal F}\int_\mathbb T f(p_1(n_\alpha)x,p_2(n_\alpha)x)\text{d}\sigma_{r,1}(x)=\int_{\mathbb T} f(y,2y)\text{d}\lambda_{M}(y)
=\int_{\mathbb T} f(ay,by)\text{d}\lambda_{M}(y).
\end{equation*}
(Recall that when $p_1$ and $p_2$ are linearly independent, we choose $a=1$ and $b=2$ in Step 1.)\\
\textit{ Case 2:} If $bp_1=ap_2$, we utilize \cref{6.thm:DistributionResult} to pick a Borel probability measure $\nu$ on $\mathbb T$ and an increasing sequence $(n_k)_{k\in\N}$ in $\N$  such that for every continuous $f:\mathbb T\rightarrow \R$,
$$
\mathop{\text{IP-lim}}_{\alpha\in\mathcal F}\int_\mathbb T f(p_1(n_\alpha)x)\text{d}\nu(x)=\int_{\mathbb T} f(y)\text{d}\lambda_{M}(y)
$$
 and  set $\sigma_{r,2}$ to be the unique Borel probability measure on $\mathbb T$ with the property that for every $\gamma\in\Z$,
\begin{equation}\label{7.eq:DefinitionSigma}
\int_\mathbb T e^{2\pi i \gamma x}\text{d}\sigma_{r,2}(x)=\int_\mathbb T e^{2\pi i a\gamma x}\text{d}\nu(x).
\end{equation}
Observe that \eqref{7.eq:DefinitionSigma} implies that for every continuous function $f:\mathbb T^2\rightarrow \R$, 
\begin{multline*}
\mathop{\text{IP-lim}}_{\alpha\in\mathcal F}\int_\mathbb T f(p_1(n_\alpha)x,p_2(n_\alpha)x)\text{d}\sigma_{r,2}(x)=\mathop{\text{IP-lim}}_{\alpha\in\mathcal F}\int_\mathbb T f(ap_1(n_\alpha)x,ap_2(n_\alpha)x)\text{d}\nu(x)\\
=\mathop{\text{IP-lim}}_{\alpha\in\mathcal F}\int_\mathbb T f(ap_1(n_\alpha)x,bp_1(n_\alpha)x)\text{d}\nu(x)=\int_{\mathbb T} f(ay,by)\text{d}\lambda_{M}(y)
\end{multline*}
If Case 1 holds, we  let $\sigma_r=\sigma_{r,1}$. If Case 2 holds, we  let $\sigma_r=\sigma_{r,2}$. Finally, we set $\mu_r=\sigma_r\otimes\mu$.\\

$\bullet$ \textit{ Step 3: An asymptotic property of $\mu_r$.} Consider the $\mathcal F$-indexed sequence   $(\mu_\alpha)_{\alpha\in\mathcal F}$ of Borel probability measures on $\mathbb T^2$ which are defined by their values on the measurable rectangles $A\times B$ as follows 
$$
\mu_{\alpha}(A\times B)=\int_\mathbb T\mathbbm 1_A(p_1(n_\alpha) x)\mathbbm 1_{B}(p_2(n_\alpha)x)\text{d}\sigma_r(x),\,A,B\in \text{Borel}(\mathbb T),
$$
and let $\mu_\infty$ be given by 
$
\mu_{\infty}(A\times B)=\int_\mathbb T\mathbbm 1_A(a x)\mathbbm 1_{B}(b x)\text{d}\lambda_M(x)
$. By our choice of $\sigma_r$, 
\begin{equation}\label{8.eq:weak*Convergence}
\mathop{\text{IP-lim}}_{\alpha\in\mathcal F}\mu_\alpha=\mu_\infty
\end{equation}
in the weak$^*$ topology. Note that the boundary of $B_{r,m_0}$ is a finite set and, so, $$\mu_\infty(\partial([B_{r,m_0}-y]\times [B_{r,m_0}-y]))\leq\mu_\infty(\partial([B_{r,m_0}-y])\times\mathbb T)+\mu_\infty(\mathbb T\times\partial([B_{r,m_0}-y])) =0$$
for all but finitely many $y\in\mathbb T$. Thus, by \eqref{8.eq:weak*Convergence}, 
\begin{multline*}
\mathop{\text{IP-lim}}_{\alpha\in\mathcal F}\int_{\mathbb T}\mathbbm 1_{B_{r,m_0}}(y+p_1(n_\alpha)x)\mathbbm 1_{B_{r,m_0}}(y+p_2(n_\alpha)x)\text{d}\sigma_r(x)\\
=\int_{\mathbb T}\mathbbm 1_{B_{r,m_0}}(y+ax)\mathbbm 1_{B_{r,m_0}}(y+bx)\text{d}\lambda_M(x)
\end{multline*}
for $\mu$-a.e. $y\in \mathbb T$. (Here we are using the fact that if $\mu_\infty(\partial A)=0$ for some set $A$, then the convergence \eqref{8.eq:weak*Convergence} implies that $\mathop{\text{IP-lim}}_{\alpha\in\mathcal F}\mu_\alpha(A)=\mu_\infty(A)$. This fact can be deduced from the definition of IP-limits and Remark 3(iv) in \cite[p. 149]{waltersIntroduction}, for example). 
The Lebesgue dominated convergence theorem now implies that 
\begin{multline*}
\mathop{\text{IP-lim}}_{\alpha\in\mathcal F}\int_{\mathbb T}\int_{\mathbb T}\mathbbm 1_{B_{r,m_0}}(y)\mathbbm 1_{B_{r,m_0}}(y+p_1(n_\alpha)x)\mathbbm 1_{B_{r,m_0}}(y+p_2(n_\alpha)x)\text{d}\sigma_r(x)\text{d}\mu(y)\\
=\int_{\mathbb T}\mathbbm 1_{B_{r,m_0}}(y)\left(\mathop{\text{IP-lim}}_{\alpha\in\mathcal F}\int_{\mathbb T}\mathbbm 1_{B_{r,m_0}}(y+p_1(n_\alpha)x)\mathbbm 1_{B_{r,m_0}}(y+p_2(n_\alpha)x)\text{d}\sigma_r(x)\right)\text{d}\mu(y)\\
=\int_{\mathbb T}\int_{\mathbb T}\mathbbm 1_{B_{r,m_0}}(y)\mathbbm 1_{B_{r,m_0}}(y+ax)\mathbbm 1_{B_{r,m_0}}(y+bx)\text{d}\lambda_M(x)\text{d}\mu(y).
\end{multline*}
Let us now choose a large enough $k_0\in\N$ so  that for every $\alpha\in \mathcal F$ with $k_0<\min \alpha$,
\begin{multline}\label{8.eq:ConvergenceBound}
\int_{\mathbb T}\int_{\mathbb T}\mathbbm 1_{B_{r,m_0}}(y)\mathbbm 1_{B_{r,m_0}}(y+p_1(n_\alpha)x)\mathbbm 1_{B_{r,m_0}}(y+p_2(n_\alpha)x)\text{d}\sigma_r(x)\text{d}\mu(y)\\<\int_{\mathbb T}\int_{\mathbb T}\mathbbm 1_{B_{r,m_0}}(y)\mathbbm 1_{B_{r,m_0}}(y+ax)\mathbbm 1_{B_{r,m_0}}(y+bx)\text{d}\lambda_M(x)\text{d}\mu(y)+\frac{\mu^{r+2}(B_{r,m_0})}{8},
\end{multline}
By formula \eqref{7.eq:RationalApproximation}, the expression on the right of \eqref{8.eq:ConvergenceBound} is bounded above by 
\begin{multline}\label{8.eq:UpperBoundUsingLambda_M}
    \int_{\mathbb T}\int_{\mathbb T}\mathbbm 1_{B_{r,m_0}}(y)\mathbbm 1_{B_{r,m_0}}(y+ax)\mathbbm 1_{B_{r,m_0}}(y+bx)\text{d}\mu(x)\text{d}\mu(y)\\
+\frac{\mu^r(B_{r,m_0})}{8}\left(1-\mu^2(B_{r,m_0})\right)+\frac{\mu^{r+2}(B_{r,m_0})}{8}.
\end{multline}
Thus, by formula \eqref{eq:DefnB_rm_0} we obtain 
\begin{multline*}
\int_{\mathbb T}\int_{\mathbb T}\mathbbm 1_{B_{r,m_0}}(y)\mathbbm 1_{B_{r,m_0}}(y+p_1(n_\alpha)x)\mathbbm 1_{B_{r,m_0}}(y+p_2(n_\alpha)x)\text{d}\sigma_r(x)\text{d}\mu(y)\\
<\int_{\mathbb T^2}\mathbbm 1_{B_{r,m_0}}(y)\mathbbm 1_{B_{r,m_0}}(y+ax)\mathbbm 1_{B_{r,m_0}}(y+bx)\text{d}\mu(x)\text{d}\mu(y)+\frac{\mu^r(B_{r,m_0})}{8}
\leq\frac{\mu^r(B_{r,m_0})}{4}.
\end{multline*}
To conclude the proof note that $T_r$ is an invertible $\mu_r$-preserving transformation and let  $$\Gamma_r:=\{n_{\alpha}\,|\,\min \alpha>k_0\}$$
be the  IP set generated by $(n_k)_{k=k_0+1}^\infty$. Since $\mu_r(A_r)=\mu(B_{r,m_0})$, we have  that  for any $\alpha\in\mathcal F$ with $\min \alpha>k_0$,
\begin{multline}
 \mu_r(A_r\cap T_r^{-p_1(n_\alpha)}A_r\cap \cdots\cap T_r^{-p_\ell(n_\alpha)}A_r)\leq \mu_r(A_r\cap T_r^{-p_1(n_\alpha)}A_r\cap T_r^{-p_2(n_\alpha)}A_r)\\
 =\int_{\mathbb T}\int_{\mathbb T}\mathbbm 1_{B_{r,m_0}}(y)\mathbbm 1_{B_{r,m_0}}(y+p_1(n_\alpha)x)\mathbbm 1_{B_{r,m_0}}(y+p_2(n_\alpha)x)\text{d}\sigma_r(x)\text{d}\mu(y)<\frac{\mu_r^r(A_r)}{4},
 \end{multline}
  proving that \eqref{6.eq:LargeIntersectionSet} holds. We are done.\hfill \qedsymbol
%%%%%%%%%%%%%%%%%%%%%%%%%%%%%%%%%%%%%%%%%%%%%%%%%%%%%%%%%%%%%%%%%%%%%%%%%%%%%%%%%%%%%%%%%%%%%%%%%%%%%%%%%%%%%%%%%%%%%%%%%%%%%
\section{An approximate uniform inverse of the Furstenberg correspondence principle ---the proof of \cref{0.Cor:CombinatorialFailureOfIP}}\label{Sec:9}
The aim of this section is to prove \cref{0.Cor:CombinatorialFailureOfIP}. A key ingredient in our argument is a result of independent interest, which can be regarded as an approximate version of the inverse Furstenberg correspondence principle, and will be established in the general setting of countable abelian groups.  \\
Let $(G,+)$ be a countable abelian group. A sequence $(\Phi_N)_{N\in\N}$ of non-empty, finite  subsets  of $G$ is called a F{\o}lner sequence if for any $g\in G$,
$$
\lim_{N\rightarrow\infty}\frac{|\Phi_N\cap(\Phi_N-g)|}{|\Phi_N|}=1.
$$
We say that a  set $E\subseteq G$ has uniform density $\delta\geq 0$  and write $d_{{\rm U}}(E)=\delta$ if for every F{\o}lner sequence $(\Phi_N)_{N\in\N}$ in $G$ one has
 $$\delta=d_{(\Phi_N)}(E):=\lim_{N\rightarrow\infty}\frac{|E\cap \Phi_N|}{|\Phi_N|}.$$
 Here is now  our "approximate" variant of the inverse Furstenberg correspondence principle. Note that formula \eqref{3.eq:UniformLimitWithError} involves sets having uniform density but, unlike other variants of the inverse Furstenberg correspondence principle (see 
  Theorem 1.2 in \cite{SohailRobinVanDerCorputSets} and Theorem 1.6 in \cite{SaulRMInverseFurstenberg}), the price to pay is the appearance of the error term $k\epsilon$. 
\begin{thm}\label{3.Lem:epsilonInverse} Let $(G,+)$ be a countable abelian group, let $(X,\mathcal A,\mu)$  be a standard Lebesgue space with no atoms (i.e. it is measure-theoretically isomorphic to $[0,1]$ equipped with the Lebesgue measure), and let $(T_g)_{g\in G}$ denote a $\mu$-preserving $G$-action. Suppose that $(T_g)_{g\in G}$ is ergodic. For any $A\in\mathcal A$ with $\mu(A)>0$  and any $\epsilon>0$, there is a set $E\subseteq G$ such that for any $k\in\N$ and any  $g_1,...,g_k\in G$, the quantity $d_{{\rm U}}(\bigcap_{j=1}^k(E-g_j))$ is well-defined and 
\begin{equation}\label{3.eq:UniformLimitWithError}
\left|d_{{\rm U}}(\bigcap_{j=1}^k(E-g_j))-\mu(\bigcap_{j=1}^k T_{-g_j}A)\right|<k\epsilon.
\end{equation}
\end{thm}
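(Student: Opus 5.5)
The plan is to realize the ergodic action as a uniquely ergodic topological system, replace $A$ by a nearby set whose boundary has measure zero, and then read $E$ off as the set of return times of a single point to that set.

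\emph{Topological model.} Since $(X,\mathcal A,\mu)$ is a standard atomless Lebesgue space and $(T_g)_{g\in G}$ is ergodic, I will invoke the Jewett--Krieger theorem for actions of countable amenable groups (B. Weiss; A. Rosenthal). It gives a compact metric space $Y$, a continuous $G$-action $(S_g)_{g\in G}$ on $Y$ which is strictly ergodic with unique invariant measure $\nu$, and a measure-theoretic isomorphism $\pi:(X,\mu,T)\to(Y,\nu,S)$. This step is the main obstacle. What has to be checked is that this generality (abelian, not necessarily free, ergodic, on a standard atomless space) is covered by the literature. If freeness is required, I would pass to a free extension, for example a product with a free Bernoulli action, and use only the factor map. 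That costs nothing, because the sets we build only need to approximate $\pi(A)$ in measure.

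\emph{Approximating $A$.} Let $A'=\pi(A)$. Using regularity of $\nu$, I choose an open set $C\subseteq Y$ with $\nu(\partial C)=0$ and $\nu(C\triangle A')<\epsilon/2$. Such a set exists: approximate $A'$ from inside by a compact $K$ and from outside by an open $U$, then take $C=\{y:\operatorname{dist}(y,K)<t\}$ for a suitable $t$. Only countably many $t$ give $\nu(\{\operatorname{dist}(\cdot,K)=t\})>0$, so some admissible $t$ avoids them.

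\emph{Defining $E$.} Fix any point $y_0\in Y$ and put $E=\{g\in G:S_gy_0\in C\}$. For $g_1,\dots,g_k\in G$ we have $h\in\bigcap_j(E-g_j)$ if and only if $S_hy_0\in D:=\bigcap_j S_{-g_j}C$, using the convention $S_{h+g}=S_gS_h$. The set $D$ satisfies $\nu(\partial D)\le\sum_j\nu(\partial S_{-g_j}C)=0$.

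By unique ergodicity, for every Følner sequence $(\Phi_N)$ and every $y\in Y$, the averages $\frac1{|\Phi_N|}\sum_{h\in\Phi_N}\delta_{S_hy}$ converge weak$^*$ to $\nu$. This holds because every weak$^*$ limit point is invariant, by the Følner property. By the portmanteau theorem, applied to the $\nu$-continuity set $D$, the relative frequency $|\{h\in\Phi_N:S_hy_0\in D\}|/|\Phi_N|$ therefore converges to $\nu(D)$. Since the limit does not depend on the Følner sequence, $d_{\rm U}\bigl(\bigcap_j(E-g_j)\bigr)$ is well-defined and equals $\nu(D)$.

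\emph{Error bound.} Finally,
$$\Bigl|\nu\Bigl(\bigcap_j S_{-g_j}C\Bigr)-\mu\Bigl(\bigcap_j T_{-g_j}A\Bigr)\Bigr|\le\sum_{j=1}^k\nu\bigl(S_{-g_j}(C\triangle A')\bigr)<\frac{k\epsilon}{2}<k\epsilon .$$
The first inequality holds because $\pi$ intertwines the two actions and is measure preserving, so $\mu(\bigcap_j T_{-g_j}A)=\nu(\bigcap_j S_{-g_j}A')$, and because $\bigl(\bigcap_j S_{-g_j}C\bigr)\triangle\bigl(\bigcap_j S_{-g_j}A'\bigr)\subseteq\bigcup_j S_{-g_j}(C\triangle A')$. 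The last step uses the invariance of $\nu$. This gives \eqref{3.eq:UniformLimitWithError}.
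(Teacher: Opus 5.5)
Your proposal is correct and follows essentially the same route as the paper: you pass to a strictly ergodic Jewett--Krieger model, replace $A$ by an open set with null boundary built around an inner compact approximation $K$, take $E$ to be the return-time set of a single point to that open set, and conclude by unique ergodicity, the portmanteau theorem for the null-boundary set $\bigcap_j S_{-g_j}C$, and a $k$-fold union bound. The only differences are cosmetic. The paper uses a finite union of balls $B_{r_0}(x_j)$ centred at points of $K$, where you use a metric neighbourhood of $K$; and your Bernoulli-product workaround for a possible freeness hypothesis in Jewett--Krieger is a sensible precaution that the paper does not address explicitly.
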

In Subsection \ref{Sec:9.1} we will prove \cref{0.Cor:CombinatorialFailureOfIP}. In Subsection \ref{Sec:9.2}, we prove 
\cref{3.Lem:epsilonInverse}.
%%%%%%%%%%%%%%%%%%%%%%%%%%%%%%%%%%%%%%%%%%%%%%%%%%%%%%%%%%%%%%%%%%%%%%%%
\subsection{The proof of \cref{0.Cor:CombinatorialFailureOfIP}}\label{Sec:9.1}
Our goal in this subsection is to prove \cref{0.Cor:CombinatorialFailureOfIP} which we reformulate here for reader's convenience (see \cref{9.Cor:CombinatorialFailureOfIP} below).
\begin{cor}\label{9.Cor:CombinatorialFailureOfIP}
    Let $d,D,\ell\in\N$  be such that $D>\ell>1$. Let $p_1,...,p_\ell\in\Z[x_1,...,x_d]$ be  non-constant polynomials with zero constant term and 
    let ${\bf v_1},...,{\bf v_\ell}\in\Z^D$ be non-zero vectors for which  the  maps $\vec n\mapsto p_j(\vec n){\bf v_j}$, $j\in\{1,...,\ell\}$, are pairwise distinct.
    There exist $\epsilon>0$ and a set $E\subseteq \Z^D$ with $d_{{\rm U}}(E)>0$ such that the set 
    \begin{equation}\label{9.eq:InverseFurstenberg}
\{\vec n\in\Z^d\,|\,d_{{\rm U}}(E\cap (E-p_1(\vec n){\bf v_1})\cap\cdots\cap (E-p_\ell(\vec n){\bf v_\ell}))>(d_{{\rm U}}(E))^{\ell+1}-\epsilon\}
    \end{equation}
    is not {\rm IP$^*$}.  
\end{cor}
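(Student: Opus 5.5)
Using \cref{3.Lem:epsilonInverse}, I will transfer a measure-theoretic counterexample of the type in \cref{6.thm:FailureOfIP*Part2} into a subset of $\Z^D$. The plan is to build an \emph{ergodic} $\Z^D$-action $(S_{\bf m})_{{\bf m}\in\Z^D}$ on an atomless standard Lebesgue space, a set $A$, and an IP set $\Gamma\subseteq\Z^d$ such that for every $\vec n\in\Gamma$,
$$\mu(A\cap S_{-p_1(\vec n){\bf v_1}}A\cap\cdots\cap S_{-p_\ell(\vec n){\bf v_\ell}}A)\le \tfrac14\mu^{\ell+1}(A).$$
We then apply \cref{3.Lem:epsilonInverse} with $G=\Z^D$ and a small error $\eta$ to obtain $E$. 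The case $k=1$ gives $|d_{\rm U}(E)-\mu(A)|<\eta$. The case $k=\ell+1$, with $g_0=0$ and $g_j=p_j(\vec n){\bf v_j}$, bounds $d_{\rm U}$ of the $(\ell+1)$-fold intersection by $\frac14\mu^{\ell+1}(A)+(\ell+1)\eta$. Taking $\eta$ small and $\epsilon=\frac14\mu^{\ell+1}(A)$, the inequality defining \eqref{9.eq:InverseFurstenberg} fails for all $\vec n\in\Gamma$, so that set is disjoint from an IP set and hence is not IP$^*$.

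For the dynamical construction I reduce to $d=1$ exactly as in the proof of \cref{6.thm:FailureOfIP*Part2}. By \cref{6.lem:DegreeOfLinearRestriction} I choose $\vec a$ so that the maps $n\mapsto q_j(n){\bf v_j}$, with $q_j(x)=p_j(a_1x,\dots,a_dx)$, remain pairwise distinct and nonconstant. I then pick two indices $i\ne j$ and aim to reproduce Steps 1--3 of that proof for the pair of $\Z^D$-valued maps $q_i(n){\bf v_i}$ and $q_j(n){\bf v_j}$.

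The natural choice is a rotation-type skew product. I take $X=\mathbb T^2$ with $\mu=\sigma\otimes\text{Leb}$ as before and $A=\mathbb T\times B$, where $B$ is the Behrend-type set from Step 1. The action is $S_{\bf m}(x,y)=(x,y+\langle {\bf w},{\bf m}\rangle x)$ for a vector ${\bf w}\in\Z^D$. I choose ${\bf w}$ so that $\langle{\bf w},{\bf v_i}\rangle q_i$ and $\langle{\bf w},{\bf v_j}\rangle q_j$ are distinct nonzero polynomials, which is possible since the maps are distinct and the bad choices of ${\bf w}$ lie on finitely many hyperplanes. These two polynomials then play the roles of $p_1,p_2$ in Steps 1--3, giving $\sigma$ via \cref{6.thm:DistributionResult} and the IP set $\Gamma$.

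The main obstacle is ergodicity, which \cref{3.Lem:epsilonInverse} requires and which the skew product above lacks, since the $x$-coordinate is invariant. Here I use the hypothesis $D>\ell$ (in fact $D\ge2$ suffices for the spare directions). I choose ${\bf u}\in\Z^D$ outside the finitely many hyperplanes $\langle{\bf u},{\bf v_k}\rangle=0$. I then take the product with an ergodic action $R$ on an auxiliary space $(Z,\nu)$ that is trivial along ${\bf v_1},\dots,{\bf v_\ell}$, arranged by using coordinates complementary to $\operatorname{span}\{{\bf v_k}\}$, which is a proper subspace because $D>\ell$. On the product I replace the invariant $x$-coordinate by letting the extra directions move the fiber coordinate, e.g.\ $(x,y)\mapsto(x+\theta_{\bf m},y+\cdots)$ with $\theta_{\bf m}$ depending only on the component of ${\bf m}$ transverse to $\operatorname{span}\{{\bf v_k}\}$, so that the full $\Z^D$-action becomes ergodic. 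The measure $\sigma$ must also be quasi-invariant under these transverse translations. I would instead try to make it invariant, by averaging $\sigma$ over a compact group of translations that acts trivially on the relevant values $q_i(n)x$ and $q_j(n)x$ mod $1$ along $\Gamma$.

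If this is too delicate, my fallback is to take an ergodic decomposition of the non-ergodic action. Since the intersection measure is $\le\frac14\mu^{\ell+1}(A)$ on average over components, I would select a component on which both the bound along a sub-IP set (via Hindman, \cref{0.thm:Hindman}) and a comparable value of $\mu(A)$ persist. Controlling $\mu(A)$ on the chosen component is the delicate point, and I expect this step to be the hardest part of the argument.
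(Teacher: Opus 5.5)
Your outer reduction is sound. Suppose you have an ergodic $\Z^D$-action on an atomless standard Lebesgue space, a set $A$, and an IP set along which the $(\ell+1)$-fold intersection is at most $\frac14\mu^{\ell+1}(A)$. Then applying \cref{3.Lem:epsilonInverse} with $k=1$, with $k=\ell+1$, and with a small error gives the corollary. Your linear functional (call it ${\bf c}$ instead of ${\bf w}$) turns the shifts by $p_j(\vec n){\bf v_j}$ into powers $\langle{\bf c},{\bf v_j}\rangle q_j$ of a single skew product, and this is exactly what the paper does.

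The gap is the step you flag yourself, ergodicity, and neither of your two routes can close it.
\begin{itemize}
\item Transverse translations of $x$ fail. A map $V(x,y)=(x+\theta,y+\phi(x))$ commutes with $U(x,y)=(x,y+cx)$ only if $c\theta\in\Z$. So all $x$-translations lie in a finite group $\frac1K\Z/\Z$, and $x\mapsto Kx$ stays an invariant function, non-constant for any $\sigma$ that is not finitely supported. Invariance of $\sigma$ under an irrational translation would force $\sigma$ to be Lebesgue, which destroys the IP-distribution property from \cref{6.thm:DistributionResult}.
\item A direct product with an auxiliary ergodic action on $(Z,\nu)$ also fails: it keeps $I\times Z$ invariant for every invariant $I\subseteq X$.
\item The ergodic-decomposition fallback fails too. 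The fibers $\{x\}\times\mathbb T$ are invariant and carry group rotations. On each fiber, the set of $\vec n$ with $\|\langle{\bf c},{\bf v_k}\rangle q_k(\vec n)x\|<\delta$ for all $k$ is IP$^*$, by \cref{3.lem:DiophantineRec} and \cref{4.rem:IntersectionProperty}. For such $\vec n$ the intersection is close to $\mu(B)>\mu(B)^{\ell+1}$, so no ergodic component carries the counterexample. More generally, extracting ergodic examples this way would settle the question left open after \cref{0.thm:IPFailure}.
\end{itemize}

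The missing idea is to obtain ergodicity without touching the intersections at all. The construction is as follows.
\begin{itemize}
\item Let $(Y,\mathcal B,\nu,S)$ and $B$ be the non-ergodic example from the proof of \cref{6.thm:FailureOfIP*Part2}, for the essentially distinct polynomials $q_j=\langle{\bf c},{\bf v_j}\rangle p_j$.
\item Put $X=Y^{\Z}$ with the product measure. Let $\tilde S$ be the coordinatewise action of $S$ and $R$ the shift; they commute.
\item Use $D>\ell$ to pick ${\bf w}\neq0$ with $\langle{\bf w},{\bf v_j}\rangle=0$ for every $j$. This is where $D>\ell$ is genuinely needed; your remark that $D\ge2$ suffices is wrong when $D=\ell$ and the ${\bf v_j}$ span $\R^D$.
\item Replace ${\bf c}$ by $\langle{\bf w},{\bf w}\rangle{\bf c}-\langle{\bf c},{\bf w}\rangle{\bf w}$. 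This only rescales every $q_j$ by the same positive integer and gives ${\bf c}\perp{\bf w}$.
\item Set $T_{\vec a}=\tilde S^{\langle{\bf c},\vec a\rangle}R^{\langle{\bf w},\vec a\rangle}$ and $A=\{\omega:\omega(0)\in B\}$.
\end{itemize}
Then $T_{\bf w}=R^{\langle{\bf w},{\bf w}\rangle}$ is ergodic, so the $\Z^D$-action is ergodic. Meanwhile $T_{-p_j(\vec n){\bf v_j}}=\tilde S^{-q_j(\vec n)}$, so $\mu(A\cap T_{-p_1(\vec n){\bf v_1}}A\cap\cdots\cap T_{-p_\ell(\vec n){\bf v_\ell}}A)=\nu(B\cap S^{-q_1(\vec n)}B\cap\cdots\cap S^{-q_\ell(\vec n)}B)$. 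The shift permutes copies of $Y$ itself rather than acting on a separate factor, and that is what kills the invariant functions of $S$ while leaving the intersection estimates unchanged.
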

\begin{proof}
Because the non-constant polynomials $p_1,...,p_\ell$ have zero constant term and the maps $\vec n\mapsto p_j(\vec n){\bf v_j}$, $j\in\{1,...,\ell\}$, are pairwise distinct, one can find an $\vec n\in \Z^d$ with the property  that the vectors $p_1(\vec n){\bf v_1},...,p_\ell(\vec n){\bf v_\ell}$ are non-zero and pairwise distinct. 
Thus, we can find a non-zero ${\bf c}=(c_1,...,c_D)\in \Z^D$ with the property that for every $j\in\{1,...,\ell\}$,
    $\langle {\bf c},p_j(\vec n){\bf v_j}\rangle\neq 0$ and such that for any $i\neq j$,  $\langle {\bf c},p_j(\vec n){\bf v_j}-p_i(\vec n){\bf v_i}\rangle\neq 0$. 
    It follows that the maps
    \begin{equation}\label{9.eq:Defnq_j}
    q_j(\vec x):=\langle {\bf c},{\bf v_j}\rangle p_j(\vec x),\,j=1,...,\ell,
    \end{equation}
    are non-constant, essentially distinct polynomials with zero constant term. Notice that since $\ell<D$, one can also find a non-zero vector ${\bf w}=(w_1,...,w_D)\in \Z^D$ with the property that $\langle {\bf w},{\bf v_j}\rangle=0$ for each $j\in\{1,...,\ell\}$.
    Replacing ${\bf c}$ with $\langle {\bf w},{\bf w}\rangle{\bf c}-\langle {\bf c},{\bf w}\rangle {\bf w}$, if needed, we can assume  without loss of generality that  $\langle {\bf c},{\bf w}\rangle=0$.\\
    
    By the proof of \cref{6.thm:FailureOfIP*Part2}, there exist an atomless, standard Lebesgue space $(Y,\mathcal B,\nu)$, an invertible $\nu$-preserving transformation $S$, a set $B\in \mathcal B$ with $\nu(B)>0$ and an $\epsilon>0$, for which the set 
    $$
\mathcal R:=\{\vec n\in\Z^d\,|\,\nu(B\cap S^{- q_1(\vec n)}B\cap \cdots\cap S^{-q_\ell(\vec n)}B)> \nu^{\ell+1}(B)-\epsilon\}
    $$
    is not IP$^*$. Let $X=Y^\Z$, let $\mathcal A$ denote its product $\sigma$-algebra, and let $\mu$ be the product measure on $X$. We define the maps $\tilde S,R:X\rightarrow X$ by $[R\omega](n)=\omega(n+1)$ and $[\tilde S\omega](n)=S(\omega(n))$. Observe that $\tilde S$ and $R$ are commuting, $\mu$-preserving transformations. For each $\vec a=(a_1,...,a_D)\in\Z^D$, we define the $\mu$-preserving transformation 
    \begin{equation}\label{9.eq:DefnS}
    T_{\vec a}=\tilde S^{\sum_{j=1}^Dc_ja_j}R^{\sum_{j=1}^Dw_ja_j}.
    \end{equation}
     Note now that because of our choice of ${\bf c}$,  $\langle {\bf c},{\bf w}\rangle=0$, one has that 
    $$T_{ {\bf w}}= \tilde S^{\langle {\bf c},{\bf w}\rangle }R^{\langle {\bf w},{\bf w}\rangle }= R^{\langle {\bf w},{\bf w}\rangle }.$$
It follows that, since $R$ is a shift map, $T_{{\bf w}}$ is ergodic. Thus, the action  $(T_{\vec a})_{\vec a\in\Z^D}$ has an ergodic element which implies that  it is an ergodic $\Z^D$-action.\\
Let $A:=\{\omega \in X\,|\,\omega(0)\in B\}$ and note that $\mu(A)=\nu(B)>0$. Since $(Y,\mathcal B,\nu)$ is an atomless standard Lebesgue space, $(X,\mathcal A,\mu)$ also is and, so,  by \cref{3.Lem:epsilonInverse}, there exists a set $E\subseteq \Z^D$ with the property  that for any $k\in\N$ and any $\vec a_1,...,\vec a_k\in\Z^D$, 
    \begin{equation}\label{9.eq:translateInequality}
\left|d_{{\rm U}}(\bigcap_{j=1}^k(E-\vec a_j))-\mu(\bigcap_{j=1}^k T_{-\vec a_j}A)\right|<k\frac{\epsilon}{3(\ell+1)}.
    \end{equation}
By taking $k=1$ and $\vec a_1=\vec 0$ in \eqref{9.eq:translateInequality}, we see that  formula \eqref{eq:IntegralZeroIdentity} implies that 
$$|d_{{\rm U}}^{\ell+1}(E)-\mu^{\ell+1}(A)|<(\ell+1)\frac{\epsilon}{3(\ell+1)}=\frac{\epsilon}{3}.$$
Also, by formula  \eqref{9.eq:translateInequality}, we have that for any $\vec n\in \Z^d$,
$$|d_{{\rm U}}(E\cap (E-p_1(\vec n){\bf v_1})\cap\cdots\cap (E-p_\ell(\vec n){\bf v_\ell}))-\mu(A\cap T_{-p_1(\vec n){\bf v_1}}A\cap \cdots\cap T_{-p_\ell(n){\bf v_\ell}}A)|<\frac{\epsilon}{3}.$$
It follows that if $\vec n\in\Z^d$ is such that  
$$d_{{\rm U}}(E\cap (E-p_1(\vec n){\bf v_1})\cap\cdots\cap (E-p_\ell(\vec n){\bf v_\ell}))> d_{{\rm U}}^{\ell+1}(E)-\frac{\epsilon}{3},$$
then 
\begin{multline*}
    \mu(A\cap T_{-p_1(\vec n){\bf v_1}}A\cap \cdots\cap T_{-p_\ell(\vec n){\bf v_\ell}}A)\\
    >
    d_{{\rm U}}(E\cap (E-p_1(\vec n){\bf v_1})\cap\cdots\cap (E-p_\ell(\vec n){\bf v_\ell}))-\frac{\epsilon}{3}
   > d_{{\rm U}}^{\ell+1}(E)-2\frac{\epsilon}{3}>\mu^{\ell+1}(A)-\epsilon.
\end{multline*}
Noting that formulas \eqref{9.eq:Defnq_j} and \eqref{9.eq:DefnS} imply that for every $j\in\{1,...,\ell\}$ and every $\vec n\in\Z^d$,
$$
T_{-p_j(\vec n){\bf v_j}}=\tilde S^{-\langle {\bf c},{\bf v_j}\rangle p_j(\vec n)}R^{-\langle {\bf w},{\bf v_j}\rangle p_j(\vec n)}=\tilde S^{-\langle {\bf c},{\bf v_j}\rangle p_j(\vec n)},
$$
we obtain  from the definition of $A$ that 
$$
\mu(A\cap T_{-p_1(\vec n){\bf v_1}}A\cap \cdots\cap T_{-p_\ell(\vec n){\bf v_\ell}}A)=
\nu(B\cap S^{- q_1(\vec n)}B\cap \cdots\cap S^{-q_\ell(\vec n)}B)
$$
and, so, 
\begin{multline*}
\mathcal S:=\{\vec n\in\Z^d\,|\, d_{{\rm U}}(E\cap (E-p_1(\vec n){\bf v_1})\cap\cdots\cap (E-p_\ell(\vec n){\bf v_\ell}))>(d_{{\rm U}}(E))^{\ell+1}-\frac{\epsilon}{3}\}\\
\subseteq \{\vec n\in \Z^d\,|\,\mu(A\cap T_{-p_1(\vec n){\bf v_1}}A\cap \cdots\cap T_{-p_\ell(\vec n){\bf v_\ell}}A)>\mu^{\ell+1}(A)-\epsilon\}\\
= \{\vec n\in\Z^d\,|\,\nu(B\cap S^{- q_1(\vec n)}B\cap \cdots\cap S^{-q_\ell(\vec n)}B)
> \nu^{\ell+1}(B)-\epsilon\}= \mathcal R.
\end{multline*}
So, since $\mathcal R$ is not IP$^*$ and $\mathcal S\subseteq \mathcal R$, we have that $\mathcal S$ is not IP$^*$. We are done.
\end{proof}
\begin{rem}\label{0.WMPrinciple}
Let ${\bf e_1}:=(1,0)$, ${\bf e_2}:=(0,1)$, and pick two $\Q$-linearly independent vectors ${\bf v},{\bf w}\in\Z^2$ with the property that for some $a_1,b_1,a_2,b_2\in\Z$, 
$${\bf e_j}=a_j{\bf v}+b_j{\bf w},\,j\in\{1,2\}.$$
Theorem 1.5 in \cite{FraKu2025ergodic} shows that for any non-constant $p\in\Z[x]$ with $p(0)=0$, any $E\subseteq \Z^2$ with $d^*(E)>0$, and any $\epsilon>0$, the set 
$$
\{n\in\Z\,|\,d^*(E\cap (E-p(n){\bf v})\cap (E-p(n){\bf w}))>(d^*(E))^4-\epsilon\}
$$
is syndetic. We remark that the linear independence of ${\bf v}$ and ${\bf w}$ cannot be omitted.\\
Indeed, \cite[Theorem 2.1]{BHKNilSystems2005} provides an example of an atomless standard Lebesgue space $(Y,\mathcal B,\nu)$, an invertible  $\nu$-preserving transformation $S$, and  a set $B\in\mathcal B$ with $\nu(B)>0$ for which 
$$
\{n\in\Z\,|\,\nu(B\cap S^{-n}B\cap S^{-2n}B)\geq\frac{\nu^{4}(B)}{2}\}=\{0\}.
$$
Let ${\bf v}\in\Z^2$ be a non-zero vector. Arguing as in the proof of \cref{0.Cor:CombinatorialFailureOfIP}, one can find a suitable ergodic probability preserving systems $(X,\mathcal A,\mu,(T_{\vec n})_{\vec n\in\Z^2})$ and  a set $A\in\mathcal A$ with $\mu(A)=\nu(B)>0$  to which one can apply \cref{3.Lem:epsilonInverse} to obtain  a subset $E\subseteq \Z^2$ with $d_{{\rm U}}(E)>0$ and an $\epsilon>0$ for which the set 
    $$\{ n\in\Z\,|\,d_{{\rm U}}(E\cap (E-n{\bf v})\cap (E-2n{\bf v}))>(d_{{\rm U}}(E))^{4}-\epsilon\}=\{0\}.$$
Thus the syndeticity conclusion in \cite[Theorem 1.5]{FraKu2025ergodic} fails for the linearly dependent pair ${\bf v},{2\bf v}$.
\end{rem}
%%%%%%%%%%%%%%%%%%%%%%%%%%%%%%%%%%%%%%%%%%%%%%%%%%%%%%%%%%%%%%%%%%%%%%%
\subsection{The proof of \cref{3.Lem:epsilonInverse}}\label{Sec:9.2}
The proof of \cref{3.Lem:epsilonInverse} makes use of a generalization of the classical Jewett-Krieger Theorem to countable abelian groups, see \cite{Weis1985StricltyErgodic}. Before formulating this result, we need to introduce some relevant definitions.\\
Given a countable abelian group $(G,+)$  and a probability space $(X,\mathcal A,\mu)$ where $X$ is a compact metric space and $\mathcal A=\text{Borel}(X)$, we say that the measure preserving system $(X,\mathcal A,\mu,(T_g)_{g\in G})$ is \textit{strictly ergodic} if (1) $(T_g)_{g\in G}$ is a $\mu$-preserving $G$ action by homeomorphisms of $X$, (2) $\mu$ is the only $(T_g)_{g\in G}$-invariant Borel probability measure in $X$, and (3) for every non-empty open set $U\subseteq X$, $\mu(U)>0$. Observe that the conditions (1),(2),(3) imply that (a) $(T_g)_{g\in G}$ is minimal, meaning that for each $x\in X$, $\{T_gx\,|\,g\in G\}$ is dense in $X$, and (b) for every continuous $f:X\rightarrow \R$, any $x\in X$, and any F{\o}lner sequence $(\Phi_N)_{N\in\N}$ in $G$,
\begin{equation}\label{3.eq:UniformLimitForC(X)}
\lim_{N\rightarrow\infty}\frac{1}{|\Phi_N|}\sum_{g\in \Phi_N}f(T_gx)=\int_X f\text{d}\mu.
\end{equation}
We remark that by a standard approximation argument (see \cite[page 149]{waltersIntroduction}, for example), \eqref{3.eq:UniformLimitForC(X)} implies that for every $V\in\mathcal A$ with $\mu(\partial V)=0$ and   every F{\o}lner sequence $(\Phi_N)_{N\in\N}$ in $G$, one has 
\begin{equation}\label{3.eq:UniformLimitForMeasurable}
\lim_{N\rightarrow\infty}\frac{1}{|\Phi_N|}\sum_{g\in \Phi_N}\mathbbm 1_V(T_gx)=\mu(V),\,\text{ for every }x\in X.
\end{equation}
We will utilize \eqref{3.eq:UniformLimitForMeasurable} in the proof of \cref{3.Lem:epsilonInverse}.
\begin{thm}[Jewett-Krieger Theorem for countable abelian groups, \cite{Weis1985StricltyErgodic}]\label{3.Thm:AbelianJewettKrieger}
    Let $(G,+)$ be a countable abelian group and let $(Y,\mathcal B,\nu)$  be a standard Lebesgue space with no atoms. Suppose that $(S_g)_{g\in G}$ is an ergodic $\nu$-preserving $G$-action. Then, $(Y,\mathcal B,\nu, (S_g)_{g\in G})$ is measure-theoretically isomorphic to a strictly ergodic measure preserving system $(X,\mathcal A,\mu,(T_g)_{g\in G})$.
\end{thm}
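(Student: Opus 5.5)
We plan to follow the Hansel–Raoult–Denker style proof of the classical Jewett–Krieger theorem, in the amenable form of Weiss, with the Ornstein–Weiss quasi-tiling machinery replacing Rokhlin towers over intervals. The target is a sequence of finite measurable partitions $\mathcal P_1\preceq\mathcal P_2\preceq\cdots$ of $Y$, jointly generating $\mathcal B$ modulo null sets, each of which is \emph{uniform}. Here $\mathcal P$ is uniform if there is a $(S_g)$-invariant set $Y_0$ of full measure with the following property: for every finite $F\subseteq G$ and every $F$-cylinder $C$ of $\mathcal P$ (an atom of $\bigvee_{g\in F}S_{-g}\mathcal P$), and every $\delta>0$, there is a $(K,\eta)$ such that for every $(K,\eta)$-invariant finite $\Phi\subseteq G$ and every $y\in Y_0$,
$$\Bigl|\frac{1}{|\Phi|}\sum_{g\in\Phi}\mathbbm 1_C(S_gy)-\nu(C)\Bigr|<\delta .$$
Given such partitions, we code $Y_0$ into $\prod_k \mathcal P_k^{\,G}$ by $\mathcal P_k$-names, and let $X$ be the closure of the image with the shift action $(T_g)$. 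The coding map is an isomorphism because the partitions generate. Uniform frequencies of all cylinders, uniformly over sufficiently invariant sets, give convergence of ergodic averages of every continuous function on $X$ (these are uniform limits of cylinder functions) for every point of $X$, not only of the image, by approximation. Hence $(T_g)$ is uniquely ergodic with measure the image of $\nu$. Full support follows since a cylinder meeting $X$ meets the image of $Y_0$, so it has positive frequency and therefore positive measure.

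To produce a uniform partition we will modify a given partition $\mathcal Q$ on a set of small measure. By the mean/pointwise ergodic theorem along a tempered Følner sequence, most points see empirical cylinder frequencies close to the true ones on a large invariant shape $\Phi$. Using the Ornstein–Weiss Rokhlin lemma, we build finitely many disjoint towers $\{S_gB_i\}_{g\in\Phi_i}$ with $\epsilon$-disjoint Følner shapes $\Phi_i$, covering all but $\epsilon$ of $Y$. On each base we pick one fixed good $\Phi_i$-name pattern (one realized by a point whose frequencies are good) and relabel the whole tower by that pattern, along with the bad bases and the uncovered remainder (a fixed symbol there). Any sufficiently invariant $\Phi$ is then, by quasi-tiling, $\epsilon$-almost tiled by translates of the $\Phi_i$, so cylinder frequencies along $\Phi$ are uniformly close to the prescribed ones at \emph{every} point. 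The relabeling changes $\mathcal Q$ on a set of measure $O(\epsilon)$.

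We then iterate. At stage $k$ we take $\mathcal Q_k=\mathcal P_{k-1}\vee\mathcal R_k$, with $(\mathcal R_k)$ a generating sequence, and perform the modification with errors $\epsilon_k$ and cylinder lengths growing. We must also arrange that the new partition keeps $\mathcal P_{k-1}$ as a coarsening, by modifying only the $\mathcal R_k$-coordinate and choosing patterns compatible with the already-uniform $\mathcal P_{k-1}$-names. We need $\sum\epsilon_k<\infty$ so that the modifications converge, and so that the limit partitions still generate, via a Borel–Cantelli / $d$-bar closeness argument. Ergodicity is used to make the good-pattern frequencies equal $\nu(C)$ rather than a conditional expectation, and atomlessness is needed for the tower constructions.

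The main obstacle is the uniformity step: obtaining frequency control simultaneously for all sufficiently invariant sets, which is needed for every Følner sequence in \eqref{3.eq:UniformLimitForC(X)}, and at every point. This requires the quantitative Ornstein–Weiss quasi-tiling theorem (every sufficiently invariant set is $\epsilon$-quasi-tiled by the tower shapes), together with careful bookkeeping so that the compatibility between successive partitions is preserved while the modified sets remain summable.
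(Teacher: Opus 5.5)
The paper does not prove this theorem. It imports it as a black box from Weiss, and cites Rosenthal for amenable groups, so your sketch has to stand on its own.

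\textbf{The coding step.} Passing from an increasing generating sequence of uniform partitions to a strictly ergodic model by coding is the standard argument. It is fine after two small repairs:
\begin{itemize}
\item Full support: a point $y\in C\cap Y_0$ does not give $C$ positive frequency. You should delete from $Y_0$ the $G$-saturation of the countably many null cylinders.
\item Freeness: the Ornstein--Weiss Rokhlin lemma requires an essentially free action, which the statement does not assume. For abelian $G$ the stabilizer of $y$ is an invariant function of $y$, hence a.e.\ equal to a fixed subgroup $H$. Then $G/H$ acts freely, and it is infinite because $\nu$ is atomless.
\end{itemize}

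\textbf{Main gap: uniform control at every point.} The gap is exactly at the step you flag as the main obstacle. The Rokhlin lemma controls only the \emph{measure} of the uncovered remainder and of the bad columns. Uniformity, however, is a statement about every point of $Y_0$ and every sufficiently invariant window.
\begin{itemize}
\item The fact that $\Phi$ admits an $\epsilon$-quasi-tiling by the $\Phi_i$ does not help. The names along the orbit of $y$ are dictated by the towers that orbit actually passes through, not by a tiling of $\Phi$ you get to choose.
\item The step already fails for $G=\Z$. If the return time to the base is unbounded (nothing in the Rokhlin lemma prevents this), then by ergodicity a.e.\ orbit has arbitrarily long runs in the remainder. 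Your new partition is constant there, so the all-constant cylinder has frequency $1$ on such windows, although its measure is $O(\epsilon)$.
\item What you need is a tower structure that is itself uniform. For every $y\in Y_0$ and every sufficiently invariant $\Phi$, the complete tiles of the orbit of $y$ inside $\Phi$ must cover all but $\epsilon|\Phi|$ of it.
\item For $\Z$ this means a base with return times in $[N,2N)$. In general, you can run the Ornstein--Weiss construction with maximal $\epsilon$-disjoint families of translates measurably inside each orbit; this gives precisely this pointwise estimate.
\end{itemize}

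\textbf{Second gap: the iteration.} One relabeling yields only approximate uniformity. The precision is $O(\epsilon)$, and control holds only for cylinder shapes small compared with the tiles, since larger patterns depend on how tiles are juxtaposed along orbits.
\begin{itemize}
\item Uniformity passes to coarsenings. So once you freeze $\mathcal P_{k-1}$, no refinement $\mathcal P_k\succeq\mathcal P_{k-1}$ can be uniform unless $\mathcal P_{k-1}$ already was. Growing cylinder lengths at later stages cannot repair this.
\item Hence each $\mathcal P_k$ must itself be obtained as a limit of infinitely many modifications at growing scales.
\item Summability in measure is not enough for that limit. Each new modification must preserve the earlier estimates at \emph{every} point, for example by using replacement patterns that are good at all earlier scales on all long sub-windows.
\item In the relative step, a good $\mathcal R_k$-pattern over a prescribed $\mathcal P_{k-1}$-column name need not exist, since that cylinder may lie entirely in the bad set. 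The usual remedy builds such patterns from shorter sub-blocks, using the uniformity of $\mathcal P_{k-1}$.
\item Only the bad columns may be relabeled. Relabeling whole towers by a single pattern is not an $O(\epsilon)$ change.
\end{itemize}
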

\begin{proof}[Proof of \cref{3.Lem:epsilonInverse}] The main argument in our proof of \cref{3.Lem:epsilonInverse} makes use  of the ideas utilized in the proof of Theorem 6.4 in \cite{bergelson2016NonPWSyndetic}.
Let $A\in\mathcal A$ be such that $\mu(A)>0$ and fix $\epsilon>0$. By \cref{3.Thm:AbelianJewettKrieger}, we  assume without loss of generality that $(X,\mathcal A,\mu,(T_g)_{g\in G})$ is strictly ergodic and let $d:X\times X\rightarrow [0,\infty)$ denote the metric on $X$.\\
Our strategy of proving \cref{3.Lem:epsilonInverse} is to first approximate $\mathbbm 1_A$ in the $L^1(\mu)$-norm by a function of the form $\mathbbm 1_{\bigcup_{j=1}^t U_j}$, where $U_1,...,U_t$ is a finite collection of open sets with the additional property that for each $j\in \{1,...,t\}$, $\mu(\partial U_j)=0$. We will then utilize this approximation in combination with \eqref{3.eq:UniformLimitForMeasurable} to define the set $E$.\\
$\bullet$ \textit{ Finding $U_1,...,U_t$.} By the inner regularity of $\mu$, there exists a compact set $K\subseteq A$ such that $\mu(A\setminus K)<\epsilon/2$. For every $r>0$ and every $x\in X$, let $B_r(x)=\{y\in X\,|\,d(y,x)<r\}$. Because $\lim_{r\searrow 0}\mu(\bigcup_{x\in K}B_r(x))=\mu(K)$ and because $K$ is compact we can find $x_1,...,x_t\in K$ and $\sigma,\delta>0$ such that for any $r\in(\sigma,\sigma+\delta)$, $\mu(A\triangle\bigcup_{j=1}^tB_r(x_j))<\epsilon$.
Noting that the set $(\sigma,\sigma+\delta)$ is uncountable and that for each $j\in\{1,...,t\}$ and each $r\in(\sigma,\sigma+\delta)$, $\partial B_r(x_j)\subseteq\{y\in X\,|\,d(y,x_j)=r\}$, we see that there is an $r_0\in (\sigma,\sigma+\delta)$ with the property that for each $j\in\{1,...,t\}$, $\mu(\partial B_{r_0}(x_j))=0$.\\
$\bullet$ \textit{ Defining the set $E$.} Set $V=\bigcup_{j=1}^tB_{r_0}(x_j)$ and pick $x_0\in X$ arbitrarily. We let  
$$E:=\{g\in G\,|\,T_gx_0\in V\}.$$
$\bullet$ \textit{ Proving that \eqref{3.eq:UniformLimitWithError} holds.} Let $k\in\N$ and $g_1,...,g_k\in G$ be arbitrary. We will first show that $$d_{{\rm U}}(\bigcap_{s=1}^k(E-g_s))=\mu(\bigcap_{s=1}^kT_{-g_s}V).$$ 
To do this, let $(\Phi_N)_{N\in\N}$ be a F{\o}lner sequence in $G$. It suffices to show that 
$$\lim_{N\rightarrow\infty}\frac{|\bigcap_{s=1}^k(E-g_s)\cap \Phi_N|}{|\Phi_N|}=\mu(\bigcap_{s=1}^kT_{-g_s}V).$$
Since $T_{-g_1},...,T_{-g_k}$ are homeomorphisms, one has that for every $s\in\{1,...,k\}$,
$$T_{-g_s}\partial V=\partial(T_{-g_s}V)\subseteq \bigcup_{j=1}^t\partial(T_{-g_s} B_{r_0}(x_j))=T_{-g_s}(\bigcup_{j=1}^t(\partial B_{r_0}(x_j)).$$
 Thus, $\mu(\partial(\bigcap_{s=1}^k T_{-g_s}V))\leq \mu(\bigcup_{s=1}^k T_{-g_s}\partial V)) =0$. By \eqref{3.eq:UniformLimitForMeasurable}, we now obtain,
 \begin{multline*}
     \lim_{N\rightarrow\infty}\frac{|\bigcap_{s=1}^k(E-g_s)\cap \Phi_N|}{|\Phi_N|}=\lim_{N\rightarrow\infty}\frac{1}{|\Phi_N|}\sum_{g\in \Phi_N}\prod_{s=1}^k\mathbbm 1_E(g+g_s)\\
     =\lim_{N\rightarrow\infty}\frac{1}{|\Phi_N|}\sum_{g\in \Phi_N}\prod_{s=1}^k\mathbbm 1_{T_{-g_s}V}(T_gx_0)=\lim_{N\rightarrow\infty}\frac{1}{|\Phi_N|}\sum_{g\in \Phi_N}\mathbbm 1_{\bigcap_{s=1}^kT_{-g_s}V}(T_gx_0)=\mu(\bigcap_{s=1}^kT_{-g_s}V),
 \end{multline*}
 proving the claim. To prove formula \eqref{3.eq:UniformLimitWithError}, note that  
 \begin{multline*}
     |d_{{\rm U}}(\bigcap_{s=1}^k(E-g_s))- \mu(\bigcap_{s=1}^k T_{-g_s}A)|\\
     \leq |d_{{\rm U}}(\bigcap_{s=1}^k(E-g_s))- \mu(\bigcap_{s=1}^k T_{-g_s}V)|
    + \mu(\bigcap_{s=1}^k T_{-g_s}A\triangle \bigcap_{s=1}^k T_{-g_s}V)\\
    =0+  \mu(\bigcap_{s=1}^k T_{-g_s}A\triangle \bigcap_{s=1}^k T_{-g_s}V)
    =\int_X|\prod_{s=1}^k\mathbbm 1_{T_{-g_s}A}-\prod_{s=1}^k\mathbbm 1_{T_{-g_s}V}|\text{d}\mu<k\epsilon,
 \end{multline*}
 where the last inequality in the expression above follows from formula \eqref{eq:IntegralZeroIdentity}.
\end{proof}
\begin{rem}
    By suitably adapting the formulation of \cref{3.Lem:epsilonInverse} and invoking Rosenthal's Jewett-Krieger theorem for amenable groups \cite{RosenthalAmenableJewettKrieger} (see also \cite[Theorem 2]{Weiss2025JewettKrieger}), one can extend \cref{3.Lem:epsilonInverse} to the setting of amenable groups.
\end{rem}
%%%%%%%%%%%%%%%%%%%%%%%%%%%%%%%%%%%%%%%J
%%%%%%%%%%%%%%%%%%%%%%%%%%%%%%%%%%%%%%%%%%%%%%%%%

%%%%%%%%%%%%%%%%%%%%%%%%%%%%%%%%%%%%%%%%%%%%%%%%%%%%%%%%%%%%%%%%%%%%%%%%%%%
\appendix
\counterwithin{thm}{section}
\renewcommand{\thethm}{\Alph{section}\arabic{thm}}
\renewcommand{\thesubsection}{\Alph{section}.\arabic{subsection}}
\renewcommand{\thesection}{\Alph{section}}%   A, B, C, ...
%  \renewcommand{\thesubsection}{\thesection.\arabic{subsection}
%%%%%%%%%%%%%%%%%%%%%%%%%%%%%%%%%%%%%%%%%%%%%%%%%%%%%%%%%%%%%%%%%%%%%%%%%%%%
\section{A consequence of the Density Polynomial Hales-Jewett conjecture}\label{Sec:A}
 In this appendix we show that the Density Polynomial Hales-Jewett (DPHJ) conjecture \cite[p. 56]{ERTaU}, which we formulate for convenience of the reader in Subsection \ref{A.sec.DPHJConjecture} below, implies the following  result dealing with polynomial multiple recurrence. This result was discussed at the end of the Introduction and forms a conditional refinement of the corresponding restricted version of the  IP-polynomial Szemer{\'e}di theorem to the so called "polynomial corners". Note that we make no assumption about the linear independence of $p_1,...,p_\ell\in\Z[x_1,...,x_d]$. 
\begin{thm}\label{8.thm:ConditionalSzemeredi}
     Suppose that the Density Polynomial Hales-Jewett conjecture holds. Let $D,\ell\in\N$
     and let $\delta\in (0,1)$.  There is a constant $c=c(\ell,D,\delta)>0$ and an $r=r(\ell,D,\delta)\in\N$ with the property that for any 
     probability space  $(X,\mathcal A,\mu)$, any  invertible and commuting $\mu$-preserving transformations  $T_1,...,T_\ell$, any non-constant polynomials $p_1,...,p_\ell\in\Z[x_1,...,x_d]$ with zero constant term and $\max_{1\leq j\leq \ell}\deg_{x_1,...,x_d}p_j\leq D$,
     and any $A\in\mathcal A$ with $\mu(A)\geq \delta$, the set 
     \begin{equation}
\{\vec n\in \Z^d\,|\,\mu(A\cap T_1^{-p_1(\vec n)}A\cap\cdots\cap T_\ell^{-p_\ell(\vec n)}A)\geq c(\ell,D,\delta)\}
     \end{equation}
     is \rm{IP$_r^*$}.
 \end{thm}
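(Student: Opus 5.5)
We plan to deduce \cref{8.thm:ConditionalSzemeredi} from DPHJ in its finitary, uniform form. That form says: for every $\delta>0$, $q,D\in\N$ there is $N=N(\delta,q,D)$ such that any subset of density at least $\delta/2$ of the space $\mathcal M_N:=(\mathcal P(\{1,\dots,N\}^D))^q$ (or of the appropriate finite "polynomial cube" space from \cite{ERTaU}) contains a nondegenerate polynomial combinatorial line. Quantitative uniformity follows from the qualitative statement by a standard compactness/averaging (Varnavides-type) argument. Averaging over all subcubes of a larger cube $\mathcal M_{N'}$ then yields a constant $c'=c'(\delta,q,D)>0$ with the following property: any set of density $\ge\delta/2$ in $\mathcal M_{N'}$ contains at least a $c'$ proportion of all polynomial lines. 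We take $q=\ell$, and we take $r$ large in terms of $N'$ and $d$. The parameter $d$ must not appear, so below we reduce to the variable structure of the IP$_r$ set.

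Fix distinct $\vec n_1,\dots,\vec n_r\in\Z^d$ and write $\vec n_\alpha=\sum_{k\in\alpha}\vec n_k$. Expanding $p_j(\vec n_\alpha)$ for $\alpha\subseteq\{1,\dots,r\}$ gives
\[
p_j(\vec n_\alpha)=\sum_{1\le s\le D}\ \sum_{(k_1,\dots,k_s)\in\alpha^s} c^{(j)}_{k_1,\dots,k_s},
\]
where $c^{(j)}_{k_1,\dots,k_s}\in\Z$ are multilinear coefficients depending only on $p_j$ and the $\vec n_k$. This is the VIP-polynomial structure noted in \cref{3.defnOFVIP*}'s example. We define the measure preserving action of the abelian group $\mathcal G_r:=\bigoplus_{j,s}\Z^{\{1,\dots,r\}^s}$ on $X$ by sending the generator indexed by $(j,s,k_1,\dots,k_s)$ to $T_j^{c^{(j)}_{k_1,\dots,k_s}}$; this uses commutativity of the $T_j$. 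We use $r=N'$ and identify a point $(\gamma_1,\dots,\gamma_\ell)\in\mathcal M_{N'}$, each $\gamma_j$ a collection of subsets of $\{1,\dots,N'\}^s$, $s\le D$, with the group element $g(\gamma)$ equal to the sum of the corresponding generators. For $x\in X$ put $S_x:=\{\gamma\in\mathcal M_{N'}\,|\,T^{g(\gamma)}x\in A\}$ (with the appropriate inverse convention). Then $\int|S_x|/|\mathcal M_{N'}|\,d\mu=\mu(A)\ge\delta$, so the set $X'$ of $x$ with density of $S_x$ at least $\delta/2$ has $\mu(X')\ge\delta/2$.

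For $x\in X'$ the uniform DPHJ gives at least $c'|\mathcal L|$ lines in $S_x$, where $\mathcal L$ is the set of nondegenerate polynomial lines. A line is determined by a base point $\gamma$ and a nonempty set of moving coordinates $\alpha$; its $\ell+1$ points are $\gamma$ and $\gamma\cup(\alpha^{s}\text{ added in the }j\text{-th coordinate})$ for $j=1,\dots,\ell$. By construction, $g$ of the $j$-th moving point minus $g(\gamma)$ is the generator combination corresponding to $T_j^{p_j(\vec n_\alpha)}$. This needs the base point to be disjoint from $\alpha^s$, which is how DPHJ lines are set up. Integrating over $x$ and exchanging sum and integral gives
\[
\frac{1}{|\mathcal L|}\sum_{(\gamma,\alpha)\in\mathcal L}\mu\bigl(A\cap T_1^{-p_1(\vec n_\alpha)}A\cap\cdots\cap T_\ell^{-p_\ell(\vec n_\alpha)}A\bigr)\ge \frac{\delta c'}{2}.
\]
Here the $T^{g(\gamma)}$ shift is removed by invariance of $\mu$. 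The summand depends only on $\alpha$, so some nonempty $\alpha\subseteq\{1,\dots,r\}$ satisfies the bound with $c:=\delta c'/2$. This shows the set in question meets every IP$_r$ set.

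The main obstacle is the bookkeeping in the middle step. The DPHJ line structure (sets of $s$-tuples in $\alpha^s$ added to all coordinates simultaneously) has to match exactly the multilinear expansion of $p_j(\vec n_\alpha)$. Ordered tuples with repetitions, the factor coming from the symmetry of coefficients, and the condition that the base point avoids $\alpha^s$ all need care. I would handle this by choosing the group generators to be exactly indexed by ordered tuples, so no symmetrization is needed. Dependence on $d$ and on the $p_j$ is avoided because only $D$, $\ell$, $\delta$ enter the combinatorial parameters. A secondary point is deriving the counting ("supersaturated") form of DPHJ from the qualitative conjecture, which I would do via the usual averaging over embedded copies of $\mathcal M_N$ inside $\mathcal M_{N'}$.
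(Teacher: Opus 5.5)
Your architecture matches the paper's. The paper proves the VIP version, \cref{8.ConditionalSzemeredi}, in the same way: average the indicators of the translates of $A$ over the cube, pass to the $x$ whose fibre $S_x$ has density $>\delta/2$, apply DPHJ fibrewise, and use invariance of $\mu$. Your integer multilinear expansion over ordered tuples is a legitimate, more hands-on substitute for the VIP formalism and McCutcheon's level decomposition. The paper's route yields the general VIP statement; yours is self-contained for polynomials.

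However, the DPHJ step does not go through as written. DPHJ, as formulated in \cite{ERTaU} and in the paper, concerns $\mathcal M_{\ell,D,N}$, the $\ell$-tuples of subsets of $\{1,\dots,N\}^D$ only, and its lines add $\gamma^D$ to a single coordinate. Your points carry components at every level $s\le D$, and your lines add $\alpha^s$ at every level at once. That is a different, multi-level statement which the conjecture does not provide. Nor can you get it by embedding the levels as disjoint subsets of $\{1,\dots,N\}^D$ compatibly with $\alpha^s\mapsto\alpha^D$: every level's constant tuple $(k,\dots,k)$ would have to land on the single point $(k,\dots,k)$.

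The fix is to encode lower-degree terms as integer weights on $D$-tuples:
\[
C_j(k_1,\dots,k_D):=\sum_{s=1}^D c^{(j)}_{k_1,\dots,k_s}\,\mathbbm 1[k_s=k_{s+1}=\cdots=k_D].
\]
The map $(k_1,\dots,k_s)\mapsto(k_1,\dots,k_s,k_s,\dots,k_s)$ is a bijection from $\alpha^s$ onto $\{\vec k\in\alpha^D\,|\,k_s=\cdots=k_D\}$, so $\sum_{\vec k\in\alpha^D}C_j(\vec k)=p_j(\vec n_\alpha)$ for every $\alpha$. Now attach to $(\beta_1,\dots,\beta_\ell)\in\mathcal M_{\ell,D,r}$ the transformation $\prod_{j}T_j^{\sum_{\vec k\in\beta_j}C_j(\vec k)}$. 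Additivity over disjoint sets, together with condition (i) of DPHJ ($\beta_j\cap\gamma^D=\emptyset$), shows that the $j$-th moving point of a line differs from its base point by exactly $T_j^{p_j(\vec n_\gamma)}$. This padding by the repeated last index is precisely the content of \cref{7.lem:ShiftToSetPolynomial}; with it your argument closes.

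Your supersaturation step is unnecessary. DPHJ as stated is already finitary and uniform, and with $r=C(\ell,D,\delta/2)$ fixed there are at most $2^{\ell r^D}\cdot 2^r$ lines. One line in each fibre $S_x$, for $x$ in a set of measure $\ge\delta/2$, plus pigeonhole over lines gives a single line that works on a set of measure at least $\delta/2^{\ell r^D+r+1}$. That is the paper's $c$. Your ``$c'$-proportion of all lines'' holds for a fixed cube only in this trivial sense, $c'=1/|\mathcal L|$. With $c'$ independent of the cube size it already fails for $\ell=D=1$ (Sperner): take a band of middle layers, since a uniformly random line has endpoints of size about $N/3$ and $2N/3$. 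Finally, define the good fibres by the strict inequality $f>\mu(A)/2$, as the paper does, to match the strict density hypothesis of DPHJ.
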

We will derive \cref{8.thm:ConditionalSzemeredi} from \cref{8.ConditionalSzemeredi}  which we will prove in Subsection \ref{Sec:A3} and which is a more general statement concerning the so-called \textit{VIP-systems} introduced in \cite{BFM} (see formula \eqref{7.eq:VIPDefn1} for the definition).
We note that, should the Density Polynomial Hales-Jewett conjecture be confirmed, \cref{8.ConditionalSzemeredi} would yield a refined version of the IP-polynomial Szemer{\'e}di theorem in its full generality.
 %%%%%%%%%%%%%%%%%%%%%%%%%%%%%%%%%%%%%%%%%%%%%
 \subsection{The Density Polynomial Hales-Jewett conjecture}\label{A.sec.DPHJConjecture}
We begin by stating the Density Polynomial Hales-Jewett  conjecture as it was originally formulated in \cite[p. 56]{ERTaU}.\\
For any  $q,D,a\in\N$ we let $\mathcal M_{q,D,a}$ denote the set of all $q$-tuples of subsets of $\{1,...,a\}^D$. More formally,  
 $$\mathcal M_{q,D,a}:=\{(\alpha_1,...,\alpha_q)\,|\,\forall i\in\{1,...,q\},\,\alpha_i\subseteq \{1,...,a\}^D\}.$$
 \begin{namedthm*}{Density Polynomial Hales-Jewett conjecture (DPHJ)} Let $q,D\in\N$ and let $\delta>0$. There exists a positive integer 
 \begin{equation}\label{A.eq:DPHJConstant}
 C=C(q,D,\delta)
 \end{equation}
 such that for any $N\in\N$ with $N\geq C$ and any $\mathcal S\subseteq \mathcal M_{q,D,N}$ with $\frac{|\mathcal S|}{|\mathcal M_{q,D,N}|}>\delta$, there exists a non-empty $\gamma\subseteq \{1,...,N\}$ and an $(\alpha_1,...,\alpha_q)\in \mathcal S$ such that: 
 \begin{enumerate}[(i)]
\item For every $j\in\{1,...,q\}$, $\alpha_j\cap \gamma^D=\emptyset$.
\item One has
\begin{equation*}
(\alpha_1\cup \gamma^D,\alpha_2,...,\alpha_q),(\alpha_1,\alpha_2\cup \gamma^D,...,\alpha_q),\dots,
(\alpha_1,...,\alpha_{q-1}\cup\gamma^D,\alpha_q), (\alpha_1,...,\alpha_{q-1},\alpha_q\cup\gamma^D)\in\mathcal S.
\end{equation*}
\end{enumerate}
 \end{namedthm*}

 %%%%%%%%%%%%%%%%%%%%%%%%%%%%%%%%%%
 \subsection{Background on VIP-systems}
Let $(G,+)$ be an abelian  group and let $\mathcal F$ denote the family of all finite, non-empty subsets of $\N$. An $\mathcal F$-sequence $(g_\alpha)_{\alpha\in\mathcal F}$ in $G$  is called a \textit{VIP-system} if there is a $t\in\N$ with the property that  for any disjoint $\alpha_0,...,\alpha_t\in\mathcal F$,
\begin{equation}\label{7.eq:VIPDefn1}
\sum_{k=1}^{t+1}\sum_{0\leq i(1)<\cdots< i(k)\leq t}(-1)^kg_{\alpha_{i(1)}
\cup\cdots\cup \alpha_{i(k)}}=0.
\end{equation}
We say that the VIP-system $(v_\alpha)_{\alpha\in\mathcal F}$ has degree $D$ if $D$ is the least natural number $t$ for which \eqref{7.eq:VIPDefn1} holds. Our goal in this subsection is to prove the following lemma.  Recall that for each $D\in\N$, $\mathcal F_\emptyset(\N^D)$ denotes the family of all  finite subsets of $\N^D$ (so, in particular, $\emptyset\in \mathcal F_\emptyset (\N^D)$). 
\begin{lem}[Cf. formula for $\varphi_{v,\eta}$ on page 66 of \cite{BerLeibPolyHJ}]\label{7.lem:ShiftToSetPolynomial}
    Let  $D\in\N$ and let $(G,+)$ be an abelian group. For any VIP-system $(g_\alpha)_{\alpha\in\mathcal F}$ in $G$ of degree at most $D$ there is a function $\eta:\mathcal F_\emptyset (\N^D)\rightarrow G$ with the following two properties:
    \begin{enumerate}
       \item [(P.1)] For any $\alpha\in\mathcal F$,    $$\eta(\alpha^D)=\eta(\underbrace{\alpha\times\cdots\times\alpha}_{D\text{ times}})=g_\alpha.$$
         \item [(P.2)] For any $A,B\in\mathcal F_\emptyset (\N^D)$ with $A\cap B=\emptyset$, $\eta(A\cup B)=\eta(A)+\eta(B)$.
    \end{enumerate}
\end{lem}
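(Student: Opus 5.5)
The plan is to build $\eta$ from the ``polynomial expansion'' of a VIP-system. First, from the defining identity \eqref{7.eq:VIPDefn1} with $t=D$, I will extract a multilinear representation: there is a symmetric function $\psi$ defined on multisets of at most $D$ natural numbers (equivalently, on tuples $(i_1,\dots,i_D)\in\N^D$ up to permutation, padded appropriately) with values in $G$ such that
$$g_\alpha=\sum_{\emptyset\neq\beta\subseteq\alpha,\ |\beta|\leq D}\psi(\beta)\qquad(\alpha\in\mathcal F).$$
This is the standard fact that VIP-systems of degree $\le D$ are exactly those whose ``Möbius coefficients'' vanish on sets of size $>D$. Concretely, I define $\psi$ by Möbius inversion over the finite lattice of subsets, $\psi(\beta):=\sum_{\gamma\subseteq\beta,\gamma\neq\emptyset}(-1)^{|\beta|-|\gamma|}g_\gamma$. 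Then $g_\alpha=\sum_{\emptyset\ne\beta\subseteq\alpha}\psi(\beta)$ automatically, and applying \eqref{7.eq:VIPDefn1} to singletons $\alpha_i=\{k_i\}$ shows $\psi(\beta)=0$ whenever $|\beta|\geq D+1$. Indeed, for $|\beta|=D+1$, the relation is exactly $\pm\psi(\beta)=0$. For larger $\beta$, group the elements of $\beta$ into $D+1$ blocks and induct on $|\beta|$.

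Next I need to distribute $\psi(\beta)$ among points of $\N^D$. For each nonempty $\beta\subseteq\N$ with $|\beta|\le D$, let $\tau(\beta)$ denote the set of tuples $(i_1,\dots,i_D)\in\N^D$ whose set of coordinates equals $\beta$. These sets $\tau(\beta)$ partition $\N^D$, and $\alpha^D=\bigsqcup_{\emptyset\ne\beta\subseteq\alpha,|\beta|\le D}\tau(\beta)$. Choose for each such $\beta$ a distinguished point $x_\beta\in\tau(\beta)$, for instance the lexicographically smallest tuple with coordinate set $\beta$; this set is nonempty because $|\beta|\le D$. Then define $w:\N^D\to G$ by $w(x_\beta)=\psi(\beta)$ and $w(x)=0$ for all other $x$, and set $\eta(A):=\sum_{x\in A}w(x)$ for finite $A$, with $\eta(\emptyset)=0$.

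Property (P.2) is immediate, because $\eta$ is a sum over points. For (P.1), observe that $\eta(\alpha^D)=\sum_{\beta}\sum_{x\in\tau(\beta)}w(x)=\sum_{\emptyset\ne\beta\subseteq\alpha,|\beta|\le D}\psi(\beta)=g_\alpha$.

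The main obstacle is the first step, namely showing rigorously that $\psi$ vanishes on sets of size larger than $D$. I will try the direct route first. Given $\beta$ with $|\beta|=m>D$, partition it into blocks $\alpha_0,\dots,\alpha_D$, where $\alpha_0,\dots,\alpha_{D-1}$ are singletons and $\alpha_D$ is the remainder. Substituting the expansion of $g$ in terms of $\psi$ into \eqref{7.eq:VIPDefn1}, inclusion–exclusion leaves only those $\psi(\gamma)$ with $\gamma\subseteq\beta$ meeting every block. By the inductive hypothesis, those with $|\gamma|\le D$ or with $\gamma\subsetneq\beta$ of size greater than $D$ vanish or cancel, which isolates $\psi(\beta)$. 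If this bookkeeping becomes messy, the fallback is to cite the equivalent characterization of VIP-systems from \cite{BFM}, which says that $g_\alpha$ is a sum of $\psi$ over subsets of size at most $D$.
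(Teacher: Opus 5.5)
Your proposal is correct and is essentially the paper's argument. The paper takes the level decomposition $g_\alpha=\sum_{t=1}^{\min\{|\alpha|,D\}}\sum_{\gamma\in\alpha^{(t)}}\eta_t(\gamma)$ from \cref{7.prop:LevelDecomposition}, places $\eta_t(\{i_1<\cdots<i_t\})$ at the single point $(i_1,\dots,i_t,i_t,\dots,i_t)\in\N^D$, and extends additively over points, just as you do with your representatives $x_\beta$; you instead derive that decomposition yourself by M\"obius inversion. Your choice of the lexicographically smallest representative is immaterial. The only refinement is that ``degree at most $D$'' gives the defining identity with $t=D'$, the actual degree, so run your singleton/block induction with $D'\le D$ in place of $D$; it then yields $\psi(\beta)=0$ for $|\beta|>D'$, hence for $|\beta|>D$.
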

The proof of \cref{7.lem:ShiftToSetPolynomial} makes use of the following result from \cite{McCutcheonInfinitaryPVW}. For each $t\in\N$ and any non-empty set $S$, we will let $S^{(t)}$ denote the class of all non-empty, $t$-element subsets of $S$. 
\begin{prop}[Proposition 2.5 in \cite{McCutcheonInfinitaryPVW}]\label{7.prop:LevelDecomposition}
    Let $D\in\N$ and let $(G,+)$ be an abelian group. An $\mathcal F$-indexed sequence $(g_\alpha)_{\alpha\in\mathcal F}$ in $G$ is a VIP-system of degree at most $D$ if and only if there are  functions $\eta_t:\N^{(t)}\rightarrow G$, $t\in\{1,...,D\}$, with the property that for any $\alpha\in\mathcal F$,
$$g_\alpha=\sum_{t=1}^{\min\{|\alpha|,D\}}\sum_{\gamma\in\mathcal \alpha^{(t)}}\eta_t(\gamma).$$
\end{prop}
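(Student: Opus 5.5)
The plan is to reduce both directions to Möbius inversion on the Boolean lattice of finite subsets of $\N$. Set $g_\emptyset:=0$ and, for every $\gamma\in\mathcal F$, define
$$h(\gamma):=\sum_{\emptyset\neq\beta\subseteq\gamma}(-1)^{|\gamma|-|\beta|}g_\beta .$$
Standard inclusion--exclusion then gives, for every $\alpha\in\mathcal F$,
$$g_\alpha=\sum_{\emptyset\neq\gamma\subseteq\alpha}h(\gamma).$$
Moreover, any representation $g_\alpha=\sum_{\emptyset\neq\gamma\subseteq\alpha}\eta(\gamma)$ valid for all $\alpha$ forces $\eta=h$, by induction on $|\gamma|$. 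Hence the statement is equivalent to the following claim: $(g_\alpha)$ is a VIP-system of degree at most $D$ if and only if $h(\gamma)=0$ whenever $|\gamma|>D$. In the forward direction we then take $\eta_t:=h|_{\N^{(t)}}$. In the backward direction the given $\eta_t$ define a function $\eta$ on $\mathcal F$ that vanishes on sets of size $>D$, and uniqueness gives $\eta=h$.

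The key computation is how the defining identity \eqref{7.eq:VIPDefn1} with $t=D$ acts on a single term $h(\beta)$. Write $\alpha_K:=\bigcup_{i\in K}\alpha_i$, so that the identity reads $\sum_{\emptyset\neq K\subseteq\{0,\dots,D\}}(-1)^{|K|}g_{\alpha_K}=0$. Substitute $g_{\alpha_K}=\sum_{\emptyset\ne\beta\subseteq\alpha_K}h(\beta)$. For a fixed $\beta\subseteq\alpha_{\{0,\dots,D\}}$, let $I(\beta)$ be the set of indices $i$ with $\beta\cap\alpha_i\neq\emptyset$; this set is nonempty. The coefficient of $h(\beta)$ is then
$$\sum_{K\supseteq I(\beta)}(-1)^{|K|}=(-1)^{|I(\beta)|}\sum_{L\subseteq\{0,\dots,D\}\setminus I(\beta)}(-1)^{|L|}.$$
This vanishes unless $I(\beta)=\{0,\dots,D\}$, in which case it equals $(-1)^{D+1}$. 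So the left side of \eqref{7.eq:VIPDefn1} equals $(-1)^{D+1}\sum h(\beta)$, the sum running over those $\beta$ that meet every $\alpha_i$.

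\emph{Backward direction.} If $h$ vanishes on sets of size $>D$, every $\beta$ meeting all $D+1$ disjoint sets $\alpha_i$ has $|\beta|\ge D+1$. Each surviving term is therefore zero, and \eqref{7.eq:VIPDefn1} holds with $t=D$, so the degree is at most $D$.

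\emph{Forward direction.} This is the step needing care: we show $h(\gamma)=0$ for $|\gamma|=m\ge D+1$ by induction on $m$. Fix such a $\gamma$. Take $\alpha_0,\dots,\alpha_{D-1}$ to be distinct singletons $\{x_0\},\dots,\{x_{D-1}\}\subseteq\gamma$, and let $\alpha_D:=\gamma\setminus\{x_0,\dots,x_{D-1}\}$, which is nonempty. By the computation above, the VIP identity gives $0=(-1)^{D+1}\sum h(\beta)$ over $\beta\subseteq\gamma$ containing all $x_i$ and meeting $\alpha_D$. Every such $\beta$ has $|\beta|\ge D+1$. Those with $\beta\subsetneq\gamma$ have $D+1\le|\beta|<m$, so they vanish by the induction hypothesis; when $m=D+1$ there are none. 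Only $\beta=\gamma$ survives, giving $h(\gamma)=0$. This completes the induction and the proof. I expect the only real obstacle to be the bookkeeping in the coefficient computation; once that is in place, the choice of partition makes both directions immediate.
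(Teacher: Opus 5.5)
Your argument is correct and complete. The paper gives no proof of this proposition. It is quoted from McCutcheon (Proposition 2.5 in \cite{McCutcheonInfinitaryPVW}) and used only as a black box in the proof of Lemma~\ref{7.lem:ShiftToSetPolynomial}, so there is no in-paper argument to compare with.

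Your M\"obius-inversion route is a self-contained proof, and it gives slightly more than the statement asks for. First, the functions $\eta_t$ are unique, namely $\eta_t(\gamma)=\sum_{\emptyset\neq\beta\subseteq\gamma}(-1)^{|\gamma|-|\beta|}g_\beta$ for $|\gamma|=t$. Second, your coefficient computation, run with an arbitrary $t$ in place of $D$, shows that the order-$t$ identity holds for all disjoint $\alpha_0,\dots,\alpha_t\in\mathcal F$ if and only if $h$ vanishes on every set of size $>t$. In the language of Subsection~\ref{Sec3.1}, that computation is the observation that $(-1)^{t+1}D_{\alpha_0}\cdots D_{\alpha_t}g(\emptyset)$ retains exactly those $h(\beta)$ with $\beta$ meeting every $\alpha_i$.

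This equivalence also settles the one point you glossed over. ``Degree at most $D$'' only guarantees the identity at $t=\deg(g)\le D$, not at $t=D$ itself. Run your forward induction with $t=\deg(g)$ in place of $D$. This gives $h=0$ on sets of size $>\deg(g)$, hence on sets of size $>D$, which is all the representation requires. The backward direction needs no change, since verifying the identity at $t=D$ already shows the degree is at most $D$.
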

\begin{proof}[Proof of \cref{7.lem:ShiftToSetPolynomial}]
    Let $\eta_1,...,\eta_D$ be as in \cref{7.prop:LevelDecomposition}. We define $\eta:\mathcal F_\emptyset (\N^D)\rightarrow G$ as follows.
For any $A\in\mathcal F_\emptyset (\N^D)$, $A\neq \emptyset$,
$$\eta(A):=\sum_{t=1}^D\sum_{\substack{(i_1,...,i_D)\in A\\i_r< i_s\text{ for }r<s\leq t}}\left(\eta_t(\{i_1,...,i_t\})\prod_{j=t}^D\mathbbm 1_{\{i_t\}}(i_j)\right)$$
and set $\eta(\emptyset)=0$.
It follows that for any $\alpha\in\mathcal F$,   
\begin{equation*}
\eta(\alpha^D)=\sum_{t=1}^D\sum_{\substack{(i_1,...,i_D)\in \alpha^D\\i_r< i_s\text{ for }r<s\leq t}}\left(\eta_t(\{i_1,...,i_t\})\prod_{j=t}^D\mathbbm 1_{\{i_t\}}(i_j)\right)
=\sum_{t=1}^{\min\{|\alpha|,D\}}\sum_{\gamma\in\alpha^{(t)}}\eta_t(\gamma),
\end{equation*}
which by \cref{7.prop:LevelDecomposition} implies that (P.1) holds.\\
The fact that  (P.2) holds is an immediate consequence of the definition of $\eta$. We are done.
\end{proof}

%%%%%%%%%%%%%%%%%%%%%%%%%%%%%%%%%%%%%%%%%%%%%%%%%%%%%  
\subsection{A conditional VIP Szemer{\'e}di theorem and the proof of \cref{8.thm:ConditionalSzemeredi}}\label{Sec:A3}
Invoking the inclusion-exclusion principle, it is not hard to see that for any $t\in \N$ and any sequences $(n_k^{(1)})_{k\in\N}$,...,$(n_k^{(t)})_{k\in\N}$ in $\Z$, the map 
$$\alpha\mapsto n_\alpha^{(1)}\cdots n_\alpha^{(t)}$$ 
is a VIP-system of degree at most $t$. It follows that for any polynomial $p\in\Z[x_1,...,x_d]$  with zero constant term and any sequence $(\vec n_k)_{k\in\N}$ in $\Z^d$, one has that the VIP-system  $p(\vec n_\alpha):=p(\sum_{j\in\alpha}\vec n_j)$ has degree at most  $D=\deg_{x_1,...,x_d}p$ \cite[Proof of Theorem 1.8]{BFM},\cite[p. 1001]{BHM}.  
Thus, \cref{8.thm:ConditionalSzemeredi} follows from the following more general result for VIP-systems.
 \begin{thm}\label{8.ConditionalSzemeredi}
Let $D,\ell\in\N$ and let $\delta>0$. Suppose that the Density Polynomial Hales-Jewett conjecture holds and put  
$$r:=C(\ell,D,\delta/2)\in\N\text{ and }  c:=\frac{\delta}{2^{\ell r^D+r+1}}>0,$$
where $C(\ell,D,\delta/2)$ is as defined in \eqref{A.eq:DPHJConstant}.\\
Then, for any probability space $(X,\mathcal A,\mu)$, any countable abelian group $(G,+)$, any $\mu$-preserving and commuting $G$-actions $(T^{(1)}_g)_{g\in G}$,...,$(T^{(\ell)}_g)_{g\in G}$, any VIP-systems $v_{1},...,v_{\ell}:\mathcal F\rightarrow G$ of degree at most $D$, any $A\in\mathcal A$ with $\mu(A)\geq \delta$, and any $\alpha\in\mathcal F$ with $|\alpha|\geq r$, there 
is  a non-empty $\gamma\subseteq \alpha$ such that 
 \begin{equation}\label{7.eq:GammaREturn}
\mu(A\cap T_{-v_1(\gamma)}^{(1)}A\cap\cdots\cap  T_{-v_\ell(\gamma)}^{(\ell)}A)\geq c.
     \end{equation}
 \end{thm}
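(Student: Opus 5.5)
The plan is to reduce the statement to a single application of DPHJ, carried out pointwise in $x\in X$, and then finish with a pigeonhole argument over the finitely many configurations that DPHJ can return. Fix $\alpha\in\mathcal F$ with $|\alpha|\geq r$. Replacing $\alpha$ by a subset of size exactly $r$ costs nothing, since we only need some non-empty $\gamma\subseteq\alpha$. After relabelling we identify $\alpha$ with $\{1,\dots,r\}$ via an order-preserving bijection, so subsets of $\alpha^D$ correspond to subsets of $\{1,\dots,r\}^D$.

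\emph{Step 1: linearize the VIP-systems.} For each $j\in\{1,\dots,\ell\}$, apply \cref{7.lem:ShiftToSetPolynomial} to $v_j$. This gives $\eta_j:\mathcal F_\emptyset(\N^D)\to G$ with two properties: $\eta_j(\beta^D)=v_j(\beta)$ for every $\beta\in\mathcal F$, and $\eta_j$ is additive on disjoint unions. Restrict $\eta_j$ to subsets of $\alpha^D\cong\{1,\dots,r\}^D$. For a tuple $\vec\beta=(\beta_1,\dots,\beta_\ell)\in\mathcal M_{\ell,D,r}$, put
$$U_{\vec\beta}:=T^{(1)}_{-\eta_1(\beta_1)}\cdots T^{(\ell)}_{-\eta_\ell(\beta_\ell)}, \qquad F(\vec\beta):=U_{\vec\beta}A.$$
Since the actions commute, $U_{\vec\beta}$ is a well-defined measure preserving map, and $\mu(F(\vec\beta))=\mu(A)\geq\delta$.

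\emph{Step 2: density in the fibres.} For $x\in X$, let $\mathcal S_x:=\{\vec\beta\in\mathcal M_{\ell,D,r}\,|\,x\in F(\vec\beta)\}$. Averaging gives
$$\int_X \frac{|\mathcal S_x|}{|\mathcal M_{\ell,D,r}|}\,\text{d}\mu=\frac{1}{|\mathcal M_{\ell,D,r}|}\sum_{\vec\beta}\mu(F(\vec\beta))\geq\delta.$$
Since the integrand is bounded by $1$, the set $X'$ of $x$ with $|\mathcal S_x|/|\mathcal M_{\ell,D,r}|>\delta/2$ has $\mu(X')\geq\delta/2$. For each $x\in X'$, DPHJ applies with $q=\ell$, density $\delta/2$ and $N=r=C(\ell,D,\delta/2)$. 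It yields a non-empty $\gamma_x\subseteq\{1,\dots,r\}$ and $\vec\beta_x\in\mathcal S_x$ such that $\beta_{x,j}\cap\gamma_x^D=\emptyset$ for all $j$, and every tuple obtained from $\vec\beta_x$ by replacing one coordinate $\beta_{x,j}$ with $\beta_{x,j}\cup\gamma_x^D$ also lies in $\mathcal S_x$.

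\emph{Step 3: pigeonhole.} There are at most $2^r$ choices of $\gamma$ and $|\mathcal M_{\ell,D,r}|=2^{\ell r^D}$ choices of $\vec\beta$. The map $x\mapsto(\gamma_x,\vec\beta_x)$ can be chosen measurably, for instance by taking the lexicographically first admissible pair, since each condition is a finite Boolean combination of the sets $F(\cdot)$. Hence some fixed pair $(\gamma,\vec\beta)$ serves a set of $x$ of measure at least $\frac{\delta}{2}\cdot 2^{-r-\ell r^D}=c$. Every such $x$ lies in $F(\vec\beta)\cap\bigcap_{j=1}^\ell F(\vec\beta^{(j)})$, where $\vec\beta^{(j)}$ denotes $\vec\beta$ with $j$-th coordinate $\beta_j\cup\gamma^D$. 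By additivity and disjointness, $\eta_j(\beta_j\cup\gamma^D)=\eta_j(\beta_j)+v_j(\gamma)$. Using commutativity, $F(\vec\beta^{(j)})=U_{\vec\beta}\,T^{(j)}_{-v_j(\gamma)}A$. Therefore
$$F(\vec\beta)\cap\bigcap_{j}F(\vec\beta^{(j)})=U_{\vec\beta}\Big(A\cap\bigcap_{j=1}^\ell T^{(j)}_{-v_j(\gamma)}A\Big),$$
and since $U_{\vec\beta}$ preserves $\mu$, this gives \eqref{7.eq:GammaREturn}. Finally, $\gamma$ corresponds to a non-empty subset of $\alpha$ under the identification.

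The main obstacle I expect is bookkeeping rather than an idea. Three points need checking: the restriction of $\eta_j$ to $\alpha^D$ is compatible with $\eta_j(\gamma^D)=v_j(\gamma)$ after relabelling, which holds because $\gamma^D\subseteq\alpha^D$ literally, so $\eta_j$ can simply be used on subsets of $\alpha^D$; the sign conventions must match $T_{-v_j(\gamma)}$; and the count must give exactly the constant $c=\delta/2^{\ell r^D+r+1}$.
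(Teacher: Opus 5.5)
Your proposal is correct and follows the paper's proof essentially step for step. Both linearize each $v_j$ via \cref{7.lem:ShiftToSetPolynomial}, apply DPHJ fibrewise on the set where the averaged indicator exceeds half the density, and pigeonhole over the at most $2^{\ell r^D+r}$ pairs $(\vec\beta,\gamma)$ to reach $c$. The only differences are cosmetic: the paper thresholds at $\mu(A)/2$ rather than $\delta/2$ and leaves implicit the relabelling of $\alpha$ as $\{1,\dots,r\}$ and the measurability of the pigeonhole sets, which you handle correctly.
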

 \begin{proof}
    We imitate the proof of Theorem 6.15 in \cite{berMcCuIPPolySzemeredi}. By \cref{7.lem:ShiftToSetPolynomial}, for each  $j\in\{1,...,\ell\}$, there is a map $\eta_j:\mathcal F_\emptyset (\N^D)\rightarrow G$ with the properties that for every $\alpha\in\mathcal F$, $v_{j}(\alpha)=\eta_j(\alpha^D)$ and for any disjoint $A,B\in \mathcal F_\emptyset(\N^D)$, $\eta_j(A\cup B)=\eta_j(A)+\eta_j(B)$. 
    For any $\alpha_1,...,\alpha_\ell\in\mathcal F_\emptyset (\N^D)$, let
     $$\varphi(\alpha_1,...,\alpha_\ell):=\prod_{j=1}^\ell T_{- \eta_j(\alpha_j)}^{(j)}.$$
Without loss of generality assume that $|\alpha|=r$ and let 
$$\mathcal M=\{(\beta_1,...,\beta_\ell)\,|\,\beta_1,...,\beta_\ell\subseteq \alpha^D\}.$$
We define the function $f:X\rightarrow [0,1]$, by
$$f(x)=\frac{1}{|\mathcal M|}\sum_{(\alpha_1,...,\alpha_\ell)\in\mathcal M} \mathbbm 1_{\varphi(\alpha_1,...,\alpha_\ell)A}(x).$$
For each $x\in X$, we set 
$$S_x=\{(\alpha_1,...,\alpha_\ell)\in\mathcal M\,|\,\mathbbm 1_{\varphi(\alpha_1,...,\alpha_\ell)A}(x)=1\}.$$
Note that $|S_x|=|\mathcal M|f(x)$.\\
 Let 
$$Y=\{x\in X\,|\,f(x)> \frac{
\mu(A)}{2}\}.$$
Because $\int_Xf(x)\text{d}
\mu(x)=\mu(A)$ and $0\leq f(x)\leq 1$ for each $x\in X$, we must have 
$$
\mu(Y)\geq \frac{\mu(A)}{2}\geq \frac{\delta}{2}.
$$
It follows that for every $x\in Y$, $\frac{|S_x|}{|\mathcal M|}> \frac{\delta}{2}$ and, so, by the DPHJ conjecture, there exist $\alpha_{1,x},...,\alpha_{\ell,x}\subseteq \alpha^D$ and a non-empty $\gamma_x\subseteq \alpha$ such that for every $j\in\{1,...,\ell\}$, $\alpha_{j,x}\cap \gamma_x^D=\emptyset$  and 
\begin{equation}\label{8.eq:WildCard}
(\alpha_{1,x},...,\alpha_{\ell,x}),(\alpha_{1,x}\cup\gamma_x^D,...,\alpha_{\ell,x}),(\alpha_{1,x},\alpha_{2,x}\cup\gamma_x^D,...,\alpha_{\ell,x}),...,(\alpha_{1,x},...,\alpha_{\ell,x}\cup\gamma_x^D)\in\mathcal S_x.
\end{equation}
 Because there are no more than  $2^{\ell r^D}$ choices for $(\alpha_{1,x},...,\alpha_{\ell,x})$ and no more than $2^r$ choices for $\gamma_x\subseteq \alpha$, we can find $(\alpha_1,...,\alpha_\ell)\in \mathcal M$ and a non-empty $\gamma\subseteq \alpha$ with $\gamma^D$ disjoint from  $\alpha_1,...,\alpha_\ell$ for which
 \begin{multline*}
     \mu(A\cap T_{-v_1(\gamma)}^{(1)}A\cap \cdots\cap T_{-v_\ell(\gamma)}^{(\ell)}A)
     =\mu(A\cap T_{-\eta_1(\gamma^D)}^{(1)}A\cap \cdots\cap T_{-\eta_\ell(\gamma^D)}^{(\ell)}A)\\
    =\mu(\varphi(\alpha_1,...,\alpha_\ell)A\cap \varphi(\alpha_1\cup\gamma^D,....,\alpha_\ell)A\cap \cdots\cap \varphi(\alpha_1,...,\alpha_\ell\cup\gamma^D)A)\\
     \geq \mu(\{x\in X\,|\, (\alpha_1,...,\alpha_\ell),(\alpha_1\cup \gamma^D,...,\alpha_\ell),...,(\alpha_1,...,\alpha_\ell\cup \gamma^D)\in S_x\})\geq \frac{\delta}{2^{\ell r^D+r+1}}.
 \end{multline*} 
 We are done. 
 \end{proof}

%%%%%%%%%%%%%%%%%%%%%%%%%%%%%%%%%%%%%%%%%%%%%%%%%%%%%%%%%%%%%%%%
%%%%%%%%%%%%%%%%%%%%%%%%%%%%%%%%%%%%%%%%%%%%%%%%%%%%%%%%%%%%%%%%%%%%%%%%%%%%%%%%%%%%%%%%%%%%%%%%%%%%%%%%%%%%%%%%
  \bibliography{Bib.bib}
\bibliographystyle{amsplain}
\vspace{0.5em}
\noindent
{\small Vitaly Bergelson\\
\textsc{Department of Mathematics, The Ohio State University, Columbus, OH 43210, USA}\par\nopagebreak
\noindent
\href{mailto:vitaly@math.ohio-state.edu}
{\texttt{vitaly@math.ohio-state.edu}}
\vspace{0.5em}\\
\noindent
Rigoberto Zelada\\
\textsc{Mathematics Institute. University of Warwick, Coventry, CV4 7AL, UK}\par\nopagebreak
\noindent
\href{mailto:rzelada@umd.edu}
{\texttt{rigozelada@gmail.com}}}
%%%%%%%%%%%%%%%%%%%%
\end{document}